\newif\ifPDF
\tikzset{EdgeStyle/.style = {->}}
\tikzset{LabelStyle/.style= {fill=yellow}}
\newtheorem{thm}{Theorem}[section]
\newtheorem{cor}[thm]{Corollary}
\newtheorem{lem}[thm]{Lemma}
\newtheorem{prop}[thm]{Proposition}
\theoremstyle{definition}
\newtheorem{defn}[thm]{Definition}
\newtheorem{rem}[thm]{Remark}
\newtheorem{example}[thm]{Example}
\numberwithin{equation}{section}
\theoremstyle{definition}
\newcommand{\norm}[1]{\left\Vert#1\right\Vert}
\newcommand{\abs}[1]{\left\vert#1\right\vert}
\newcommand{\Int}{\mathbb Z}
\newcommand{\Comp}{\mathbb C}
\newcommand{\Ratn}{\mathbb Q}
\newcommand{\eps}{\varepsilon}
\newcommand{\Kzero}{\mathrm{K}_0}
\newcommand{\Kone}{\mathrm{K}_1}
\newcommand{\N}{\mathbb N}
\newcommand{\Z}{\mathbb Z}
\newcommand{\e}{\varepsilon}
\newcommand{\ov}{\overline}
\begin{document}

\title[C*-algebras of a homeomorphism on a Cantor set]{C*-algebras of a Cantor
system with finitely many minimal subsets: structures, K-theories, and the index map}

\author{Sergey Bezuglyi}
\address{University of Iowa, Iowa City, Iowa, USA}
\email{sergii-bezuglyi@uiowa.edu}

\author{Zhuang Niu}
\address{Department of Mathematics, University of Wyoming, Laramie, Wyoming, USA,~~82071.}
\email{zniu@uwyo.edu}

\author{Wei Sun}
\address{Department of Mathematics, Eastern China Normal University, Shanghai, China.}
\email{wsun@math.ecnu.edu.cn}

\thanks{}
\keywords{}
\date{\today}
\dedicatory{}
\commby{}


\begin{abstract}
We study  homeomorphisms of a Cantor set with $k$ ($k < +\infty$) minimal 
invariant closed (but not open) subsets; we also study  crossed product 
C*-algebras associated to these Cantor systems and their certain orbit-cut 
sub-C*-algebras. In the case that $k\geq 2$, the crossed product C*-algebra is stably finite,  
has stable rank 2, and has real rank zero if in addition $(X, \sigma)$ is aperiodic. 
The image of the index map is connected to certain directed graphs 
arising from the Bratteli-Vershik-Kakutani model of the Cantor system. 
Using this, it is shown that the ideal of the Bratteli diagram (of the Bratteli-Vershik-Kakutani model) must have at least $k$ vertices at each level, and the image of the index map must consist infinitesimals.

\end{abstract}

\maketitle


\setcounter{tocdepth}{1}
\tableofcontents


\section{Introduction}

The paper is devoted to the study of interactions between topological
dynamical systems on a Cantor set and C*-algebras. 
There are several constructions of operator algebras arising from dynamical systems in the frameworks of
measurable or topological dynamics, and the most important one is the concept of
crossed product operator algebras which is originated in the work of von Neumann. 
It is fascinating to see how dynamical properties reveal themselves in the corresponding
operator algebras. On the other hand, ideas and methods used in operator
algebras can give new approaches and information for the study of dynamical 
systems. We mention here that Bratteli diagrams (one of the central objects 
of our paper) are proved to be an effective tool in dynamics. 

A topological space is called \textit{Cantor} if it is compact, metrizable,
 totally disconnected, and has no isolated points. Any two Cantor sets are 
 homeomorphic. A \textit{Cantor system }$(X, \sigma)$ consists of a
  Cantor set $X$ and a homeomorphism $\sigma: X\to X$. A closed subset 
$Y\subseteq X$ is said to be invariant if $Y=\sigma(Y)$.
It follows from the Zorn's Lemma that any system $(X, \sigma)$ has 
minimal nonempty closed invariant subsets
 (they are called \textit{minimal components}). 
If $X$ is the only nonempty closed invariant subset, the system 
$(X, \sigma)$ is said to be \textit{minimal}. This means, in other words,
 that, for every $x \in X$, the $\sigma$-orbit of $x$ in dense in $X$. 
 In this paper, we will 
focus on \textit{nonminimal homeomorphisms} of a Cantor set. 
Our primary interest is the 
case when a homeomorphism $\sigma$ has  finitely many minimal components 
 $Y_1, Y_2, ..., Y_k$. These subsets are disjoint with necessity.
 There are several natural classes of 
 homeomorphisms which have this property, e.g., interval exchange 
 transformations (treated as Cantor  dynamical systems), aperiodic 
 substitutions, etc., see \cite{BezuglyiKwiatkowskiMedynets2009, 
 BezuglyiKwiatkowskiMedynetsSolomyak2013, CornfeldFominSinai1982}.

If some of the minimal component, say $Y_i$, is open, then both $Y_i$ and $X\setminus Y_i$ are again Cantor sets (they are compact, metrizable, totally disconnected, and have no isolated points). Since $Y_i$ and $X\setminus Y_i$ are invariant under $\sigma$, the whole system $(X, \sigma)$ can be decomposed into the Cantor systems $(Y_i, \sigma)$ and $(X\setminus Y_i, \sigma)$, where $(Y_i, \sigma)$ is minimal, and$(X\setminus Y_i, \sigma)$ has $k-1$ minimal components. 

Thus, in the paper, we will assume that $(X, \sigma)$ is  \textit{indecomposable}, i.e., none of $Y_i$ are open. 
Our goal is to  study 
C*-algebras associated to $(X, \sigma)$. For this, we use the 
existence of a \textit{Bratteli-Vershik-Kakutani model} of $(X, \sigma)$ 
constructed by a  sequence of Kakutani-Rokhlin partitions. Our main 
interest  is on the properties of the C*-algebras associated
to a Cantor system  $(X, \sigma)$ with finitely many minimal sets. 
In this paper, we apply the methods developed in the K-theory and Bratteli diagrams. Note that the case of 
minimal homeomorphisms was extensively  studied in earlier papers (see, 
in particular,   \cite{Davidson-book, GPS-Cantor, HPS-Cantor, Poon-PAMS-89, 
Poon-Rocky, Put-PJM, Putnam-AT} where various aspects of 
the theory of C*-algebras corresponding  to minimal 
homeomorphisms  are  discussed).

It is well known that if $(X, \sigma)$ is minimal, then the crossed product C*-algebra $\mathrm{C}(X)
\rtimes\Int$ is isomorphic to an inductive limit of circle algebras (A$\mathbb{T}$ algebra); see \cite{Put-PJM}. This is still true if there is only one minimal component. However, once there are at least two minimal components, the C*-algebra $\mathrm{C}(X)\rtimes\Int$ is not an A$\mathbb{T}$ algebra anymore. In fact, in this case, the C*-algebra $\mathrm{C}(X)\rtimes\Int$
 is a \textit{stably finite C*-algebra with stable rank $2$}; moreover, it has {\em real rank zero} if $(X, \sigma)$ is aperiodic. (See Corollary \ref{2-0-fin}; also see \cite{Poon-PAMS-89}.)


In the case of minimal dynamical systems, it is also well known that each minimal Cantor system $(X, \sigma)$ has a Bratteli-Vershik-Kakutani model; see \cite{HPS-Cantor}. Moreover, the (unordered) Bratteli diagram appears in the Bratteli-Vershik-Kakutani model must be simple, and any (non-elementary) simple Bratteli diagram can be ordered to have a continuous Bratteli-Vershik map.

In the case of Cantor systems with $k$ minimal components, it turns out that any such a system still has a Bratteli-Vershik-Kakutani model (see Section \ref{Can-BD}); and very naturally, the (unordered) Bratteli diagram appears in the Bratteli-Vershik-Kakutani model is $k$-simple (Definition \ref{defn-unordered-BD}); or roughly speaking, it has $k$ sub-diagrams which are simple, and each of them corresponds to one of the minimal components.

However, unlike the simple case, to assign an order on a given (unordered) $k$-simple Bratteli diagram with $k\geq 2$ so that the Bratteli-Vershik map is continuous is somehow restrictive (see Definition \ref{defn-BD}), and is even impossible on some ($k$-simple) Bratteli diagrams. Therefore, not all $k$-simple Bratteli diagram can arise from a Cantor system. For more details on orders on Bratteli diagrams, see \cite{BezuglyiKwiatkowskiYassawi2014, BezuglyiYasaswi2017, GPS-Cantor, HPS-Cantor,
 JanssenQuasYassawi2017,  Medynets2006}.


It turns out that these constrains on the Bratteli diagrams can be connected to the crossed product algebra $\mathrm{C}(X)\rtimes\Int$ and the associated C*-algebra extension
\begin{equation*}
\xymatrix{
0\ar[r]&\mathrm{C}_0(X\setminus\bigcup_iY_i)
\rtimes_\sigma\Int\ar[r]&\mathrm{C}(X)\rtimes_\sigma\Int\ar[r]&
\bigoplus_i\mathrm{C}(Y_i)\rtimes_\sigma\Int\ar[r]&0
}.
\end{equation*}
The index map of the extension above, $$\Kone(\bigoplus_i\mathrm{C}(Y_i)\rtimes_\sigma\Int) \to \Kzero(\mathrm{C}_0(X\setminus\bigcup_{i=1}^k Y_k)\rtimes\Int),$$
turns out to have the image isomorphic to $\Int^{k-1}$ (in particular, the index map is nonzero if 
$k\geq 2$).

To connect the index map to Bratteli diagrams, we 
introduce a sequence of finite directed graphs (transition graphs, see Definition 
 \ref{definition-TD}) for the ordered Bratteli diagram which represents the given Cantor system $(X, 
 \sigma)$. Then the index map can be recovered from these 
 transition graphs (Theorem \ref{index-diag}). Using this connection as a bridge, 
we show that  elements in the image of the index map  must be  
\textit{infinitesimals} (Corollary \ref{index-inf}). The transition graphs are also
  used in Section \ref{EHS} to characterize the (unordered) Bratteli 
  diagrams which appear in the Bratteli-Vershik-Kakutani construction (Theorem \ref{realization-TD} and Theorem 
  \ref{realization-IF} ). Relative results on graphs associated to Bratteli 
  diagrams also can be found in \cite{BezuglyiKwiatkowskiYassawi2014, BezuglyiYasaswi2017}.
  
  The paper is organized as follows. In Section \ref{sect prelim}, we 
include several definitions and facts from K-theory and 
Cantor dynamics that are used in the main part of the paper. In Section 
\ref{sect C.s. finitely many}, we discuss basic properties of the crossed
product C*-algebras, generated by homeomorphisms $\sigma$
with finitely many minimal components. Using the Pimsner-Voiculescu six-term exact sequence, one shows that (in Subsection \ref{subsect index map}) 
the index map is nonzero if the Cantor system $(X, \sigma)$ contains
more than one minimal components, and hence in this case, the crossed product C*-algebra cannot be A$\mathbb T$, in contrast to the case of minimal Cantor systems. The K-theory of the Putnam's orbit cutting algebra $A_Y$, where $Y\subseteq X$ is a closed set with non-empty intersection with each $Y_i$, $i=1, ..., k$, is considered in Section \ref{sect A_Y} (for instance, $Y$ can be $\bigcup_i Y_i$).
A dynamical system description of $\Kzero(A_Y)$ is given in Theorem \ref{k0-iso}.

In Section \ref{BV-model}, $k$-simple ordered Bratteli diagrams are 
introduced. They are Bratteli diagrams with $k$ simple quotients, but 
with more compatibility conditions on the order structure (see Definition 
\ref{defn-BD}). With these conditions, the Bratteli-Vershik map is then 
continuous, and the induced Cantor system is indecomposable and has 
$k$ minimal components. Then, in Section \ref{Can-BD}, one also shows
 that these $k$-simple ordered Bratteli diagrams (with the Vershik maps) 
 are exactly the Bratteli-Vershik-Kakutani models for arbitrary indecomposable Cantor 
 systems with $k$ minimal components.

The compatibility conditions on the $k$-simple ordered Bratteli diagram 
actually are very rigid on the underlining (unordered) Bratteli diagram, 
and it has a deep connection to the index map of the C*-algebra extension 
associated with the Cantor system. In Section \ref{tran-graph}, we build 
this bridge by \textit{transition graphs} (Definition \ref{definition-TD}). It is a 
sequence of finite directed graphs produced from the given $k$-simple 
ordered Bratteli diagram, and it actually provides a combinatorial 
description of the index map (Theorem \ref{index-diag}). As consequences, 
when $k\geq 2$, the $\Kzero$-group of the canonical AF ideal of 
$\mathrm{C}(X)\rtimes\Int$ must contain infinitesimal elements, it has rational rank at least $k$, and 
the C*-algebra $\mathrm{C}(X)\rtimes\Int$ is stably finite. Together with Corollary 2.6 of 
\cite{Poon-PAMS-89}, this provides natural examples of stably finite C*-algebra with stable 
rank $2$ and real rank zero.

Transition graphs are used again in Section \ref{EHS} to describe which unordered Bratteli 
diagram can carry a Cantor system with $k$ minimal components. With 
help of Euler walk, this question can be answered in a special case 
(Theorem \ref{realization-IF}).

In Section \ref{stable-alg}, we consider Cantor systems with only 
one minimal component. In this case, the crossed product C*-algebra is 
an A$\mathbb T$ algebra, and hence the $\Kzero$-group is a (not 
necessarily simple) dimension group. We study the connection between 
its order structure and the boundedness of the invariant measures 
concentrated on the complement of the minimal components.

In Section \ref{Chain}, we focus on topological properties of Cantor
dynamical systems with finitely many minimal components. We discuss the 
notions of \textit{chain transitive} and \textit{moving} homeomorphisms
(the later was introduced in \cite{BezuglyiDooleyKwiatkowski2006}).
We give a necessary and sufficient condition for a non-minimal 
homeomorphism to be chain transitive, see Theorem \ref{thm chain 
transitive}. 
In particular, we show that any $k$-minimal homeomorphism is 
chain transitive (Corollary \ref{cor k-minimal homeo}).  

\subsubsection*{Acknowledgements} Part of the work were carried out
 in 2010-2011 when the third named author was a Postdoctoral Fellow at the 
 Memorial University of Newfoundland; the second named author and the third 
 name author thank a start-up grant of the Memorial University of 
 Newfoundland for the support. 
 The work of the second named author is  partially supported by an NSF grant 
 (DMS-1800882). The first named author thanks the University of Wyoming 
 for the warm hospitality during his visits. 
 
\section{Preliminaries and Notation}\label{sect prelim}

\subsection{Cantor systems}
By a Cantor dynamical system $(X, \sigma)$, we mean a Cantor set $X$ together with a homeomorphism $\sigma: X\to X$. A closed set $Y\subseteq X$ is said to be invariant if $\sigma(Y) = Y$, and a closed invariant set is said to be {\em minimal} if it is non-empty and it does not contain any closed invariant subsets other than itself and $\O$. By the Zorn's Lemma and the compactness of $X$, minimal invariant subsets always exist, and let us call them {\em minimal components.} In this paper, we only consider Cantor systems with finitely many minimal components, unless
other properties of $\sigma$ are  explicitly specified. A Cantor system is said to be {\em aperiodic} if $X$ does not contain any periodic points of $\sigma$.

\subsection{Ordered Bratteli diagrams}\label{subsect OBD}
Let us recall some definitions and notation on Bratteli diagrams which are  crucial in Cantor dynamics for constructing   models for given transformations. 
For more details, we refer to \cite{BezuglyiKarpel2016, DownarowiczKarpel2018, Durand2010} where various combinatorial and dynamical properties of simple and non-simple Bratteli diagrams are discussed.

\begin{defn}\label{def BD}
 A {\it Bratteli diagram} is an infinite graph $B=(V,E)$ such that the vertex
set $V =\bigcup_{n\geq 0}V^n$ and the edge set $E= 
\bigcup_{n\geq 0}E^n$ are partitioned into disjoint subsets $V^n$ and 
$E^n$ where

(i) $V^0=\{v_0\}$ is a single point;

(ii) $V^n$ and $E^n$ are finite sets, $\forall n \geq 0$;

(iii) there exist $r : E \to V$ (range map $r$) and $s : E \to V$ 
(source map $s$), both from $E$ to $V$, such that $r(E^n)= V^{n+1}$, 
$s(E^n)= V^{n}$ (in particular, $s^{-1}(v)\neq\O$ and 
$r^{-1}(v')\neq\O$ for all $v\in V$ and $v'\in V\setminus V_0$).
\end{defn}

The structure of every Bratteli diagram $B$ is completely determined 
by the sequence of incidence matrices $(F_n)$. By definition, the 
\textit{incidence matrix} $F_n$ has entries
$$
f^{(n)}_{v,w} =|\{ e \in E_n : s(e) = w, r(e) =v\}|,\ v\in V_{n+1}, w 
\in V_n.
$$

A Bratteli diagram $B$ is called \textit{stationary} if $F_n = F_1$ for all 
$n$, and $B$ is of \textit{finite rank} if there exist $d\in \mathbb N$ 
such that $|V_n| \leq d$ for all $n$.

In what follows, we will constantly use the telescoping procedure. A 
telescoped Bratteli diagram  preserves all properties of the initial diagram
so that it does not change its dynamical properties. 

\begin{defn} \label{def telescoping}
Let $B$ be a Bratteli diagram, and $n_0 = 0 <n_1<n_2 < \cdots$ be a 
strictly increasing sequence of integers. The {\em telescoping of $B$ to 
$(n_k)$} is the Bratteli diagram $B'$, with $i$-level vertex set $(V')^i$
 to be $V^{n_i}$ and the edges between $(V')^i$ and $(V')^{i+1}$ to 
 be finite paths of the Bratteli diagram $B$ between $n_i$-level vertices
 and $n_{i+1}$-level vertices.
\end{defn}

For a Bratteli diagram $B$, the {\it tail (cofinal) equivalence}  relation
$\mathcal E$ on the path space $X_B$ is defined as follows: 
$x\mathcal E y$ if
there exists $m\in \N$ such that  $x_n=y_n$ for all $n\geq m$, where 
$x = (x_n)$, $y= (y_n)$.

A Bratteli diagram $B$ is called \textit{aperiodic} if  every 
$\mathcal  E$-orbit is countably infinite.

\begin{lem}\label{aperiodic-property}
Every aperiodic Bratteli diagram $B$ can be telescoped to a diagram $B'$
with the property: $|r^{-1}(v)| \geq 2,\ v \in V\setminus V^0 $ and
$|s^{-1}(v)| \geq 2, \ v\in V \setminus V^0$.
\end{lem}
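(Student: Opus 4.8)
The plan is to route everything through the elementary observation that \emph{aperiodicity is equivalent to the divergence of the vertex heights}. For $v\in V^n$ write $h^{(n)}(v)$ for the number of finite paths from $v_0$ to $v$, so that $h^{(0)}(v_0)=1$ and $h^{(n+1)}(w)=\sum_{v\in V^n} f^{(n)}_{w,v}\,h^{(n)}(v)$; in particular $h$ is non-decreasing along any edge. First I would prove
\[
B \text{ is aperiodic} \iff \min_{v\in V^n} h^{(n)}(v)\to\infty \quad (n\to\infty).
\]
The substantive direction is the contrapositive: if some constant $C$ bounds $\min_{v\in V^n}h^{(n)}(v)$ along infinitely many levels, consider $T=\{v\in V:\ h(v)\le C\}$. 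Since heights do not decrease along edges, $T$ is closed under passing to the source of an edge, so every non-root vertex of $T$ has a parent in $T$; thus $T$ is a locally finite rooted subtree meeting infinitely many levels. By König's Lemma $T$ carries an infinite path $x$ from $v_0$, and every vertex along $x$ has height $\le C$, whence the tail-equivalence class of $x$ has at most $C$ elements, contradicting aperiodicity. The converse is immediate, as heights are non-decreasing along each path.

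\textbf{The range condition.} Fix telescoping levels $0=n_0<n_1<\cdots$ to be chosen. For $v\in V^{n_{i+1}}$ the edges of $B'$ ending at $v$ are exactly the $B$-paths from $V^{n_i}$ to $v$, so $|r^{-1}(v)|_{B'}=\sum_{w\in V^{n_i}} q(w,v)$, where $q(w,v)$ counts $B$-paths from $w$ to $v$. Comparing this with $h^{(n_{i+1})}(v)=\sum_{w\in V^{n_i}} h^{(n_i)}(w)\,q(w,v)$ and setting $C_i:=\max_{w\in V^{n_i}} h^{(n_i)}(w)$ yields the uniform bound
\[
|r^{-1}(v)|_{B'}\ \ge\ \frac{h^{(n_{i+1})}(v)}{C_i}\qquad(v\in V^{n_{i+1}}).
\]
Since $C_i$ depends only on the already-chosen level $n_i$, while $\min_{v\in V^N}h^{(N)}(v)\to\infty$ by the criterion above, I can choose $n_{i+1}>n_i$ so large that $\min_{v\in V^{n_{i+1}}}h^{(n_{i+1})}(v)>C_i$, forcing $|r^{-1}(v)|_{B'}\ge 2$ for every $v\in V^{n_{i+1}}$. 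Building the sequence greedily gives a telescoping satisfying the range condition at every level.

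\textbf{The source condition and the main obstacle.} Here I would argue dually: for $v\in V^{n_i}$ the quantity $|s^{-1}(v)|_{B'}$ equals the number of $B$-paths running forward from $v$ to level $n_{i+1}$, and I would force this to be $\ge 2$ by the mirror image of the range construction, interleaving the two requirements into a single increasing sequence $(n_i)$ (each condition only asks that $n_{i+1}$ be taken large enough given $n_i$, so both can be arranged at once). The delicate point, and the step I expect to be the genuine obstacle, is that the forward statement needed here, namely that the minimal number of forward paths from a fixed level to level $N$ tends to infinity, does \emph{not} follow from the tail-equivalence formulation of aperiodicity as transparently as the height statement did: one must exclude vertices possessing an essentially unique infinite forward continuation, a phenomenon the backward (height) reasoning cannot detect. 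I would attempt to establish this by a reversed König/compactness argument exploiting aperiodicity in full; it is precisely this forward-branching control that carries the weight of the lemma, and once it is in hand, interleaving it with the range construction and telescoping completes the proof.
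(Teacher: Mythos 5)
The paper states this lemma without proof, so there is nothing of the authors' to compare against; I can only assess your argument on its own terms. Your treatment of the range condition is correct and essentially complete: the equivalence of aperiodicity with $\min_{v\in V^n}h^{(n)}(v)\to\infty$ holds (the set $T$ need not literally be a tree, since a vertex of $T$ may have several parents in $T$, but the K\"onig/diagonalisation argument goes through unchanged), and the estimate $|r^{-1}(v)|_{B'}\ge h^{(n_{i+1})}(v)/C_i$ does force $|r^{-1}(v)|_{B'}\ge 2$ once $n_{i+1}$ is chosen with $\min_{v}h^{(n_{i+1})}(v)>C_i$.

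The gap you flag in the source condition is genuine, and it is worse than you suspect: the forward statement you need is not merely harder to extract from aperiodicity of the tail relation --- it does not follow from it at all. Achieving $|s^{-1}(v)|\ge 2$ by telescoping is equivalent to requiring that no vertex have a unique infinite forward continuation with all multiplicities $1$ (the number of forward paths from $v$ to level $m$ is non-decreasing in $m$, so it either stays at $1$ forever or eventually reaches $2$; finiteness of each $V^n$ then gives a uniform choice of $n_{i+1}$). Aperiodicity does not exclude such vertices. Take $V^n=\{a_n,b_n\}$ with a single edge $a_n\to a_{n+1}$, a single edge $b_n\to a_{n+1}$, and a double edge $b_n\to b_{n+1}$ (and one edge from $v_0$ to each of $a_1,b_1$). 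Then $h^{(n)}(a_n)=h^{(n)}(b_n)=2^{n-1}\to\infty$, so every tail class is countably infinite and the diagram is aperiodic in the sense of the paper's definition; yet $a_n$ admits exactly one forward path to every higher level, so $|s^{-1}(a_{n_i})|_{B'}=1$ for every telescoping. Equivalently, the cylinder sets ending at $a_n$ are singletons, so the path space has isolated points. The missing input is precisely perfectness of the path space (what the paper later calls being non-elementary), which is exactly the forward-branching statement and must be taken as a hypothesis rather than derived; your intuition that ``the backward (height) reasoning cannot detect'' this phenomenon is exactly right, and no reversed K\"onig argument from aperiodicity alone can supply it. Under that additional assumption the source half becomes immediate and interleaves with your range construction as you describe.
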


\begin{rem}
Given an aperiodic dynamical system $(X,\sigma)$, a Bratteli diagram
is constructed by a sequence of refining Kakutani-Rokhlin partitions 
generated by $(X,\sigma)$ (for details, see \cite{HPS-Cantor, 
Medynets2006}).
The $n$-th level of the diagram, $(V^n, E^n)$, corresponds to the $n$-th
Kakutani-Rokhlin partition,  and the cardinality of the set 
$E(v_0, v)$ of all finite paths  between the top
$v_0$ and a vertex $v \in V^n$ is the height of
the $\sigma$-tower labeled by the symbol $v$ from that partition.
We will give more details of this construction in Section 
\ref{Can-BD}. 
\end{rem}

\begin{defn}\label{order_definition} A Bratteli diagram $B=(V^*,E)$ 
is called {\it ordered}
if there is a linear order ``$>$" on every set  $r^{-1}(v)$, $v\in
\bigcup_{n\ge 1} V_n$. We also use $>$ to denote the corresponding 
partial order on $E$ and write $(B, >)$ when we consider $B$ with 
the order $>$. 
\end{defn}

Every order $>$  defines the  \textit{lexicographic}
ordering on the set $E(k,l)$  of finite paths between
vertices of levels $V^k$ and $V^l$: it is said that 
 $$
 (e_{k+1},...,e_l) > (f_{k+1},...,f_l)
 $$
if and only if there is $i$ with $k+1\le i\le l$ such that $e_j=f_j$ for $i<j\le l$, 
and $e_i> f_i$.
It follows that, given the order $>$, any two paths from $E(v_0, v)$
are comparable with respect to the lexicographic ordering generated 
by $>$.  If two infinite paths are tail equivalent, and agree 
from the vertex $v$ onwards, then we can compare them by comparing 
their initial segments in $E(v_0,v)$. Thus $>$ defines a partial order 
on $X_B$, where two infinite paths are comparable if and only if they 
are tail equivalent.

With every Bratteli diagram $B$, one can associate the 
\textit{dimension group}
 ${\mathrm{K}}_B$. This is defined as the direct limit of groups 
$\mathbb Z^{|V_n|}$ with a sequence of positive homomorphisms 
generated by incidence matrices $F_n$:
$$
{\mathrm{K}}_B = \lim_{n \to \infty} (\mathbb Z^{|V_n|} \stackrel{F_n} 
\longrightarrow \mathbb Z^{|V_{n+1}|}). 
$$  
Then ${\mathrm{K}}_B$ is an abelian partially order group with 
 the distinguished order unit corresponding to  $V^0$. These groups
 play a prominent role in classification of AF algebras and Bratteli 
 diagrams, see, e.g.,  \cite{Br-AF, Elliott-AF, GPS-Cantor}. 
 
In particular, it is well known that, for Bratteli diagrams $B_1$ and $B_2$, 
if ${\mathrm{K}}_{B_1}\cong {\mathrm{K}}_{B_2}$ (as order-unit groups), then there is a Bratteli diagram $B$ such
 that both $B_1$ and $B_2$ can be obtained from $B$ by the 
 telescoping operations.

\subsection{Crossed product C*-algebras and K-theory} 
 Consider a compact Hausdorff space $X$ and consider a homeomorphism $\sigma: X \to X$. Let $\mathrm{C}(X)$ denote the C*-algebra  of complex-valued continuous  functions on $X$. Then the homeomorphism $\sigma$ induces an automorphism of  $\mathrm{C}(X)$ by $f\mapsto f\circ\sigma^{-1} = \sigma(f)$, which is 
 still denoted by $\sigma$. The \textit{crossed product} C*-algebra 
 $\textrm{C}(X)\rtimes_\sigma\Int$ is defined to be the universal 
 C*-algebra generated by $\mathrm{C}(X)$ and a canonical unitary 
 $u$ satisfying 
 $$u^*fu=\sigma(f),\quad f\in\mathrm{C}(X).
 $$ 
In other words, the crossed product C*-algebra is defined by 
$$
\textrm{C}(X)\rtimes_\sigma\Int:=\textrm{C*}\{f, u;\ f\in
\mathrm{C}(X), uu^*=u^*u=1, u^*fu=f\circ\sigma^{-1}\}.
$$ 
 This concept is defined and studied in many books on operator algebras, 
 see, for instance, Chapter VIII of \cite{Davidson-book}.
 
Applying the Pimsner-Voiculescu six-term exact sequence to $\mathrm{C}(X) \rtimes_\sigma \Int$, 
one obtains:
\begin{equation}\label{eq PV seq}
\xymatrix{
\Kzero(\mathrm{C}(X))\ar[r]^{1-[\sigma]_0}&\Kzero(\mathrm{C}(X)
\ar[r])&
\Kzero(\mathrm{C}(X)\rtimes_\sigma\Int)\ar[d]\\
\Kone(\mathrm{C}(X)\rtimes_\sigma\Int)\ar[u]&\Kone(\mathrm{C}(X))
\ar[l]&
\Kone(\mathrm{C}(X))\ar[l]^{1-[\sigma]_1})},
\end{equation}
where $\Kzero(A)$ and $\Kone(A)$ are $\Kzero$-group and 
$\Kone$-group 
of a C*-algebra $A$ respectively.  In (\ref{eq PV seq}), 
$[\sigma]_0$ and $[\sigma]_1$ denote the maps between 
$\Kzero$-groups and $\Kone$-groups induced by $\sigma$, respectively. 

In the case that $X$ is a Cantor set, one has 
$\Kone(\mathrm{C}(X))=\{0\}$; hence, it follows from (\ref{eq PV seq})
that
\begin{equation}\label{equ_K0}
\Kzero(\mathrm{C}(X)\rtimes_\sigma\Int)=
\Kzero(\textrm{C}(X))/(1-[\sigma]_0)(\Kzero(\textrm{C}(X))),
\end{equation} 
and the group
\begin{equation}\label{equ_K1}
\Kone(\mathrm{C}(X)\rtimes_\sigma\Int)=\ker(1-[\sigma]_0)
\end{equation}
consists of $\sigma$-invariant functions. 

A closed two-sided \textit{ideal} of a C*-algebra $A$ is a sub-C*-algebra $I\subseteq A$ such that $IA\subseteq I$ 
and $AI\subseteq I$. If $Y\subseteq X$ is a closed subset, then 
$\mathrm{C}_0(X\setminus Y)$ is an ideal of $\mathrm{C}(X)$. If, 
moreover, $Y$ is invariant (i.e., $\sigma(Y) = Y$), then 
$\mathrm{C}_0(X\setminus Y)$ is a $\sigma$-invariant ideal of 
$\mathrm{C}(X)$, and therefore 
$\mathrm{C}_0(X\setminus Y)\rtimes_{\sigma}\Int$ is an ideal of 
$\mathrm{C}(X)\rtimes_{\sigma}\Int$ with quotient canonically 
isomorphic to $\mathrm{C}(Y) \rtimes_{\sigma} \Int$. That is, one has a short exact sequence of C*-algebras:
\begin{equation*}
\xymatrix{
0\ar[r]&\mathrm{C}_0(X\setminus Y)
\rtimes_\sigma\Int\ar[r]&\mathrm{C}(X)\rtimes_\sigma\Int\ar[r]&
\mathrm{C}(Y)\rtimes_\sigma\Int\ar[r]&0
}.
\end{equation*}

In general, for any C*-algebra extension 
$$
\xymatrix{
0 \ar[r] & I \ar[r] & A \ar[r] & A/I \ar[r] & 0,
}
$$
one has the six-term exact sequence
$$
\xymatrix{
\Kzero(I)\ar[r] & \Kzero(A)\ar[r] & \Kzero(A/I) \ar[d] \\
\Kone(A/I) \ar[u] & \Kone(A) \ar[l] & \Kone(I) \ar[l].
}
$$
The map $\Kone(A/I) \to \Kzero(I)$ is called the \textit{index map},
 and the map $\Kzero(A/I) \to \Kone(I)$ is called the \textit{exponential
  map}.

For more information about the K-theory of a C*-algebra, see, for instance, 
\cite{RLL-ktheory}.

\section{Cantor system with finitely many minimal subsets}
\label{sect C.s. finitely many}

\subsection{Crossed product C*-algebra associated to $\sigma$}
Recall that, for a topological dynamical system $(X, \sigma)$
($X$ is not necessarily a Cantor set), a closed 
subset $Y$ is minimal if $Y$ is a closed invariant nonempty subset and 
$Y$ is minimal among these subsets. Minimal subsets always exist, and 
each pair of them are disjoint.

Let $(X, \sigma)$ be a topological dynamical system with only $k$ 
minimal subsets $Y_1, ..., Y_k$. We then have the short exact sequence
\begin{equation}\label{exact}
\xymatrix{
0\ar[r]&\mathrm{C}_0(X\setminus\bigcup_iY_i)
\rtimes_\sigma\Int\ar[r]&\mathrm{C}(X)\rtimes_\sigma\Int\ar[r]&
\bigoplus_i\mathrm{C}(Y_i)\rtimes_\sigma\Int\ar[r]&0
}.
\end{equation}


\begin{lem}\label{AF-cond}
Let $Y_1,. ... , Y_k$ be all the minimal subsets of $(X, \sigma)$.
Suppose that $U\subseteq X$ is an open set such that $U \cap Y_i \neq \O$, 
$i=1, 2, ..., k$. Then $\bigcup_{j=-\infty}^\infty\sigma^{j}(U)=X$.
\end{lem}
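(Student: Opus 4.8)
The plan is to show that the full $\sigma$-orbit of the open set $U$, namely $V := \bigcup_{j=-\infty}^\infty \sigma^j(U)$, already exhausts $X$, and to do so by a minimality argument applied to its complement. First I would record two structural facts about $V$. It is open, being a union of the sets $\sigma^j(U)$, each of which is open because $\sigma^j$ is a homeomorphism. It is also $\sigma$-invariant, since $\sigma(V) = \bigcup_{j} \sigma^{j+1}(U) = V$. Consequently the complement $X \setminus V$ is closed and $\sigma$-invariant as well.

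Next I would argue by contradiction, assuming $V \neq X$, so that $X \setminus V$ is a nonempty closed invariant subset of $X$. By the Zorn's Lemma argument recalled in Section \ref{sect prelim} (using the compactness of $X$), the subsystem $(X \setminus V, \sigma|_{X \setminus V})$ has a minimal component $Z$. The crucial step is to observe that $Z$ is in fact a minimal component of the ambient system $(X, \sigma)$: any nonempty closed invariant $Z' \subseteq Z$ already lies inside $X \setminus V$, so minimality of $Z$ within $X \setminus V$ forces $Z' = Z$. Since $Y_1, \dots, Y_k$ are by hypothesis \emph{all} the minimal components of $(X, \sigma)$, we must have $Z = Y_i$ for some $i$, and in particular $Y_i \subseteq X \setminus V$.

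This produces the contradiction. On the one hand $Y_i \cap V = \O$ because $Y_i \subseteq X \setminus V$; on the other hand $U \subseteq V$ (take $j = 0$), so the hypothesis $U \cap Y_i \neq \O$ gives $Y_i \cap V \neq \O$. Hence $X \setminus V$ can contain no minimal component, which is impossible for a nonempty closed invariant subset of a compact system. Therefore $X \setminus V = \O$, i.e. $V = X$, which is the desired conclusion.

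I expect the only subtle point to be the upward passage from a minimal component of the subsystem $(X \setminus V, \sigma|_{X\setminus V})$ to a minimal component of the full system $(X, \sigma)$ — that is, verifying that minimality is inherited through the closed invariant set $X \setminus V$ so that $Z$ is genuinely one of the $Y_i$. The remaining ingredients, namely the openness and invariance of $V$ and the existence of minimal components by compactness, are routine.
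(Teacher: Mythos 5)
Your proof is correct and follows essentially the same route as the paper: both pass to the open invariant set $V=\bigcup_j\sigma^j(U)$, observe that a nonempty closed invariant complement would contain a minimal component, and derive a contradiction with the hypothesis that $Y_1,\dots,Y_k$ exhaust the minimal sets while each meets $U\subseteq V$. The "subtle point" you flag (that a minimal set of the subsystem on the closed invariant set $X\setminus V$ is minimal for $(X,\sigma)$) is handled correctly and is implicit in the paper's argument as well.
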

\begin{proof}
We first note that if an open set $V$ intersects a minimal set $Y$, then
the orbit of $V$ contains $Y$.  Consider the open invariant subset 
$$Z:=\bigcup_{j=-\infty}^\infty\sigma^{j}(U).
$$ 
If $Z$ were a proper subset of $X$, then $X\setminus Z$ 
would be a nonempty invariant closed subset of $X$, and hence it must
 contains a minimal subset $Y$. But this minimal subset is disjoint with 
every set $Y_1, ... , Y_k$. This contradicts the assumption that $Y_1, Y_2, ..., Y_k$ are all the minimal componenents. Therefore, $Z=X$, as desired.
\end{proof}

\begin{rem} 
It follows from Lemma \ref{AF-cond} that for any open subset 
$U\supseteq\bigcup_i Y_i$, one has that $\bigcup_{j=-\infty}^\infty
\sigma^{j}(U)=X$.
\end{rem}

\begin{lem}\label{non-int}
Under the assumption that all proper minimal components 
for $(X, \sigma)$ are not clopen sets,
every minimal compenent $Y$  has empty interior.
\end{lem}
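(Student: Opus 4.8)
The plan is to argue by contradiction: assume some minimal component $Y$ has nonempty interior and deduce that $Y$ is clopen, which will contradict the hypothesis. First I would set $U := \mathrm{int}(Y)$ and observe that, because $\sigma$ is a homeomorphism and $Y$ is invariant, every power of $\sigma$ carries $U$ into itself. Indeed, for each $n\in\Int$ the set $\sigma^n(U)$ is open and contained in $\sigma^n(Y)=Y$, so $\sigma^n(U)\subseteq\mathrm{int}(Y)=U$; consequently the whole orbit $\bigcup_{n}\sigma^n(U)$ is contained in $U$. Here it is essential that $\sigma$ is a homeomorphism, so that it preserves both openness and the interior operation in each direction.

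Next I would invoke minimality of $Y$. Since $U=\mathrm{int}(Y)$ is a nonempty subset of $Y$ (by the contradiction hypothesis) that is open in $X$, it meets the minimal set $Y$, and the standard fact already recorded in the proof of Lemma \ref{AF-cond} — that the orbit of an open set meeting a minimal set contains that minimal set — yields $Y\subseteq\bigcup_{n}\sigma^n(U)$. Combining this with the inclusion from the previous step gives $Y\subseteq\bigcup_{n}\sigma^n(U)\subseteq U\subseteq Y$, hence $U=Y$. Thus $Y$ is open; being also closed, $Y$ is clopen. Since the system is nonminimal and the minimal components are pairwise disjoint, $Y$ is a proper subset of $X$, so this contradicts the assumption that proper minimal components are not clopen, and the lemma follows.

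There is no serious obstacle in this argument; it is a short topological deduction. The two points that deserve care are (i) checking that the interior of the invariant closed set $Y$ is forward- and backward-invariant under $\sigma$, which relies on $\sigma$ being a homeomorphism, and (ii) applying the minimality property correctly, treating $U$ as a nonempty \emph{relatively} open subset of the minimal set $Y$ rather than merely as a subset of $X$. Once these are in place the chain of inclusions closes up and forces $Y$ to be clopen.
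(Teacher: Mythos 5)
Your proof is correct and follows essentially the same route as the paper: assume the interior is nonempty, use minimality to see that the $\sigma$-orbit of that open set covers $Y$ while remaining inside $Y$, and conclude that $Y$ is open, hence clopen, a contradiction. The only cosmetic difference is that the paper extracts a finite subcover $Y=\bigcup_{j=-N}^{N}\sigma^j(U)$ by compactness, whereas you observe directly that $\bigcup_n\sigma^n(\mathrm{int}(Y))=\mathrm{int}(Y)$; both close the argument equally well.
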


\begin{proof}
Suppose $Y$ has a subset $U$ which is open in $X$.  Since $Y$ is minimal
 and compact, there exists $N$ such that $Y=\bigcup_{j=-N}^N 
 \sigma^j(U)$.  Hence $Y$ is open in $X$, which contradicts to the
 assumption.
\end{proof}

\begin{cor} Let $Y_1, ..., Y_k$ be minimal subsets for $(X, \sigma)$. 
If none of $Y_i$ is clopen, then the ideal $\mathrm{C}_0(X\setminus
\bigcup_iY_i)$ is essential in $\mathrm{C}(X)$.
\end{cor}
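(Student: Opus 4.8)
The plan is to translate the C*-algebraic assertion into an elementary topological statement about $X$ and then settle the latter. Recall that an ideal $I$ of a C*-algebra is \emph{essential} exactly when its annihilator is trivial, i.e. when $aI=\{0\}$ forces $a=0$. For the commutative algebra $\mathrm{C}(X)$ and the ideal $\mathrm{C}_0(U)$ attached to an open set $U\subseteq X$, I would first check that $a\cdot\mathrm{C}_0(U)=\{0\}$ holds if and only if $a$ vanishes on $U$: at any point of $U$ there is, by Urysohn's lemma, a function in $\mathrm{C}_0(U)$ nonzero there, which forces $a|_U=0$, and the converse is immediate. Since $a$ is continuous, $a|_U=0$ is the same as $a|_{\overline U}=0$. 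Hence $\mathrm{C}_0(U)$ is essential in $\mathrm{C}(X)$ precisely when $\overline U=X$, that is, when $U$ is dense.

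Specializing to $U=X\setminus\bigcup_i Y_i$, it then suffices to prove that $U$ is dense, equivalently that $\bigcup_i Y_i$ has empty interior. Here I would first observe that the hypothesis places us inside the scope of Lemma \ref{non-int}: since the whole space $X$ is always clopen, the assumption that none of the $Y_i$ is clopen forces every $Y_i$ to be a proper minimal component, and none of the proper minimal components (which are exactly the $Y_i$) is clopen. Lemma \ref{non-int} then yields that each individual $Y_i$ has empty interior.

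The remaining, and only genuinely argued, step is to pass from ``each closed set $Y_i$ has empty interior'' to ``the finite union $\bigcup_i Y_i$ has empty interior,'' which I would do by induction on $k$. For two closed sets $A,B$ of empty interior, if some nonempty open $V\subseteq A\cup B$ existed, then $V\setminus A$ would be open (as $A$ is closed) and contained in $B$; emptiness of $\mathrm{int}(B)$ forces $V\setminus A=\O$, so $V\subseteq A$, contradicting emptiness of $\mathrm{int}(A)$. Iterating, $\mathrm{int}(\bigcup_i Y_i)=\O$, so $U$ is dense and the ideal is essential. I do not expect a real obstacle here; the whole argument is elementary once essentiality is rephrased as density of $U$, and the only points deserving care are the verification that the hypotheses of Lemma \ref{non-int} are actually met and the (standard) finite-union argument that closes the proof.
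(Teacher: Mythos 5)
Your proof is correct and follows exactly the route the paper intends: the corollary is stated as an immediate consequence of Lemma \ref{non-int}, and your argument simply supplies the two standard facts left implicit there, namely that $\mathrm{C}_0(U)$ is essential in $\mathrm{C}(X)$ iff $U$ is dense, and that a finite union of closed sets with empty interior has empty interior.
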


\begin{prop}
Let $\alpha$ be an action of $\Int$ on a C*-algebra $A$, and let $I$ be an
 invariant ideal. If $I$ is essential in $A$, then $I\rtimes_\alpha\Int$ is 
 essential in $A\rtimes_\alpha\Int$.
\end{prop}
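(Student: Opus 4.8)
The plan is to use the standard characterization of essential ideals via annihilators and to detect vanishing in the crossed product through its canonical Fourier coefficients.

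First I would recall that a closed two-sided ideal $J$ of a C*-algebra $B$ is essential if and only if its annihilator $J^{\perp} = \{b \in B : bJ = \{0\}\}$ is trivial; here $J^{\perp}$ is automatically a closed two-sided self-adjoint ideal, so $\{b : bJ = 0\} = \{b : Jb = 0\}$. Applying this with $B = A\rtimes_\alpha\Int$ and $J = I\rtimes_\alpha\Int$, it suffices to prove that any $x \in A\rtimes_\alpha\Int$ with $x\,(I\rtimes_\alpha\Int) = 0$ must vanish. Since $I$ embeds into $I\rtimes_\alpha\Int$ as the degree-zero elements, it is in fact enough to use the weaker hypothesis $xi = 0$ for all $i \in I$.

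Next I would bring in the canonical conditional expectation. Because $\Int$ is amenable, the full and reduced crossed products coincide, so $A\rtimes_\alpha\Int$ carries the faithful canonical conditional expectation $E : A\rtimes_\alpha\Int \to A$, which is an $A$-bimodule map with $E(u^n) = 0$ for $n \neq 0$. For each $n \in \Int$ set $E_n(x) = E(xu^{-n}) \in A$; these Fourier coefficient maps jointly separate points (for instance via Fejér/Cesàro summation of the Fourier series, which converges in norm to $x$). Thus $x = 0$ as soon as $E_n(x) = 0$ for every $n$.

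The computation that ties everything together uses the covariance relation $u^{-n}\alpha^n(a) = a u^{-n}$ together with the right $A$-linearity of $E$: for $i \in I$,
\[
E_n(xi) = E\bigl(x i u^{-n}\bigr) = E\bigl(x u^{-n}\alpha^n(i)\bigr) = E\bigl(xu^{-n}\bigr)\alpha^n(i) = E_n(x)\,\alpha^n(i).
\]
Given $xi = 0$ for all $i \in I$, this yields $E_n(x)\alpha^n(i) = 0$ for all $i \in I$ and all $n$. Since $I$ is $\alpha$-invariant we have $\alpha^n(I) = I$, so as $i$ ranges over $I$ the element $\alpha^n(i)$ ranges over all of $I$; hence $E_n(x)\,I = \{0\}$, i.e. $E_n(x)$ lies in the annihilator of $I$ in $A$. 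Because $I$ is essential in $A$, this annihilator is zero, so $E_n(x) = 0$ for every $n$, and therefore $x = 0$.

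The conceptual core is the identity $E_n(xi) = E_n(x)\alpha^n(i)$, which transfers essentiality coefficientwise; the only point requiring care --- and the step I would treat as the main obstacle --- is justifying that the Fourier coefficients faithfully detect elements of the crossed product, that is, the faithfulness of $E$ and the norm convergence of the Fejér means. Both are standard facts for actions of $\Int$ (amenability guaranteeing full $=$ reduced), so no new dynamical input beyond the essentiality of $I$ in $A$ is needed.
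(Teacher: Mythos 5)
Your proof is correct, and its engine is the same as the paper's: transfer the annihilation condition from the crossed product down to $A$ through the canonical faithful conditional expectation $E$, using the bimodule identity $E(xb)=E(x)b$ for $b\in A$, and then apply essentiality of $I$ in $A$. Where you diverge is the finishing step. You treat an arbitrary element $x$ of the annihilator, show that every Fourier coefficient $E_n(x)$ kills $I$ (using the covariance relation and $\alpha$-invariance of $I$), and then conclude $x=0$ from the joint injectivity of the coefficient maps, which requires the Fej\'er/Ces\`aro summation argument. The paper instead observes that the annihilator is a (closed, two-sided) ideal, so if it were nonzero it would contain a nonzero positive element $a$; for such an $a$ the single identity $E(ab)=E(a)b=0$ for all $b\in I$ gives $E(a)=0$ by essentiality, and faithfulness of $E$ on positive elements immediately yields $a=0$ --- no higher coefficients, no summability argument, and in fact no use of the invariance of $I$ beyond what is needed to form $I\rtimes_\alpha\Int$ in the first place. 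Your version is a bit longer but avoids the reduction to positive elements; both are complete, and the step you correctly flag as the main technical burden (norm convergence of the Fej\'er means) is exactly the overhead the paper's positivity trick is designed to sidestep.
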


\begin{proof}
Consider the conditional expectation $$\mathbb{E}: A\rtimes_\alpha\Int 
\ni a \mapsto \int_{\mathbb T}\alpha^*_t(a) dt \in A,$$ where 
$\alpha^*_t$ is the dual action of $\alpha$. Note that $\mathbb{E}$ is 
faithful.

Let $a$ be a positive element in $A\rtimes_\alpha\Int$ with 
$a(I\rtimes_\alpha\Int)=\{0\}$. Then, for any $b\in I$, one has that 
$ab=0$. Since $b\in I\subseteq A$, one also has that $\mathbb{E}(ab)=
\mathbb{E}(a)b=0$. 
Since $\mathbb{E}(a)\in A$ and $b$ is arbitrary, one has that $\mathbb{E}
(a)=0$. Since $\mathbb E$ is faithful, one has that $a=0$, as desired.
\end{proof}

\begin{cor} For $(X, \sigma)$ as above, 
if none of $Y_1, ... , Y_k$ is clopen, then the ideal $\mathrm{C}_0
(X\setminus\bigcup_iY_i)\rtimes_\sigma\Int$ is essential in 
$\mathrm{C}(X)\rtimes_\sigma\Int$.
\end{cor}

%
%



 \subsection{AF sub-C*-algebras} 
We return to the case when the compact topological space  $X$ is a Cantor 
set.  Then each minimal subset $Y_i, i = 1, ..., k$, is a Cantor set or consists
of a periodic orbit. Indeed, to see this, suppose that a minimal set,
say  $Y_1$, contains an isolated point $x$.  Then $\textrm{Orbit}(x)$ is 
an open set (relative to $Y_1$), and therefore $\overline{\textrm{Orbit}(x)}\setminus
\textrm{Orbit}(x)$ is an invariant closed subset of $Y_1$. Since $Y_1$ is 
minimal, one has that $Y_1=\textrm{Orbit}(x)$, and then a standard 
compactness argument shows that $Y_1$ consists of a periodic orbit.

Let $Y\subseteq X$ be a closed subset with $Y\cap Y_i\neq \O$, 
$i=1, ..., k$. By Lemma \ref{AF-cond}, one has that $\bigcup_{j=-\infty}^\infty\sigma^j(U)=
X$ for any clopen set $U \supseteq Y$. Then, by 
  \cite[Lemma 2.3]{Poon-Rocky}, the sub-C*-algebra 
$$A_Y:=\textrm{C*}\{g, fu;\ g, f\in\mathrm{C}(X), f|_Y=0\}\subseteq 
\textrm{C}(X)\rtimes_\sigma\Int$$ is approximately finite (AF) dimensional.

In particular, fix $y_i\in Y_i$, $i=1, 2, ..., k$, and define 
$$A_{y_1,...,y_k}:=\textrm{C*}\{g, fu;\ g, f\in \textrm{C}(X), f(y_i)=0,
 i=1,...,k\}\subseteq \textrm{C}(X)\rtimes_\sigma\Int .$$ The C*-algebra 
 $A_{y_1,...,y_k}$ is AF, and hence the ideal $\mathrm{C}_0(X\setminus
 \bigcup_iY_i)\rtimes_\sigma\Int$ is also AF, see 
 \cite[Theorem 3.1]{Ell-Her-AF}.


 \subsection{Index maps} \label{subsect index map}
The six-term exact sequence associated to \eqref{exact} is 
\begin{equation}\label{eq 6 term crss prop}
\xymatrix{
\Kzero(\mathrm{C}_0(X\setminus\bigcup_iY_i)\rtimes_\sigma\Int)\ar[r]&
\Kzero(\mathrm{C}(X)\rtimes_\sigma\Int\ar[r])&\bigoplus_i
\Kzero(\mathrm{C}(Y_i)\rtimes_\sigma\Int)\ar[d]\\
\bigoplus_i\Kone(\mathrm{C}(Y_i)\rtimes_\sigma\Int)
\ar[u]^{\mathrm{Ind}}&\Kone(\mathrm{C}(X)\rtimes_\sigma\Int\ar[l])&
\Kone(\mathrm{C}_0(X\setminus\bigcup_iY_i)\rtimes_\sigma\Int\ar[l])
\cong\{0\}
}.
\end{equation}

Since the restriction of $\sigma$ to each $Y_i$ is minimal, it follows from  
\eqref{equ_K1}  that 
$$
\Kone(\mathrm{C}(Y_i)\rtimes_\sigma\Int)=\Int,
$$ 
and it is generated by the $\Kone$-class of the canonical unitary of $\mathrm{C}(Y_i)\rtimes_\sigma\Int$.
Also note that $(X, \sigma)$ is indecomposable, that is, the only clopen 
$\sigma$-invariant subsets are $X$ and $\O$. Then, the only invariant 
$\Int$-valued continuous functions on $X$ are constant functions, and then, 
by \eqref{equ_K1} again, we obtain that
  $$\Kone(\mathrm{C}(X)\rtimes_\sigma\Int)=\Int,$$
and it is generated by the $\Kone$-class of the canonical unitary of  $\mathrm{C}(X)\rtimes_\sigma\Int$.

\begin{thm}\label{index0} 
If a Cantor system $(X, \sigma)$ is indecomposable and has $k$ minimal components, then the the image of the index map $\mathrm{Ind}$  is isomorphic to $\Int^{k-1}$; in particular, index map is nonzero if $k\geq 2$.
\end{thm}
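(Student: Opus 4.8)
The plan is to analyze the six-term exact sequence \eqref{eq 6 term crss prop} directly, computing the image of $\mathrm{Ind}$ via exactness. By the discussion preceding the theorem, the source of $\mathrm{Ind}$ is $\bigoplus_i \Kone(\mathrm{C}(Y_i)\rtimes_\sigma\Int) \cong \Int^k$, where each summand is generated by the $\Kone$-class $[u_i]$ of the canonical unitary of $\mathrm{C}(Y_i)\rtimes_\sigma\Int$. Since the bottom-right corner $\Kone(\mathrm{C}_0(X\setminus\bigcup_iY_i)\rtimes_\sigma\Int)\cong\{0\}$, exactness at $\bigoplus_i\Kone(\mathrm{C}(Y_i)\rtimes_\sigma\Int)$ forces $\mathrm{Ind}$ to be \emph{injective}. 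Hence the image of $\mathrm{Ind}$ is isomorphic to $\bigoplus_i\Kone(\mathrm{C}(Y_i)\rtimes_\sigma\Int)\cong\Int^k$ \emph{modulo its kernel}, which is trivial — so naively one gets $\Int^k$, not $\Int^{k-1}$. This tension is exactly where the real content lives, and resolving it is the crux.

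**The key relation.** The resolution must come from comparing $\mathrm{Ind}$ with the map into the whole system. The natural restriction homomorphisms $\mathrm{C}(X)\rtimes_\sigma\Int \to \mathrm{C}(Y_i)\rtimes_\sigma\Int$ send the canonical unitary $u$ of $\mathrm{C}(X)\rtimes_\sigma\Int$ to the canonical unitary $u_i$ of each quotient. Therefore the quotient map $\Kone(\mathrm{C}(X)\rtimes_\sigma\Int) \to \bigoplus_i\Kone(\mathrm{C}(Y_i)\rtimes_\sigma\Int)$ sends the generator $[u]$ (recall $\Kone(\mathrm{C}(X)\rtimes_\sigma\Int)=\Int$, generated by $[u]$) to the diagonal element $([u_1],\dots,[u_k]) = (1,1,\dots,1)$. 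By exactness at $\bigoplus_i\Kone(\mathrm{C}(Y_i)\rtimes_\sigma\Int)$, the kernel of $\mathrm{Ind}$ equals the image of this quotient map, namely the subgroup $\Int\cdot(1,1,\dots,1)$ generated by the diagonal. The earlier injectivity claim was the mistake to avoid: the composite $\Kone(\mathrm{C}(X)\rtimes_\sigma\Int)\to\bigoplus_i\to\Kzero$ is zero, so the diagonal is killed.

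**Computing the quotient.** With $\ker(\mathrm{Ind}) = \Int\cdot(1,\dots,1)\subseteq\Int^k$, the first isomorphism theorem gives
\begin{equation*}
\mathrm{image}(\mathrm{Ind}) \cong \Int^k \big/ \Int\cdot(1,1,\dots,1) \cong \Int^{k-1}.
\end{equation*}
The last isomorphism is the standard fact that the diagonal copy of $\Int$ in $\Int^k$ is a direct summand (the quotient is torsion-free of rank $k-1$, e.g.\ via the map $(n_1,\dots,n_k)\mapsto(n_1-n_k,\dots,n_{k-1}-n_k)$). For $k\geq 2$ this group is nontrivial, so $\mathrm{Ind}$ is nonzero, giving the second assertion immediately. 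The essential inputs are thus (i) the minimality of each $Y_i$ together with \eqref{equ_K1}, which pins down each $\Kone(\mathrm{C}(Y_i)\rtimes_\sigma\Int)=\Int$; (ii) indecomposability, which pins down $\Kone(\mathrm{C}(X)\rtimes_\sigma\Int)=\Int$ rather than something larger; and (iii) the vanishing $\Kone(\mathrm{C}_0(X\setminus\bigcup_iY_i)\rtimes_\sigma\Int)=\{0\}$, which forces the image to be precisely the cokernel of the restriction rather than merely containing it.

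**Main obstacle.** The main obstacle is not the algebra but verifying that the restriction map sends the generator $[u]$ to the full diagonal $(1,\dots,1)$ rather than to some vector with entries of differing sign or multiplicity. This requires checking that the canonical unitary is compatibly identified across all the quotients $\mathrm{C}(Y_i)\rtimes_\sigma\Int$ under the same orientation of $\sigma$ — a point that is geometrically clear since all the $Y_i$ sit inside the single system $(X,\sigma)$ with one global $u$, but that deserves an explicit sentence. Once the diagonal identification is established, the exactness argument and the quotient computation are routine.
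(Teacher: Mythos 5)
Your final argument is correct and is essentially the paper's own proof: the restriction map sends the generator $[u]$ of $\Kone(\mathrm{C}(X)\rtimes_\sigma\Int)\cong\Int$ to the diagonal $(1,\dots,1)\in\Int^k$, so exactness at $\bigoplus_i\Kone(\mathrm{C}(Y_i)\rtimes_\sigma\Int)$ gives $\mathrm{image}(\mathrm{Ind})\cong\Int^k/\Int\cdot(1,\dots,1)\cong\Int^{k-1}$. The opening detour asserting that $\mathrm{Ind}$ is injective (which you then correctly retract) is a non sequitur you could simply delete --- the vanishing of $\Kone$ of the ideal sits at a different spot in the six-term sequence and plays no role in this computation, contrary to your closing item (iii).
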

\begin{proof}
Note that the canonical unitary of $\mathrm{C}(X)\rtimes_\sigma\Int$ is sent to the canonical unitaries of $\mathrm{C}(Y_i)\rtimes_\sigma\Int$, $i=1, 2, ..., k$. Therefore, the index map 
$\Kone(\mathrm{C}(X)\rtimes_\sigma\Int)\to \bigoplus_{i=1}^k\Kone(\mathrm{C}
(Y_i)\rtimes_\sigma\Int)$ in \eqref{eq 6 term crss prop} is given by $$\Int\ni 1\mapsto (1, ..., 1)\in \Int^k,$$
and hence
\begin{equation}\label{eq-ind}
\mathrm{Image}(\mathrm{Ind})\cong (\bigoplus_{i=1}^k    
\Kone(\mathrm{C}(Y_i)\rtimes_\sigma\Int))/\Kone(\mathrm{C}(X)\rtimes_
\sigma\Int)\cong \Int^k/\Int(1, 1, ..., 1),
\end{equation} 
which is isomorphic to $\Int^{k-1}$ and is nonzero if $k
\geq 2$.
\end{proof}

An abelian group $G$ is said to have $\Ratn$-\textit{rank} $r$ 
if the vector space $G\otimes\Ratn$ has dimension $r$ over $\Ratn$. 
Then, another consequence of Equation \eqref{eq-ind} is the
 following:
\begin{thm}\label{ind-Q}
The $\Ratn$-rank of the group $\Kzero(\mathrm{C}_0(X\setminus
\bigcup_iY_i)\rtimes_\sigma\Int)$ is  at least $k-1$.
\end{thm}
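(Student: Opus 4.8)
The plan is to derive this as a purely formal consequence of Theorem \ref{index0} together with the flatness of $\Ratn$ over $\Int$. By exactness of the six-term sequence \eqref{eq 6 term crss prop}, the index map $\mathrm{Ind}$ takes values in $\Kzero(\mathrm{C}_0(X\setminus\bigcup_iY_i)\rtimes_\sigma\Int)$; hence its image $G:=\mathrm{Image}(\mathrm{Ind})$ is a subgroup of this group. By Theorem \ref{index0} (explicitly, by \eqref{eq-ind}) we have $G\cong\Int^{k-1}$, so in particular $G\otimes_\Int\Ratn\cong\Ratn^{k-1}$, which has dimension $k-1$ over $\Ratn$.

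Next I would tensor the inclusion $\iota\colon G\hookrightarrow\Kzero(\mathrm{C}_0(X\setminus\bigcup_iY_i)\rtimes_\sigma\Int)$ with $\Ratn$ over $\Int$. Since $\Ratn$ is torsion-free, hence flat, as a $\Int$-module, the functor $-\otimes_\Int\Ratn$ is exact and in particular preserves injectivity. Thus $\iota\otimes\mathrm{id}_\Ratn$ is an injection of $\Ratn$-vector spaces
$$
\Ratn^{k-1}\cong G\otimes_\Int\Ratn\hookrightarrow\Kzero(\mathrm{C}_0(X\setminus\textstyle\bigcup_iY_i)\rtimes_\sigma\Int)\otimes_\Int\Ratn .
$$
Comparing dimensions gives $\dim_\Ratn\big(\Kzero(\mathrm{C}_0(X\setminus\bigcup_iY_i)\rtimes_\sigma\Int)\otimes_\Int\Ratn\big)\geq k-1$, which is exactly the assertion that the $\Ratn$-rank is at least $k-1$.

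There is essentially no obstacle here: all of the substantive work is already contained in Theorem \ref{index0}, and the present statement only repackages the fact that the $\Ratn$-rank of an abelian group cannot be smaller than the $\Ratn$-rank of any of its subgroups. The one point worth stating carefully is the use of flatness (torsion-freeness) of $\Ratn$ to guarantee that passing to the subgroup $G$ does not lose rank; beyond that the argument is a direct dimension count.
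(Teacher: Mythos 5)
Your argument is correct and is exactly what the paper intends: the paper offers no written proof, simply declaring the theorem ``another consequence of Equation \eqref{eq-ind}'', and your reasoning (the image of the index map is a subgroup isomorphic to $\Int^{k-1}$, and tensoring the inclusion with the flat $\Int$-module $\Ratn$ preserves injectivity, so the ambient group has $\Ratn$-rank at least $k-1$) is precisely the routine verification being left to the reader.
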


\begin{rem}
The lower bound of the $\Ratn$-rank of $\Kzero(\mathrm{C}_0(X\setminus
\bigcup_iY_i)\rtimes_\sigma\Int)$ will be be improved to $k$ in Corollary 
\ref{rank-of-ideal}.
\end{rem}


For a clopen set $U\subset X$, denote by
$\chi_U$ the characteristic  function of the set $U$; note that $\chi_U 
\in \mathrm{C}(X)$. Also note that any continuous integer-valued function on $X$ which vanishes on $\bigcup_{i=1}^k Y_i$ induces a $\Kzero$-class of the C*-algebra $\mathrm{C}_0(X\setminus \bigcup_{i=1}^k Y_i)$ and hence induces a $\Kzero$-class of the C*-algebra $\mathrm{C}_0(X\setminus \bigcup_{i=1}^k Y_i)\rtimes_\sigma\Int$ by the embedding of $\mathrm{C}_0(X\setminus \bigcup_{i=1}^k Y_i)$.

\begin{thm}\label{index}
Consider the minimal components 
$Y_i$, $i =1,..., k,$ and choose pairwise disjoint  clopen sets $U_i\supseteq Y_i$.  
Define $$d_i:=[\chi_{U_i}-\chi_{U_i}\circ\sigma^{-1}]_0\in
\Kzero(\mathrm{C}_0(X\setminus\bigcup_iY_i)\rtimes_\sigma\Int).$$ (Note that each integer-valued continuous function $\chi_{U_i}-\chi_{U_i}\circ\sigma^{-1}$ vanishes on $\bigcup_{i=1}^k Y_i$.)
Then $d_i$ is independent of the choice of $U_i$. Moreover, $d_1=0$ if 
$k=1$. If $k\geq 2$, then $$\sum_{i=1}^k d_i=0,$$
 and the sum of any proper subset of $\{d_1, d_2, ..., d_k\}$ is nonzero.
\end{thm}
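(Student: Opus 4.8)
The plan is to show that each $d_i$ is exactly the image under the index map $\mathrm{Ind}$ of \eqref{eq 6 term crss prop} of the generator $e_i=[u_i]_1$ of the $i$-th summand $\Kone(\mathrm{C}(Y_i)\rtimes_\sigma\Int)\cong\Int$; once this is done, all four assertions follow formally from the description of $\mathrm{Ind}$ obtained in Theorem \ref{index0}. Throughout I will use the elementary \emph{coboundary principle}: if $g$ is an integer-valued continuous function on $X$ vanishing on $\bigcup_iY_i$, so that $g$ and $g\circ\sigma^{-1}$ both lie in $I:=\mathrm{C}_0(X\setminus\bigcup_iY_i)\rtimes_\sigma\Int$, then $[g-g\circ\sigma^{-1}]_0=0$ in $\Kzero(I)$, because conjugation by the multiplier unitary $u$ implements a Murray--von Neumann equivalence between $g$ and $u^*gu=g\circ\sigma^{-1}$, whence $[g]_0=[g\circ\sigma^{-1}]_0$.

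The heart of the argument is the computation of $\mathrm{Ind}(e_i)$. I represent $e_i$ by the quotient unitary $(1,\dots,u_i,\dots,1)$ and lift it to a partial isometry in $\mathrm{C}(X)\rtimes_\sigma\Int$ (writing $\pi$ for the quotient map in \eqref{exact}). The naive lift $\chi_{U_i}u+(1-\chi_{U_i})$ is unsuitable, as it has norm $\sqrt2$ and is not a partial isometry, so instead I set
$$s_i=\chi_{U_i}u+\chi_{Q_i},\qquad Q_i:=X\setminus\big(U_i\cup\sigma(U_i)\big).$$
The set $Q_i$ is clopen, is disjoint from $Y_i$, and contains every $Y_j$ with $j\neq i$; here I use the invariance of the $Y_j$ to see that $\sigma(U_i)$, like $U_i$, misses $Y_j$. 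The purpose of this choice is that the range and source projections of $\chi_{U_i}u$, namely $\chi_{U_i}$ and $\chi_{\sigma(U_i)}$, are both orthogonal to $\chi_{Q_i}$, so that $s_i$ is a genuine partial isometry with $\pi(s_i)=(1,\dots,u_i,\dots,1)$ and with defect projections $1-s_i^*s_i=\chi_{U_i\setminus\sigma(U_i)}$ and $1-s_is_i^*=\chi_{\sigma(U_i)\setminus U_i}$, both lying in $I$. The standard formula for the index map then gives
$$\mathrm{Ind}(e_i)=[\chi_{U_i\setminus\sigma(U_i)}]_0-[\chi_{\sigma(U_i)\setminus U_i}]_0=[\chi_{U_i}-\chi_{U_i}\circ\sigma^{-1}]_0=d_i$$
(with the usual orientation convention for the index map; the overall sign is immaterial for the conclusions).

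With the identification $d_i=\mathrm{Ind}(e_i)$ in hand, the four claims are immediate. Independence of the choice of $U_i$ holds because $\mathrm{Ind}(e_i)$ makes no reference to $U_i$; alternatively, for two choices $U_i,U_i'$ the difference is $[g-g\circ\sigma^{-1}]_0$ with $g=\chi_{U_i}-\chi_{U_i'}\in I$, which vanishes by the coboundary principle. If $k=1$, Theorem \ref{index0} says the image of $\mathrm{Ind}$ is $\Int^{0}=0$, so $d_1=\mathrm{Ind}(e_1)=0$. For $k\geq2$, exactness of \eqref{eq 6 term crss prop} together with Theorem \ref{index0} identifies $\ker\mathrm{Ind}$ with $\Int(1,1,\dots,1)$, the image of the generator of $\Kone(\mathrm{C}(X)\rtimes_\sigma\Int)$; hence $\sum_{i=1}^k d_i=\mathrm{Ind}(1,\dots,1)=0$, while for a proper nonempty $S\subsetneq\{1,\dots,k\}$ one has $\sum_{i\in S}d_i=\mathrm{Ind}(\mathbf 1_S)\neq0$, since the indicator vector $\mathbf 1_S$ is not an integer multiple of $(1,\dots,1)$. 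The relation $\sum_i d_i=0$ can also be seen directly: with $U=\bigsqcup_iU_i\supseteq\bigcup_iY_i$ one has $\sum_id_i=[\chi_U-\chi_U\circ\sigma^{-1}]_0$, and writing $\chi_U=1-\chi_{X\setminus U}$ with $\chi_{X\setminus U}\in I$ reduces this again to a coboundary.

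The main obstacle is the index computation in the second paragraph: one must produce a partial isometry lift whose two defect projections \emph{both} lie in the ideal $I$. The delicate point is the choice of $Q_i$ and the verification that the ranges and sources of the two summands of $s_i$ are orthogonal, which is what forces $s_i$ to be a partial isometry rather than a mere contraction; this is precisely where the hypotheses are used, namely that the $U_i$ are pairwise disjoint, that each $U_i\supseteq Y_i$, and that each $Y_j$ is invariant (so that $\sigma(U_i)$ remains disjoint from $Y_j$ for $j\neq i$).
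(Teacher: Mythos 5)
Your proposal is correct and follows essentially the same route as the paper: lift the canonical unitary of $\mathrm{C}(Y_i)\rtimes_\sigma\Int$ to an explicit partial isometry in $\mathrm{C}(X)\rtimes_\sigma\Int$, read off $\mathrm{Ind}(e_i)=d_i$ from the two defect projections, and then deduce all four claims from the six-term sequence together with Theorem \ref{index0}. The only difference is the complementary projection in the lift --- the paper uses $v_i=\chi_{U_i}u+\sum_{j\neq i}\chi_{U_j}$, whereas you use $\chi_{Q_i}$ with $Q_i=X\setminus(U_i\cup\sigma(U_i))$ --- and your choice is in fact the more careful one, since the paper's $v_i$ is a genuine partial isometry only when $\sigma(U_i)\cap U_j=\O$ for $j\neq i$ (arrangeable by shrinking the $U_j$, but not part of the stated hypotheses), while your $Q_i$ makes the range and source orthogonality automatic.
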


\begin{proof}
Consider $$v_i:=\chi_{U_i}u+ \sum_{j\neq i}\chi_{U_j}\in \mathrm{C}(X)
\rtimes_\sigma\Int,$$ where $u$ is the canonical unitary of $\mathrm{C}(X)
\rtimes_\sigma\Int$. Then, it is a lifting 
of the canonical unitary of $\mathrm{C}(Y_i)\rtimes_\sigma\Int$. Since 
$v_iv_i^*=\sum_{j=1}^k\chi_{U_j}$ (so $v_i$ is a partial isometry) and 
$$v_i^*v_i=u^*\chi_{U_i}u+\sum_{j\neq i}\chi_{U_j}=\chi_u\circ
\sigma^{-1}+\sum_{j\neq i}\chi_{U_j},$$ 
the index of the canonical unitary of $\mathrm{C}(Y_i)\rtimes_\sigma\Int$ is 
given by 
\begin{eqnarray*}
[1-v_i^*v_i]_0-[1-v_iv_i^*]_0&=&[1-(\chi_{U_i}\circ\sigma^{-1}+\sum_{j\neq i}
\chi_{U_j})]_0-[1-(\sum_{j=1}^k\chi_{U_j})]_0\\
&=&[\chi_{U_i}-\chi_{U_i}\circ\sigma^{-1}]_0\\
&=& d_i.
\end{eqnarray*}
Therefore, $d_i$ is the image of the $\Kone$-class of the canonical unitary of $\mathrm{C}(Y_i)\rtimes_\sigma\Int$ under the index map. Then, the theorem follows from the six-term sequence 
\eqref{eq 6 term crss prop} and \eqref{eq-ind}.
\end{proof}

\begin{cor}\label{AT}
The C*-algebra $\mathrm{C}(X)\rtimes_\sigma\Int$ is an $\mathrm{A}
\mathbb T$-algebra (i.e., it is the inductive limit of $F\otimes\mathrm{C}
(\mathbb T)$, where $F$ is a finite dimensional C*-algebra and $\mathbb T$ 
is the unit circle) if and only if $k=1$.
\end{cor}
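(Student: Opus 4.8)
The plan is to prove both implications of the equivalence, with the genuinely easy direction being ``$k=1\Rightarrow\mathrm{A}\mathbb{T}$'' and the direction that I would power with the freshly established index computation being the contrapositive ``$k\ge 2\Rightarrow$ not $\mathrm{A}\mathbb{T}$.'' The guiding idea is that $\mathrm{A}\mathbb{T}$-algebras have stable rank one, that stable rank one forces every index map to vanish, and that Theorem \ref{index} (equivalently Theorem \ref{index0}) already exhibits a nonzero index map once $k\ge 2$; these three facts collide to give the obstruction.

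For the forward (easy) implication, suppose $k=1$. Then $(X,\sigma)$ has a single minimal component $Y_1$ and \eqref{exact} reads $0\to\mathrm{C}_0(X\setminus Y_1)\rtimes_\sigma\Int\to\mathrm{C}(X)\rtimes_\sigma\Int\to\mathrm{C}(Y_1)\rtimes_\sigma\Int\to 0$, whose ideal is AF and whose quotient is the crossed product of the minimal Cantor system $(Y_1,\sigma)$, hence an $\mathrm{A}\mathbb{T}$-algebra by \cite{Put-PJM}. Consistently, Theorem \ref{index0} gives $\mathrm{Image}(\mathrm{Ind})\cong\Int^{k-1}=0$, so the index map vanishes in this case. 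To conclude that the total algebra is $\mathrm{A}\mathbb{T}$ I would appeal to the one-minimal-component analysis (cf.\ Section \ref{stable-alg} and the extension of \cite{Put-PJM}), where the Bratteli--Vershik--Kakutani model is used to realize $\mathrm{C}(X)\rtimes_\sigma\Int$ directly as an inductive limit of circle algebras; I stress that here the underlying diagram need not be simple, since $(X,\sigma)$ need not be minimal, so this step is a real input rather than a citation to the purely minimal theory.

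For the main implication I argue contrapositively: assuming $k\ge 2$, I show $\mathrm{C}(X)\rtimes_\sigma\Int$ is not $\mathrm{A}\mathbb{T}$. First, any $\mathrm{A}\mathbb{T}$-algebra has stable rank one, because each building block $F\otimes\mathrm{C}(\mathbb{T})$ is a finite direct sum of matrix algebras over $\mathrm{C}(\mathbb{T})$, all of stable rank one, and stable rank does not increase under inductive limits. Next I invoke the lifting lemma that stable rank one annihilates index maps: if $A$ has stable rank one and $I\triangleleft A$ is a closed ideal, then the index map $\Kone(A/I)\to\Kzero(I)$ is zero. Indeed, density of invertibles in each amplification $\mathrm{M}_n(\wt A)$ lets one lift any unitary of $\mathrm{M}_n(\wt{A/I})$, after a small (hence $\Kone$-preserving) perturbation, to an invertible $b$, and then the polar part $w=b\abs{b}^{-1}$ is a unitary of $\mathrm{M}_n(\wt A)$ with $[\pi(w)]$ equal to the given $\Kone$-class; thus $\Kone(A)\to\Kone(A/I)$ is onto, so by exactness of the six-term sequence the connecting index map is zero. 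Applying this to \eqref{exact} would force the index map of \eqref{eq 6 term crss prop} to vanish, contradicting Theorem \ref{index}, which shows it is nonzero for $k\ge 2$. Hence $\mathrm{C}(X)\rtimes_\sigma\Int$ cannot be $\mathrm{A}\mathbb{T}$, completing the equivalence.

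I expect the main obstacle to be the $k=1$ direction rather than the $k\ge 2$ direction. The stable-rank-one argument is clean and essentially self-contained once the lifting lemma is in hand, and it leverages Theorem \ref{index} directly. By contrast, establishing that a single-minimal-component but \emph{non-minimal} system still yields an $\mathrm{A}\mathbb{T}$-algebra requires genuine use of the Bratteli--Vershik--Kakutani machinery developed later in the paper, and this is where I would need to be most careful to ensure that Putnam's circle-algebra approximation survives the loss of minimality.
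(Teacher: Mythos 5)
Your contrapositive direction ($k\ge 2\Rightarrow$ not A$\mathbb T$) is correct: A$\mathbb T$-algebras have stable rank one, stable rank one makes $\Kone(A)\to\Kone(A/I)$ surjective and hence kills the index map of \eqref{exact}, and Theorem \ref{index0} shows that map is nonzero when $k\ge 2$. This is a legitimate alternative to what the paper does for this half (the paper invokes the ``only if'' part of \cite[Theorem 5]{LM} directly; your stable-rank route is essentially \cite[Proposition 4(ii)]{LM}, which the paper itself uses in Corollary \ref{2-0-fin} to pin down the stable rank), and it costs you nothing beyond the standard lifting lemma you sketch.

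The genuine gap is in the $k=1$ direction, and it is exactly where you flag your own uncertainty. You correctly assemble the three relevant facts --- the ideal $\mathrm{C}_0(X\setminus Y_1)\rtimes_\sigma\Int$ is AF (hence A$\mathbb T$), the quotient $\mathrm{C}(Y_1)\rtimes_\sigma\Int$ is A$\mathbb T$ by \cite{Put-PJM}, and the index map vanishes by Theorem \ref{index0} --- but you then abandon them in favor of an unexecuted plan to rebuild Putnam's inductive-limit construction for the non-minimal total algebra via the Bratteli--Vershik--Kakutani model. That plan is never carried out, and your worry about whether ``Putnam's circle-algebra approximation survives the loss of minimality'' signals that you do not have a proof of this half. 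The missing ingredient is the Lin--R{\o}rdam extension theorem, \cite[Theorem 5]{LM}: an extension of A$\mathbb T$-algebras is again A$\mathbb T$ if and only if the index map is zero. With that one citation, the three facts you already listed finish the $k=1$ direction immediately (and, for that matter, give the paper's one-line proof of \emph{both} directions at once); no direct circle-algebra approximation of the non-minimal system is needed. This is precisely how the paper argues, both in Corollary \ref{AT} and in the structure theorem at the start of Section \ref{stable-alg}.
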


\begin{proof}
By  \cite[Theorem 5]{LM}, an extension of A$\mathbb T$ algebras is A$
\mathbb T$ if and only if the index map is zero, and by Theorem \ref{index0}, 
this holds if and only if $k=1$.
\end{proof}

\begin{rem}
For the case $k\geq 2$, we will show later (see Corollary \ref{2-0-fin}) that 
the C*-algebra $\mathrm{C}(X)\rtimes_\sigma\Int$ is stably finite, has stable 
rank $2$, and has real rank zero if the Cantor system is aperiodic.
\end{rem}

\section{The K-theory of $A_Y$}\label{sect A_Y}
We study a general Cantor system $(X, \sigma)$ in this section.
Let $Y\subseteq X$ be a closed 
subset of $X$ which  satisfies the property: for any open subset $U$
containing $Y$, one has 
$$\bigcup_{n\in\Int} \sigma^n(U)=X.
$$ 
Such sets are called \textit{basic} in \cite{Medynets2006}.
 In particular, in the case that $X$ has $k$ minimal subsets $Y_1, ..., Y_k$, 
 the set $Y$ can be any closed subset with $Y\cap Y_i\neq\O$, $i=1, ..., k$. 
 By  \cite[Theorem 2.3]{Poon-Rocky}, $A_Y$ is an AF-algebra.  We will 
 calculate the $\Kzero$-group of $A_Y$ in this section, and show that 
  it can be identified with a certain ordered group related to the 
  dynamical system $(X, \sigma)$.

First, let us recall the following result. 

\begin{prop}[Proposition 3.3 of \cite{Poon-PAMS-89}, Theorem 4.1 of \cite{Put-PJM}]
For $Y$ chosen as above, there is an exact sequence
\begin{equation}\label{k0-exact-01}
\xymatrix{
0\ar[r]&\mathrm{C}^{\sigma}(X, \Int)\ar[r]^-\alpha&\mathrm{C}(Y, \Int)
\ar[r]^-\beta&\Kzero(A_Y)\ar[r]^-{\iota_*}&\Kzero(\mathrm{C}(X)\rtimes_
\sigma\Int)\ar[r]&0},
\end{equation}
where $\mathrm{C}^{\sigma}(X, \Int)$ is the group of $\sigma$-invariant 
integer-valued continuous functions on $X$, $\alpha$ is the restriction map, 
and $$\beta(f)=[g-g\circ\sigma^{-1}]$$ for some $g\in\mathrm{C}(X, \Int)
$ with $g|_Y=f$. In particular, if $(X, \sigma)$ is indecomposable, i.e., any $
\sigma$-invariant clopen subset is trivial, then $\mathrm{C}^{\sigma}(X, 
\Int)\cong\Int$, and \eqref{k0-exact-01} is transformed to
\begin{equation}\label{k0-exact-02}
\xymatrix{
0\ar[r]&\Int\ar[r]^-\alpha&\mathrm{C}(Y, \Int)\ar[r]^-\beta&\Kzero(A_Y)
\ar[r]^-{\iota_*}&\Kzero(\mathrm{C}(X)\rtimes_\sigma\Int)\ar[r]&0}.
\end{equation}
\end{prop}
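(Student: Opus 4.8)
The plan is to prove the exact sequence by identifying both $\Kzero$-groups with explicit quotients of $\mathrm{C}(X,\Int)$ and then extracting the four terms by a short diagram chase. Write $\delta\colon\mathrm{C}(X,\Int)\to\mathrm{C}(X,\Int)$ for $\delta(w)=w-w\circ\sigma^{-1}$, and set $J_Y=\{w\in\mathrm{C}(X,\Int):w|_Y=0\}$. Since $X$ is a Cantor set, $\Kzero(\mathrm{C}(X))=\mathrm{C}(X,\Int)$ with $[\chi_U]_0$ corresponding to $\chi_U$, and restriction to the closed set $Y$ is a surjection $\mathrm{C}(X,\Int)\to\mathrm{C}(Y,\Int)$ with kernel $J_Y$ (every integer-valued continuous function on $Y$ extends to $X$, by zero-dimensionality). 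First I would record the two target groups: the Pimsner--Voiculescu computation \eqref{equ_K0}, together with the fact that $\sigma$ acts on $\Kzero(\mathrm{C}(X))$ by $w\mapsto w\circ\sigma^{-1}$, gives $\Kzero(\mathrm{C}(X)\rtimes_\sigma\Int)=\mathrm{C}(X,\Int)/\delta(\mathrm{C}(X,\Int))$.

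The heart of the argument is the parallel identification
\[
\Kzero(A_Y)\;\cong\;\mathrm{C}(X,\Int)/\delta(J_Y),
\]
compatibly with the inclusion $\iota\colon A_Y\hookrightarrow\mathrm{C}(X)\rtimes_\sigma\Int$, which then induces the canonical quotient map $\mathrm{C}(X,\Int)/\delta(J_Y)\to\mathrm{C}(X,\Int)/\delta(\mathrm{C}(X,\Int))$. To establish it I would use the AF structure of $A_Y$: realizing $A_Y$ as the limit of the finite-dimensional algebras coming from a refining sequence of Kakutani--Rokhlin partitions based at $Y$ (as in \cite[Lemma 2.3]{Poon-Rocky} and \cite[Theorem 4.1]{Put-PJM}), the minimal projections of each finite stage are characteristic functions $\chi_U$ of clopen floors, so $\mathrm{C}(X)\hookrightarrow A_Y$ induces a surjection $\mathrm{C}(X,\Int)\twoheadrightarrow\Kzero(A_Y)$. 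Whenever $U$ is clopen with $U\cap Y=\emptyset$, the element $\chi_U u$ lies in $A_Y$ and is a partial isometry with $\chi_U u(\chi_U u)^*=\chi_U$ and $(\chi_U u)^*\chi_U u=\chi_U\circ\sigma^{-1}$, whence $[\chi_U]_0=[\chi_U\circ\sigma^{-1}]_0$ in $\Kzero(A_Y)$; summing over a clopen level-set decomposition of an arbitrary $w\in J_Y$ (whose support is clopen and disjoint from $Y$) shows that $\delta(J_Y)$ lies in the kernel of the surjection. The hard part will be the reverse inclusion---that these $\sigma$-translation relations, with base disjoint from $Y$, generate \emph{all} relations in $\Kzero(A_Y)$---which is exactly the dimension-group computation of the direct limit, where one matches Murray--von Neumann equivalence at each finite stage against the connecting maps of the Bratteli diagram.

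With these two identifications in hand, the maps of the sequence become transparent. The map $\iota_*$ is the canonical quotient map, hence surjective, giving exactness at $\Kzero(\mathrm{C}(X)\rtimes_\sigma\Int)$. The map $\beta$ sends $f$ to $[\delta(g)]$ for any extension $g\in\mathrm{C}(X,\Int)$ of $f$; this is well defined and independent of the extension because two extensions differ by an element of $J_Y$, whose $\delta$-image vanishes in $\Kzero(A_Y)$. As $f$ ranges over $\mathrm{C}(Y,\Int)$ the extension $g$ ranges over all of $\mathrm{C}(X,\Int)$, so $\mathrm{image}(\beta)=\delta(\mathrm{C}(X,\Int))/\delta(J_Y)=\ker\iota_*$, giving exactness at $\Kzero(A_Y)$. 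Finally $\alpha$ is restriction, and it is injective: if $\phi\in\mathrm{C}^\sigma(X,\Int)$ vanishes on $Y$, then $\{\phi=0\}$ is a clopen invariant set containing $Y$, so the basic property of $Y$ forces $\{\phi=0\}=X$, i.e. $\phi=0$.

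It remains to check exactness at $\mathrm{C}(Y,\Int)$, a direct chase. For $\phi\in\mathrm{C}^\sigma(X,\Int)$, the function $\phi$ itself extends $\alpha(\phi)$ and $\delta(\phi)=0$, so $\beta\circ\alpha=0$. Conversely, $\beta(f)=0$ means $\delta(g)=\delta(w)$ for some $w\in J_Y$ and an extension $g$ of $f$; then $g-w$ is $\sigma$-invariant and restricts on $Y$ to $f$, so $f=\alpha(g-w)\in\mathrm{image}(\alpha)$. Thus $\ker\beta=\mathrm{image}(\alpha)$, completing the first exact sequence. The ``in particular'' assertion is then immediate: when $(X,\sigma)$ is indecomposable, a $\sigma$-invariant $\phi\in\mathrm{C}(X,\Int)$ has clopen invariant level sets, each of which must be $\emptyset$ or $X$, so $\phi$ is constant and $\mathrm{C}^\sigma(X,\Int)=\Int$; substituting this into the first sequence yields \eqref{k0-exact-02}.
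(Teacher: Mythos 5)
First, a point of reference: the paper does not prove this proposition at all --- it is imported verbatim from Proposition 3.3 of \cite{Poon-PAMS-89} and Theorem 4.1 of \cite{Put-PJM} --- so there is no internal proof to compare against. Your reconstruction is structurally sensible, and most of its pieces are sound: the identification $\Kzero(\mathrm{C}(X)\rtimes_\sigma\Int)=\mathrm{C}(X,\Int)/\delta(\mathrm{C}(X,\Int))$ via Pimsner--Voiculescu, the surjectivity of $\mathrm{C}(X,\Int)\to\Kzero(A_Y)$ coming from the AF structure, the inclusion $\delta(J_Y)\subseteq\ker$ via the partial isometries $\chi_U u$ with $U\cap Y=\O$, the injectivity of $\alpha$ from the basic-set property of $Y$, and the concluding diagram chase together with the ``in particular'' clause are all correct.

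The genuine gap is the step you yourself flag as ``the hard part'': that the kernel of $\mathrm{C}(X,\Int)\twoheadrightarrow\Kzero(A_Y)$ is \emph{exactly} $\delta(J_Y)$ and not larger. This cannot be deferred, because it is the entire content of the proposition: exactness at $\Kzero(A_Y)$ (you need $\ker\iota_*\subseteq\mathrm{image}(\beta)$) and exactness at $\mathrm{C}(Y,\Int)$ (your argument that $\beta(f)=0$ forces $\delta(g)=\delta(w)$ with $w\in J_Y$) both collapse without it. Carrying it out requires choosing Kakutani--Rokhlin partitions whose bases shrink to $Y$, showing that two integer combinations of floors occupying each tower with equal multiplicity at stage $n$ differ by $\delta$ of a function supported off the base, and then controlling what happens as the bases converge to $Y$; this is precisely the argument of \cite[Theorem 4.1]{Put-PJM}, and one sentence gesturing at ``the dimension-group computation'' does not reproduce it. Note also that the paper runs the logic in the opposite direction: it takes the exact sequence as given and \emph{deduces} the isomorphism $\Kzero(A_Y)\cong\mathrm{C}(X,\Int)/\delta(J_Y)$ (Theorem \ref{k0-iso} and the lemma preceding it, via the Five Lemma). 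So a completed version of your kernel computation would subsume Theorem \ref{k0-iso}; as written, the proposal assumes the one fact that makes the proposition nontrivial.
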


Let us define an ordered group from the dynamical system setting.

\begin{defn}
Let $(X, \sigma)$ be a Cantor system, and let $Y\subseteq X$ be a closed 
subset. Define 
$$\mathrm{K}^0_Y(X, \sigma)=\mathrm{C}(X, \Int)/\{f-f\circ\sigma^{-1};\ 
f\in\mathrm{C}(X, \Int), f|_Y=0\},$$ and set
$$\mathrm{K}^0(X, \sigma)^+=\{\bar{f};\ f\geq 0, f\in\mathrm{C}(X, \Int)
\}\subseteq \mathrm{K}^0_Y(X, \sigma).$$
\end{defn}

Note that $\mathrm{C}(X, \Int)\cong\Kzero(\mathrm{C}(X))$ as ordered 
groups (with positive cone of $\mathrm{C}(X, \Int)$ consists of positive 
functions). Then, the embedding $\mathrm{C}(X)\subseteq A_Y$ induces a 
map $\theta: \mathrm{C}(X, \Int)\to\Kzero(A_Y)$. Moreover, a direct 
calculation shows that if $f\in \mathrm{C}(X, \Int)$ with $f|_Y=0$, then $$
\theta(f-f\circ\sigma^{-1})=0.$$ Therefore, $\theta$ induces a map from $
\mathrm{K}^0_Y(X, \sigma)$ to $\Kzero(A_Y)$ which sending $\mathrm{K}
^0(X, \sigma)^+$ to the positive cone of $\Kzero(A_Y)$.

By \eqref{equ_K0}, one has 
\begin{equation*}
\xymatrix{
0\ar[r] & \{f-f\circ\sigma^{-1};\ f\in\mathrm{C}(X, \Int)\} \ar[r] & 
\mathrm{C}(X, \Int)\ar[r]^-{\iota_*} & \Kzero(\mathrm{C}(X)\rtimes_
\sigma\Int)\ar[r] & 0
},
\end{equation*}
and therefore, there is an exact sequence
\begin{equation}\label{k0-exact-03}
\xymatrix{
0\ar[r] & H \ar[r] & \mathrm{K}_Y^0(X, \sigma)\ar[r]^-{\iota_*} & 
\Kzero(\mathrm{C}(X)\rtimes_\sigma\Int)\ar[r] & 0
},
\end{equation}
where $$H:=\frac{\{f-f\circ\sigma^{-1};\ f\in\mathrm{C}(X, \Int)\}}{\{f-f
\circ\sigma^{-1};\ f\in\mathrm{C}(X, \Int), f|_Y=0\}}.$$

For any $f\in\mathrm{C}(X, \Int)$, define $\eta(f-f\circ\sigma^{-1})$ to be 
the restriction of $f$ to $Y$. This induces an isomorphism from $H$ to $
\mathrm{C}(Y, \Int)/\alpha(\mathrm{C}^\sigma(X, \Int))$, which is still 
denoted by $\eta$. 

Indeed, if $$f-f\circ\sigma^{-1}=g-g\circ\sigma^{-1},$$ then  $$f-
g=(f-g)\sigma^{-1}.$$ That is, $f-g\in\mathrm{C}^\sigma(X, \Int)$, and 
hence $\theta$ is well defined. It is also clear that $\eta$ is a bijection, and 
thus an isomorphism.

\begin{lem}
With notation as above, the following  diagram commutes:
\begin{displaymath}
\xymatrix{
0\ar[r] & H \ar[r] \ar[d]_\eta & \mathrm{K}_Y^0(X, \sigma)\ar[r]^-
{\iota_*} \ar[d]_\theta & \Kzero(\mathrm{C}(X)\rtimes_\sigma\Int)\ar[r] 
\ar@{=}[d] & 0\\
0\ar[r] & \mathrm{C}(Y, \Int)/\alpha(\mathrm{C}^{\sigma}(X, \Int))\ar[r]^-
\beta &\Kzero(A_Y)\ar[r]^-{\iota_*}&\Kzero(\mathrm{C}(X)\rtimes_\sigma
\Int)\ar[r] &0
}.
\end{displaymath}
In particular, the map $\theta$ is an isomorphism.
\end{lem}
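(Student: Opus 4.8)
The plan is to show the two squares of the diagram commute and then invoke the short five lemma. Both rows are short exact: the top row is \eqref{k0-exact-03}, and the bottom row is \eqref{k0-exact-02} from the Poon--Putnam proposition. The right-hand vertical arrow is the identity, and, as just observed, the left-hand arrow $\eta$ is an isomorphism. Hence, once commutativity is in hand, the short five lemma forces the middle arrow $\theta$ to be an isomorphism, which is exactly the asserted conclusion. So the whole content reduces to tracing through the definitions of $\eta$, $\beta$, $\theta$, and the two copies of $\iota_*$.

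For the right square I would argue purely by functoriality of $\Kzero$. An element of $\mathrm{K}_Y^0(X,\sigma)$ is represented by some $f\in\mathrm{C}(X,\Int)$; under the top map $\iota_*$ it goes, via \eqref{equ_K0}, to the class of $f$ in $\Kzero(\mathrm{C}(X)\rtimes_\sigma\Int)$ coming from the inclusion $\mathrm{C}(X)\subseteq\mathrm{C}(X)\rtimes_\sigma\Int$. On the other route, $\theta$ sends this class to the image of $f$ in $\Kzero(A_Y)$ under $\mathrm{C}(X)\subseteq A_Y$, and the bottom $\iota_*$ pushes it forward along $A_Y\subseteq\mathrm{C}(X)\rtimes_\sigma\Int$. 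Since the composite $\mathrm{C}(X)\subseteq A_Y\subseteq\mathrm{C}(X)\rtimes_\sigma\Int$ is just the inclusion $\mathrm{C}(X)\subseteq\mathrm{C}(X)\rtimes_\sigma\Int$, the two images agree, and the right square commutes.

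For the left square I would take a generator $\overline{f-f\circ\sigma^{-1}}\in H$ with $f\in\mathrm{C}(X,\Int)$. Going down first, $\eta$ sends it to the class of $f|_Y$ in $\mathrm{C}(Y,\Int)/\alpha(\mathrm{C}^\sigma(X,\Int))$, and then $\beta$ sends $\overline{f|_Y}$ to the $\Kzero(A_Y)$-class of $g-g\circ\sigma^{-1}$ for any lift $g\in\mathrm{C}(X,\Int)$ with $g|_Y=f|_Y$; choosing the lift $g=f$ yields $\theta(f-f\circ\sigma^{-1})$. Going right first, the inclusion $H\hookrightarrow\mathrm{K}_Y^0(X,\sigma)$ carries $\overline{f-f\circ\sigma^{-1}}$ to the class of $f-f\circ\sigma^{-1}$, which $\theta$ again sends to $\theta(f-f\circ\sigma^{-1})$. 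The two outputs coincide, so the left square commutes.

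The one genuinely delicate point is the correct reading of the Poon--Putnam map $\beta$: its value $[g-g\circ\sigma^{-1}]$ must be interpreted as the $\Kzero(A_Y)$-class of the integer-valued function $g-g\circ\sigma^{-1}$ obtained through the embedding $\mathrm{C}(X)\subseteq A_Y$, i.e.\ as $\theta(g-g\circ\sigma^{-1})$, and this must be independent of the chosen lift $g$. Both facts are already available: independence of the lift is built into the exactness of \eqref{k0-exact-02}, and the fact that $\theta$ kills $\{f-f\circ\sigma^{-1}:f|_Y=0\}$, established just before the lemma, is precisely what makes $\theta(g-g\circ\sigma^{-1})$ depend only on $g|_Y$. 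With both squares shown to commute and the outer verticals isomorphisms, the short five lemma finishes the proof.
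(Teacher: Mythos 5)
Your argument is correct and follows essentially the same route as the paper: the paper also checks commutativity of the left square by tracing a generator $\overline{f-f\circ\sigma^{-1}}$ through $\beta\circ\eta$ and $\theta$, notes the right square is immediate, and then invokes the Five Lemma using that $\eta$ is an isomorphism. Your extra remarks on functoriality for the right square and on the well-definedness of $\beta$ are accurate elaborations of what the paper leaves implicit.
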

\begin{proof}
To prove this, one only has to verify the commutativity for the first
 square. Pick any $\overline{f-f\circ\sigma^{-1}}\in H$. Then we have
$$\beta(\eta(\overline{f-f\circ\sigma^{-1}}))=\beta(\overline{f|_Y})=[f-f
\circ\sigma^{-1}],$$ and
$$\theta(\overline{f-f\circ\sigma^{-1}})=[f-f\circ\sigma^{-1}],$$ as desired.
Since the map $\eta$ is an isomorphism, by the Five-Lemma, the map 
$\theta$ is also an isomorphism.
\end{proof}

Moreover, the map $\theta$ is in fact an order isomorphism:
\begin{thm}\label{k0-iso}
The map $\theta$ induces an order isomorphism 
$$(\mathrm{K}^0_Y(X, \sigma), 
\mathrm{K}^0_Y(X, \sigma)^+, \tilde{1})\cong(\mathrm{K}_0(A_Y), 
\Kzero^+(A_Y), [1_{A_Y}]).$$
\end{thm}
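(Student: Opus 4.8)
The plan is to build on the preceding lemma, which already shows (via the commutative diagram and the Five-Lemma) that $\theta$ is a group isomorphism. Thus the entire remaining content of the theorem is the \emph{order} statement: that $\theta$ matches the distinguished order units and carries the positive cone $\mathrm{K}^0_Y(X,\sigma)^+$ exactly onto $\Kzero^+(A_Y)$. Since $\theta$ is induced by the unital inclusion $\mathrm{C}(X)\hookrightarrow A_Y$, the order unit $\tilde 1=\ol{\chi_X}$ (the class of the constant function $1$) is sent to $[\chi_X]_0=[1_{A_Y}]$, so the order units match immediately, and only the equality of positive cones requires work.

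One inclusion is formal. By definition $\mathrm{K}^0_Y(X,\sigma)^+$ consists of the classes $\ol f$ admitting a representative $f\in\mathrm{C}(X,\Int)$ with $f\geq 0$. Under the identification $\mathrm{C}(X,\Int)\cong\Kzero(\mathrm{C}(X))$, such an $f$ is the class of an honest projection in some $M_n(\mathrm{C}(X))$ (of ``pointwise rank'' $f$), and the inclusion $\mathrm{C}(X)\hookrightarrow A_Y$ sends projections to projections. Hence $\theta(\ol f)=[f]_0\in\Kzero^+(A_Y)$, giving $\theta(\mathrm{K}^0_Y(X,\sigma)^+)\subseteq\Kzero^+(A_Y)$.

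For the reverse inclusion $\Kzero^+(A_Y)\subseteq\theta(\mathrm{K}^0_Y(X,\sigma)^+)$, I would exploit the tower description of $A_Y$ underlying its AF-ness (\cite[Lemma 2.3]{Poon-Rocky}, \cite[Theorem 4.1]{Put-PJM}): one writes $A_Y=\ol{\bigcup_m C_m}$, where each $C_m$ is a direct sum of matrix algebras $M_{h_i}(\mathrm{C}(B_i))$ built from the Kakutani--Rokhlin towers of a clopen neighbourhood of $Y$, the index $i$ running over the towers, $B_i$ the clopen base and $h_i$ the height. In this realization the diagonal matrix units are precisely the characteristic functions $\chi_{\sigma^j(V)}$ of clopen tower levels, which lie in $\mathrm{C}(X)$. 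Since the positive cone $\Kzero^+(C_m)\cong\bigoplus_i\mathrm{C}(B_i,\Int)^+$ is generated as a monoid by the classes of such characteristic functions, every element of $\Kzero^+(C_m)$ has the form $\theta(\ol h)$ with $h\in\mathrm{C}(X,\Int)$, $h\geq 0$. Passing to the limit and using that the positive cone of the $\Kzero$-group of an inductive limit is the union of the images of the positive cones of the building blocks, we conclude $\Kzero^+(A_Y)\subseteq\theta(\mathrm{K}^0_Y(X,\sigma)^+)$.

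Combining the two inclusions yields $\theta(\mathrm{K}^0_Y(X,\sigma)^+)=\Kzero^+(A_Y)$; together with the group isomorphism from the preceding lemma and the matching of order units, this shows that $\theta$ is an order isomorphism of order-unit groups. The main obstacle is the reverse inclusion, namely extracting from the tower/first-return description of $A_Y$ the concrete fact that each approximating subalgebra $C_m$ carries a diagonal of characteristic functions from $\mathrm{C}(X)$, so that $\Kzero^+(C_m)$ is generated by projections already living in $\mathrm{C}(X)$. Everything else is bookkeeping with the preceding lemma and the standard behaviour of positive cones under inductive limits.
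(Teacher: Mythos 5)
Your proposal is correct and follows essentially the same route as the paper: the paper's own proof simply notes that the inclusion $\theta(\mathrm{K}^0_Y(X,\sigma)^+)\subseteq\Kzero^+(A_Y)$ is clear and that, by the AF-decomposition of $A_Y$, every positive class in $\Kzero(A_Y)$ comes from a positive integer-valued function on $X$. Your write-up just makes explicit the tower/characteristic-function details that the paper leaves as ``clear.''
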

\begin{proof}
We need to show that $\theta(\mathrm{K}^0_Y(X, \sigma)^+)=\Kzero^+
(A_Y)$. 
It is clear that the image of $\mathrm{K}^0_Y(X, \sigma)^+$ is in $\Kzero^+
(A_Y)$. On the other hand, using the AF-decomposition of $A_Y$, it is clear 
that any positive element in $\Kzero(A_Y)$ comes from a positive integer 
function on $X$.
\end{proof}


Assume that $(X, \sigma)$ is indecomposable. Let $W_1\subseteq W_2$ be 
closed subsets of $X$. Then one has 
\begin{equation}\label{k0-exact-04}
\xymatrix{
0\ar[r] & K \ar[r] & \mathrm{K}_{W_2}^0(X, \sigma)\ar[r] & 
\mathrm{K}_{W_1}^0(X, \sigma)\ar[r] & 0
},
\end{equation}
where $$K=\frac{\{f-f\circ\sigma^{-1};\ f\in\mathrm{C}(X, \Int), f|_{W_1}
=0\}}{\{f-f\circ\sigma^{-1};\ f\in\mathrm{C}(X, \Int),  f|_{W_2}=0\}}.$$
For any $f\in\mathrm{C}(X, \Int)$, define $$\eta(f-f\circ\sigma^{-1})=
f|_{W_2}.$$ 
Note that this map is well defined.
One also defines $$\mathrm{C}(W_2, W_1, \Int):=\{f\in \mathrm{C}(W_2, 
\Int), f|_{W_1}=0\}.$$ Then, we can prove the following statement.
\begin{lem}
If $(X, \sigma)$ is indecomposable and $W_1\neq\O$, the map $\eta$ 
induces an isomorphism $$\eta: K\to \mathrm{C}(W_2, W_1, \Int).$$
\end{lem}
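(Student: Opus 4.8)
The plan is to check that $\eta$ descends to a well-defined homomorphism on $K$, and then to verify injectivity (which is essentially formal) and surjectivity (which carries the only real content). First I would note that $\eta$ lands in the claimed target: if $f\in\mathrm{C}(X,\Int)$ with $f|_{W_1}=0$, then since $W_1\subseteq W_2$ we have $(f|_{W_2})|_{W_1}=f|_{W_1}=0$, so $f|_{W_2}\in\mathrm{C}(W_2,W_1,\Int)$. Additivity is immediate from $(f+g)|_{W_2}=f|_{W_2}+g|_{W_2}$.

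For well-definedness on $K$, suppose $f,g\in\mathrm{C}(X,\Int)$ with $f|_{W_1}=g|_{W_1}=0$ represent the same class, i.e.\ $(f-f\circ\sigma^{-1})-(g-g\circ\sigma^{-1})=h-h\circ\sigma^{-1}$ for some $h\in\mathrm{C}(X,\Int)$ with $h|_{W_2}=0$. Then $f-g-h$ is $\sigma$-invariant, and since $(X,\sigma)$ is indecomposable we have $\mathrm{C}^{\sigma}(X,\Int)=\Int$, so $f-g-h$ is a constant $c$. Because $W_1\subseteq W_2$ forces $h|_{W_1}=0$, evaluating at any point of the nonempty set $W_1$ gives $c=0$; hence $f-g=h$ and $f|_{W_2}=g|_{W_2}$. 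This is precisely where the hypotheses that $(X,\sigma)$ is indecomposable and $W_1\neq\O$ enter.

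Injectivity is then formal: if $\eta(\overline{f-f\circ\sigma^{-1}})=f|_{W_2}=0$, then $f-f\circ\sigma^{-1}$ already lies in the subgroup $\{g-g\circ\sigma^{-1};\ g\in\mathrm{C}(X,\Int),\ g|_{W_2}=0\}$ defining the denominator of $K$, so its class is zero.

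The substantive step is surjectivity. Given $h\in\mathrm{C}(W_2,W_1,\Int)$, I would produce an extension $f\in\mathrm{C}(X,\Int)$ with $f|_{W_2}=h$; then $f|_{W_1}=h|_{W_1}=0$ automatically, and $\eta(\overline{f-f\circ\sigma^{-1}})=h$. Such an extension exists because $X$, being a Cantor set, is zero-dimensional: the image $h(W_2)$ is finite, each level set $h^{-1}(n)$ is a compact clopen subset of $W_2$, and in a compact zero-dimensional space disjoint compact sets are separated by clopen sets, so one can choose pairwise disjoint clopen sets $U_n\subseteq X$ with $U_n\cap W_2=h^{-1}(n)$; then $f:=\sum_n n\,\chi_{U_n}$ is a continuous integer-valued extension of $h$. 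I expect this extension property of $\mathrm{C}(\cdot,\Int)$ on closed subsets of a Cantor set to be the main (albeit standard) obstacle, while the rest of the argument is algebraic bookkeeping governed by indecomposability.
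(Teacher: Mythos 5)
Your proof is correct and follows essentially the same route as the paper: well-definedness comes from indecomposability forcing $f-g-h$ to be constant and then evaluating on the nonempty set $W_1$, and injectivity is read off directly from the definition of the denominator of $K$. The only difference is that you spell out the surjectivity step (extending an integer-valued continuous function from the closed set $W_2$ to $X$ via disjoint clopen sets), which the paper simply declares to be clear; your extension argument is the standard one and is valid.
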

\begin{proof}
Let us first check that the map $\eta$ is well defined. 
Indeed, if $$(f-f\circ\sigma^{-1})-(g-g\circ
\sigma^{-1})=h-h\circ\sigma^{-1}$$ for some $f, g, h\in\mathrm{C}(X, \Int)
$ with $f|_{W_1}=g|_{W_1}=0$ and $h|_{W_2}=0$, then $$(f-g-h)=(f-g-h)
\circ\sigma^{-1}.$$ Since $(X, \sigma)$ is indecomposable, there is no 
invariant clopen subset of $X$, and hence $f-g-h$ is a constant function. 
Since $W_1\neq\O$ and the restrictions of $f, g, h$ to $W_1$ are zero, one 
has that $$f=g+h.$$ The condition $h|_{W_2}=0$ implies that 
$$f|_{W_2}=g|_{W_2};
$$ that is, the map $\eta$ is well defined.

It is clear that $\eta$ is surjective. If $\eta(f-f\circ\sigma^{-1})=0$, then 
$f|_{W_2}=0$; that is, the map $\eta$ is also injective, and hence it is an 
isomorphism.
\end{proof}

Thus, the exact sequence \eqref{k0-exact-04} can be written as
\begin{equation}\label{k0-exact-05}
\xymatrix{
0\ar[r] &\mathrm{C}(W_2, W_1, \Int) \ar^-{\eta^{-1}}[r] & 
\mathrm{K}_{W_2}^0(X, \sigma)\ar[r] & \mathrm{K}_{W_1}^0(X, \sigma)
\ar[r] & 0
},
\end{equation}
and applying Theorem \ref{k0-iso}, we obtain the following statement.

\begin{thm}\label{exact-subsets}
For any non-empty closed subsets $W_1\subseteq W_2$ which are basic, one has 
\begin{equation}\label{k0-exact-06}
\xymatrix{
0\ar[r] &\mathrm{C}(W_2, W_1, \Int) \ar^-{\beta}[r] & \Kzero(A_{W_2})
\ar^-{\iota_*}[r] & \Kzero(A_{W_1})\ar[r] & 0
},
\end{equation}
where $\beta(f)=g-g\circ\sigma^{-1}$ for some $g\in\mathrm{C}(X, \Int)$ 
with $g|_{W_2}=f.$
\end{thm}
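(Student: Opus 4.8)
The plan is to obtain \eqref{k0-exact-06} by transporting the exact sequence \eqref{k0-exact-05} of dynamical groups along the order isomorphisms $\theta$ supplied by Theorem \ref{k0-iso}, applied once to $W_2$ and once to $W_1$; throughout, the standing assumption that $(X,\sigma)$ is indecomposable is in force, and the hypothesis that $W_1,W_2$ are nonempty and basic is exactly what licenses these two applications (so that $A_{W_2},A_{W_1}$ carry the structure of Theorem \ref{k0-iso}) as well as the isomorphism $\eta$ of the preceding lemma. Write $\theta_{W_2}\colon\mathrm{K}^0_{W_2}(X,\sigma)\to\Kzero(A_{W_2})$ and $\theta_{W_1}\colon\mathrm{K}^0_{W_1}(X,\sigma)\to\Kzero(A_{W_1})$ for these isomorphisms. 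The first structural point I would record is that, since $W_1\subseteq W_2$, the constraint $f|_{W_2}=0$ is stronger than $f|_{W_1}=0$, so the generating set of $A_{W_2}$ lies inside that of $A_{W_1}$; hence $A_{W_2}\subseteq A_{W_1}$ as sub-C*-algebras of $\mathrm{C}(X)\rtimes_\sigma\Int$, and it is this inclusion that induces the map $\iota_*\colon\Kzero(A_{W_2})\to\Kzero(A_{W_1})$ of \eqref{k0-exact-06}.

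Next I would assemble the ladder whose top row is \eqref{k0-exact-05}, whose bottom row is \eqref{k0-exact-06}, whose left vertical map is the identity on $\mathrm{C}(W_2,W_1,\Int)$, and whose middle and right vertical maps are $\theta_{W_2}$ and $\theta_{W_1}$, and check that both squares commute. For the left square, the preceding lemma identifies $\eta^{-1}(f)$ with the class $\overline{g-g\circ\sigma^{-1}}\in\mathrm{K}^0_{W_2}(X,\sigma)$ for any $g\in\mathrm{C}(X,\Int)$ with $g|_{W_2}=f$; applying $\theta_{W_2}$, which is induced by the embedding $\mathrm{C}(X)\subseteq A_{W_2}$, carries this to $[g-g\circ\sigma^{-1}]\in\Kzero(A_{W_2})$, which is precisely $\beta(f)$ by the formula in the statement. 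Thus $\beta=\theta_{W_2}\circ\eta^{-1}$, and in particular $\beta$ inherits well-definedness for free. For the right square, the quotient map $\mathrm{K}^0_{W_2}(X,\sigma)\to\mathrm{K}^0_{W_1}(X,\sigma)$ is induced by the identity on $\mathrm{C}(X,\Int)$, while $\theta_{W_2},\theta_{W_1}$ both come from the embeddings of $\mathrm{C}(X)$ into $A_{W_2},A_{W_1}$; since the triangle $\mathrm{C}(X)\hookrightarrow A_{W_2}\hookrightarrow A_{W_1}$ commutes, a class $[h]$ with $h\in\mathrm{C}(X,\Int)$ lands on the same element $[h]\in\Kzero(A_{W_1})$ along either route, giving $\iota_*\circ\theta_{W_2}=\theta_{W_1}\circ(\text{quotient})$.

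Finally, because the squares commute and the vertical maps are isomorphisms, the bottom maps are exactly the images of the top maps under these isomorphisms, so \eqref{k0-exact-06} is the transport of the exact sequence \eqref{k0-exact-05} and is therefore exact; concretely, $\beta=\theta_{W_2}\circ\eta^{-1}$ is injective, $\iota_*=\theta_{W_1}\circ(\text{quotient})\circ\theta_{W_2}^{-1}$ is surjective, and $\ker\iota_*=\theta_{W_2}(\ker(\text{quotient}))=\theta_{W_2}(\operatorname{image}\eta^{-1})=\operatorname{image}\beta$. The step I expect to require the most care is the commutativity of the right square, precisely because the inclusion of sets reverses to an inclusion of algebras ($W_1\subseteq W_2$ but $A_{W_2}\subseteq A_{W_1}$); one must keep the direction of $\iota_*$ aligned with that of the quotient $\mathrm{K}^0_{W_2}(X,\sigma)\to\mathrm{K}^0_{W_1}(X,\sigma)$ and use the functoriality of $\theta$ under the nested embeddings consistently. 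Everything else is bookkeeping once Theorem \ref{k0-iso} and the $\eta$-lemma are in hand.
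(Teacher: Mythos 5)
Your proposal is correct and follows exactly the route the paper takes: the paper derives \eqref{k0-exact-06} by rewriting \eqref{k0-exact-04} as \eqref{k0-exact-05} via the isomorphism $\eta$ and then ``applying Theorem \ref{k0-iso}'', which is precisely your transport along the ladder with vertical maps $\mathrm{id}$, $\theta_{W_2}$, $\theta_{W_1}$. You have merely made explicit the details the paper leaves implicit (the inclusion $A_{W_2}\subseteq A_{W_1}$ inducing $\iota_*$, the identity $\beta=\theta_{W_2}\circ\eta^{-1}$, and the commutativity of both squares), and these checks are all accurate.
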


\section{$k$-simple Bratteli diagrams and Bratteli-Vershik models}
\label{BV-model}

In this section, we shall introduce certain ordered Bratteli diagrams which will 
be used to model Cantor systems with finitely many minimal subsets.

\subsection{$k$-simple Bratteli diagrams}

%

Let us now introduce a special class of Bratteli diagrams. This class of Bratteli 
diagrams will serve as models for Cantor systems with finitely many minimal 
componenets.

\begin{defn}\label{defn-unordered-BD}
Let $k\in \mathbb N$. A Bratteli diagram $B= (V, E)$ is said to 
be {\em $k$-simple} if for each $n\geq 1$, there are pairwise disjoint 
subsets $V^n_1, ..., V^n_k$ of $V^n$ such that
\begin{enumerate}
\item\label{cond1}  for any $1\leq i\leq k$ and any $v\in V^{n+1}_i $, one 
has that $s(r^{-1}(v))\subseteq V^{n}_i$,
\item\label{cond2} for any $1\leq i\leq k$ and any level $n$, there is $m>n$ 
such that each vertex of $V^m_i$ is connected to all vertices of $V^n_i$.
\end{enumerate}

Moreover, denote by $V^n_o= 
V^n\setminus(V^n_1\cup\cdots\cup V^n_k)$ for $n\geq 1$. Then 
\begin{enumerate} 
\item The diagram $B$ is said to be {\em strongly $k$-simple} if for any 
level $n$, there is $m>n$ such that if a vertex $v\in V^m_o$ is connected 
to some vertex of $V^n_o$, then $v$ is connected to all vertices of 
$V^n_o$. 
\item\label{Cantorset} The diagram $B$ is said to be {\em non-elementary} 
if for any $V^n_o$, there is $m>n$ such that the multiplicity of the edges 
between $V^n_o$ and $V^m_o$ is either $0$ or at least $2$.
\end{enumerate}

A dimension group $G$ is said to be (strongly) $k$-simple if $G\cong {\mathrm{K}}_B$ 
for some (strongly) $k$-simple Bratteli diagram.
\end{defn}

\begin{rem} (1) To clarify the meaning of Definition \ref{defn-unordered-BD}, we remark that:

\begin{itemize}
\item[(a)] The diagram $B$ consists of $k$ many \textit{simple} 
sub-diagrams
 $B_i$, $i = 1,... ,k,$ constructed on the sequence of vertices $(V^n_i)$; there 
 are no edges connecting different sub-diagrams. The part of the diagram 
 whose infinite paths eventually go through vertices of $V^n_o$ for
  sufficiently large $n$ constitutes an open invariant set which does not 
  contain minimal subsets.
\item[(b)] Without loss of generality, we can assume that $V_o^n \neq \O$;
otherwise the corresponding Bratteli-Vershik system would be decomposable.
\item [(c)] The part of the diagram $B$ defined by $(V^n_o)$ induces
 an ideal of the corresponding AF-algebra.   Strongly $k$-simple diagrams
  correspond to the case that the ideal is simple.
\item [(d)] A $k$-simple Bratteli diagram $B$ is non-elementary if and only if the infinite-path space does not have isolated points (so it is a Cantor set). To guarantee that a Bratteli diagram is non-elementary 
it suffices to require that for every infinite path $x = (x_i)$ there are infinitely
many edges $x_m$ such that $|s^{-1}(r(x_m))| >1$
\end{itemize}

(2) Let $B$ be a $k$-simple Bratteli diagram, then the sub-diagram restricted 
to the vertices in $V_o^n$, denoted by $I_B$, induces an ordered ideal ${\mathrm{K}}_{I_B}\subseteq {\mathrm{K}}_B$ 
such that
${\mathrm{K}}_B/{\mathrm{K}}_{I_B} \cong \bigoplus_{i=1}^k G_i$, where $G_i$ are simple dimension 
groups induced by the restriction of $B$ to the vertices in $V_i^n$.
Moreover, the diagram $B$ is strongly $k$-simple if and only if ${\mathrm{K}}_{I_B}$ is a 
simple dimension group; it is non-elementary if and only if ${\mathrm{K}}_{I_B}$ has no quotient which is 
isomorphic to $\Int$, and if and only if the the space of infinite paths through 
the sets $V^n_o$ is a locally compact Cantor set.
\end{rem}

An ordered Bratteli diagram $B=(V, E, >)$ is a Bratteli diagram with 
(partial) order $>$ on $E$ so that two edges $e$ and $e'$ are comparable 
if and only if $r(e)=r(e')$, see Definition \ref{order_definition}. Denote 
 by $E_{\textrm{max}}$ and 
$E_{\textrm{min}}$ the sets of maximal edges and minimal edges, 
respectively. This partial order induces a lexicographical partial order on 
paths (infinite or finite). Denote by $X_{\textrm{max}}$ and 
$X_{\textrm{min}}$ the set of maximal infinite paths and the set of minimal 
infinite paths respectively. Also note that if $B'$ is the Bratteli diagram 
obtained by telescoping on $B$, the lexicographic order on $B'$ make it into 
an ordered Bratteli diagram canonically.

\begin{defn}\label{defn-BD}
An ordered Bratteli diagram $B=(V, E, >)$ is called {\em $k$-simple} (with 
a slight abusing of notation) if it satisfies the following conditions:
\begin{enumerate}

\item\label{cond-unorder} the unordered Bratteli diagram $ B =(V, E)$ is 
$k$-simple in the sense of Definition \ref{defn-unordered-BD},

\item \label{minimalsets} There are infinite paths $z_{1, \textrm{max}}, ..., 
z_{k, \textrm{max}}$ and $z_{1, \textrm{min}}, ..., z_{k, \textrm{min}}$ 
such that for any level $n$ and $1\leq i\leq k$, $$\{z_{i, \textrm{min}}^n, 
z_{i, \textrm{max}}^n\}\subset V_{i}^n$$ and $$X_{\textrm{max}}=\{z_{1, 
\textrm{max}}, ..., z_{k, \textrm{max}}\},$$ $$X_{\textrm{min}}=\{z_{1, 
\textrm{min}}, ..., z_{k, \textrm{min}}\}.$$ 

By this condition and Lemma \ref{shorten} below, there is $L$ such that for 
all $n\geq L$ and any $v\in V_o^n$, the maximal edge (or minimal edge) 
starting with $v$ backwards to $V^1$ will end up in $V^1_i$ for some 
$1\leq i\leq k$. Denote by $m_+(v)=i$ (or $m_-(v)=i$). 

\item \label{cont-cond}
For any $v\in V^n_o$, one has
\begin{enumerate}
\item\label{prop-4-1} if $e$ is an edge with $s(e)=v$, one has 
$$m_-(s(e+1))=m_+(v),$$ (if $e\in E_{\max}$, the vertex $s(e+1)$ is 
understood as $s(e'+1)$ with $e'$ a non-maximal edge starting with $e$ and 
ending at some level $m>n$---such edge exists and $m_-(s(e+1))$ is well 
defined, by Condition \ref{minimalsets}), and
\item\label{prop-4-2} if $e$ is an edge with $e\notin E_{\max}$, $r(e)=v$ 
and $s(e)\in V^{n-1}_i$ with $n\geq 3$, one has $$m_-(s(e+1))=i.$$
\end{enumerate}
\end{enumerate}
If, in addition, the unordered Bratteli diagram $(V, E)$ is strongly 
$k$-simple, then $B$ is said to be a strongly $k$-simple ordered Bratteli
 diagram.
\end{defn}

\begin{rem}
Note that if $k=1$, then Condition \ref{cont-cond} is redundant. Moreover, 
Condition \ref{cont-cond} is preserved under telescoping. 
\end{rem}

\begin{lem}\label{shorten}
Any ordered Bratteli diagram satisfying Condition \ref{minimalsets} 
of Definition \ref{defn-BD} can be 
telescoped to an ordered Bratteli diagram satisfying the following condition: if 
$e$ and $e'$ are in $E_{\textrm{max}}$ (or $E_{\textrm{min}}$) with 
$r(e)=s(e')$, then $e$ is in $X_{\textrm{max}}$ (or $X_{\textrm{min}}$). 
\end{lem}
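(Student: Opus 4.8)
The plan is to exploit the uniqueness of maximal (respectively minimal) incoming edges together with the finiteness of $X_{\textrm{max}}$ and $X_{\textrm{min}}$ guaranteed by Condition \ref{minimalsets} of Definition \ref{defn-BD}. First I would record the basic structural fact that for every vertex $w\in V^n$ with $n\geq 1$ there is a \emph{unique} maximal edge $e$ with $r(e)=w$, so that following maximal edges backwards from $w$ produces a unique maximal finite path from $V^0$ to $w$. Consequently a maximal edge $e$ belongs to $X_{\textrm{max}}$ if and only if its range $w=r(e)$ admits an infinite forward maximal path, i.e.\ a sequence of maximal edges $e_1,e_2,\dots$ with $s(e_1)=w$; call such a vertex \emph{good}. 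Prepending the forced backward maximal path to an infinite forward one shows that the good vertices of $V^n$ are exactly $\{z_{1,\textrm{max}}^n,\dots,z_{k,\textrm{max}}^n\}$, a set of at most $k$ elements.

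Next I would set up the finiteness that makes the telescoping work. For $n\leq m$ let $R_n^m\subseteq V^n$ be the set of vertices from which a forward maximal path reaching level $m$ exists. These sets are nested, $R_n^{m+1}\subseteq R_n^m$, and by K\"onig's lemma (the forward maximal paths out of a fixed vertex form a finitely branching tree) their intersection over all $m$ is precisely the set of good vertices at level $n$. Since each $V^n$ is finite, a nested decreasing sequence of finite sets stabilizes, so there is $M_{\textrm{max}}(n)$ with $R_n^m$ equal to the good set for all $m\geq M_{\textrm{max}}(n)$. Applying the same construction to the reversed order yields $M_{\textrm{min}}(n)$ for minimal paths.

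Then I would telescope. Define levels recursively by $n_0=0$ and $n_{i+1}>n_i$ with $n_{i+1}\geq\max\{M_{\textrm{max}}(n_i),M_{\textrm{min}}(n_i)\}$, and let $B'$ be the telescoping of $B$ to $(n_i)$ in the sense of Definition \ref{def telescoping}, with its induced lexicographic order. The key translation is that a maximal edge of $B'$ into a vertex $v\in V^{n_{i+1}}$ is exactly the backward maximal path of $B$ from $v$ down to level $n_i$; hence a vertex $w\in V^{n_i}$ is the source of a maximal edge of $B'$ if and only if $w\in R_{n_i}^{n_{i+1}}$. By the choice $n_{i+1}\geq M_{\textrm{max}}(n_i)$ this forces $w$ to be good in $B$, and since telescoping does not change the set of infinite maximal paths, $w$ remains good in $B'$.

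Finally I would assemble the conclusion. If $e,e'\in E_{\textrm{max}}$ in $B'$ with $r(e)=s(e')=w$, then $e'$ witnesses that $w$ is the source of a maximal edge, so by the previous step $w$ is good; thus an infinite maximal path through $w$ exists, and the edge of that path entering $w$---being the unique maximal edge with range $w$---is $e$ itself, whence $e\in X_{\textrm{max}}$. The argument for $E_{\textrm{min}}$ and $X_{\textrm{min}}$ is identical after reversing all orders, and the two telescopings are carried out simultaneously by the choice of $n_{i+1}$ above. I expect the main obstacle to be the careful verification that maximal edges of the telescoped diagram correspond to backward maximal paths of the original, so that ``source of a maximal edge in $B'$'' translates correctly into ``reaches the next level by forward maximal edges in $B$''; once this dictionary is in place, the stabilization of the nested sets $R_n^m$ together with K\"onig's lemma finishes the proof.
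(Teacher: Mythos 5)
Your proof is correct and follows essentially the same route as the paper's (which adapts Proposition 2.8 of Herman--Putnam--Skau): telescope far enough that any chain of maximal (resp.\ minimal) edges spanning a whole telescoped level is forced to lie on one of the finitely many paths $z_{i,\max}$ (resp.\ $z_{i,\min}$). Your stabilizing sets $R_n^m$ together with K\"onig's lemma are just a more explicit packaging of the paper's observation that the connected components of the forest of maximal edges, after deleting the $z_{i,\max}$, are finite.
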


\begin{proof}
The proof is similar to that of Proposition 2.8 of \cite{HPS-Cantor}. Let $T$ 
denote the graph obtained from $E_{\textrm{max}}$ by deleting $z_{1, 
\textrm{max}} ,..., z_{k, \textrm{max}}$. By Condition \eqref{minimalsets}, 
each connected component of $T$ is finite.

Let $n_0=0$. Having defined $n_k$, choose $n_{k+1}$ so that no vertex in 
$V_{n_k}$ is connected to all vertices of $V_{n_{k+1}}$. Contract 
the diagram to the subsequence $\{n_k;\ k\geq 0\}$. Then, this diagram 
satisfies the lemma.
\end{proof}

Recall that two ordered Bratelli diagrams $B_1$ and $B_2$ are 
\textit{equivalent} if 
there is an ordered Bratteli diagram $B$ such that $B_1$ and $B_2$ can be 
obtained by telescoping on $B$. 
\begin{lem}\label{OBD-red}
Let $B=(V, E, >)$ be a $k$-simple ordered Bratteli diagram. Then it is 
equivalent to a $k$-simple Bratteli diagram $B'=(V', E', >')$ satisfying the 
following conditions:
\begin{enumerate}
\item\label{tel-cond-1}  if $e\in E'_{\max}$ with $r(e)\in V_o'$, then $s(e)
\notin V_o'$ (so the edge $e$ in Condition \ref{prop-4-1} cannot be a 
maximal edge in $B'$), 
\item\label{tel-cond-2} for any $1\leq i\leq k$ and any $n\geq 1$, each 
vertex $v\in (V')_i^{n+1}$ is connected to all vertices of 
$w\in (V')_{i}^{n}$, and
\item\label{tel-cond-3} if $B$ is strongly $k$-simple, then, for any $n\geq 
1$, each vertex of $v\in (V')^{n+1}_o$ is connected to all vertices of $
(V')_o^n$ (and hence to all vertices of $(V')^n$).
\end{enumerate}
Moreover, if $B$ is an unordered $k$-simple Bratteli diagram, then it is 
equivalent to an unordered $k$-simple Bratteli diagram $B'$ which satisfies 
Condition \ref{tel-cond-2} and Condition \ref{tel-cond-3}.
\end{lem}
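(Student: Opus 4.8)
The plan is to prove Lemma~\ref{OBD-red} by a sequence of telescopings, establishing the three conditions one at a time and checking that each telescoping preserves the $k$-simple ordered structure from Definition~\ref{defn-BD}. I would first invoke Lemma~\ref{shorten} to assume from the outset that maximal (resp.\ minimal) edges whose range is the source of another maximal (resp.\ minimal) edge already lie in $X_{\textrm{max}}$ (resp.\ $X_{\textrm{min}}$); this normalization is what makes the $m_\pm$ labels well behaved under further contraction. Throughout I would use the fact, recorded in the remark after Definition~\ref{defn-BD}, that Condition~\ref{cont-cond} is preserved under telescoping, so that the only substantive work is arranging the incidence-pattern conditions~\ref{tel-cond-1}--\ref{tel-cond-3}.

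For Condition~\ref{tel-cond-2} and Condition~\ref{tel-cond-3} I would use the standard telescoping trick coming from $k$-simplicity itself. By Condition~\eqref{cond2} of Definition~\ref{defn-unordered-BD}, for each $i$ and each level $n$ there is $m>n$ such that every vertex of $V_i^m$ connects to all of $V_i^n$; and if $B$ is strongly $k$-simple, the analogous property holds for the $V_o^n$. Choosing a subsequence $n_0<n_1<n_2<\cdots$ with each consecutive gap large enough to realize these ``connect to all'' properties simultaneously for all $i=1,\dots,k$ (and for the $o$-part in the strongly $k$-simple case), the telescoped diagram $B'$ to $(n_j)$ has each vertex of $(V')_i^{j+1}$ connected to all of $(V')_i^{j}$, which is exactly Condition~\ref{tel-cond-2}, and likewise Condition~\ref{tel-cond-3}. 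This part is essentially bookkeeping: interleave finitely many threshold levels and pass to the diagonal subsequence.

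The main obstacle is Condition~\ref{tel-cond-1}: after telescoping I want every maximal edge $e$ with $r(e)\in V_o'$ to have $s(e)\notin V_o'$. The issue is that maximal edges with range in the $o$-part could have source also in the $o$-part, and a crude telescoping concatenates maximal edges into a single maximal edge whose source and range type is governed by the endpoints of the concatenated path. I would argue that after the Lemma~\ref{shorten} normalization, a maximal path from level $n$ to level $m$ that stays inside the $o$-vertices at both ends can be ruled out for $m$ large: the connected components of the maximal-edge forest $T$ (with the $z_{i,\textrm{max}}$ deleted) are finite, so following maximal edges backwards from any $v\in V_o^m$ one eventually reaches a vertex that is itself the range of no maximal edge internal to $T$, i.e.\ the maximal edge into it originates from $X_{\textrm{max}}$ and hence from some $V_j^1\subseteq V^1\setminus V_o^1$. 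Telescoping past the level where all these finite maximal components have been absorbed then forces the source of any level-to-level maximal edge into $V_o'$ to come from outside $V_o'$, giving Condition~\ref{tel-cond-1}; the parenthetical consequence that the edge $e$ of Condition~\ref{prop-4-1} cannot be maximal in $B'$ is then immediate from the definition of $m_+$.

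Finally, for the unordered assertion I would simply drop all references to the order and to Lemma~\ref{shorten}, keeping only the telescoping that produces Conditions~\ref{tel-cond-2} and~\ref{tel-cond-3}; since those two conditions are statements purely about the incidence matrices, the same subsequence $(n_j)$ works verbatim. I expect the delicate point to be verifying that the telescoping realizing Condition~\ref{tel-cond-1} is compatible with the subsequence chosen for Conditions~\ref{tel-cond-2}--\ref{tel-cond-3}, so that all three hold on a \emph{single} common telescoping; this is handled by choosing, at each stage, the next level large enough to satisfy whichever of the three requirements is currently outstanding and then diagonalizing, exactly as in the proof of Lemma~\ref{shorten} and Proposition~2.8 of \cite{HPS-Cantor}.
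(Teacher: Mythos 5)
Your treatment of Conditions \ref{tel-cond-1} and \ref{tel-cond-2} matches the paper: Condition \ref{tel-cond-1} is exactly what Lemma \ref{shorten} provides (your finite-component argument for the maximal-edge forest is a re-derivation of that lemma's proof), and Condition \ref{tel-cond-2} is the routine telescoping from Condition \eqref{cond2} of Definition \ref{defn-unordered-BD}.

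The gap is in Condition \ref{tel-cond-3}. You assert that in the strongly $k$-simple case ``the analogous property holds for the $V_o^n$,'' i.e.\ that for each $n$ there is $m$ such that \emph{every} vertex of $V_o^m$ connects to all of $V_o^n$, and that one can then telescope as for Condition \ref{tel-cond-2}. But the definition of strong $k$-simplicity only says: if a vertex $v\in V_o^m$ is connected to \emph{some} vertex of $V_o^n$, then it is connected to all of them. Nothing prevents a vertex $v\in V_o^{m}$ whose entire backward cone down to level $n$ lies in $\bigcup_i V_i^n$ (Condition \ref{cond1} constrains sources of edges into $V_i^{n+1}$, not into $V_o^{n+1}$, so such vertices genuinely occur). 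No telescoping can create an edge from $V_o^n$ to such a vertex, so your diagonal subsequence argument cannot produce Condition \ref{tel-cond-3}. This is precisely why the paper does not telescope here but instead performs an \emph{interpolation}: each level is split in two, the problematic vertices $(v')^{n+1}_j$ (those not connected to $V_o^n$) are kept out of the $o$-part of the inserted level $\tilde V^n$, a compatible order is placed on the enlarged diagram so that telescoping back to the old levels recovers $B$ (this is what makes $B'$ equivalent to $B$), and then one telescopes to the new levels $\tilde V^n$, whose $o$-part consists only of the well-connected vertices. Without this construction, or some substitute for it, your proof of Condition \ref{tel-cond-3} does not go through; the same gap affects the unordered statement at the end.
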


\begin{proof}
Condition \ref{tel-cond-1} follows from Lemma \ref{shorten}. Since $B$ is 
$k$-simple, by \ref{cond2} of Definition \ref{defn-unordered-BD}, Condition 
\ref{tel-cond-2} can also be obtained by a telescoping of $B$. 

If $B$ is strongly $k$-simple. Then $B$ can be telescoped further so that if a 
vertex $v\in V^{n+1}_o$ is connected to a vertex of $V^n_o$, then it is 
connected to all vertices of $V^n_o$. For Condition \ref{tel-cond-3}. One needs to  find an equivalent Bratteli diagram $B'$ so that each vertex $v\in (V')^{n+1}_o$ is 
connected to all vertices of $(V')^n_o$.

For each $n\geq 1$, write
$$V_o^{n+1}=\{(v')^{n+1}_1, ..., (v')^{n+1}_{r_{n+1}}, v^{n+1}_1, ..., 
v^{n+1}_{t_{n+1}}\}$$ where the vertices $v'_i$ are not connected 
to $V_o^n$. Denote by $w^{n}_1, ..., w^{n}_l$ the vertices 
from $V^n$ which are  not in $V^n_o$.  
Interpolate the level $n$ and $n+1$ of $B$ as follows: Consider the 
vertices
$$\tilde{V}^n:\ w_1^n, ..., w_l^n,  v^{n+1}_1, ..., v^{n+1}_{t_{n+1}}.$$
The map from $V^n\to\tilde{V}^n$ is defined as the identity if 
restricted to $w_1^n, ..., w_l^n$ (in $V^n$), and the original map 
(that is, it is defined by edges of $B$) if 
restricted to $v^{n+1}_1, ..., v^{n+1}_{t_{n+1}}$. Define the map from 
$\tilde{V}_n\to V^{n+1}$ as the original map if restricted to $(v')^{n+1}_1, 
..., (v')^{n+1}_{r_{n+1}}$, and the identity if restricted to 
$v^{n+1}_1, ..., v^{n
+1}_{t_{n+1}}$ (in $V_{n+1}$). It can be illustrated in the following picture. 
The original maps
$$
\xymatrix{
 w^n_1 \ar@{-}[d] \ar@{-}[drr] &  \cdots & w^n_l \ar@{-}[d] \ar@{-}[dr] 
 \ar@{-}[drrr] & (v')^n_1  \ar@{-}[drrr] & \cdots & (v')_r^n \ar@{-}[dr] & 
 v^{n}_1  \ar@{-}[drr] \ar@{-}[d] & \cdots & v^{n}_s \ar@{-}[d] \ar@{-}[dll] 
 \\
 w^{n+1}_1 &  \cdots & w^{n+1}_l & (v')^{n+1}_1 & \cdots & (v')_r^{n+1} 
 & v^{n+1}_1 & \cdots & v^{n+1}_s 
}
$$
are interpolated into
$$
\xymatrix{
 w^n_1 \ar@{-}[d]^{\mathrm{id}} &  \cdots & w^n_l \ar@{-}
 [d]^{\mathrm{id}}  & (v')^n_1 \ar@{-}[drrr]  & \cdots & (v')_r^n \ar@{-}
 [dr] & v^{n}_1 \ar@{-}[drr] \ar@{-}[d] & \cdots & v^{n}_s \ar@{-}[d] 
 \ar@{-}[dll] \\
 w^n_1 \ar@{-}[d] \ar@{-}[drr] &  \cdots & w^n_l \ar@{-}[d] \ar@{-}[dr] 
 \ar@{-}[drrr] &  &  &  & v^{n+1}_1 \ar@{-}[d]^{\mathrm{id}} & \cdots & 
 v^{n+1}_s \ar@{-}[d]^{\mathrm{id}} \\
 w^{n+1}_1 &  \cdots & w^{n+1}_l & (v')^{n+1}_1 & \cdots & (v')_r^{n+1} 
 & v^{n+1}_1 & \cdots & v^{n+1}_s 
}.
$$


Put an order on this enlarged Bratelli diagram as the following: Consider a vertex of the level $\tilde{V}^n$. If it is one of the $w_i^n$, $i=1, ..., l$, then there is only one edge connecting it backwards, and so just put the trivial order; if it is one of the $v^{n+1}_i$, $s=1, ..., s$,  then the edges backwards are exactly the same edges backwards as in the original Bratelli diagram, and so just put the identical order as it is in the original Bratteli diagram. Put the order in the similar way for vertices of the level $V^n$. Then it is straightforward to check that  its telescoping  into the levels $V^{n}$ is exactly the original ordered Bratteli diagram $B$.

Denote by $B'$ the telescoping of this diagram into the levels $\tilde{V}^n$. Note that the vertices of $(V')_0^n$ only consist of $v^{n+1}_1, ..., v^{n+1}_{t_{n+1}}$, and hence $B'$ is a
 $k$-simple ordered Bratteli diagram satisfying Condition \ref{tel-cond-3}. It follows from Lemma \ref{shorten} that Condition \ref{tel-cond-1} 
 and Condition \ref{tel-cond-2} can also be satisfied by a further telescoping.
\end{proof}

\begin{rem}
In the rest of the paper, we always assume that strong $k$-simple Bratteli 
diagrams satisfy Conditions \ref{tel-cond-1}, \ref{tel-cond-2}, and \ref{tel-cond-3} of Lemma \ref{OBD-red}. 
\end{rem}

\begin{example}\label{ex-2-BD}
The following is an example of a strongly $2$-simple ordered Bratteli
 diagram (with level $0$ omitted):

\begin{center}
\setlength{\unitlength}{1mm}
\begin{picture}(90, 120)

\put(0,0){$\vdots$}
\put(30,0){$\vdots$}
\put(60,0){$\vdots$}
\put(90,0){$\vdots$}

\put(0,12){$\vdots$}
\put(30,12){$\vdots$}
\put(60,12){$\vdots$}
\put(90,12){$\vdots$}

\put(0,30){\circle*{2}}
\put(30,30){\circle{2}}
\put(60,30){\circle{2}}
\put(90,30){\circle*{2}}

\put(0,60){\circle*{2}}
\put(30,60){\circle{2}}
\put(60,60){\circle{2}}
\put(90,60){\circle*{2}}

\put(0,90){\circle*{2}}
\put(30,90){\circle{2}}
\put(60,90){\circle{2}}
\put(90,90){\circle*{2}}

\put(0,120){\circle*{2}}
\put(30,120){\circle{2}}
\put(60,120){\circle{2}}
\put(90,120){\circle*{2}}

\put(-1,31){\line(0,1){28}}
\put(1,31){\line(0,1){28}}

\put(-1,61){\line(0,1){28}}
\put(1,61){\line(0,1){28}}

\put(-1,91){\line(0,1){28}}
\put(1,91){\line(0,1){28}}

\put(89,31){\line(0,1){28}}
\put(91,31){\line(0,1){28}}

\put(89,61){\line(0,1){28}}
\put(91,61){\line(0,1){28}}

\put(89,91){\line(0,1){28}}
\put(91,91){\line(0,1){28}}

\put(29,32){\line(0,1){26}}
\put(31,32){\line(0,1){26}}

\put(29,62){\line(0,1){26}}
\put(31,62){\line(0,1){26}}

\put(29,92){\line(0,1){26}}
\put(31,92){\line(0,1){26}}

\put(59,32){\line(0,1){26}}
\put(61,32){\line(0,1){26}}

\put(59,62){\line(0,1){26}}
\put(61,62){\line(0,1){26}}

\put(59,92){\line(0,1){26}}
\put(61,92){\line(0,1){26}}

\put(3, 119){\line(2, -1){55}}
\put(2, 119){\line(1, -1){27}}

\put(3, 89){\line(2, -1){55}}
\put(2, 89){\line(1, -1){27}}

\put(3, 59){\line(2, -1){55}}
\put(2, 59){\line(1, -1){27}}

\put(87, 119){\line(-2, -1){55}}
\put(88, 119){\line(-1, -1){27}}

\put(87, 89){\line(-2, -1){55}}
\put(88, 89){\line(-1, -1){27}}

\put(87, 59){\line(-2, -1){55}}
\put(88, 59){\line(-1, -1){27}}

\put(31,62){\line(1, 1){27}}
\put(31, 89){\line(1, -1){27}}

\put(31,92){\line(1, 1){27}}
\put(31, 119){\line(1, -1){27}}

\put(31,32){\line(1, 1){27}}
\put(31, 59){\line(1, -1){27}}

\put(24, 95){${}_5$}
\put(28, 95){${}_4$}
\put(39, 95){${}_1$}
\put(31, 95){${}_2$}
\put(34, 95){${}_3$}

\put(-3.5, 95){${}_2$}
\put(2, 95){${}_1$}
\put(86, 95){${}_1$}
\put(92, 95){${}_2$}

\put(48, 95){${}_1$}
\put(54, 95){${}_3$}
\put(57, 95){${}_2$}
\put(61, 95){${}_4$}
\put(64.5, 95){${}_5$}

\put(-3.5, 65){${}_2$}
\put(2, 65){${}_1$}
\put(86, 65){${}_1$}
\put(92, 65){${}_2$}

\put(24, 65){${}_5$}
\put(28, 65){${}_4$}
\put(39, 65){${}_1$}
\put(31, 65){${}_2$}
\put(34, 65){${}_3$}

\put(48, 65){${}_1$}
\put(54, 65){${}_3$}
\put(57, 65){${}_2$}
\put(61, 65){${}_4$}
\put(64.5, 65){${}_5$}

\put(-3.5, 35){${}_2$}
\put(2, 35){${}_1$}
\put(86, 35){${}_1$}
\put(92, 35){${}_2$}

\put(24, 35){${}_5$}
\put(28, 35){${}_4$}
\put(39, 35){${}_1$}
\put(31, 35){${}_2$}
\put(34, 35){${}_3$}

\put(48, 35){${}_1$}
\put(54, 35){${}_3$}
\put(57, 35){${}_2$}
\put(61, 35){${}_4$}
\put(64.5, 35){${}_5$}

\end{picture}
\end{center}

\end{example}

\subsection{Bratteli-Vershik map}
Let $B$ be a non-elementary ordered $k$-simple Bratteli diagram, 
and denote by 
$X_B$ the space of infinite paths of $B$. For each finite path $\xi$, we
will denote by $\chi_\xi$ the cylinder set consisting of all paths starting 
with $\xi$. Then $X_B$ is a Cantor set with topology generated by these 
cylinder sets because the cylinder sets are clopen $X_B$. 

Let us adapt the well-known construction of the Vershik map $\sigma: X_B
\to X_B$ of the simple case to the 
case of $k$-simple Bratteli diagrams.
Let $\xi=(\xi^1, \xi^2, ...)\in X_B$ with each $\xi^i\in E$. If $\xi=z_{i, 
\textrm{max}}$ for some $1\leq i\leq k$, then define 
$$\sigma(\xi)=\sigma(z_{i, \textrm{max}})=z_{i, \textrm{min}}.$$
Otherwise, set $$d(\xi)=\max\{m;\ (\xi^1, ..., \xi^{m-1})\in E_{\max}\},$$ 
and for any vertex $v$ at level $n$, set $r_{\min}(v)\in E_{\min}^{1, n}$ the minimal edge with range $v$. Then define
$$\sigma(\xi)(n)=\left\{
\begin{array}{ll}
r_{\min}(s(\xi^{d(\xi)}+1)),& \textrm{if}\ n< d(\xi);\\
\xi^n+1, & \textrm{if}\ n= d(\xi);\\
\xi^n, & \textrm{if}\ n\geq d(\xi)+1.
\end{array}
\right.
$$

\begin{lem}
The map $\sigma: X_B \to X_B$ is a homeomorphism.
\end{lem}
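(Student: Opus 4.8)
The plan is to show that $\sigma$ is a well-defined continuous bijection of the compact Hausdorff space $X_B$ onto itself, after which the inverse is automatically continuous. First I would verify that $\sigma$ is well-defined: for a path $\xi$ that is not one of the maximal paths $z_{i,\mathrm{max}}$, the quantity $d(\xi)=\max\{m;\ (\xi^1,\dots,\xi^{m-1})\in E_{\max}\}$ is finite, so the incremented edge $\xi^{d(\xi)}+1$ exists, and since it is not maximal at its range vertex the minimal extension $r_{\min}(s(\xi^{d(\xi)}+1))$ backwards through the diagram is uniquely determined. The key point here is that $\sigma(\xi)$ is again a legitimate infinite path, i.e.\ the ranges and sources of consecutive edges match; this follows because below level $d(\xi)$ we attach the minimal path into $s(\xi^{d(\xi)}+1)$, at level $d(\xi)$ we replace $\xi^{d(\xi)}$ by its successor (same range vertex), and above we leave $\xi$ unchanged.

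Next I would construct the inverse explicitly and thereby prove bijectivity. The natural candidate $\tau=\sigma^{-1}$ sends $z_{i,\mathrm{min}}\mapsto z_{i,\mathrm{max}}$ and, for any other path $\eta$, locates the first coordinate $d'(\eta)=\max\{m;\ (\eta^1,\dots,\eta^{m-1})\in E_{\min}\}$ where $\eta$ ceases to be minimal, replaces that edge by its predecessor, and fills in the maximal path below. I would then check $\sigma\circ\tau=\mathrm{id}$ and $\tau\circ\sigma=\mathrm{id}$ by a direct coordinatewise comparison, taking care of the two cases (generic paths versus the distinguished extremal paths). The fact that $X_{\max}=\{z_{1,\mathrm{max}},\dots,z_{k,\mathrm{max}}\}$ and $X_{\min}=\{z_{1,\mathrm{min}},\dots,z_{k,\mathrm{min}}\}$ from Condition \ref{minimalsets} of Definition \ref{defn-BD} is exactly what guarantees there are no ``stray'' maximal or minimal paths to spoil this pairing, so $\sigma$ restricts to a bijection between the finitely many extremal paths and is a bijection on the complement.

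For continuity, I would argue locally using the cylinder-set topology. Fix $\xi\notin X_{\max}$; then $d(\xi)=d$ is finite, and any path $\xi'$ agreeing with $\xi$ through level $d$ has $d(\xi')=d$ and $\sigma(\xi')$ agreeing with $\sigma(\xi)$ through level $d$ as well, because $\sigma$ only alters coordinates up to level $d$ and those alterations depend solely on $(\xi^1,\dots,\xi^d)$. Thus $\sigma$ is continuous at every non-maximal point, and by the same argument applied to $\tau$ it is continuous at every non-minimal point; the finitely many extremal points are handled by noting that a basic cylinder neighborhood of $z_{i,\mathrm{max}}$ maps into a cylinder neighborhood of $z_{i,\mathrm{min}}=\sigma(z_{i,\mathrm{max}})$ (this uses that for $\xi$ close to but distinct from $z_{i,\mathrm{max}}$, $d(\xi)$ is large, so $\sigma(\xi)$ agrees with $z_{i,\mathrm{min}}$ on a long initial segment).

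The main obstacle will be the continuity at the maximal and minimal paths, which is precisely where the simple-case argument needs the $k$-simple hypothesis. The subtlety is that a sequence converging to $z_{i,\mathrm{max}}$ must have its images converge to $z_{i,\mathrm{min}}$ rather than to some $z_{j,\mathrm{min}}$ with $j\neq i$; this is controlled by the compatibility Conditions \ref{prop-4-1} and \ref{prop-4-2} through the functions $m_+$ and $m_-$, which ensure that when a long maximal initial segment ``unwinds'' it lands in the correct sub-diagram $V_i$. I therefore expect the bulk of the genuine work to be a careful bookkeeping argument tracking, for paths $\xi$ near $z_{i,\mathrm{max}}$, that $m_-(s(\xi^{d(\xi)}+1))=i$, so that the minimal tail $r_{\min}(s(\xi^{d(\xi)}+1))$ feeds into $V^1_i$ and hence $\sigma(\xi)$ shares a long initial segment with $z_{i,\mathrm{min}}$.
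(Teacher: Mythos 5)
Your proposal is correct and follows essentially the same route as the paper: an explicit inverse $\tau$ built from predecessors and maximal tails to establish bijectivity, compactness to reduce to continuity of $\sigma$ alone, an easy cylinder-set argument at non-maximal paths, and a case analysis at $z_{i,\mathrm{max}}$ using Conditions \ref{prop-4-1} and \ref{prop-4-2} to guarantee that the unwound minimal tail lands in the sub-diagram over $V_i$, so that $\sigma(\xi)$ agrees with $z_{i,\mathrm{min}}$ on a long initial segment. The ``bookkeeping'' you anticipate is exactly the paper's three-way split according to whether $r(\xi_j(M+1))$ lies in $V_i^{M+2}$ or in $V_o^{M+2}$, and in the latter case whether $\xi_j(M+1)$ is maximal or not.
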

\begin{proof}
Define a map $\tau: X_B\to X_B$ as follows: Let $\xi=(\xi^1, \xi^2, ...)\in X_B$ with each $\xi^i\in E$. If $\xi=z_{i, \textrm{min}}$ for some $1\leq i\leq k$, then define 
$$\sigma(\xi)=\sigma(z_{i, \textrm{min}})=z_{i, \textrm{max}}.$$
Otherwise, set $$c(\xi)=\max\{m;\ (\xi^1, ..., \xi^{m-1})\in E_{\min}\},$$ and for any vertex $v$ at level $n$, set $r_{\max}(v)\in E_{\max}^{1, n}$ the maximal edge with range $v$. Then define
$$\tau(\xi)(n)=\left\{
\begin{array}{ll}
r_{\max}(s(\xi^{c(\xi)}+1)),& \textrm{if}\ n< d(\xi);\\
\xi^n-1, & \textrm{if}\ n= d(\xi);\\
\xi^n, & \textrm{if}\ n\geq d(\xi)+1.
\end{array}
\right.
$$
Then it is straightforward to calculation that $\sigma\circ\tau=\tau\circ\sigma=\mathrm{id}$, and thus the map $\sigma$ is one-to-one and onto.

Since $X_B$ is a compact metrizable space, to show that $\sigma$ is a homeomorphism, it is enough to show that $\sigma$ is continuous. It is clear that $\sigma$ is continuous at each $\xi\in X_{B}\setminus X_{\max}$. Consider $z_{i, \max}$ and a sequence $\xi_j \to z_{i, \max}$ with $\xi_j\notin X_{\max}$. Let $N\in\mathbb N$. Then there is $J$ so that for any $j>J$, one has $$\xi_j(n)=z_{i, \max}(n),\quad\forall 1\leq n\leq N.$$

Pick an arbitrary $\xi_j$ with $j>N$, and put $$M=\max\{n; \xi(n')\in E_{\max}\ \forall n' \leq n\}.$$ Note that $M\geq N$. If $r(\xi_j(M+1)) \in V^{M+2}_i$, then one has $$\sigma(\xi_j)(n)=z_{i, \min}(n)=\sigma(z_{i, \max})(n),\quad\forall 1\leq n\leq M-2.$$

If $r(\xi_j(M+1))\in V^{M+2}\setminus \bigcup_i V_i^{M+2}$ and $\xi_j(M+1)\in E_{\max}$. By Condition \ref{prop-4-1}, one has that $$\sigma(\xi_j)(n)=z_{i, \min}(n)=\sigma(z_{i, \max})(n),\quad\forall 1\leq n\leq M-1.$$ 

If $r(\xi_j(M+1))\in V^{M+2}\setminus \bigcup_i V_i^{M+2}$ and $\xi_j(M+1)\notin E_{\max}$. By Condition \ref{prop-4-2}, one still has that $$\sigma(\xi_j)(n)=z_{i, \min}(n)=\sigma(z_{i, \max})(n),\quad\forall 1\leq n\leq M-1.$$

Since $M\geq N$, one has that  $\sigma(\xi_j)$ is in the $N-1$ neighbourhood of $z_{i, \min}$ for any $j\geq J$, and hence the map $\sigma$ is continuous at $z_{i, \max}$, as desired.
\end{proof}

\begin{thm}
The Bratteli-Vershik system $(X_B, \sigma)$ has $k$ minimal subsets.
\end{thm}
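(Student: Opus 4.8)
The plan is to show that the $k$ simple sub-diagrams $B_i$ (built on the vertex sequences $(V_i^n)$) give rise to exactly $k$ minimal sets, one for each $i$, by identifying each minimal set with the closed invariant subspace of infinite paths that eventually travel only through the vertices in $V_i^n$. First I would define, for each $1 \leq i \leq k$, the set $Z_i := \{\xi \in X_B : \exists N, \ r(\xi^n) \in V_i^{n+1} \text{ for all } n \geq N\}$, i.e.\ paths that are eventually captured by the $i$-th sub-diagram; its closure $Y_i := \overline{Z_i}$ is the candidate minimal set. Since there are no edges connecting distinct sub-diagrams (Remark (1)(a)), and by Condition \ref{cond1} of Definition \ref{defn-unordered-BD} a path entering $V_i^n$ at some level must have come from $V_i^{n-1}$, the $Z_i$ are pairwise disjoint and each $Y_i$ is closed.

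The heart of the argument is twofold: invariance and minimality. For invariance, I would check $\sigma(Y_i) = Y_i$ by examining the Vershik map's effect on a path $\xi \in Z_i$. When $d(\xi)$ is finite and the tail of $\xi$ lies in the $i$-th sub-diagram, the new path $\sigma(\xi)$ agrees with $\xi$ from level $d(\xi)$ onward and below that is a minimal path whose initial segment, by Condition \ref{minimalsets}, also lives in $V_i^\bullet$; the boundary cases $\xi = z_{i,\max} \mapsto z_{i,\min}$ are handled directly since both distinguished paths satisfy $z_{i,\bullet}^n \in V_i^n$ by Condition \ref{minimalsets}. Since $Z_i$ is $\sigma$-invariant and $\sigma$ is a homeomorphism, $Y_i = \overline{Z_i}$ is invariant as well. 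For minimality, I would use Condition \ref{cond2} (telescoped via Lemma \ref{OBD-red}\eqref{tel-cond-2} to full connectivity: every vertex of $V_i^{n+1}$ connects to every vertex of $V_i^n$): this is exactly the simplicity of the sub-diagram $B_i$. The standard argument for simple Bratteli-Vershik systems then shows that the orbit of any $\xi \in Y_i$ is dense in $Y_i$, because full connectivity lets one find, for any prescribed finite initial segment in the $i$-th sub-diagram and any starting path, a point in the orbit matching that segment; I would adapt the minimality proof from the simple case (e.g.\ \cite{HPS-Cantor}) verbatim to the restricted diagram.

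Finally I would argue that every minimal component is one of the $Y_i$, so there are no others. Given any minimal set $Y$, pick $\xi \in Y$; if $\xi \in Z_i$ for some $i$ then by invariance and minimality $Y = Y_i$. The remaining possibility is that $\xi$ passes through $V_o^n$ (the vertices outside all sub-diagrams) infinitely often. Here I would invoke the structural remark (Remark (1)(a)) that the part of the diagram through $(V_o^n)$ constitutes an open invariant set containing no minimal subsets: concretely, the forward orbit of such a $\xi$ cannot accumulate to a closed invariant set lying entirely in the $V_o$-part, because Condition \ref{cont-cond} forces the maximal and minimal edges out of $V_o^n$ to feed back into some $V_i^\bullet$, so under iteration of $\sigma$ the path is eventually pushed into one of the sub-diagrams; thus $\overline{\mathrm{Orbit}(\xi)} \supseteq Y_i$ for some $i$, and since $Y_i$ is itself invariant and closed this forces $Y$ to contain $Y_i$, contradicting minimality of $Y$ unless $Y = Y_i$.

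The main obstacle I anticipate is precisely this last step—ruling out minimal sets supported on the $V_o$-part. Showing that a path wandering through the intermediate vertices is genuinely transient under $\sigma$ requires carefully tracking, via Conditions \ref{prop-4-1} and \ref{prop-4-2}, how the value $m_\pm(v)$ propagates, i.e.\ that the continuity conditions defining the $k$-simple order were designed exactly so that orbits escaping the $V_o$-region land coherently in a single sub-diagram. The invariance computation on the distinguished maximal/minimal paths is routine given the preceding lemma, and minimality within each $Y_i$ reduces cleanly to the known simple case, so I expect the real work to be in the book-keeping that excludes spurious minimal components and confirms the count is exactly $k$.
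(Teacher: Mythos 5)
Your overall strategy coincides with the paper's: the candidate minimal sets are the path spaces of the $k$ simple sub-diagrams (note that by Condition \ref{cond1} of Definition \ref{defn-unordered-BD} your set $Z_i$ of ``eventually captured'' paths already consists of paths lying in $V_i^n$ for \emph{every} $n$, so it is closed and the closure is redundant), invariance and minimality of each $Y_i$ follow as in the simple case, and the real content is showing no minimal set lives over the $V_o$-part. Up to that last step your argument is sound and matches the paper.

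The one place where your stated mechanism would fail is the claim that, for a path $\xi$ passing through $V_o^n$ infinitely often, ``under iteration of $\sigma$ the path is eventually pushed into one of the sub-diagrams.'' This is literally false: the Vershik map only alters an initial segment of a path and preserves its tail, so the property of meeting $V_o^n$ for infinitely many $n$ is $\sigma$-invariant, and the orbit of such a $\xi$ never enters any $Y_i$. What is true --- and what you need --- is only that the orbit \emph{accumulates} on some $Y_i$. The paper's argument for this is cleaner and does not case-split on where $\xi$ lives: for any level $n$, the finite initial path $(x_1,\dots,x_{n+1})$ of any point $x$ has finitely many successors in the lexicographic order, so some iterate $\sigma^j(x)$ begins with a \emph{maximal} finite path of length $n+1$; after telescoping as in Lemma \ref{shorten}, a maximal finite path of length $n$ must agree with $z_{i,\max}$ for some $i$ through level $n$, so $\sigma^j(x)$ lies in the $n$-neighborhood of some $z_{i,\max}$. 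Since $n$ is arbitrary and there are only $k$ choices of $i$, some $z_{i,\max}$ lies in $\overline{\mathrm{Orbit}(x)}$, whence any closed invariant set containing $x$ contains $Y_i$, and minimality forces equality. Your appeal to Conditions \ref{prop-4-1} and \ref{prop-4-2} is pointing at the right structural facts (the resets of the Vershik map land in minimal finite paths that begin inside some $V_i^\bullet$), but you should replace ``pushed into'' by ``comes arbitrarily close to $z_{i,\max}$ (or $z_{i,\min}$)'' and run the accumulation argument above; as written, the step asserting that the orbit actually enters a sub-diagram is a gap.
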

\begin{proof}
For each $1\leq i\leq k$, denote by $Y_i$ the closed subset corresponding to the paths $\{z;\ z^n\in V^n_i\}$. It follows from Condition \ref{cond1} that $Y_1, ..., Y_k$ are closed invariant subsets. By Condition \ref{cond2}, the sets $Y_1, ..., Y_k$ are minimal. Note that $z_{i, \max}\in Y_i$ for each $i$, and the orbit of $z_{i, \max}$ is dense in $Y_i$

Let $U$ be any minimal invariant closed nonempty subset of $X_B$. 

Pick any point $x=(x_1, x_2, ...)\in U$, and fix $n\in\mathbb N$ such that the $n$-neighbourhoods of each $z_{i, \textrm{max}}$ are pairwise disjoint. The finite path $(x_1,...,x_{n+1})$ has finitely many successors in $P_{0, n+1}$. In particular, its $i\mbox{th}$ successor is maximal in $P_{0, n+1}$ for some $i$, and therefore $\sigma^i(x)=(f_1, ..., f_{n+1})$ with $(f_1, ..., f_{n+1})$ maximal. Since the diagram satisfies Lemma \ref{shorten}, the finite path $(f_1, ..., f_{n})$ is in $X_{\mathrm{max}}$, and hence in the $n$-neighbourhood of $z_{i, \mathrm{max}}$ for some $i$.

Therefore, at least one of $\{z_{1, \textrm{max}}, ..., z_{k, \textrm{max}}\}$ is in the closure of the orbit of $x$, and hence $U$ has to contain one of $\{Y_1,...,Y_k\}$. Since $U$ is also minimal, it has to be one of $\{Y_1,...,Y_k\}$, as desired.
\end{proof}

Consider the C*-algebra $A_{y_1, ..., y_k}$, and write $V^n=\{v_1, ..., v_{\abs{V^n}}\}$. For each $v_i$, denote by $f_1^i<\cdots<f^i_{l_i}$ the finite paths ending at $v_i$ (they form a totally ordered set). Consider
$$F_n:=\bigoplus_{i=1}^{\abs{V^n}}\textrm{C*}\{\chi_{f^i_1}, \chi_{f^i_1}u, ..., \chi_{f^i_{l}}u^{l_i-1}\}\subseteq A_{y_1, ..., y_k}.$$

\begin{lem}\label{approx-l-subalg}
The sub-C*-algebras $\{F_n\}$ have the following properties:
\begin{enumerate}
\item\label{subalg-propty-00} $F_n \cong \bigoplus_{i=1}^{\abs{V^n}} \mathrm{M}_{l_i}(\Comp)$,
\item\label{subalg-propty-01} $F_1\subseteq\cdots\subseteq F_n\subseteq F_{n+1}\subseteq\cdots,$
\item\label{subalg-propty-02} $\bigcup_n F_n$ is dense in $A_{y_1, ..., y_k}$,
\item\label{subalg-propty-03} the $\Kzero$-map induced by the inclusion $F_n\subseteq F_{n+1}$ is the same as the multiplicities between $V^n$ and $V^{n+1}$.
\end{enumerate}
\end{lem}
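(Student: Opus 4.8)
The plan is to recognize $\{F_n\}$ as the sequence of \emph{tower algebras} attached to the Kakutani--Rokhlin partition of $X_B$ whose atoms are the level-$n$ cylinder sets $\chi_{f^i_j}$, and then to verify the four properties as in the minimal case of \cite{HPS-Cantor, Put-PJM}, the only new feature being that the order structure of a $k$-simple diagram forces the maximal paths to form the finite set $X_{\max}=\{z_{1,\max},\dots,z_{k,\max}\}$. I take the broken points to be $y_i=z_{i,\max}$, the choice compatible with the tower structure (and with the asserted inclusion $F_n\subseteq A_{y_1,\dots,y_k}$). The basic geometric facts I will use are that, for a fixed $v=v_i\in V^n$, the cylinders $\chi_{f^i_1},\dots,\chi_{f^i_{l_i}}$ (the \emph{floors} of the tower over $v$) are pairwise disjoint clopen sets, that $\sigma$ carries floor $j$ homeomorphically onto floor $j+1$ for $1\le j<l_i$ (a non-maximal initial segment is sent to its lexicographic successor while the tail is preserved), and that towers over distinct vertices are disjoint.

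For property \eqref{subalg-propty-00} I would set $w^i_m:=\chi_{f^i_1}u^{m-1}$ and check, using $u^*gu=g\circ\sigma^{-1}$ and the action of $\sigma$ on floors, that $w^i_m$ is a partial isometry from floor $m$ to floor $1$ of the tower over $v_i$; the products $e^i_{m,m'}=(w^i_m)^*w^i_{m'}$ are then a full system of $l_i\times l_i$ matrix units, and since distinct towers are orthogonal the $i$-th block generates a copy of $\mathrm M_{l_i}(\Comp)$ with unit $\sum_j\chi_{f^i_j}$, giving $F_n\cong\bigoplus_{i}\mathrm M_{l_i}(\Comp)$. Membership $F_n\subseteq A_{y_1,\dots,y_k}$ reduces to the single-step units: up to taking adjoints each is $\chi_{f^i_j}u$ with $j\le l_i-1$, whose coefficient vanishes at every $y_{i'}=z_{i',\max}$ (these sit on a \emph{top} floor, while $j<l_i$) and, for the adjoint, at $\sigma(y_{i'})=z_{i',\min}$ (a \emph{bottom} floor), the longer units being products of these. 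For property \eqref{subalg-propty-01} I refine one level: a level-$n$ floor $\chi_{f^i_j}$ is the disjoint union of the level-$(n+1)$ cylinders $\chi_{f^i_j e}$ over edges $e$ with $s(e)=v_i$, and $\sigma^{j'-j}$ preserves the appended edge, so each $e^i_{j,j'}$ is a sum of level-$(n+1)$ matrix units; hence $F_n\subseteq F_{n+1}$.

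The substantive point is property \eqref{subalg-propty-02}, and I expect it to be the main obstacle. The diagonal of $F_n$ is the span of the characteristic functions of level-$n$ cylinders, and these exhaust the locally constant functions, so $\overline{\bigcup_nF_n}\supseteq\mathrm C(X)$; the difficulty is to capture the generators $fu$ with $f(y_i)=0$. Here I would invoke Condition \ref{minimalsets} and Lemma \ref{shorten} to guarantee that the union of the top-floor cylinders at level $n$ decreases to the finite set $X_{\max}=\{z_{1,\max},\dots,z_{k,\max}\}=\{y_1,\dots,y_k\}$. Given $f$ with $f(y_i)=0$, continuity makes $\sup|f|$ over this shrinking neighbourhood tend to $0$, so I can choose $f_n$ constant on level-$n$ cylinders, equal to the cylinder values of $f$ off the top floors and $0$ on the top floors, with $\|f_n-f\|\to0$. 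Then $f_nu=\sum_\xi f_n(\xi)\,\chi_\xi u$ runs only over non-maximal cylinders $\xi$, each $\chi_\xi u$ being a within-tower single-step matrix unit in $F_n$; thus $f_nu\in F_n$ and $fu\in\overline{\bigcup_nF_n}$. Combined with $F_n\subseteq A_{y_1,\dots,y_k}$ this gives equality. This interplay between the broken points and the maximal paths is exactly where the $k$-simple order hypothesis (finiteness of $X_{\max}$) is indispensable.

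Finally, for property \eqref{subalg-propty-03} I identify $\Kzero(F_n)=\Int^{\abs{V^n}}$ with generators the classes $[\chi_{f^v_1}]$ of single-floor projections, $v\in V^n$. Under $F_n\hookrightarrow F_{n+1}$ the refinement of the base floor $\chi_{f^v_1}=\bigsqcup_{s(e)=v}\chi_{f^v_1 e}$ writes $[\chi_{f^v_1}]$ as the sum over edges $e$ out of $v$ of the generator attached to $r(e)\in V^{n+1}$; collecting edges by range shows the induced map sends the generator at $v$ to $\sum_{w}f^{(n)}_{w,v}[\,w\,]$, i.e.\ it is given by the incidence matrix $F_n$, which is precisely the multiplicity of edges between $V^n$ and $V^{n+1}$. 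This completes the plan.
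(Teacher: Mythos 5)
Your proposal is correct, and for the two items the paper actually proves --- the nesting $F_n\subseteq F_{n+1}$ and the identification of the $\Kzero$-map with the incidence matrix --- your cylinder-refinement argument is the same computation as the paper's, merely phrased through matrix units $e^i_{j,j'}=(u^*)^{j-1}\chi_{f^i_1}u^{j'-1}$ rather than through the paper's explicit identity $\chi_fu^l=\sum_{i,j}((u^*)^{j-1}\chi_{ff^i_1})(\chi_{ff^i_1}u^{l+j-1})$; both rest on decomposing a level-$n$ floor into the level-$(n+1)$ cylinders obtained by appending an edge and on the Murray--von Neumann equivalence of floors within a tower. Where you genuinely add something is on properties \eqref{subalg-propty-00} and \eqref{subalg-propty-02}: the paper declares the first ``clear'' and delegates the density statement to \cite{Poon-Rocky}, whereas you verify the matrix-unit relations directly and give a self-contained density argument, correctly pinning down the one point the paper leaves implicit, namely that the broken points must be taken to be the maximal paths $y_i=z_{i,\max}$ (so that $\chi_{f^i_j}u\in A_{y_1,\dots,y_k}$ for $j<l_i$, and so that the union of the top-floor cylinders decreases to $\{y_1,\dots,y_k\}$, which is what makes the approximation of $fu$ with $f(y_i)=0$ by elements of $F_n$ go through). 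This is exactly the use of the finiteness of $X_{\max}$ guaranteed by Condition \ref{minimalsets}, and your treatment of it is sound; the only cost of your route is length, while the benefit is that the lemma no longer leans on an external reference for its most substantive clause.
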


\begin{proof}
Property \eqref{subalg-propty-00} is clear and Property \eqref{subalg-propty-02} is standard (for instance, see \cite{Poon-Rocky}). Let us only show Property \eqref{subalg-propty-01} and Property \eqref{subalg-propty-03}.

Let $f$ be a minimal finite path with end point $v$. Assume that $v$ is sent to $v_1, ..., v_l$ with multiplicity $m_1, ..., m_l$ at level $n+1$. Consider $\chi_fu^l$ with $l$ strict smaller than the number of paths ending at $v$, and denote the edges between $v$ and $v_i$ by $f_1^{i}< \cdots < f_{m_i}^i$. Then
\begin{eqnarray*}
\chi_fu^l &=& (\sum_{i=1}^l\sum_{j=1}^{m_i} \chi_{ff^i_j})u^l\\
&=& \sum_{i=1}^l\sum_{j=1}^{m_i} (u^*)^{j-1}\chi_{ff^i_1}u^{j-1}u^l\\
&=&\sum_{i=1}^l\sum_{j=1}^{m_i} ((u^*)^{j-1}\chi_{ff^i_1})(\chi_{ff^i_1}u^{l+j-1})\in F_{n+1}.
\end{eqnarray*}
Thus, $F_n\subseteq F_{n+1}$. 

Let us calculate the $\Kzero$-map. Applying the equation above for $l=0$, one has
$$[\chi_f]= \sum_{i=1}^l\sum_{j=1}^{m_i} [(u^*)^{j-1}\chi_{ff^i_1}\chi_{ff^i_1}u^{j-1}]= \sum_{i=1}^l{m_i} [\chi_{ff^i_1}].$$

Since the standard generators of $\Kzero(F_n)$ are $[\chi_f]$, the $\Kzero$-map agrees with the multiplicity map between $V^{n}$ and $V^{n+1}$, as desired.
\end{proof}

As a straightforward corollary, one has
\begin{thm}\label{K0-model}
Denote by $\mathrm{K}_B$ the dimension group associated with $B$.
One then has that $$\Kzero(A_{y_1,...,y_k})\cong\mathrm{K}_B$$ as ordered groups.
\end{thm}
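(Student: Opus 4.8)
The plan is to deduce the theorem directly from Lemma \ref{approx-l-subalg} by invoking the continuity of the $\Kzero$-functor for inductive limits of C*-algebras. First I would observe that, by properties \eqref{subalg-propty-01} and \eqref{subalg-propty-02} of Lemma \ref{approx-l-subalg}, the algebra $A_{y_1,\dots,y_k}$ is the closure of the increasing union $\bigcup_n F_n$, i.e., it is the inductive limit $A_{y_1,\dots,y_k} = \lim_{n\to\infty}(F_n, \iota_n)$, where $\iota_n : F_n \hookrightarrow F_{n+1}$ are the canonical inclusions. Since $\Kzero$ is continuous with respect to inductive limits (and respects the order structure and order unit), this yields
$$
\Kzero(A_{y_1,\dots,y_k}) \cong \lim_{n\to\infty}\big(\Kzero(F_n), (\iota_n)_*\big)
$$
as ordered groups with order unit.

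Next I would compute the inductive system on the right explicitly. By property \eqref{subalg-propty-00}, $F_n \cong \bigoplus_{i=1}^{\abs{V^n}} \Mt{l_i}$, so $\Kzero(F_n) \cong \Int^{\abs{V^n}}$ with the standard generators $[\chi_{f^i_1}]$ labelled by the vertices of $V^n$, and positive cone $\N^{\abs{V^n}}$. By property \eqref{subalg-propty-03}, the connecting homomorphism $(\iota_n)_*$ is exactly the multiplicity (incidence) matrix $F_n$ between levels $V^n$ and $V^{n+1}$. Hence the system $(\Kzero(F_n), (\iota_n)_*)$ coincides, term by term and map by map, with the defining system
$$
\Int^{\abs{V^n}} \stackrel{F_n}{\longrightarrow} \Int^{\abs{V^{n+1}}}
$$
of the dimension group $\mathrm{K}_B$ from Subsection \ref{subsect OBD}. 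Taking limits gives the claimed group isomorphism $\Kzero(A_{y_1,\dots,y_k}) \cong \mathrm{K}_B$.

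The only point demanding care is the verification that this is an \emph{order} isomorphism, and this is where I would focus the argument. For an AF-algebra the positive cone of $\Kzero$ is the set of classes of projections, and by the standard AF theory this cone is precisely the inductive limit of the finite-stage positive cones $\N^{\abs{V^n}}$; the connecting maps are positive, so they transport cones correctly. This limiting cone is exactly $\mathrm{K}_B^+$, and the distinguished order unit $[1_{A_{y_1,\dots,y_k}}]$ corresponds to the order unit of $\mathrm{K}_B$ associated with $V^0$, since the unit is the class of $\chi_X$ sitting at the top level. I do not anticipate a genuine obstacle here—the result is essentially Elliott's identification of an AF-algebra's ordered $\Kzero$ with the dimension group of its Bratteli diagram—so the proof reduces to checking that the combinatorial data recorded in Lemma \ref{approx-l-subalg} matches the defining data of $\mathrm{K}_B$, which it does by construction.
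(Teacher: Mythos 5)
Your proposal is correct and is precisely the argument the paper intends: the paper states Theorem \ref{K0-model} as a ``straightforward corollary'' of Lemma \ref{approx-l-subalg}, leaving exactly the steps you spell out (continuity of $\Kzero$ under inductive limits, identification of the connecting maps with the incidence matrices via property \eqref{subalg-propty-03}, and the standard AF identification of the positive cone) to the reader. Nothing is missing, and your attention to the order isomorphism is a reasonable elaboration of what the paper leaves implicit.
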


\section{From Cantor systems to Bratteli-Vershik models}\label{Can-BD}

%
%
%

In this section, we shall show that any Cantor system 
$(X, \sigma)$ with finitely many 
minimal compenents can be modeled by the Bratteli-Vershik map on an ordered 
$k$-simple Bratteli diagram which is introduced in the previous section (see Theorem \ref{model-k-simple}, Theorem \ref{model} below). 
Results of this kind, based on sequences of Kakutani-Rokhlin partitions, 
have been discussed in a number of papers; see, for example, 
\cite{HPS-Cantor, Medynets2006, BezuglyiKwiatkowskiMedynets2009}. 
Nevertheless, we find it useful for the reader to see a complete proof where
all details are clarified.

We first show that if, for a given Cantor system $(X, \sigma)$,
 a sequence of Kakutani-Rokhlin partitions 
satisfies certain conditions, then it determines an ordered Bratteli diagram 
$B$. Then we prove that if $(X,\sigma)$ has $k$ minimal subsets, then 
$B$ is a $k$-simple Bratteli diagram, described in Section \ref{BV-model}.

\begin{defn}
A \textit{Kakutani-Rokhlin partition} of $(X, \sigma)$ consists of  pairwise 
disjoint clopen $\sigma$-towers 
 $$\xi_l := \{Z(l, j)\ ;\  1\leq j\leq J(l)\},\quad 1\leq l\leq L,
 $$ 
of height  $J(l)$  such that 
\begin{enumerate}
\item $Z(l, j) \cap Z(l, j') = \O,$ $j \neq j'$,
\item\label{kr-cond1} $\bigcup_{l, j} Z_{l, j}=X$, and
\item\label{kr-cond2} $\sigma(Z(l, j))=Z(l, j+1)$ for any $1\leq j< J(l)$.
\end{enumerate}
\end{defn}

\begin{rem}
Denote by $Z=\bigcup_{l=1}^L Z(l, J(l))$. Then one has $$
\bigcup_{l=1}^L Z(l, 1)=\sigma(Z).$$
\end{rem}

\begin{lem}[Lemma 4.1 of \cite{HPS-Cantor}]\label{pre-partition}
Let $Z$ be a clopen subset such that $y_i\in Z$ for any $1\leq i\leq k$, and 
let $\mathscr P$ be a partition of $X$ into  clopen sets. Then, there is a 
Kakutani-Rokhlin partition $$\{Z(l, j);\ 1\leq l\leq L, 1\leq j\leq J(l)\}$$ of 
$X$ which is finer than $\mathscr P$ and $Z=\bigcup_{l=1}^L Z(l, J(l))$.
\end{lem}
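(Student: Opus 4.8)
The plan is to produce the partition as the family of Rokhlin towers over the clopen ``roof'' $Z$ determined by the first-return map, following the scheme of Lemma~4.1 of \cite{HPS-Cantor}. The combinatorial bookkeeping is standard; the only genuinely dynamical ingredient, which replaces minimality in the classical proof, is that the first-return time to $Z$ is everywhere finite and bounded, and this is exactly where the hypothesis $y_i \in Z$ for all $i$ (together with $Y_1,\dots,Y_k$ being \emph{all} the minimal components) enters.

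First I would set up the return-time function. For $x\in Z$ put $\rho(x)=\min\{n\ge 1:\sigma^{-n}(x)\in Z\}$; I use $\sigma^{-1}$ here so that $Z$ plays the role of the roof, as demanded by the conclusion $Z=\bigcup_l Z(l,J(l))$. Each set $\{x\in Z:\rho(x)=n\}$ is clopen, so once $\rho$ is known to be finite on $Z$ it is automatically bounded: these clopen sets partition the compact set $Z$, so all but finitely many are empty.

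The heart of the proof is to show that every $\sigma$-orbit meets $Z$ in both directions, which gives finiteness of $\rho$ and, symmetrically, finiteness of the first forward-entrance time. For the backward direction, consider $F=\bigcap_{n\ge 0}\sigma^{n}(X\setminus Z)$, the closed set of points whose backward orbit avoids $Z$; it satisfies $\sigma^{-1}(F)\subseteq F$, so $\bigcap_{n\ge 0}\sigma^{-n}(F)$ is a nested intersection of nonempty compact sets whenever $F\neq\emptyset$, hence nonempty, closed and $\sigma$-invariant. It must then contain a minimal component $Y_i$; but $F\subseteq X\setminus Z$ while $y_i\in Y_i\cap Z$, a contradiction. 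Thus $F=\emptyset$, and the same argument applied to $\sigma^{-1}$ (whose minimal components are again $Y_1,\dots,Y_k$) handles the forward direction. This step is the exact analogue of the proof of Lemma~\ref{AF-cond}, and it is the one I expect to be the main obstacle, since everything afterward is formal.

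Finally I would assemble and refine the towers. Partition $Z$ into clopen pieces $B_1,\dots$ on each of which $\rho$ is constant, say $\rho\equiv J(l)$ on $B_l$, and, to ensure fineness, refine further so that the backward itinerary $\bigl(\sigma^{-i}(B_l)\bigr)_{0\le i\le J(l)-1}$ has constant $\mathscr P$-address, i.e.\ each $\sigma^{-i}(B_l)$ lies in a single atom of $\mathscr P$. Setting $Z(l,j):=\sigma^{\,j-J(l)}(B_l)$ for $1\le j\le J(l)$ gives clopen towers with $\sigma(Z(l,j))=Z(l,j+1)$ and roof $Z(l,J(l))=B_l\subseteq Z$, and by construction each level sits inside one element of $\mathscr P$. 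Disjointness of the levels within a tower follows because $\rho\equiv J(l)$ forbids a backward return to $Z$ before time $J(l)$; that distinct towers are disjoint and that they cover $X$ follows by sending each $y\in X$ to its first forward-entrance point $\sigma^{m}(y)\in Z$, which (using $m\le J(l)-1$) identifies $y$ with a unique level. Since the roofs are exactly the $B_l$, their union is $Z$, giving the required identity $Z=\bigcup_l Z(l,J(l))$.
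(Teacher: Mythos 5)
Your proposal is correct and follows essentially the same route as the paper: build first-return towers over the clopen roof $Z$, using the fact that every orbit must meet $Z$ (since a closed invariant set avoiding $Z$ would contain a minimal component missing all the $y_i$) to get finiteness and, by compactness, boundedness of the return time. The only differences are cosmetic --- you parametrize by the backward return time where the paper uses the forward one (producing the same towers), you inline the covering argument that the paper quotes from Lemma~\ref{AF-cond}, and you carry out the refinement by $\mathscr P$ explicitly where the paper defers to Putnam's Lemma~3.1.
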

\begin{proof}
By Lemma \ref{AF-cond}, one has that $X=\bigcup_{i=-N}^N\sigma^i(Z)$ for 
some $N$. Applying $\sigma^{-N}$ on both sides, one has that
\begin{equation}\label{rec-00}
X=\bigcup_{i=0}^{2N}\sigma^{-i}(Z).
\end{equation}
For each $x\in Z$, define $$r(x)=\min\{i\geq 1;\ \sigma^i(x)\in Z\}.$$ By 
\eqref{rec-00}, the map $r: Z\to \mathbb N$ is well defined. Moreover, the 
map $r$ is continuous. Write $$r(Z)=\{J(1), J(2), ... , J(L)\},$$ and define 
$$Z(l, j)=\sigma^{j}(r^{-1}(J(l))).$$

It is clear that the sets $\{Z(l, j)\}$ are clopen, and satisfy Condition \ref{kr-cond2}. Show that they form a partition of $X$. If $x\in Z(l_1, j_1)
\cap Z(l_2, j_2)$, then there are $y_1\in r^{-1}(J(l_1))$ and $y_2\in r^{-1}
(J(l_2))$ such that $$\sigma^{j_1}(y_1)=\sigma^{j_2}(y_2).$$
If $j_1>j_2$, then one has $$\sigma^{j_1-j_2}(y_1)=y_2\in r^{-1}(J(l_2))
\subseteq Z,$$ which contradicts to $y_1\in r^{-1}(J(l_1))$. If $j_1<j_2$, 
the same argument leads to a contradiction. If $j_1=j_2$, then one has that 
$y_1=y_2$ and hence $l_1=l_2$. Thus , the collection of sets $\{Z(l, j)\}$ 
consists of pairwise disjoint.

For any $x\in X\setminus Z$, consider $$n=\min\{i\geq0; \sigma^{-i}(x)\in 
Z\}\quad\textrm{and}\quad m=\min\{i\geq0; \sigma^{i}(x)\in Z\}.$$ By 
Lemma \ref{AF-cond}, such $n$ and $m$ exist. Note that $m, n\geq 1$ and 
$$\sigma^{-n}(x)\in r^{-1}(m+n+1).$$ Therefore,  $x\in Z(l, n)$ with 
$J(l)=m+n+1.$ 

For any $x\in Z$, consider $$n=\min\{i\geq 1;\ \sigma^{-i}(x)\in Z\}.$$
 Then one has that $r(\sigma^{-n}(x))=n$ and 
\begin{equation}\label{part-inc-00}
x\in Z(l, J(l))
\end{equation} 
for $l$ with $J(l)=n$.
Hence $\{Z(l, j)\}$ s a partition of $X$, and they form a Kakutani-Rokhlin 
partition of $X$ with respect to $\sigma$.

It is clear that $\bigcup_l Z(l, J(l))\subseteq Z$ for any $1\leq l\leq L$ by 
the construction. On the other hand, it follows from \eqref{part-inc-00} that 
$\bigcup_l Z(l, J(l))\supseteq Z$ and hence one has 
$$\bigcup_l Z(l, J(l))= Z.$$

Once we have a Kakutani-Rokhlin partition of $X$ with respect to $\sigma$, a 
similar argument as that  of \cite[Lemma 3.1]{Put-PJM} shows that $\{Z(l, j)
\}$ always can be modified further so that $\{Z(l, j)\}$ is finer than the given 
partition $\mathscr P$. 
\end{proof}

\begin{thm}[Theorem 4.2 of \cite{HPS-Cantor}]
\label{RH-pa}
There are Kakutani-Rokhlin partitions of $X$ $$\mathscr P_n=\{Z(n, l, j);\ 
1\leq l\leq L(n), 1\leq j\leq J(n, l)\}$$ such that
\begin{enumerate}
\item\label{decrease-seq} the sequence $(Z_n:=\bigcup_{l=1}^{L(n)} Z(n, l, 
J(n, l)))$ is a decreasing sequence of clopen sets with intersection $\{y_1, 
y_2, ..., y_k\}$, where the points $\{y_1, y_2, ..., y_k\}$ are chosen
in minimal components $Y_1, ... , Y_k$ respectively,
\item the partition $\mathscr P_{n+1}$ is finer than the partition $\mathscr 
P_n$, and
\item $\bigcup_n\mathscr P_n$ generates the topology of $X$.
\end{enumerate} 
\end{thm}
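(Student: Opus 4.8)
The plan is to build the partitions $\mathscr P_n$ by induction, at each stage invoking Lemma \ref{pre-partition} with a carefully chosen clopen ``top'' set and a carefully chosen clopen partition to refine. Two auxiliary sequences will drive the construction. First, since $X$ is a Cantor set and $\{y_1,\dots,y_k\}$ is finite, hence closed, the zero-dimensionality of $X$ lets me fix a decreasing sequence $(W_n)$ of clopen neighborhoods of $\{y_1,\dots,y_k\}$ with $\bigcap_n W_n=\{y_1,\dots,y_k\}$; concretely, pick open sets $U_n\downarrow\{y_1,\dots,y_k\}$ (e.g. shrinking unions of metric balls), sandwich a clopen set between $\{y_1,\dots,y_k\}$ and each $U_n$, and intersect the first $n$ of them to make the sequence decreasing. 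Second, using second countability I fix an increasing sequence $(\mathscr Q_n)$ of finite clopen partitions of $X$ whose union generates the topology.

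The induction then runs as follows. For the base step I set $Z_1:=W_1$ and apply Lemma \ref{pre-partition} to the clopen set $Z_1$ (which contains every $y_i$, hence meets every $Y_i$, so that Lemma \ref{AF-cond} applies) together with the partition $\mathscr Q_1$, obtaining a Kakutani-Rokhlin partition $\mathscr P_1$ finer than $\mathscr Q_1$ whose union of tower tops equals $Z_1$. For the inductive step, given $\mathscr P_n$ with top $Z_n$, I set $Z_{n+1}:=Z_n\cap W_{n+1}$, which is clopen and still contains $\{y_1,\dots,y_k\}$, and I apply Lemma \ref{pre-partition} to $Z_{n+1}$ together with the common refinement $\mathscr P_n\vee\mathscr Q_{n+1}$ (a finite clopen partition). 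The output $\mathscr P_{n+1}$ is a Kakutani-Rokhlin partition whose union of tower tops is exactly $Z_{n+1}$ and which refines both $\mathscr P_n$ and $\mathscr Q_{n+1}$.

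It then remains only to read off the three required properties. Property (2) is immediate, since $\mathscr P_{n+1}$ refines $\mathscr P_n\vee\mathscr Q_{n+1}$, which refines $\mathscr P_n$. Property (3) follows because $\mathscr P_n$ refines $\mathscr Q_n$ for each $n$ and $\bigcup_n\mathscr Q_n$ already generates the topology. For Property (1), the sets $Z_n$ decrease by construction ($Z_{n+1}\subseteq Z_n$), each contains $\{y_1,\dots,y_k\}$, and $Z_n\subseteq W_n$ forces $\bigcap_n Z_n\subseteq\bigcap_n W_n=\{y_1,\dots,y_k\}$; combined with the reverse inclusion this yields $\bigcap_n Z_n=\{y_1,\dots,y_k\}$.

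The only genuinely delicate point, and the step I expect to be the main obstacle, is arranging the auxiliary neighborhoods $W_n$ so that their intersection is \emph{exactly} the finite set $\{y_1,\dots,y_k\}$ rather than something larger; this is precisely where the zero-dimensionality of the Cantor set $X$ is essential, since it guarantees that a closed set coincides with the intersection of the clopen sets containing it. Everything else---simultaneously forcing refinement of the previous partition and of the topology-generating partitions by passing to the common refinement before each application of Lemma \ref{pre-partition}---is routine bookkeeping, and the recurrence needed to run Lemma \ref{pre-partition} is already supplied by Lemma \ref{AF-cond}, because every $Z_n$ meets each minimal component $Y_i$.
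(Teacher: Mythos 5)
Your proposal is correct and follows essentially the same route as the paper: fix a decreasing sequence of clopen neighborhoods of $\{y_1,\dots,y_k\}$ with intersection exactly that finite set, fix a generating sequence of finite clopen partitions, and inductively apply Lemma \ref{pre-partition} to the chosen top set together with the common refinement of the previous Kakutani-Rokhlin partition and the next generating partition. The only difference is that you spell out the zero-dimensionality argument for producing the clopen neighborhoods, which the paper leaves implicit.
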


\begin{proof}
Choose a sequence of clopen sets $Z_1\supseteq Z_2\supseteq\cdots$ with 
$\bigcap_n Z_n=\{y_1, y_2, ..., y_k\}$, and a sequence of finite partitions $
(\mathscr P'_n)$ such that $\bigcup\mathscr P'_n$ generates the topology 
of $X$. By Lemma \ref{pre-partition}, there is a Kakutani-Rokhlin partition $
\mathscr P_1:=\{Z(1, l, j); 1\leq l\leq L(1), 1\leq j\leq J(1, l)\}$ such that $
\bigcup_l Z(1, l, J(1, l))=Z_1$ and $\mathscr P_1$ is finer than $\mathscr 
P_1'$.

Assume that the Kakutani-Rokhlin partition $\mathscr P_1, ..., \mathscr 
P_{n-1}$ are constructed. Then, by Lemma \ref{pre-partition}, there is a 
Kakutani-Rokhlin partition $\mathscr P_n:=\{Z(n, l, j); 1\leq l\leq L(n), 1\leq j
\leq J(n, l)\}$ such that $\bigcup_l Z(n, l, J(n, l))=Z_n$ and $\mathscr P_n$ 
is finer than $\mathscr P_n'\vee\mathscr P_{n-1}$. Then $(\mathscr P_n)$ 
is the desired sequence of Kakutani-Rokhlin partition.
\end{proof}

Based on the sequence of Kakutani-Rokhlin partitions, one is able to construct an 
ordered Bratteli diagram $B= (V, E, >)$  following the procedure 
 described in 
Section 4 of \cite{HPS-Cantor}:

 For convenience, we may assume that $L(0)= 1$, 
$J(0, 1)=1$, and $Z(0, 1, 1)= X$. The set of vertices $V^n$ of the 
diagram $B$ is  formed by the towers in the 
Kakutani-Rokhlin partition $\mathscr P_n$. To define the set of edges, we 
say  that there is an edge between a
 tower (vertex) $\xi_i^{n-1}$ at the level $n-1$ and a tower (vertex) 
 $\xi_j^n$ at the level $n$ if 
 $\xi_j^n$ pass through $\xi_i^{n-1}$.  More precisely, for each $n$, one
  has 
$$V^n=\{(n, 1), (n, 2), ..., (n, L(n))\},$$ and
$$E^n=\{(n, l, l', j')| Z(n, l', j'+j)\subseteq Z(n-1, l, j)\ \textrm{for all 
$j=1, ..., J(n-1, l)$}\}.$$
Then the source and range maps are
$$s((n, l, l', j'))=(n-1, l)$$ and
$$r((n, l, l', j'))=(n, l').$$
The order on the edges comes from the natural order on each tower 
$\{Z(n, l, j);\ j=1, ..., J(n, l)\}$. That is,
$$(n, l_1, l', j'_1)>(n, l_2, l', j'_2)\ \textrm{if and only if}\ j'_1> j'_2.
$$ 

\begin{thm}\label{model-k-simple} 
Let $(X, \sigma)$ be a 
Cantor dynamical system with $k$ minimal sets $Y_1, ..., Y_k$.
The ordered Bratteli diagram $B =(V, E, >)$, constructed as above, is non-elementary, and 
 satisfies Condition \ref{cond-unorder}--\ref{cont-cond} of Definition 
 \ref{defn-BD}, that is, $B$ is a non-elementary $k$-simple ordered Bratteli diagram.
\end{thm}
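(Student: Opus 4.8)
The plan is to build the subdiagram decomposition directly from the distinguished points $y_1,\dots,y_k$ produced in Theorem \ref{RH-pa}, and then to read off each clause of Definition \ref{defn-BD} from the tower/floor combinatorics of the Kakutani--Rokhlin partitions. First I would fix pairwise disjoint clopen sets $W_i\supseteq Y_i$ and, using the refinement freedom in Lemma \ref{pre-partition} and Theorem \ref{RH-pa}, arrange that every tower of every $\mathscr P_n$ meets at most one $Y_i$; since $\sigma(Y_i)=Y_i$, a tower meeting $Y_i$ meets it in \emph{every} floor, so the towers separate cleanly. I would then define $V^n_i$ to be the towers meeting $Y_i$ (pairwise disjoint by the refinement) and $V^n_o$ the remaining towers, and set $Z_n^{(i)}=Z_n\cap W_i$, so that $Z_n=\bigsqcup_i Z_n^{(i)}$ with $Z_n^{(i)}\downarrow\{y_i\}$.

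With this setup, Condition \ref{cond1} is immediate: an edge into a tower $v\in V^{n+1}_i$ records a level-$n$ block of $v$, and since every floor of $v$ meets $Y_i$, so does the parent level-$n$ tower, which therefore lies in $V^n_i$. Condition \ref{cond2} follows because the restrictions of the partitions to $Y_i$ form a refining sequence of Kakutani--Rokhlin partitions for the minimal system $(Y_i,\sigma)$ whose incidence data is exactly the subdiagram on $(V^n_i)$; minimality forces this subdiagram to be simple, giving eventual full connectivity. Non-elementariness (Condition \ref{Cantorset}) I would obtain from the fact that $X$ has no isolated points (together with Lemma \ref{non-int}), so the path space is a Cantor set and the sufficient criterion noted after Definition \ref{defn-unordered-BD} applies on the $V_o$ part.

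Next I would pin down the order structure. A point lies in the top floor of its tower at every level iff it lies in $\bigcap_n Z_n=\{y_1,\dots,y_k\}$, and dually in the bottom floor at every level iff it lies in $\bigcap_n\sigma(Z_n)=\{\sigma(y_1),\dots,\sigma(y_k)\}$; this identifies $X_{\max}=\{z_{1,\max},\dots,z_{k,\max}\}$ with the points $y_i$ and $X_{\min}=\{z_{1,\min},\dots,z_{k,\min}\}$ with the points $\sigma(y_i)$, and since $y_i,\sigma(y_i)\in Y_i$ both paths run through $V^n_i$, yielding Condition \ref{minimalsets}. The crucial preliminary for Condition \ref{cont-cond} is a dictionary between the maps $m_\pm$ and these neighborhoods: after telescoping as in Lemma \ref{shorten}, any maximal finite path traced backward from a vertex $v$ (with $n$ large) is forced to be a tail of some $z_{i,\max}$, whence $m_+(v)=i$ exactly when the top floor of $v$ is contained in $Z_n^{(i)}=Z_n\cap W_i$; symmetrically $m_-(w)=j$ exactly when the bottom floor of $w$ lies in $\sigma(Z_n^{(j)})$.

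Finally, I would verify Condition \ref{cont-cond} by a single block-stacking observation driven by continuity of $\sigma$: if $e+1$ is the successor of $e$ among the edges into a common range, the corresponding blocks are stacked consecutively in that tower, so the bottom floor of the $(e+1)$-block equals $\sigma$ applied to the top floor of the $e$-block. In \ref{prop-4-1} the $e$-block is a copy of $v\in V^n_o$, whose top floor lies in $Z_n^{(m_+(v))}\subseteq W_{m_+(v)}$, so the $(e+1)$-block has its bottom floor in $\sigma(W_{m_+(v)})$, giving $m_-(s(e+1))=m_+(v)$; in \ref{prop-4-2} the $e$-block is a copy of $s(e)\in V^{n-1}_i$, whose top floor lies in $W_i$ by the refinement, giving $m_-(s(e+1))=i$. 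I expect the main obstacle to be precisely the $m_\pm$-dictionary of the previous paragraph: forcing the backward maximal/minimal path to land in the correct $V^1_i$ relies on the telescoping of Lemma \ref{shorten}, and one must separately handle the boundary case $e\in E_{\max}$ in \ref{prop-4-1} via the parenthetical convention there (equivalently Condition \ref{tel-cond-1} of Lemma \ref{OBD-red}) and check that the successive telescopings remain compatible, which is routine but needs care.
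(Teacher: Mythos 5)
Your proposal is correct and follows essentially the same route as the paper's proof: take $V^n_i$ to be the towers meeting $Y_i$, deduce Conditions \ref{cond1} and \ref{cond2} from the invariance and minimality of the $Y_i$, identify $X_{\textrm{max}}$ and $X_{\textrm{min}}$ with $\bigcap_n Z_n=\{y_1,\dots,y_k\}$ and $\bigcap_n\sigma(Z_n)$, and verify Condition \ref{cont-cond} by the observation that consecutive blocks of a tower are $\sigma$-adjacent, reading $m_\pm$ off from which neighborhood of which $Y_i$ the extreme floors lie in (your fixed sets $W_i$ playing the role of the paper's shrinking sets $N_{n-1,i}$, with the same reliance on Lemma \ref{shorten} to force the backward maximal/minimal path to land directly in some $V^{n-1}_i$). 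You additionally supply an argument for non-elementariness, which the statement asserts but the paper's written proof does not address.
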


\begin{proof}
For each $n$ and $1\leq i\leq k$, set $$V_i^n:=\{(n, l);\ Y_i\cap Z(n, l, j))
\neq\O\ \textrm{for some $1\leq j\leq J(n, l)$}\}.$$ Note that since $Y_i$ 
is invariant, $$V_i^n =\{(n, l);\ Y_i\cap Z(n, l, j))\neq\O\ \textrm{for all 
$1\leq j\leq J(n, l)$}\}.$$ Since $Y_i$ are pairwise disjoint, by choosing $n$ 
sufficiently large, one has that $\{V^n_1, V^n_2, ..., V^n_k\}$ are pairwise 
disjoint.

Consider an edge $(n, l, l', j')$ with its range $(n, l')\in V_i^n$. One then has 
that $Z(n, l', j)\cap Y_i\neq\O$ for all $1\leq j\leq J(n, l')$, and thus 
$$Z(n-1, l, j)\cap Y_i\neq\O$$ for all $1\leq j \leq J(n-1, l)$. That is, the 
vertex $(n-1, l)$ is in $V_i^{n-1}.$ Hence the Bratelli diagram satisfies 
Condition \ref{cond1} of Definition \ref{defn-unordered-BD}.

Note that the restriction of $\sigma$ to $Y_i$ is minimal. Thus, Condition 
\ref{cond2} of Definition \ref{defn-unordered-BD} also holds. That is, the 
unordered diagram $(V, E)$ is $k$-simple, and this verified Condition 
\ref{cond-unorder} of \ref{defn-BD}.

Let $((n, l_n, l_n', j_n'))$ be an infinite path in $E_{\mathrm{min}}$. Since it 
is an infinite path, one has that $l'_{n-1}=l_n$. Since $(n, l_n, l_n', j_n')$ is 
minimal, one has that $j_n'=0$. Hence $Z(n, l_n, 1)\subseteq Z(n-1, l_{n-1}, 
1)$ for any $n$, and thus
$$\bigcap_{n=1}^\infty Z(n, l_n, 1)\subseteq\bigcap_{n=1}^
\infty(\bigcup_{l=1}^{L(n)} Z(n, l, 1))=\{\sigma(y_1), \sigma(y_2), ..., 
\sigma(y_k)\}.$$

By the construction of $Z(n, l, j)$, it is finer than any given partition $
\mathscr P'$. Therefore the intersection $\bigcap_{n=1}^\infty Z(n, l_n, 1)$ 
is a single point, and there is $1\leq i\leq k$ such that $$\bigcap_{n=1}^
\infty Z(n, l_n, 1)=\sigma(y_i).$$ This uniquely determines $l_n$, and thus 
there are at most $k$ infinite minimal paths. On the other hand, for each $y_i
$, the infinite sequence $((Z(n, l_n, 1)))$ with $\sigma(y_i)\in Z(n, l_n, 1)$ 
clearly form a minimal path. Thus, there are $k$ minimal paths.

A similar argument show that there are also $k$ maximum paths. Therefore, 
the ordered Bratteli diagram $(V, E, >)$ satisfies Condition 
\ref{minimalsets} of Definition \ref{defn-BD}.

For each $1\leq i\leq k$ and each $n\geq 1$, define $$N_{n, i}=\bigcup_{(n, 
l)\in V_i^n} \bigcup_{j=1}^{J(n, l)} Z(n, l, j).$$ Then each $N_{n, i}$ is a 
clopen neighbourhood of $Y_i$, and $N_{n, 1}, ..., N_{n, k}$ are pairwise 
disjoint.

Consider a vertex (a tower) $(n, l)\in V_n\setminus(V_1^n\cup\cdots\cup 
V_k^n)$. Note that if $n\geq 2$, by Condition \ref{decrease-seq} of 
Theorem \ref{RH-pa}, there is one and only one $1\leq i\leq k$ and one and 
only one $1\leq j\leq k$ such that 
$$Z(n, l, 1)\subseteq N_{n-1, i}\quad\textrm{and}\quad Z(n, l, J(n, l))
\subseteq N_{n-1, j}.$$ Then, by the definition, one has
$$m_-((n, l))=i\quad\textrm{and}\quad m_+((n, l))=j.$$

Let $(n+1, l, l', j')$ be an edge such that $$Z(n+1, l', j'+j)\subseteq Z(n, l, j),
\quad j=1, ..., J(n, l).$$ Then $$s((n+1, l, l', j'))=(n, l).$$ Note that there is 
$Z(n, l'')$ such that
$$Z(n+1, l', j'+J(n, l)+j)\subseteq Z(n, l'', j),\quad j=1, ..., J(n, l').$$ Then $$
(n+1, l, l', j')+1=(n+1, l'', l, j'+J(n, l)),$$ and
$$s((n+1, l, l', j')+1)=(n, l'').$$ If $$Z(n, l, J(n, l))\subseteq N_{n-1, s}\quad 
\textrm{and} \quad Z(n, l, J(n, l)+1)\subseteq N_{n-1, t}$$ for some $1\leq 
s, t\leq k$, then $$Z(n+1, l', j'+J(n+1))\subseteq N_{n-1, s}\quad
\textrm{and}\quad Z(n+1, l', j'+J(n+1)+1)\subseteq N_{n-1, t}.$$
But $$Z(n+1, l', j'+J(n+1)+1)=\sigma(Z(n+1, l', j'+J(n+1)+1)),$$ and this 
forces $s=t$, that is, 
$$m_-(s((n+1, l, l', j')+1))=m_-((n, l''))=t=s=m_+((n, l)).$$ This shows 
Condition \ref{prop-4-1} of Definition \ref{defn-BD}. 

Condition \ref{prop-4-2} of Definition \ref{defn-BD} can also be verified in a 
similar way.
\end{proof}

Applying the same argument as that of  \cite[Theorem 4.4]{HPS-Cantor},
 one can prove the following statement. 
 
\begin{thm}
Given $(X,\sigma)$ and the points $y_1\in Y_1,..., y_k\in Y_k$, the equivalent 
class of the ordered Bratteli diagram $B$ constructed in Theorem 
\ref{model-k-simple} does not depend on the choice of Kakutani-Rohklin 
partitions.
\end{thm}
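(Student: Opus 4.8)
The plan is to follow the strategy of Theorem 4.4 of \cite{HPS-Cantor}, upgraded to keep track of the $k$ distinguished maximal and minimal paths rather than a single pair. Suppose $(\mathscr{P}_n)$ and $(\mathscr{Q}_n)$ are two sequences of Kakutani-Rokhlin partitions as produced in Theorem \ref{RH-pa}, both built from the \emph{same} points $y_1\in Y_1,\dots,y_k\in Y_k$, and let $B$ and $B'$ be the associated ordered Bratteli diagrams. The goal is to exhibit a single ordered Bratteli diagram that telescopes onto each of $B$ and $B'$, which is precisely the assertion that $B$ and $B'$ are equivalent.

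The first step is to isolate the elementary correspondence: passing to a subsequence of the partitions corresponds to telescoping the diagram. Concretely, I would check that if $(\mathscr{P}_{n_i})$ is a subsequence of $(\mathscr{P}_n)$, then the ordered Bratteli diagram built from $(\mathscr{P}_{n_i})$ is exactly the telescoping of $B$ to the levels $(n_i)$ in the sense of Definition \ref{def telescoping}. The vertex identification is immediate (towers of $\mathscr{P}_{n_i}$), and an edge between consecutive retained levels is recorded by how a tower of $\mathscr{P}_{n_{i+1}}$ threads through the intermediate towers, which is exactly a finite path of $B$. The point demanding care is the \emph{order}: the vertical position $j$ of a tower of $\mathscr{P}_{n_{i+1}}$ must match the lexicographic order on the concatenated intermediate edges. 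This holds because the intermediate subdivision refines the vertical stacking level by level, so reading $j$ from the top down reproduces the lexicographic comparison of Section \ref{subsect OBD}.

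The second step is the interleaving. Using Lemma \ref{pre-partition} together with the fact that each of $\bigcup_n\mathscr{P}_n$ and $\bigcup_n\mathscr{Q}_n$ generates the topology of $X$ while both sequences of tops decrease to $\{y_1,\dots,y_k\}$, I would choose increasing indices $a_1<a_2<\cdots$ and $b_1<b_2<\cdots$ so that, alternately, $\mathscr{Q}_{b_n}$ is finer than $\mathscr{P}_{a_n}$ with top $Z^{\mathscr{Q}}_{b_n}\subseteq Z^{\mathscr{P}}_{a_n}$, and $\mathscr{P}_{a_{n+1}}$ is finer than $\mathscr{Q}_{b_n}$ with top $Z^{\mathscr{P}}_{a_{n+1}}\subseteq Z^{\mathscr{Q}}_{b_n}$ (each of these is arranged for a large enough index, by compactness, since the tops shrink to $\{y_1,\dots,y_k\}$, which lies in the relevant clopen top). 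Then the interleaved sequence $\mathscr{R}_1=\mathscr{P}_{a_1},\ \mathscr{R}_2=\mathscr{Q}_{b_1},\ \mathscr{R}_3=\mathscr{P}_{a_2},\dots$ is itself a refining sequence of Kakutani-Rokhlin partitions with tops decreasing to $\{y_1,\dots,y_k\}$, and hence defines an ordered Bratteli diagram $\tilde B$. Because every member of all three sequences shares the same limiting top-set $\{y_1,\dots,y_k\}$, the distinguished paths $z_{i,\mathrm{max}}$ (threading the nested tops) and $z_{i,\mathrm{min}}$ (threading the nested bases $\sigma(Z_n)$) are aligned across $B$, $B'$, and $\tilde B$. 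By Step 1, telescoping $\tilde B$ to its odd levels recovers the diagram of $(\mathscr{P}_{a_n})$, which is a telescoping of $B$; telescoping to its even levels recovers the diagram of $(\mathscr{Q}_{b_n})$, a telescoping of $B'$. Thus $B\sim\tilde B\sim B'$.

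The main obstacle is the order-compatibility threaded through both steps: it is not enough that the underlying \emph{unordered} diagrams telescope, as in the known common-telescoping statement recalled in Section \ref{subsect OBD}. One must verify that the linear order on each $r^{-1}(v)$ transported through refinement agrees with the lexicographic order, and that under the interleaving the three families of distinguished paths $\{z_{i,\mathrm{max}}\}$ and $\{z_{i,\mathrm{min}}\}$ genuinely correspond. This is exactly where the common choice of $y_1,\dots,y_k$ is essential, and where the simple-case argument of \cite{HPS-Cantor} must be upgraded from one distinguished path to $k$ of them; once the order is shown to be preserved, the remainder of the conclusion is formal.
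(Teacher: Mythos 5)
Your proposal is correct and is essentially the argument the paper intends: the paper gives no details and simply invokes the interleaving argument of Theorem 4.4 of \cite{HPS-Cantor}, which is exactly what you carry out, with the appropriate bookkeeping for the $k$ distinguished maximal and minimal paths anchored at the common points $y_1,\dots,y_k$.
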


\begin{thm}\label{model}
There is a one-to-one correspondence between the equivalence classes of non-elementary $k$-simple ordered Bratteli diagrams  and the pointed topological conjugacy 
classes of Cantor systems with $k$ minimal invariant subsets.
\end{thm}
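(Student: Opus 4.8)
The plan is to exhibit the correspondence as a pair of mutually inverse maps and then verify that each descends to the relevant equivalence classes. In one direction, the Bratteli--Vershik construction of Section~\ref{BV-model} assigns to each non-elementary $k$-simple ordered Bratteli diagram $B$ the Cantor system $(X_B,\sigma)$; by the lemma and theorem of that section this $\sigma$ is a homeomorphism and the system is indecomposable with exactly $k$ minimal components $Y_1,\dots,Y_k$, the distinguished points being $y_i:=\sigma^{-1}(z_{i,\min})$. Call this assignment $\Phi$. In the other direction, Theorem~\ref{model-k-simple} assigns to each pointed Cantor system $(X,\sigma,y_1,\dots,y_k)$ a non-elementary $k$-simple ordered Bratteli diagram built from a refining sequence of Kakutani--Rokhlin partitions as in Theorem~\ref{RH-pa}; call this $\Psi$. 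I would first record that $\Psi$ is independent of the chosen partition sequence (the theorem immediately preceding this one) and is a conjugacy invariant: a pointed conjugacy $h\colon(X,\sigma)\to(X',\sigma')$ transports a Kakutani--Rokhlin sequence for the first system to one for the second, hence yields equivalent diagrams.

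Next I would check that $\Phi$ is well defined on equivalence classes. If $B'$ is a telescoping of $B$, the canonical bijection between the two path spaces, grouping consecutive edges into single edges, is a homeomorphism; comparing the two definitions of the Vershik map, and using that the lexicographic order on $B'$ is the one induced from $B$ as noted before Definition~\ref{defn-BD}, shows it intertwines the maps and preserves the paths $z_{i,\min},z_{i,\max}$. Thus equivalent diagrams give pointed-conjugate systems.

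The heart of the proof is that $\Phi$ and $\Psi$ are mutually inverse. For $\Psi\circ\Phi$, given $B$ the system $(X_B,\sigma)$ carries a natural refining sequence of Kakutani--Rokhlin partitions whose towers at level $n$ are the cylinder sets determined by initial path segments up to $V^n$, with the tops collapsing to the distinguished points; feeding these into the construction of Theorem~\ref{model-k-simple} returns $B$ up to telescoping essentially by inspection, since the level-$n$ vertices, edge multiplicities, and orders are exactly the incidence data of $B$ (compare Theorem~\ref{K0-model}). For $\Phi\circ\Psi$, starting from $(X,\sigma,y_i)$ with diagram $B$, I would define $h\colon X\to X_B$ sending $x$ to the infinite path recording, at each level $n$, which tower of $\mathscr P_n$ contains $x$ together with its height in that tower. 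Because $\bigcup_n\mathscr P_n$ generates the topology and the partitions are nested, $h$ is a well-defined homeomorphism, and it sends the distinguished points to the prescribed minimal paths by the computation $\bigcap_n Z(n,l_n,1)=\sigma(y_i)$ in the proof of Theorem~\ref{model-k-simple}.

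The main obstacle is the intertwining identity $h\circ\sigma=\sigma\circ h$. On the interior of a tower, where $\sigma$ moves a point up one floor, this is immediate from the clause $n=d(\xi)$ in the definition of the Vershik map. The delicate case is a point $x$ sitting at the top of its level-$n$ tower for all large $n$, i.e.\ one carried by $h$ near a maximal path: here $\sigma$ crosses towers, and I must check that the Vershik reset to the minimal edge lands in the correct minimal component. This is precisely what the compatibility conditions \ref{prop-4-1} and \ref{prop-4-2} of Definition~\ref{defn-BD} encode, and which were verified for the constructed diagram in Theorem~\ref{model-k-simple}; matching the dynamical computation defining $\sigma$ on $X$ with the combinatorial one defining $\sigma$ on $X_B$ is the only genuinely nontrivial verification. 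Once it is carried out, $h$ is a pointed conjugacy, and together with the computations above this shows $\Phi$ and $\Psi$ are mutually inverse bijections, establishing the stated one-to-one correspondence.
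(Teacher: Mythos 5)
Your proposal is correct and follows exactly the route the paper intends: Theorem~\ref{model} is stated without a written proof, being the assembly of the two constructions already carried out (the Bratteli--Vershik map of Section~\ref{BV-model} and the Kakutani--Rokhlin construction of Theorem~\ref{model-k-simple}, together with the independence-of-partition theorem), in the spirit of Theorem~4.4 of \cite{HPS-Cantor}. You supply precisely that assembly, correctly placing the only delicate verification at the tower tops, where Conditions~\ref{prop-4-1} and~\ref{prop-4-2} of Definition~\ref{defn-BD} make the Vershik ``carry'' match the dynamics, and correctly identifying the basepoints as $y_i=\sigma^{-1}(z_{i,\min})=z_{i,\max}$.
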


\section{Transition graphs of a $k$-simple ordered Bratteli diagram and the index maps}\label{tran-graph}

\subsection{Transition graphs associated to Bratteli diagrams}

Consider a $k$-simple ordered Bratteli diagram $B=(V, E, >)$. Condition 
\ref{cont-cond} of Definition \ref{defn-BD} is necessary for the
 continuity of the 
associated Bratteli-Vershik map. Based on this continuity condition, we will 
introduce a sequence of directed graphs for $B$, and it turns out that these 
graphs are closely related to the index map of the short exact sequence 
\eqref{exact} associated to the Bratteli-Vershik system $(X_B, \sigma)$ (see 
Theorem \ref{index-diag}). Moreover, we will be able to obtain certain 
combinatorial properties of these graphs and the Bratteli diagram $B$ from 
the information on the index map.

In the next definition, we use the word ``vertex'' for a Bratteli diagram
and for  a transition graph. It should be clear from the context which vertex
is considered.

\begin{defn}\label{definition-TD}
Let $B=(V, E, >)$ be a $k$-simple ordered Bratteli diagram. For each level 
$n\geq 2$, define the {\em transition graph} $L_n$ to be the following 
directed graph: The vertices of $L_n$ correspond to the minimal sets
 $Y_1, ..., Y_k$, and the edges are 
labelled by the vertices from the set  $V_o^n$. For each $v\in V^n_o$, the
 edge $v$ starts from $Y_i$ and ends at $Y_j$ if and only if $$m_-(v)=i
 \quad \textrm{and}\quad m_+(v)=j.$$
\end{defn}
 
\begin{example}
For the diagram considered in Example \ref{ex-2-BD}, we obtain the
 transition graph at level $n$ as follows:

\begin{center}
\begin{tikzpicture}
\node[VertexStyle] (Y1) at (0, 0) {$Y_1$};
\node[VertexStyle] (Y2) at (6, 0) {$Y_2$};
\tikzstyle{EdgeStyle}=[bend left, ->]
 \Edge[label=$v_1$](Y2)(Y1)
  \Edge[label=$v_2$](Y1)(Y2)
 \end{tikzpicture}.
\end{center}

\end{example}

\begin{lem}\label{path-lifting}
Let $v\in V_o^n = V^n\setminus\bigcup_{i=1}^k V_i^n$ and let $e\in
 E_{\min}$ with 
$r(e)=v$ and $s(e)\in V^{n-1}_i$. Take $v'\in V^{n+1}$ and $e'\in E$ with 
$r(e')=v'$ and $s(e')=v$. Then, for any $e''\in E$ with $r(e'')=v'$ and 
$e'<e''$, there is a path $(v_1, ..., v_l)$ in $L_{n}$ with $v_1=v$ which 
starts at $Y_i$ and ends at $Y_{m_-(s(e''))}$.
\end{lem}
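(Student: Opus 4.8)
The plan is to produce the desired $L_n$-path by reading off, in increasing order, the sources of the consecutive edges of $r^{-1}(v')$ lying between $e'$ and $e''$, and recognizing the $V_o^n$-sources among them as edges of $L_n$. Throughout I extend $m_-, m_+$ to every vertex by declaring $m_-(w) = m_+(w) = p$ whenever $w \in V^n_p$; this is consistent with the usage already made in Condition \ref{cont-cond}, and it is forced by Condition \ref{cond1} of Definition \ref{defn-unordered-BD}, which makes every edge into a vertex of $V^n_p$ have source in $V^{n-1}_p$, so that both the maximal and minimal backward paths from $w$ remain inside the simple subdiagram $B_p$ and terminate in $V^1_p$. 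Two preliminary remarks: since $s(e')=v\in V_o^n$, Condition \ref{cond1} forbids $v'=r(e')$ from lying in any $V^{n+1}_j$, so $v'\in V^{n+1}_o$; and since $e\in E_{\min}$ with $r(e)=v$ and $s(e)\in V^{n-1}_i$, the minimal backward path from $v$ stays in $B_i$, whence $m_-(v)=i$.

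List the consecutive edges of $r^{-1}(v')$ from $e'$ to $e''$ as $e'=f_0<f_1<\cdots<f_m=e''$ with $f_{t+1}=f_t+1$; note $f_t\notin E_{\max}$ for $0\le t<m$ because $f_t<e''$. Consider their sources $s(f_0),\ldots,s(f_m)$ at level $n$, where $s(f_0)=v$ and $s(f_m)=s(e'')$. The heart of the argument, and the step I expect to be the main obstacle, is the chaining identity
$$m_+(s(f_t)) = m_-(s(f_{t+1})), \qquad 0\le t<m,$$
which I would prove in two cases. If $s(f_t)\in V_o^n$, then applying Condition \ref{prop-4-1} to the vertex $s(f_t)$ and the non-maximal edge $f_t$ (which has $s(f_t)$ as source) gives $m_-(s(f_t+1))=m_+(s(f_t))$. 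If instead $s(f_t)\in V^n_p$, then $f_t$ is a non-maximal edge with range $v'\in V^{n+1}_o$ and source in $V^n_p$, and $n+1\ge 3$; applying Condition \ref{prop-4-2} at level $n+1$ yields $m_-(s(f_t+1))=p=m_+(s(f_t))$. Case B is the delicate one, since there Condition \ref{prop-4-1} does not apply and one must invoke \ref{prop-4-2} together with the already-established fact that $v'\in V^{n+1}_o$.

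Granting the chaining identity, I would assemble the path as follows. Let $v_1,\ldots,v_l$ be the subsequence of $s(f_0),\ldots,s(f_{m-1})$ consisting of those sources lying in $V_o^n$, listed in order; since $s(f_0)=v\in V_o^n$ this is nonempty and $v_1=v$. Each $v_a\in V_o^n$ is an edge of $L_n$ from $Y_{m_-(v_a)}$ to $Y_{m_+(v_a)}$, whereas each source in some $V^n_p$ contributes only a trivial step with $m_-=m_+=p$. Telescoping the chaining identity across any block of minimal-subdiagram sources separating $v_a$ from $v_{a+1}$ gives $m_+(v_a)=m_-(v_{a+1})$, so $(v_1,\ldots,v_l)$ is a genuine path in $L_n$. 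Its initial vertex is $Y_{m_-(v_1)}=Y_{m_-(v)}=Y_i$, and telescoping from the last collected source $v_l$ through the remaining (necessarily minimal-subdiagram) sources up to $s(f_m)=s(e'')$ gives $m_+(v_l)=m_-(s(e''))$, so the terminal vertex is $Y_{m_-(s(e''))}$, as required.
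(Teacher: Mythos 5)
Your proof is correct and follows essentially the same route as the paper's: both arguments read off the sources of the consecutive edges of $r^{-1}(v')$ between $e'$ and $e''$, use Condition \ref{prop-4-1} to chain across sources in $V_o^n$ and Condition \ref{prop-4-2} to skip sources lying in some $V^n_j$, and identify the endpoint as $Y_{m_-(s(e''))}$. Your write-up is in fact slightly more careful than the paper's on two points it leaves implicit, namely that $v'\in V_o^{n+1}$ (which is what licenses the use of Condition \ref{prop-4-2}) and the uniform convention $m_-(w)=m_+(w)=p$ for $w\in V^n_p$.
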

\begin{proof}
Since $e'<e''$, there are 
$$\{\{\underbrace{e_1^{(1)}, ..., e^{(1)}_{m_1}}_{G_1}\}, 
\{\underbrace{e^{(2)}_{1}, ..., e^{(2)}_{m_2}}_{G_2}\}, ...,
\{\underbrace{e^{(d)}_{1}, ..., e^{(d)}_{m_d}}_{G_{d}}\}, e'''\}$$
with each $e^{(s)}_j$ an edge between level $n-1$ and level $n$ such that
\begin{enumerate}
\item each group $G_s$ consists of all edges with range $v_s\in V^n$,
\item inside each group $G_s$, one has $$e_1^{(s)}<e_2^{(s)}(=e_1^{(s)}
+1)<\cdots<e_{m_s}^{(s)}(=e_{m_s-1}^{(s)}+1),$$
\item $e_1^{(1)}=e$, $e'''\in E_{\min}$ and $r(e''')=s(e'')$,
\item\label{lem-cond-004} there are edges $g_1, ..., g_s$ connecting $v_s$ to $v'$ respectively such that 
$$e'=g_1<g_2(=g_1+1)<g_3(=g_2+1)<\cdots<g_d(=g_{d-1}+1)<g_d
+1=e''.$$
\end{enumerate}

Then each group $G_s$ represents an edge in $L_n$ if $v_s\in V^n
\setminus \bigcup_{j=1}^k V_j^n$. Moreover, if $v_s\in V_j^n$ for some $j
$, by Condition \ref{prop-4-2} of Definition \ref{defn-BD}, one has that $m_-
(v_{s+1})=j$. Thus, if one deletes these $v_s\in V_j^n$ for some $j$, the 
remaining vertices induce a path in $L_n$. 

It is clear that the path starts with $Y_i$. The endpoint of the path is 
$Y_{m_+(v_d)}$. Then it follows from Condition \ref{prop-4-1} of Definition 
\ref{defn-BD} and Condition \ref{lem-cond-004} above that $$ Y_{m_+
(v_d)}=Y_{m_-(s(g_d+1))}=Y_{m_-(s(e''))},$$ as desired.

The argument can be illustrated by the following diagram

\begin{center}
\begin{tikzpicture}
\SetVertexNormal[Shape = rectangle, FillColor = green];
\node[VertexStyle] (Y1) at (-0.5, 6) {$Y_i$};
\node[VertexStyle] (E) at (12.1, 6) {$Y_{m_-(s(e''))}$};
 \node[VertexStyle] (G0) at (3.5, 6) {$Y_{m_-(v_s)}$};
  \node[VertexStyle] (G1) at (5.5, 6) {$Y_{m_+(v_s)}$};
  \node[VertexStyle] (Gd0) at (4.5+3.3, 6) {$Y_{m_-(v_d)}$};
  \node[VertexStyle] (Gd1) at (6.8+3.3, 6) {$Y_{m_+(v_d)}$};
  \node[VertexStyle] (G11) at (1.3, 6) {$Y_{m_+(v)}$};

\SetVertexNormal[Shape = circle];
\node[VertexStyle] (V) at (1.5, 3) {$v$};
\node[VertexStyle] (V') at (5.5, 0) {$v'$};
\node[VertexStyle] (V'') at (8, 3) {$v_d$};
\node[VertexStyle] (E0) at (10, 3) {};
\node[VertexStyle] (VS) at (4.7, 3) {$v_s$};

  \node at (11, 6) {$=$};
   \node at (3.3, 3) {$\cdots$};
  \node at (6.3, 3) {$\cdots$};

   \node at (4.5, 6) {$\cdots$};
  
  \node at (9.1, 6) {$\cdots$};
  \node at (6.7, 6) {$\cdots$};
    
\node at (0.4, 6) {$\cdots$};
\node at (2.4, 6) {$\cdots$};

\tikzstyle{EdgeStyle}=[-]

 \Edge[label=$e^{(s)}_1$](VS)(G0) 
  \Edge[label=$e^{(s)}_{m_s}$](VS)(G1)

\Edge[label=$e^{(d)}_1$](V'')(Gd0) 
  \Edge[label=$e^{(d)}_{m_d}$](V'')(Gd1)

\Edge[label=$e^{(1)}_{m_1}$](V)(G11)

 \Edge[label=$e$](V)(Y1)
 \Edge[label=${e'=g_1}$](V)(V')
 \Edge[label=$g_s$](V')(VS) 
  \Edge[label=$g_d$](V')(V'') 
  \Edge[label=${e''}$](V')(E0) 
  \Edge[label=$e'''$](E0)(E)

 \end{tikzpicture}.
\end{center}

\end{proof}

\begin{cor}\label{loop-diag}
Let $B=(V, E, >)$ be $k$-simple non-elementary ordered Bratteli diagram  
with $k\geq 2$, 
and let $L_n$ denote the transition graph of $B$ at level 
$n$. If there is an edge $v_1$ that has the vertex $Y_i$ as the source 
point, then there is a closed walk $(v_1, ..., v_l)$ in $L_n$ (so the range point of $v_l$ is $Y_i$).
\end{cor}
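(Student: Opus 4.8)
The plan is to obtain the closed walk from a single application of Lemma~\ref{path-lifting}, arranging the two edges $e'$ and $e''$ appearing there to emanate from the \emph{same} vertex $v_1$, which forces the endpoint of the produced path back to $Y_i$.

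Since $v_1\in V_o^n$ is the source of an edge of $L_n$ whose source vertex is $Y_i$, we have $m_-(v_1)=i$. Let $e\in E_{\min}$ be the minimal edge with $r(e)=v_1$. First I would check that the hypothesis $s(e)\in V^{n-1}_i$ of Lemma~\ref{path-lifting} holds. Using the minimal-edge version of the telescoping normalization behind Lemma~\ref{shorten} (and Lemma~\ref{OBD-red}), $s(e)\notin V_o^{n-1}$: otherwise the minimal edge into $s(e)$ together with $e$ would be two consecutive minimal edges, so by Lemma~\ref{shorten} the vertex $s(e)$ would lie on a minimal infinite path $z_{i',\min}$ and hence in some $V^{n-1}_{i'}$, a contradiction. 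Thus $s(e)\in V^{n-1}_{i'}$, and since the minimal path out of $s(e)$ stays in the simple subdiagram $B_{i'}$ we get $i'=m_-(v_1)=i$, i.e. $s(e)\in V^{n-1}_i$.

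Next I would produce two nested edges out of $v_1$ into one common vertex at the next level. Because $v_1\in V_o^n$, every edge leaving $v_1$ has range in $V_o^{n+1}$ (a range in $V^{n+1}_{i'}$ would force its sources into $V^{n}_{i'}$ by Condition~\ref{cond1}), so the forward orbit of $v_1$ stays in the $V_o$-part of $B$. By non-elementarity there is a level $m>n$ for which the $V_o$-multiplicities between $V_o^n$ and $V_o^m$ are $0$ or at least $2$; telescoping to a subsequence that keeps every level $\le n$ and then jumps to $m$ collapses only levels above $n$, hence leaves $V_o^n$, the labels $m_\pm$, and therefore the transition graph $L_n$ and the edge $e$ unchanged. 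In this telescoped diagram $v_1$ is joined to some $v'\in V_o^{n+1}$ by at least two parallel edges; order two of them as $e'<e''$, so that $s(e')=s(e'')=v_1$.

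Finally I would apply Lemma~\ref{path-lifting} with $v=v_1$, the minimal edge $e$, the vertex $v'$, and the pair $e'<e''$. It yields a path $(v_1,\dots,v_l)$ in $L_n$ starting at $Y_i$ with first edge $v_1$ and ending at $Y_{m_-(s(e''))}$. Since $s(e'')=v_1$ we have $m_-(s(e''))=m_-(v_1)=i$, so the path terminates at $Y_i$ and is the desired closed walk. The only delicate points are verifying that the telescoping leaves $L_n$ and the minimal-edge hypothesis intact, and the observation that choosing $s(e'')=v_1$ is exactly what pins the endpoint $m_-(s(e''))$ back to $i$; the combinatorial chaining of the intermediate edges via Conditions~\ref{prop-4-1} and~\ref{prop-4-2} is already carried out inside Lemma~\ref{path-lifting}.
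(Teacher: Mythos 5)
Your proposal is correct and follows essentially the same route as the paper: take the vertex $v_1\in V_o^n$ with $m_-(v_1)=i$, use non-elementarity to produce two comparable edges $e'<e''$ from $v_1$ into a common vertex $v'$ at the next level, and apply Lemma~\ref{path-lifting}, so that $s(e'')=v_1$ forces the endpoint $Y_{m_-(s(e''))}=Y_{m_-(v_1)}=Y_i$. Your extra care about why $s(e)\in V^{n-1}_i$ and why the telescoping needed to place the parallel edges at level $n+1$ leaves $L_n$ unchanged only makes explicit details the paper's proof glosses over.
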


\begin{proof}
Without loss of generality, assume that the vertex $Y_1$ is a source point. 
Then there is $v\in V^{n}\setminus \bigcup_{i=1}^k V^{n}_i$ and $e\in 
E_{\min}$ such that $r(e)=v$ and $s(e)\in V^{n-1}_1$. Since the diagram is 
non-elementary, by Condition \ref{Cantorset} of Definition \ref{defn-unordered-BD}, there is $v'\in V^{n+1}$ such 
that $$\abs{E_{v, v'}:=\{e'\in E;\ s(e')=v, r(e')=v'\}}\geq 2.$$

Pick $e', e''\in E_{v, v'}$ with $e'<e''$. Then, by Lemma \ref{path-lifting}, 
there is a path in $L_{n}$ staring at $Y_1$ and ending at $Y_{m_-(v)}
=Y_1$; and it is the desired loop.
\end{proof}

\subsection{Index maps and transition graphs} In this subsection, we shall 
give a description of the images of the index maps (see Theorem \ref{index}) 
using the transition graphs.

Consider a $k$-simple non-elementary ordered Bratteli diagram $B=(V, E, >)$, and consider the associated Cantor system $(X_B, \sigma)$. Note that the ideal $\mathrm{C}_0(X_B\setminus\bigcup_{i=1}^k Y_i)\rtimes\Int$ has an AF-structure arising naturally from the sub-diagram of $B$ with vertices $\{V_o^n: n=1, 2, ...\}$.  In particular, its $\Kzero$-group is naturally isomorphic to the dimension group of the sub-diagram of $B$ with vertices $\{V_o^n: n=1, 2, ...\}$. With this identification, one has the following theorem:
\begin{thm}\label{index-diag}
Let $Y_i$ be a minimal component of $(X_B, \sigma)$, and let $L_n$ be the 
transition graph of $B$ at level $n$. Denote by $E_+(Y_i)$ the set of 
edges of $L_n$ which have $Y_i$ as source, and denote by $E_-(Y_i)$ 
the set of edges of $L_n$ which have $Y_i$ as range. Write 
$$E_+(Y_i)=\{v^+_1, ..., v^+_s\}$$ and $$E_-(Y_i)=\{v^-_1, ..., v^-_t\}.$$ 
That is,
 \begin{center}
\begin{tikzpicture}
\node[VertexStyle] (Y1) at (3, 1) {$Y_i$};
\node (E+1) at (0, 0) {};
\node (E+2) at (0, 2) {};

\node (E-1) at (6, 0) {};
\node (E-2) at (6, 2) {};

\node at (0.6, 1) {$\vdots$};
\node at (6-0.6, 1) {$\vdots$};

\tikzstyle{EdgeStyle}=[->]
 \Edge[label=$v_1^+$](Y1)(E+1)
  \Edge[label=$v_s^+$](Y1)(E+2)
  
   \Edge[label=$v_1^-$](E-1)(Y1)
  \Edge[label=$v_s^-$](E-2)(Y1)
 \end{tikzpicture}.
\end{center}
Let $d_i$ be the element of Theorem \ref{index} associated to $Y_i$. Then 
$d_i$ is given by $$(e_ {v^+_1}+\cdots+e_ {v^+_s})-(e_ {v^-_1}+\cdots
+e_ {v^-_t}),$$ where $e_ {v}$ stands for $(0, ..., 0, 1, 0, ..., 0)\in
\bigoplus_{V_o^n}\Int$ with entry $1$ at the position $v$.
\end{thm}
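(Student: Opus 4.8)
The plan is to start from the concrete description of $d_i$ furnished by Theorem \ref{index}: for \emph{any} family of pairwise disjoint clopen sets $U_j\supseteq Y_j$ one has $d_i=[\chi_{U_i}-\chi_{U_i}\circ\sigma^{-1}]_0$, and this class does not depend on the choice. I would fix a level $n$ and take the $U_j$ to be the canonical clopen neighbourhoods $N_{n,j}=\bigcup_{(n,l)\in V_j^n}\bigcup_{m}Z(n,l,m)$ of the minimal sets (as in the proof of Theorem \ref{model-k-simple}), which are disjoint and nested-decreasing once $n$ is large. On the other side I would use the AF-picture of the ideal from Lemma \ref{approx-l-subalg}: under the identification $\Kzero(\mathrm{C}_0(X_B\setminus\bigcup_i Y_i)\rtimes\Int)\cong\varinjlim_m\Int^{V_o^m}$, the generator $e_v$ is the class $[\chi_{Z(m,v,1)}]_0$ of the base of the transient tower $v\in V_o^m$, and for an integer-valued function $g$ supported off $\bigcup_i Y_i$ and measurable with respect to $\mathscr P_m$ (so that, at a fine enough level, its support is a union of $V_o^m$-towers), its class equals the \emph{tower sum} $\sum_{v\in V_o^m}\big(\sum_{\text{floors of }v} g\big)\,e_v$.

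The key mechanism is that $\chi_{U_i}-\chi_{U_i}\circ\sigma^{-1}$ is a coboundary, so summing it along a Rokhlin tower telescopes. For a tower $v\in V_o^m$ with floors $Z(m,v,1),\dots,Z(m,v,h)$ the tower sum collapses to $\chi_{U_i}(\text{top of }v)-\chi_{U_i}\big(\sigma^{-1}(\text{base of }v)\big)$, i.e. to the values of $\chi_{U_i}$ at the two boundary floors of the orbit segment. The top floor is governed by $m_+$ \emph{by definition}, since $Z(m,v,h)\subseteq N_{m-1,\,m_+(v)}$, so it lies in $U_i$ exactly when $m_+(v)=i$. Granting the analogous statement at the bottom boundary, namely that $\sigma^{-1}(\text{base of }v)$ lies in $N_{\cdot,\,m_-(v)}$ and hence contributes exactly when $m_-(v)=i$, the $v$-coordinate of $d_i$ is the difference of these two indicators; this is precisely the net count of transition-graph edges at $Y_i$, an edge $v$ with source $Y_i$ (i.e. $m_-(v)=i$) counting once and an edge $v$ with range $Y_i$ (i.e. $m_+(v)=i$) counting once with the opposite sign, yielding $\sum_{v\in E_+(Y_i)}e_v-\sum_{v\in E_-(Y_i)}e_v$. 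Towers lying in some $V_j^m$ rather than $V_o^m$ contribute $0$ and are simply dropped.

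The main obstacle is exactly the bottom-boundary identification, which, unlike the top, is not definitional and is where the continuity Conditions \ref{cont-cond} of Definition \ref{defn-BD} must enter. Here I would invoke the method of Lemma \ref{path-lifting}: writing a base point of $v$ as a path minimal up to level $m$, one locates the first non-minimal edge $\xi^c$ above, observes that $\sigma^{-1}$ replaces everything below level $c$ by the maximal path ending at $s(\xi^c-1)$, and then propagates the component label through Conditions \ref{prop-4-1} and \ref{prop-4-2} to conclude $m_+(s(\xi^c-1))=m_-(v)$, whence $\sigma^{-1}(\text{base of }v)\in N_{\cdot,\,m_-(v)}$. Two routine points remain: one must pass to a level fine enough that $\chi_{U_i}\circ\sigma^{-1}$ is $\mathscr P_m$-measurable so the tower-sum formula applies, and one must check that the resulting level-$n$ expression is consistent under telescoping; the latter follows from the matching relation $m_+(v_s)=m_-(v_{s+1})$ along the traversal of a higher tower, which makes the signed indicators telescope between consecutive level-$n$ towers. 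Finally I would reconcile the orientation conventions (the Vershik map, the index map, and the basis $e_v$) so that the signs agree with the statement.
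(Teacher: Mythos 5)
Your proposal is sound and reaches the right answer, but it organizes the computation differently from the paper. Both arguments start from Theorem \ref{index} and compute $d_i=[\chi_{U}-\chi_{U}\circ\sigma^{-1}]_0$ inside the AF ideal; the difference is where the cancellation is made to happen. The paper takes $U$ to be the cylinder set of paths lying in $V_i$ up to level $n-1$ and computes the two sets $U\setminus\sigma(U)$ and $\sigma(U)\setminus U$ \emph{directly at level $n$}, as disjoint unions of specific floors $\chi_-(\cdot)$ of the $V_o^n$-towers indexed by the edges that ``jump into'' and ``jump out of'' each block $E_l$ of edges from $V_i^{n-1}$ to $v_l$; the coefficient of $e_{v_l}$ is then the purely combinatorial in/out balance $r_l-m_l\in\{0,\pm1\}$ of Equation \eqref{cancelation-k}, which is $\pm1$ exactly according to whether $E_l$ contains the maximal or the minimal edge into $v_l$, i.e.\ according to $m_\pm(v_l)=i$. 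You instead sum the coboundary over whole Kakutani--Rokhlin towers at a finer level $m$ and telescope, reducing each tower to its two boundary values $\chi_{U_i}(\mathrm{top})-\chi_{U_i}(\sigma^{-1}(\mathrm{base}))$. That route is conceptually clean and makes explicit where Conditions \ref{prop-4-1} and \ref{prop-4-2} enter (the identification $\sigma^{-1}(\mathrm{base\ of\ }v)\in N_{\cdot,\,m_-(v)}$, which you correctly isolate as the crux and which does go through by propagating $m_-(s(\xi^c))=m_-(v)$ and then applying \ref{prop-4-1} or \ref{prop-4-2} to the edge $\xi^c-1$ according to whether $s(\xi^c-1)$ is transient or recurrent); but it costs you two extra steps the paper avoids --- passing to a level fine enough that $\chi_{U_i}\circ\sigma^{-1}$ is tower-measurable, and then telescoping the level-$m$ answer back down to the level-$n$ coordinates via the matching $m_+(v_s)=m_-(v_{s+1})$, which is essentially a second run of the same argument (the content of Lemma \ref{path-lifting}). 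Note finally that your raw boundary count gives the $v$-coordinate $[m_+(v)=i]-[m_-(v)=i]$, i.e.\ $\sum_{E_-(Y_i)}e_v-\sum_{E_+(Y_i)}e_v$, which also agrees with the paper's intermediate expression $\sum_l(r_l-m_l)e_{v_l}$; the overall sign relative to the displayed statement is a global orientation convention for the index map, and since the theorem is only ever used through the facts that the entries lie in $\{0,\pm1\}$ and pair off, your deferral of that reconciliation is harmless.
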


\begin{proof}
Consider the set $U\subseteq X_B$ consisting of all infinite paths which are in 
$Y_i$ up to level $n-1$.  It is clear that $U$ is a clopen set containing $Y_i$. 
By Theorem \ref{index}, the element $d_i$ is given by $$[\chi_{U}-\chi_{U}
\circ\sigma^{-1}]_0=[\chi_{U}-\chi_{\sigma(U)}]_0.$$ 

Denote by $v_1, ..., v_h$ the vertices in $V_o^n$ connecting to $Y_i$; and 
for each $1\leq\ l\leq h$, define
$$E_l:=\{e\in E;\ r(e)=v_l, s(e)\in V_i^{n-1}\};$$ and 
$$\{f^{(l)}_1, ..., f^{(l)}_{m_l}\}:=\{e\in E_l;\ e\notin E_{\max}, s(e+1)
\notin V^{n-1}_i\},$$
$$\{g^{(l)}_1, ..., g^{(l)}_{r_l}\}:=\{e\in E_l;\ e\notin E_{\min}, s(e-1)
\notin V^{n-1}_i\}.$$

Since the number of edges jumping into $E_l$ is the same as the number of 
edges being pushed out of $E_l$, one has
\begin{equation}\label{cancelation-k}
\left\{
\begin{array}{ll}
m_l=r_l,& \textrm{if $\abs{(E_l\cap E_{\max})\cup(E_l\cap E_{\min})}\in
\{0, 2\}$};\\
m_l+1=r_l, &  \textrm{if $(E_l\cap E_{\max})\neq\O$ but $E_l\cap E_{\min}
=\O$};\\
m_l=r_l+1, &  \textrm{if $(E_l\cap E_{\max})=\O$ but $E_l\cap E_{\min}
\neq\O$}.
\end{array}
\right.
\end{equation}

For each $e\in E_l$, define $\chi_-(e)$ to be the cylinder set consisting all 
infinite paths starting with $we$ with $w$ the minimal finite path ending at 
$s(e)$. Note that for any $e_1, e_2\in E_l$, one has 
\begin{equation}\label{same-K}
e_{v_l}:=[\chi_-(e_1)]_0=[\chi_-(e_2)]_0=(0, ..., 0, 1, 0...,0)\in
\bigoplus_{V_o^n}\Int\subset\Kzero(I),
\end{equation} 
where $1$ is at the position $v_l$.

One then asserts that 
\begin{equation}
\chi_{U}-\chi_{\sigma(U)}=\sum_{l=1}^h (\chi_-(g^{(l)}_1)+\cdots+\chi_-
(g^{(l)}_{r_l}))-\sum_{l=1}^h (\chi_-(f^{(l)}_1+1)+\cdots+\chi_-(f^{(l)}
_{m_l}+1)).
\end{equation}
In fact, $$\chi_{U}-\chi_{\sigma(U)}=\chi_{U\setminus\sigma(U)}-
\chi_{\sigma(U)\setminus U}.$$ Then the assertion follows from the 
equations
$$\chi_{U\setminus\sigma(U)}=\sum_{l=1}^h (\chi_-(g^{(l)}_1)+\cdots+
\chi_-(g^{(l)}_{r_l})),$$ and
$$\chi_{\sigma(U)\setminus U}=\sum_{l=1}^h (\chi_-(f^{(l)}_1+1)+\cdots
+\chi_-(f^{(l)}_{m_l}+1)),$$ which can be verified straightforwardly.

Then, by Equation \eqref{cancelation-k} and Equation \eqref{same-K}, one 
has
\begin{eqnarray*}
&&[\chi_{U}-\chi_{\sigma(U)}]_0\\
&= &\sum_{l=1}^h [(\chi_-(g^{(l)}_1)+\cdots+\chi_-(g^{(l)}_{r_l})]_0-\sum_{l=1}^h[\chi_-(f^{(l)}_1+1)+\cdots+\chi_-(f^{(l)}_{m_l}+1)]_0\\
&= &\sum_{l=1}^h [(\chi_-(g^{(l)}_1)]_0+\cdots+[\chi_-(g^{(l)}
_{r_l})]_0)-([\chi_-(f^{(l)}_1)]_0+\cdots+[\chi_-(f^{(l)}_{m_l})]_0)\\
&=& \sum_{l=1}^h (r_l-m_l)e_{v_l}
=(e_ {v^+_1}+\cdots+e_ {v^+_s})-(e_ {v^-_1}+\cdots+e_ {v^-_t}),
\end{eqnarray*}
as desired.
\end{proof}

\begin{cor}\label{connected-diag}
Let $B=(V, E, >)$ be a $k$-simple ordered Bratteli diagram with $k\geq 2$. 
Then each transition graph $L_n$ is connected. In particular, $L_n$ has 
at least $k-1$ edges. 
\end{cor}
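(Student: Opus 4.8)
The plan is to argue by contradiction, using the additivity of the index classes $d_i$ recorded in Theorem \ref{index-diag} together with the nonvanishing of proper partial sums established in Theorem \ref{index}. Suppose $L_n$ were disconnected. Then its vertex set $\{Y_1, \dots, Y_k\}$ splits into two nonempty blocks indexed by $S$ and $T = \{1, \dots, k\} \setminus S$ so that no edge of $L_n$, in either direction, joins a vertex of $S$ to a vertex of $T$.

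First I would compute the partial sum $\sum_{i \in S} d_i$ from the level-$n$ formula of Theorem \ref{index-diag}, namely $d_i = \sum_{v \in E_+(Y_i)} e_v - \sum_{v \in E_-(Y_i)} e_v$. An edge $v$ of $L_n$ running from $Y_a$ to $Y_b$ contributes $+e_v$ to $d_a$ and $-e_v$ to $d_b$, so in $\sum_{i \in S} d_i$ its contribution is $+e_v - e_v = 0$ if $a, b \in S$, is $0$ if $a, b \in T$, and is $\pm e_v$ exactly when $v$ crosses between the two blocks. By the choice of the partition there are no crossing edges, so every term cancels and $\sum_{i \in S} d_i = 0$ already in $\bigoplus_{V_o^n} \Int$, hence in $\Kzero$ of the AF ideal $\mathrm{C}_0(X_B\setminus\bigcup_i Y_i)\rtimes\Int$. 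But $S$ is a proper nonempty subset of $\{1, \dots, k\}$ (here $k \geq 2$ guarantees both blocks can be taken nonempty), so Theorem \ref{index} forces $\sum_{i \in S} d_i \neq 0$. This contradiction shows $L_n$ is connected.

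The ``in particular'' then follows from the elementary graph-theoretic fact that a connected graph on $k$ vertices has at least $k-1$ edges; since the edges of $L_n$ are indexed by $V_o^n$, this is precisely the claimed lower bound. The only point requiring care is the cancellation bookkeeping: distinct edges of $L_n$ give distinct basis vectors $e_v$, and an $S$-to-$T$ edge is a different graph edge from any $T$-to-$S$ edge, so the two families cannot cancel against one another. Consequently the vanishing of $\sum_{i \in S} d_i$ genuinely forces the absence of crossing edges, which is exactly what disconnectedness would have supplied; beyond this verification I do not anticipate any deeper obstacle.
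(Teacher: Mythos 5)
Your argument is correct and is essentially the paper's own proof: both assume a proper connected component (your block $S$), use the formula of Theorem \ref{index-diag} to see that every edge's $\pm e_v$ contributions cancel in $\sum_{i\in S} d_i$, and derive a contradiction with the nonvanishing of proper partial sums from Theorem \ref{index}. Your version merely spells out the cancellation bookkeeping in more detail; the final edge count via connectedness is the same.
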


\begin{proof}
If there were a proper connected component of $L_n$, say, consists of 
$\{Y_{n_1}, ..., Y_{n_s}\}$. Then, by Theorem \ref{index-diag}, 
one has $d_{n_1}+\cdots+d_{n_s}=0$.
But this contradicts to the conclusion of Theorem \ref{index}.
\end{proof}

In general, $k-1$ can be attained. For example, consider the following 
stationary diagram
\begin{displaymath}
\xymatrix{
\circ \ar@{-}[dr] \ar@{=}[d]  & \ar@{-}[d] \circ & \circ \ar@{=}[d] \ar@{-}[dl] \ar@{-}[dr] & \circ \ar@{-}[d] & \circ \ar@{-}[dl] \ar@{=}[d] \\
\circ \ar@{-}[dr] \ar@{=}[d]  & \ar@{-}[d] \circ & \circ \ar@{=}[d] \ar@{-}[dl] \ar@{-}[dr] & \circ \ar@{-}[d] & \circ \ar@{-}[dl] \ar@{=}[d] \\
\circ  & \circ & \circ & \circ & \circ 
}.
\end{displaymath}
This diagram can be easily ordered so that it becomes an ordered $3$-simple 
Bratteli diagram. Its transition graph at each level has $2$ edges. 

However, if the Bratteli diagram is non-elementary, that is, if the path space is 
a Cantor set, then $L_n$ has at least $k$ edges.

\begin{cor}\label{k-vertices}
Let $B=(V, E, >)$ be a non-elementary $k$-simple ordered Bratteli 
diagram with $k\geq 2$. 
The transition graph $L_n$ has at least $k$ edges. 
In particular, one has that $$\abs{V_o^n}=\abs{V_n\setminus\bigcup_{i=1}
^k V_i^n}\geq k$$ for all $n$.
\end{cor}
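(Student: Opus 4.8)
The plan is to reduce Corollary~\ref{k-vertices} to a purely graph-theoretic dichotomy for the finite directed graph $L_n$ and then to feed in the loop produced in Corollary~\ref{loop-diag}. First I would record the bookkeeping: by Definition~\ref{definition-TD} the assignment sending each $v$ to the edge it labels is a bijection between $V_o^n$ and the edge set of $L_n$, so the number of edges of $L_n$ equals $\abs{V_o^n}$. Hence it suffices to prove the first assertion, that $L_n$ has at least $k$ edges; the ``in particular'' clause is then immediate.

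Next I would set up the count. By Corollary~\ref{connected-diag} the graph $L_n$ is connected on its $k$ vertices $Y_1,\dots,Y_k$, and a connected (multi)graph on $k$ vertices has at least $k-1$ edges, with equality exactly when it is a tree, i.e. when it has no loop, no pair of parallel edges, and no cycle. So the entire point is to exclude the tree case: if I can exhibit a cycle in the underlying undirected multigraph of $L_n$, then $L_n$ is not a tree and therefore has at least $k$ edges.

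Then I would produce the cycle from Corollary~\ref{loop-diag}. Since $k\geq 2$ and $L_n$ is connected, it has at least $k-1\geq 1$ edges, so some vertex $Y_i$ is the source of an edge. Corollary~\ref{loop-diag} then yields a closed walk $(v_1,\dots,v_l)$ in $L_n$ beginning and ending at $Y_i$. Writing this walk as a vertex sequence $Y_{i_0},Y_{i_1},\dots,Y_{i_l}=Y_{i_0}$, the coincidence of the endpoints forces a repetition; choosing indices $a<b$ with $Y_{i_a}=Y_{i_b}$ and $b-a$ minimal produces a directed cycle $Y_{i_a}\to\cdots\to Y_{i_b}=Y_{i_a}$ whose interior vertices are distinct. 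If $b-a=1$ this cycle is a single self-loop at $Y_{i_a}$; if $b-a\geq 2$ it is a genuine directed cycle. In either case the underlying undirected multigraph contains a cycle, so $L_n$ is not a tree and $\abs{V_o^n}\geq k$, as desired.

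The main obstacle I anticipate is exactly this last passage, from the existence of a closed walk (which a priori may repeat vertices or edges) to a cycle that strictly exceeds the tree bound $k-1$. The clean way around it is the minimal-repetition argument above; it is worth noting in support that a directed closed walk can never backtrack along a single undirected edge, since returning along an edge would require the oppositely directed edge, a distinct element of $V_o^n$, so the two traversals already constitute parallel edges. One small point to keep straight is that $m_-$, $m_+$, and hence $L_n$ itself, are only defined once $n$ is large enough for Condition~\ref{minimalsets} of Definition~\ref{defn-BD}; the argument is carried out at any level where $L_n$ is defined, which is precisely the range already covered by Corollaries~\ref{connected-diag} and~\ref{loop-diag}.
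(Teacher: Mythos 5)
Your proposal is correct and follows essentially the same route as the paper: the paper likewise combines the connectivity of $L_n$ (Corollary \ref{connected-diag}) with the existence of a closed walk (Corollary \ref{loop-diag}) to conclude that a connected graph on $k$ vertices containing a cycle has at least $k$ edges. Your version merely spells out the extraction of a genuine cycle from the closed walk, a detail the paper leaves implicit.
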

\begin{proof}
It follows from Corollary \ref{connected-diag} that the transition graph $L_n$ 
is connected, and it follows from Corollary \ref{loop-diag} that $L_n$ 
contains loops. Since $L_n$ has $k$ vertices, it must has at least $k$ edges, 
as desired.
\end{proof}

\begin{defn}
Let $G$ be a dimension group with a given  inductive limit decomposition $G=\varinjlim \Int^{n_i}$. Define $\mathrm{D}(G)$ to be the subgroup consists of the elements $g$ such that, with $g=(g_i)$, where $g_i\in \Int^{n_i}$, there is $m\in\mathbb N$ such that $$\norm{g_i}_\infty \leq m,\quad i=1, 2, ... ,$$ where $\norm{\cdot}_\infty$ is the standard $\ell^\infty$-norm of $\Int^{n_i}$. 

It is straightforward to show that $\mathrm{D}(G)\cap G^+ = \{0\}$ if the dimension group $G$ has no quotient which is isomorphic to $\Int$. Also note that if $G$ is a noncyclic simple dimension group, then any element of $\mathrm{D}(G)$ is an infinitesimal (an element $g$ of a simple dimension group $G$ is said to be an infinitesimal if $-h<mg<h$ for all $m\in\mathbb N$ and $h\in G^+\setminus\{0\}$).
\end{defn}

Recall that $\mathrm{K}_{I_B}$ is the dimension group of the sub-diagram of $B$ restricted to the vertices $V_o^n$, $n=1, 2, ... ;$ hence it is isomorphic to $\Kzero(\mathrm{C}_0(X\setminus \bigcup_{i=1}^k Y_i)\rtimes\Int)$. 

\begin{cor}\label{index-inf}
If $B$ is a non-elementary ordered Bratteli diagram, then 
$$\mathrm{Image}(\mathrm{Ind})\subseteq \mathrm{D}({\mathrm{K}}_{I_B})$$
with respect to the inductive limit decomposition of $\mathrm{K}_{I_B}$ given by $B$. In particular, one has
$$\mathrm{Image}(\mathrm{Ind})\cap ({\mathrm{K}}_{I_B})^+=\{0\}.$$ 
Moreover, if $B$ is assumed to be strongly $k$-simple (so the ideal $\mathrm{K}_{I_B}$ is 
simple), then the image of the index map is in the subgroup of $\mathrm{K}_{I_B}$ of infinitesimals. 

\end{cor}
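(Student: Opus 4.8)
The plan is to extract the required uniform $\ell^\infty$-bound directly from Theorem \ref{index-diag} and then feed it into the two formal properties of $\mathrm{D}(\cdot)$ recorded in its definition. First I would observe that, by Theorem \ref{index}, the elements $d_1,\dots,d_k$ are the images under $\mathrm{Ind}$ of the canonical generators of $\bigoplus_i\Kone(\mathrm{C}(Y_i)\rtimes_\sigma\Int)$, so they generate $\mathrm{Image}(\mathrm{Ind})$ as a group. Since $\mathrm{D}(\mathrm{K}_{I_B})$ is a subgroup, it then suffices to prove that each $d_i$ lies in it, i.e. that the representative of $d_i$ in $\bigoplus_{V_o^n}\Int$ has $\ell^\infty$-norm bounded independently of $n$.

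This is the one step with genuine content, and it is exactly what Theorem \ref{index-diag} delivers. For every level $n\geq 2$, that theorem expresses the representative of $d_i$ in $\bigoplus_{V_o^n}\Int$ as $(e_{v^+_1}+\cdots+e_{v^+_s})-(e_{v^-_1}+\cdots+e_{v^-_t})$, where the $v^+_j$ are the edges of $L_n$ issuing from $Y_i$ and the $v^-_j$ the edges of $L_n$ landing at $Y_i$. A fixed $v\in V_o^n$ thus contributes $+1$ precisely when $m_-(v)=i$ and $-1$ precisely when $m_+(v)=i$, so its net coefficient always lies in $\{-1,0,1\}$ (the two contributions cancel exactly when $v$ is a loop at $Y_i$). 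Hence the level-$n$ representative of $d_i$ has $\ell^\infty$-norm at most $1$ for every $n$, so $d_i\in\mathrm{D}(\mathrm{K}_{I_B})$ with bound $1$. The uniformity of this bound is the whole point: it holds because at each level every entry of the representative lies in $\{-1,0,1\}$, which in turn is because each $v\in V_o^n$ indexes a single edge of $L_n$ and so contributes to the source-sum and to the range-sum for $Y_i$ at most once each. Passing to the subgroup generated by $d_1,\dots,d_k$ then gives $\mathrm{Image}(\mathrm{Ind})\subseteq\mathrm{D}(\mathrm{K}_{I_B})$.

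The remaining two assertions are then immediate. Since $B$ is non-elementary, $\mathrm{K}_{I_B}$ has no quotient isomorphic to $\Int$ by the Remark following Definition \ref{defn-unordered-BD}, and the property of $\mathrm{D}$ recorded in its definition gives $\mathrm{D}(\mathrm{K}_{I_B})\cap(\mathrm{K}_{I_B})^+=\{0\}$; combined with the inclusion above this yields $\mathrm{Image}(\mathrm{Ind})\cap(\mathrm{K}_{I_B})^+=\{0\}$. If moreover $B$ is strongly $k$-simple, then $\mathrm{K}_{I_B}$ is a simple dimension group, and being non-elementary it is noncyclic; the definition of $\mathrm{D}$ then ensures that every element of $\mathrm{D}(\mathrm{K}_{I_B})$ is an infinitesimal, so every element of $\mathrm{Image}(\mathrm{Ind})$ is too. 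I expect no real obstacle beyond the bookkeeping of the previous paragraph: all the dynamical and combinatorial work has already been carried out in Theorem \ref{index-diag}, and this corollary amounts to reading the uniform bound off that computation and then invoking the stated properties of $\mathrm{D}(\cdot)$.
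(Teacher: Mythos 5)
Your proposal is correct and follows essentially the same route as the paper's own (very brief) proof: reduce to showing each generator $d_i$ lies in $\mathrm{D}(\mathrm{K}_{I_B})$, read off from Theorem \ref{index-diag} that every entry of the level-$n$ representative of $d_i$ lies in $\{-1,0,1\}$, and then invoke the stated properties of $\mathrm{D}(\cdot)$ for the positivity and infinitesimal claims. Your write-up simply makes explicit the cancellation for loops and the final two deductions, which the paper leaves implicit.
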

\begin{proof}
Since the image of the index map is generated by $\{d_1, ..., d_k\}$, it is enough to show that $$d_i \in \mathrm{D}(\mathrm{K}_{I_B}),\quad i=1, 2, ..., k.$$ But this follows directly from Theorem \ref{index-diag}, which states that each entry of $d_i$ must be $0$, $-1$, or $1$, at any level, as desired.
%
\end{proof}

The following can be regarded as a strengthened version of Corollary \ref{k-vertices}.
\begin{cor}\label{rank-of-ideal}
Denote by $r$ the $\Ratn$-rank of ${\mathrm{K}}_{I_B}$. Then $r\geq k$. 
\end{cor}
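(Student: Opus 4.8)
The plan is to sharpen the bound $r\geq k-1$ of Theorem \ref{ind-Q} by producing one extra $\Ratn$-independent element, namely a nonzero \emph{positive} element of ${\mathrm{K}}_{I_B}$, which cannot lie in the $\Ratn$-span of the image of the index map precisely because that image avoids the positive cone. The whole argument rests on the positivity obstruction recorded in Corollary \ref{index-inf}, so essentially no new combinatorics is needed.

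First I would fix the two ingredients. By Theorem \ref{index0} the subgroup $\mathrm{Image}(\mathrm{Ind})\subseteq {\mathrm{K}}_{I_B}$ is isomorphic to $\Int^{k-1}$; in particular it is free abelian of $\Ratn$-rank $k-1$, and one may take as a $\Ratn$-basis the elements $d_1,\dots,d_{k-1}$ supplied by Theorem \ref{index}. Next, since $k\geq 2$, Theorem \ref{ind-Q} already gives $r\geq k-1\geq 1$, so ${\mathrm{K}}_{I_B}\neq 0$; as the positive cone of a dimension group is directed and generates the group, there is a nonzero positive element $p\in ({\mathrm{K}}_{I_B})^+$.

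The core step is to verify that $p,d_1,\dots,d_{k-1}$ are $\Ratn$-linearly independent in ${\mathrm{K}}_{I_B}\otimes\Ratn$. Suppose not; then $p$ lies in the $\Ratn$-span of the $d_i$. Because ${\mathrm{K}}_{I_B}$ is a direct limit of free abelian groups it is torsion-free, hence embeds in ${\mathrm{K}}_{I_B}\otimes\Ratn$, and clearing denominators yields an integer $m>0$ with $mp\in\mathrm{Image}(\mathrm{Ind})$. But $p\geq 0$ and $p\neq 0$ (together with $m>0$ and torsion-freeness) force $mp\in ({\mathrm{K}}_{I_B})^+\setminus\{0\}$, contradicting the identity $\mathrm{Image}(\mathrm{Ind})\cap ({\mathrm{K}}_{I_B})^+=\{0\}$ from Corollary \ref{index-inf}. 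Therefore the subgroup generated by $\{p\}\cup\mathrm{Image}(\mathrm{Ind})$ has $\Ratn$-rank at least $(k-1)+1=k$, and consequently $r\geq k$.

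The only delicate point is the passage from ``$p$ lies in the $\Ratn$-span of the image'' to ``a positive integer multiple $mp$ lies in the image as an actual subgroup of ${\mathrm{K}}_{I_B}$''; this is exactly where torsion-freeness of the dimension group is used, and it is routine. Beyond that I anticipate no real obstacle: the substantive work—identifying $\mathrm{Image}(\mathrm{Ind})$ with the transition-graph data and proving it meets $({\mathrm{K}}_{I_B})^+$ only in $0$—has already been carried out in Theorem \ref{index-diag} and Corollary \ref{index-inf}, and the present statement is a clean corollary of that positivity phenomenon.
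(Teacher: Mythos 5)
Your proposal is correct and is essentially identical to the paper's own proof: both pick a nonzero positive element $p$ of ${\mathrm{K}}_{I_B}$, observe via torsion-freeness that $p\otimes 1\in \mathrm{Image}(\mathrm{Ind})\otimes\Ratn$ would force a positive multiple $mp$ into $\mathrm{Image}(\mathrm{Ind})\cap({\mathrm{K}}_{I_B})^+=\{0\}$ (Corollary \ref{index-inf}), and conclude that the $\Ratn$-rank exceeds $k-1$. No substantive difference from the paper's argument.
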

\begin{proof}
Denote by $H\subseteq {\mathrm{K}}_{I_B}$ the image of the index map. By Equation \ref{eq-ind}, one has that $\mathrm{dim}_\Ratn(H\otimes \Ratn) = k-1$. Note that the dimension group ${\mathrm{K}}_{I_B}$ must contain nonzero positive elements; so pick $p\in {\mathrm{K}}_{I_B}$ which is positive and nonzero. One asserts that $p\otimes 1_\Ratn\notin H\otimes\Ratn$. If this were not true, there are natural numbers $m, n$ and $h\in H$ such that $mp = nh$. Since $p$ is positive, it follows from Corollary \ref{index-inf} that $mp=0$. Since the dimension group ${\mathrm{K}}_{I_B}$ is torsion free, one has that $p=0$, which contradicts to the choice of $p$. Therefore ${\mathrm{K}}_{I_B}\otimes\Ratn \supsetneq H\otimes\Ratn$, and hence $r=\mathrm{dim}_\Ratn({\mathrm{K}}_{I_B}\otimes\Ratn) \geq k-1+1=k$. 
%
\end{proof}

The authors thank the referee for suggesting Pimsner's dynamical criterion for the stable finiteness in the following corollary.
\begin{cor}\label{2-0-fin}
Let $(X, \sigma)$ be a indecomposable Cantor system with $k$ minimal subsets.  
Then the C*-algebra $\mathrm{C}(X)\rtimes_{\sigma}\Int$ is stably finite. It has stable rank $2$ if $k\geq 2$, and stable rank $1$ if $k=1$.  Moreover, if $(X, \sigma)$ is aperiodic, the C*-algebra $\mathrm{C}(X)\rtimes_{\sigma}\Int$ has real rank zero. 
\end{cor}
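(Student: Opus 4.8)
The plan is to study $A:=\mathrm{C}(X)\rtimes_\sigma\Int$ through the short exact sequence \eqref{exact}, whose ideal $I:=\mathrm{C}_0(X\setminus\bigcup_iY_i)\rtimes_\sigma\Int$ is AF and whose quotient $B:=\bigoplus_{i=1}^k\mathrm{C}(Y_i)\rtimes_\sigma\Int$ is a finite direct sum of crossed products of minimal Cantor systems. Each summand $\mathrm{C}(Y_i)\rtimes_\sigma\Int$ is simple and carries a $\sigma$-invariant probability measure, hence a faithful tracial state, so it is stably finite; it is moreover an A$\mathbb{T}$-algebra of stable rank one (or $\mathrm{M}_p(\mathrm{C}(\mathbb T))$ if $Y_i$ is a periodic orbit), and it has real rank zero when $(X,\sigma)$ is aperiodic, since then $Y_i$ is a Cantor set. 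The case $k=1$ is settled immediately: by Corollary \ref{AT} the index map vanishes and $A$ is an A$\mathbb{T}$-algebra, hence stably finite of stable rank one, with real rank zero in the aperiodic case by the A$\mathbb{T}$-structure and \cite{Poon-PAMS-89}. So assume $k\geq2$.

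The heart of the matter is \emph{stable finiteness}, and this is where I expect the main obstacle: an extension of stably finite C*-algebras need not be stably finite, the Toeplitz extension $0\to\mathcal K\to\mathcal T\to\mathrm{C}(\mathbb T)\to0$ being the standard warning. What rescues us is the location of the index map established in Corollary \ref{index-inf}, namely $\mathrm{Image}(\mathrm{Ind})\cap\Kzero(I)^+=\{0\}$. Concretely, let $s\in A$ be an isometry, so $s^*s=1$ and $p:=ss^*\leq1$. Its image $\bar s\in B$ is an isometry in the finite algebra $B$, hence a unitary, so $1-p$ lies in $I$ and is a projection there. Regarding $s$ as a partial isometry lifting $\bar s$, the index map carries $[\bar s]_1$ to $[1-s^*s]_0-[1-ss^*]_0=-[1-p]_0$ (the same computation as in Theorem \ref{index}). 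Therefore $[1-p]_0\in\mathrm{Image}(\mathrm{Ind})\cap\Kzero(I)^+=\{0\}$, and since $I$ is AF and so has cancellation, $[1-p]_0=0$ forces $1-p=0$; that is, $s$ is a unitary and $A$ is finite.

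To upgrade finiteness to \emph{stable} finiteness, I would run the identical argument in each matrix amplification: for the extension $0\to\mathrm{M}_n(I)\to\mathrm{M}_n(A)\to\mathrm{M}_n(B)\to0$ the ideal $\mathrm{M}_n(I)$ is again AF, the quotient $\mathrm{M}_n(B)$ is again finite, and the index map is identified with $\mathrm{Ind}$ under $\mathrm{K}_*(\mathrm{M}_n(A))\cong\mathrm{K}_*(A)$, so its image still meets $\Kzero(I)^+$ only in $0$. Hence every isometry of $\mathrm{M}_n(A)$ is a unitary and $A$ is stably finite. As the referee pointed out, an alternative route to stable finiteness is Pimsner's dynamical criterion for the AF-embeddability of $\mathrm{C}(X)\rtimes_\sigma\Int$.

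For the stable-rank and real-rank assertions I would invoke Corollary 2.6 of \cite{Poon-PAMS-89}, which computes these invariants for extensions of the present shape. Since $I$ is AF and $B$ is a direct sum of A$\mathbb{T}$-algebras, both of stable rank one, the extension theory gives $\mathrm{sr}(A)\leq2$; and stable rank one is excluded because it would force invertibles to lift along $A\to B$ and hence the index map to vanish, contradicting Theorem \ref{index0}. Thus $\mathrm{sr}(A)=2$. Finally, when $(X,\sigma)$ is aperiodic the diagram is aperiodic, so $I$ has real rank zero and each $\mathrm{C}(Y_i)\rtimes_\sigma\Int$ has real rank zero; feeding this together with the index-map information into \cite[Cor.~2.6]{Poon-PAMS-89} yields real rank zero for $A$. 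The conceptual point throughout is that the \emph{infinitesimal} nature of $\mathrm{Image}(\mathrm{Ind})$ --- each $d_i$ having entries in $\{-1,0,1\}$ at every level --- is exactly the obstruction whose vanishing both forecloses infinite isometries and distinguishes this situation from the Toeplitz one.
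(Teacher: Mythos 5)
Your proposal follows essentially the same route as the paper: the same extension with AF ideal and finite A$\mathbb T$ quotient, the same index-map computation showing $[1-ss^*]_0\in\mathrm{Image}(\mathrm{Ind})\cap\Kzero^+(I)=\{0\}$ to force isometries to be unitaries (with matrix amplifications for stable finiteness), and the same appeal to \cite{Poon-PAMS-89} together with the nonvanishing of the index map for stable rank $2$. The only correction needed is in the real rank zero step: the relevant obstruction there is the \emph{exponential} map $\Kzero(\bigoplus_i\mathrm{C}(Y_i)\rtimes_\sigma\Int)\to\Kone(\mathrm{C}_0(X\setminus\bigcup_iY_i)\rtimes_\sigma\Int)$, which vanishes because the ideal is AF, and the citation should be Proposition 4 (i) of \cite{LM} rather than Corollary 2.6 of \cite{Poon-PAMS-89}, which concerns stable rank.
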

\begin{proof}
Let us first consider the real rank of $\mathrm{C}(X)\rtimes_{\sigma}\Int$. Note that there is a short exact sequence
\begin{equation}\label{ext-cp-alg}
\xymatrix{
0\ar[r]&\mathrm{C}_0(X\setminus\bigcup_iY_i)
\rtimes_\sigma\Int\ar[r]&\mathrm{C}(X)\rtimes_\sigma\Int\ar[r]&
\bigoplus_i\mathrm{C}(Y_i)\rtimes_\sigma\Int\ar[r]&0
}.
\end{equation}
Since $(X, \sigma)$ is assumed to be aperiodic, each minimal component $Y_i$ is homeomorphic to a Cantor set, and therefore the quotient algebra $\bigoplus_i\mathrm{C}(Y_i)\rtimes_\sigma\Int$ is an A$\mathbb T$ algebra with real rank zero. On the other hand, the ideal $\mathrm{C}_0(X\setminus\bigcup_iY_i)\rtimes_\sigma\Int$ is AF, so it also has real rank zero. Since $\Kone({\mathrm{C}_0(X\setminus\bigcup_iY_i)\rtimes_\sigma\Int}) = \{0\}$, one has that the exponential map $$\Kzero(\bigoplus_i\mathrm{C}(Y_i)\rtimes_\sigma\Int) \to \Kone(\mathrm{C}_0(X\setminus\bigcup_iY_i)\rtimes_\sigma\Int) $$ of the extension above must be zero. Hence, as  an extension of two real rank zero C*-algebras with zero exponential map, the algebra $\mathrm{C}(X)\rtimes_\sigma\Int$ has real rank zero (see, for instance, Proposition 4 (i) of \cite{LM}).


For the stable rank of $\mathrm{C}(X)\rtimes_{\sigma}\Int$, if $k=1$, it follows from Corollary \ref{AT} that $\mathrm{C}(X)\rtimes_{\sigma}\Int$ is an A$\mathbb T$ algebra, and in particular, it has stable rank one. If $k\geq 2$, it follows from Corollary 2.6 of 
\cite{Poon-PAMS-89} that $\mathrm{C}(X)\rtimes_{\sigma}\Int$ has stable 
rank $2$. (Indeed, since both the ideal and the quotient algebras in the extension \eqref{ext-cp-alg} have stable rank $1$, it follows from Corollary 4.12 of \cite{Rieffel-DimStr} that the stable rank of $\mathrm{C}(X) \rtimes_\sigma\Int$ is either $1$ or $2$. Since the index map is nonzero when $k\geq 2$, the stable rank of $\mathrm{C}(X)\rtimes_{\sigma}\Int$ cannot be $1$; see, for instance, Proposition 4 (ii) of \cite{LM}.)


Let us show that $\mathrm{C}(X)\rtimes_{\sigma}\Int$ is always stably finite, and let us prove the following general statement instead: Consider an extension 
of C*-algebras 
\begin{displaymath}
\xymatrix{
0 \ar[r] & I \ar[r] & A \ar[r]^{\pi} & D \ar[r] & 0
}
\end{displaymath}
with $A$ and $D$ unital. Assume that $D$ is stably finite, $I$ has the 
property that $[p]_0\neq 0\in\Kzero(I)$ for any nonzero projection $p\in I$, 
and 
\begin{equation}\label{perp-pos}
\mathrm{Ind}(\Kone(D))\cap\Kzero^+(I)=\{0\}.
\end{equation} 
Then $A$ is stably finite. 

One only has to show that $A$ is finite (for matrix algebras over $A$, one 
can tensor the extension above with a matrix algebra, and proceed with the 
same argument). Let $v$ be an isometry in $A$. Since $D$ is finite,  the 
image $\pi(v)$ has to be an unitary. Then
$$\mathrm{Ind}(-[\pi(v)]_1)=[1_A-vv^*]_0-[1_A-v^*v]_0=[1_A-vv^*]_0\in\Kzero^+(I).$$
Therefore $[1-vv^*]_0=0$ and hence $1-vv^*=0$ (since $1-vv^*$ is a 
projection on $A$). So $v$ must be a unitary, and $A$ is finite.

Consider the extension \eqref{ext-cp-alg}. Note that, by Theorem \ref{model}, one may assume that the Cantor system arises from a non-elementary $k$-simple ordered Bratteli diagram. It follows from Corollary \ref{index-inf} 
that Equation \eqref{perp-pos} always holds, and $I$ is an AF-algebra. Hence the statement follows.

Alternatively, one also can use Pimsner's dynamical criterion for the stable finiteness. Note that the C*-algebra $\mathrm{C}(X)\rtimes_\sigma\Int$ is A$\mathbb T$ and hence stably finite if $k=1$ (Corollary \ref{AT}). So, let us assume that $k\geq 2$, and let us show that every point of $X$ is pseudoperiodic, i.e., for any $x_0\in X$ and any $\eps>0$, there exist $x_1, x_2, ..., x_{n-1}$ such that $$\mathrm{dist}(x_{i+1}, \sigma(x_i)) < \eps,\quad i=0, 1, ..., n-1,$$ where $\mathrm{dist}$ is a compatible metric on $X$, and $x_{n}$ is understood as  $x_0$. Then the stably finiteness follows from Theorem 9 of \cite{Pimsner-AF}.

The pseudoperiodicity indeed follows from Corollary \ref{connected-diag}: If $x_0 \in Y_i$ for sone $i=1, ..., k$, then the pseudoperiodicity follows from the minimality of $Y_i$; therefore one may assume that $x_0 \in X\setminus \bigcup_{i=1}^k Y_i$. With the Bratteli-Vershik model, let $x_0$ be represented by the infinite path $[e_1, e_1, ..., e_d, e_{d+1}, ...]$, and let $d$ be sufficiently large such that any two paths with same first $d$ segments actually have distance at most $\eps$. 

Consider the vertex $r(e_d)$, which is at the level $d+1$. Since $x_0\in X\setminus \bigcup_{i=1}^k Y_i$, one may assume that $d$ is sufficiently large so that $r(e_d) \in V_o^{d+1}$. Consider the minimal edge starting with $r(e_d)$ backwards, and denote it by $[e'_1, e'_2, ..., e_d']$. It is clear that $x_0$ is in the forward orbit of the infinite path $[e'_1, ..., e'_d, e_{d+1}, ...]$. By Lemma \ref{shorten}, one may assume that $s(e'_d)$ is in the sub-diagram $B_{m_{-}(r(e_d))}$ (and so the finite minimal path $[e'_1, e'_2, ..., e_{d-1}']$ is in the sub-diagram $B_{m_{-}(r(e_d))}$). Pick an arbitrary infinite path $y$ in the sub-diagram $B_{m_{-}(r(e_d))}$ which starts with $[e'_1, e'_2, ..., e_{d-1}']$. Note that $$\mathrm{dist}(y, [e'_1, ..., e_{d-1}', e'_d, e_{d+1}, ...])<\eps.$$

 Consider the minimal set $Y_{m_{-}(r(e_d))}$. By Corollary \ref{connected-diag}, there is a closed walk $(r(e_d), v_2, ..., v_l)$ in the transition graph $L_{d+1}$, where $v_i\in V_o^{d+1}$, $i=2, ..., l$, and $m_+(v_l) = m_-(r(e_d))$. Then, this loop provides a partial orbit $x_1, x_2, ..., x_n$, where each $x_i$ is an infinite path of the Bratteli diagram such that $x_{i+1}=\sigma(x_{i})$, $i=0, ..., n-1$, and $\sigma(x_n) = (e_1'', ..., e_{d-1}'', e_d'', ...)$, where $e_1'', e_2'', ..., e_{d-1}''$ are minimal edges in the Bratteli sub-diagram $B_{m_{-}(r(e_d))}$. Pick an arbitrary infinite path $z_0$ in the Bratteli sub-diagram $B_{m_{-}(r(e_d))}$ which starts with $[e_1'', e_2'', ..., e_{d-1}'']$, and note that $$\mathrm{dist}(z_0, \sigma(x_n)) = \mathrm{dist}(z, [e''_1, ..., e_{d-1}'', e''_d, ...])<\eps.$$

Since the set $Y_{m_{-}(r(e_d))}$ is minimal (so the forward orbit is dense), there are $z_1, z_2, ..., z_{n_1}\in Y_{m_{-}(r(e_d))}$ such that $z_{i+1} = \sigma(z_{i})$, $i=0, ..., n_1-1$, and $\mathrm{dist}(\sigma(z_{n_1}, y)) < \eps.$ Therefore, the finite sequence 
$$
x_0, x_1, x_2, ..., x_n, z_0, z_1, ..., z_{n_1}, y, [e'_1, ..., e_{d-1}', e'_d, e_{d+1}, ...], ..., x_0
$$
is the desired pseudoperiodic orbit.
\end{proof}

\section{Realizability of a Bratteli diagram}\label{EHS}

Let $B=(V, E)$ be an unordered $k$-simple Bratteli diagram. In this section, let us consider the question when there is an order $>$ on $B$  so that $(V, E, >)$ is an ordered $k$-simple Bratteli diagram.

Suppose that there is an order $>$ on $B$ so that $(V, E, >)$ is $k$-simple. Without loss of generality, let us also assume that $(V, E, >)$ satisfies the conditions of Lemma \ref{OBD-red}. 

Denote by $L_n$ the transition graphs of $(V, E, >)$ at the level $n$.  For any edge $v$ of the transition graph $L_n$, denote by $Y_{\min(v)}$ the source point of $v$ and denote by $Y_{\max(v)}$ the range point of $v$. 

\begin{thm}\label{realization-TD}
Consider the $k$-simple ordered Bratteli diagram $(V, E, >)$. The transition graphs $\{L_n;\ n=2, ...\}$ are compatible to the unordered Bratteli diagram $(V, E)$ in the following sense:
For any edge $w$ of $L_{n+1}$, there is a path $(v_1, v_2, ..., v_l)$ in $L_n$ such that 
\begin{enumerate}
\item\label{comp-01} the edge $w$ and the path $(v_1, ..., v_l)$ have the same range and source,
\item\label{comp-02} for any $v\in V^n_o$, the number of times $v$ (as an edge of $L_n$) appears in $(v_1, ..., v_l)$ is the same as the multiplicity of the edges in the Bratteli diagram $(V, E)$ between $v$ and $w$ (as vertices of $(V, E)$), 
\item\label{comp-03} if $w$ (as a vertex in $V_o^{n+1}$) is connected to some vertex in $V_i^n$ for some $1\leq i\leq k$, then $(v_1, v_2, ..., v_l)$ passes through $Y_i$, and
\item\label{comp-04} for any edge $v$ of $L_n$, the vertex $v$ (as a vertex in the Bratteli diagram) is connected to some vertex in $V_{\min(v)}^{n-1}$ and is also connected to some vertex in $V_{\max(v)}^{n-1}$.
\end{enumerate}

Conversely, if there is a sequence of directed graphs $\{L_n; n=2, 3, ...\}$ such that the vertices of each $L_n$ are $\{Y_1, ...,Y_k\}$, the edges of each $L_n$ are labelled by the vertices in $V_o^n$, and $(L_n)$ are compatible with $(V, E)$ in the sense above, then there is an order on $(V, E)$ so that it is a $k$-simple ordered Bratteli diagram.
\end{thm}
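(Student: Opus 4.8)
The plan is to prove both implications of Theorem \ref{realization-TD}. The forward implication is essentially an unwinding of Definition \ref{definition-TD} together with the path-lifting mechanism of Lemma \ref{path-lifting}, while the converse, which is the substantive part, is a level-by-level construction of the order; that is where the difficulty lies. Throughout it is convenient to extend $m_\pm$ to sub-diagram vertices by setting $m_-(u)=m_+(u)=i$ for $u\in V_i^n$, and to observe that Conditions \ref{prop-4-1}--\ref{prop-4-2} of Definition \ref{defn-BD} are exactly the single requirement that for any two consecutive edges $e<e+1$ with a common range one has $m_+(s(e))=m_-(s(e+1))$.

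For the forward direction I would fix $w\in V_o^{n+1}$, list the edges entering $w$ in increasing order $h_1<\cdots<h_p$, and \emph{define} the walk attached to $w$ to be the subsequence of sources $s(h_1),\dots,s(h_p)$ lying in $V_o^n$. Compatibility \ref{comp-02} is then immediate, since each edge from $v\in V_o^n$ to $w$ contributes exactly one appearance of $v$. The continuity condition, applied to consecutive edges, keeps $m_\pm$ constant along the sub-diagram sources sandwiched between two consecutive $V_o^n$-sources, so $\max(v_t)=\min(v_{t+1})$ and the subsequence is a genuine walk whose endpoints are $Y_{m_-(w)}$ and $Y_{m_+(w)}$; this gives \ref{comp-01}. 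An edge entering $w$ from $V_i^n$ forces the value $i$ at that position, hence the walk visits $Y_i$ there, which is \ref{comp-03}; and \ref{comp-04} follows from the maximal normalization of Lemma \ref{OBD-red}(\ref{tel-cond-1}) and its minimal counterpart coming from Lemma \ref{shorten}, which force the minimal and maximal edges into $v$ to originate in $V_{m_-(v)}^{n-1}$ and $V_{m_+(v)}^{n-1}$.

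For the converse I would build the order by induction on the level, maintaining the hypotheses that the order defined up to level $n$ realizes $m_-(v)=\min(v)$ and $m_+(v)=\max(v)$ for every $v\in V_o^n$, and that each simple sub-diagram $B_i$ has a unique minimal and a unique maximal path. At level $n+1$ there are two cases. For $w\in V_i^{n+1}$ all incoming edges come from $V_i^n$ by $k$-simplicity, and I would order them by a standard proper ordering of the simple diagram $B_i$ (as in \cite{HPS-Cantor}), keeping distinguished extremal source vertices so that $B_i$ retains unique extremal paths. For $w\in V_o^{n+1}$ I would use the walk $(v_1,\dots,v_l)$ supplied by compatibility, which runs from $Y_{j_0}=Y_{\min(w)}$ to $Y_{j_l}=Y_{\max(w)}$, and arrange the incoming edges in the block pattern
$$
[\text{run at } Y_{j_0}]\; v_1\; [\text{run at } Y_{j_1}]\; v_2 \cdots v_l\; [\text{run at } Y_{j_l}],
$$
matching the $V_o^n$-edges into $w$ bijectively to the appearances of the $v_t$ (legitimate by \ref{comp-02}) and placing each edge entering $w$ from a sub-diagram $V_i^n$ into some run at a vertex $Y_i$ of the walk (possible by \ref{comp-03}, since every $V_i^n$ connected to $w$ occurs on the walk). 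By construction consecutive edges always have matching $m_\pm$-values, so Conditions \ref{prop-4-1}--\ref{prop-4-2} hold at level $n+1$, and reading off the extremal edges into $w$ gives $m_-(w)=\min(w)$, $m_+(w)=\max(w)$, closing the induction.

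The step I expect to be the main obstacle is guaranteeing Condition \ref{minimalsets}, i.e. that the finished diagram has exactly $k$ minimal and $k$ maximal infinite paths, each confined to one $B_i$. Here compatibility \ref{comp-04}, applied to $w$ at level $n+1$, is decisive: it forces $w$ to be connected to both $V_{\min(w)}^n$ and $V_{\max(w)}^n$, so the first and last runs above are nonempty, and hence the minimal and maximal edges into every $w\in V_o^{n+1}$ originate in a sub-diagram rather than in $V_o^n$. Consequently no $V_o$-vertex is ever the source of an extremal edge, so no extremal infinite path meets $V_o$; each such path is trapped in some $B_i$, where uniqueness was arranged in the inductive step, yielding exactly the paths $z_{i,\min},z_{i,\max}$. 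The remaining care is with the lowest levels, where $m_\pm$ is undefined and Condition \ref{prop-4-2} is vacuous: there I would first telescope $(V,E)$ (transporting the $L_n$ accordingly) so that the normalizations of Lemma \ref{OBD-red} hold, and order the finitely many initial levels by hand, which does not affect the eventual conditions defining a $k$-simple ordered Bratteli diagram.
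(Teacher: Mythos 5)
Your proposal is correct and follows essentially the same route as the paper: the forward direction is the same restriction of the ordered list of edges entering $w$ to those with sources in $V_o^n$, and the converse uses the same construction (distinguished extremal vertices $z_{i,\min}^n, z_{i,\max}^n$ in each sub-diagram, ordering the edges into a $V_o$-vertex along the compatible walk with sub-diagram edges interleaved at the matching vertices, and Condition \ref{comp-04} forcing extremal edges to originate in sub-diagrams so that the only extremal infinite paths are the $z_{i,\min}, z_{i,\max}$). Your write-up is in places more explicit than the paper's (e.g.\ the placement of the run at $Y_{\min(w)}$ before $e(v_1)$, and the verification the paper ``leaves to readers''), but the underlying argument is the same.
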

\begin{proof}
Let $w$ be any edge of $L_{n+1}$. With slight abusing of notation, let $w$ also denote the vertex in $V_o^{n+1}$ which corresponds to this directed edge of $L_{n+1}$. Since the edges $r^{-1}(w)\in E_{n}$ of the Bratteli diagram are totally ordered, write them as
$$e'_1<e'_2< \cdots< e'_m.$$ Remove all the edges with the source points not in $V_o^{n}$, and write the remaining edges as
$$e_1<e_2<\cdots < e_l.$$ Put $v_i=s(e_i), i=1, ..., l$. Then it is a direct calculation to show that $(v_1, v_2, ..., v_l)$ is a path in $L_n$ and satisfies Conditions \ref{comp-01},  \ref{comp-02}, \ref{comp-03} and \ref{comp-04}. We leave it to readers.

Now, assume that there are directed graphs $\{L_n;\ n=2, 3, ...\}$ and a $k$-simple (unordered) Bratteli diagram $(V, E)$ which are compatible. Let us show that there is an order $>$ on $(V, E)$ so that $(V, E, >)$ it is a $k$-simple ordered Bratteli diagram.

For each $1\leq i\leq k$ and $n\geq 1$, choose a pair of vertices $(z^n_{i, \min}, z^n_{i, \max})$ in $V^n_i$. (The infinite paths  $(z^1_{i, \min}, z^2_{i, \min}, ...)$ and $(z^1_{i, \max}, z^2_{i, \max}, ...)$ are going to be the minimal path and maximal path of the final ordered Bratteli diagram respectively.)

Then for each $v\in V_i^n$, put an arbitrary total order on $r^{-1}(v)$ such that $(z^{n-1}_{i, \min}, v)$ is minimal and $(z^{n-1}_{i, \max}, v)$ is maximal.

Now, let us consider how to order the edges $r^{-1}(w)$ for some $w\in V_o^n$.

On the first level, put an arbitrary total order on $r^{-1}(w)$ if $w\in V_o^1$. 

For any $w\in V_o^n$ with $n\geq 2$, define the order the edges $r^{-1}(w)$ as following:

Pick any edge between $V^{n-1}_{\min(w)}$ and $w$ to be the minimal edge and pick any edge between $V^{n-1}_{\max(w)}$ and $w$ to be the maximal edge. (The existence of such edges is insured by Condition \ref{comp-04}.)


If $n=2$, then order the edges $r^{-1}(w)$ with an arbitrary total order with the given minimal element and maximal element.

If $n>2$, then consider the corresponding path $(v_1, v_2, ..., v_l)$ in the transition graph $L_{n-1}$. For each $v_i$, $1\leq i\leq l$, pick an arbitrary edge in $E_n$ connecting $v_i$ (as a vertex in $V_o^{n-1}$) to $w$, and denote it by $e(v_i)$. By Condition \ref{comp-02}, such edge exists and the collection $$\{e(v_1), e(v_2), ..., e(v_l)\}$$ exhausts all the edges between $V_o^{n-1}$ and $w$. Set 
$$e(v_1)<e(v_2)< \cdots <e(v_l).$$ 

For each $1\leq i\leq k$, and edges between $V_i^{n-1}$ and $w$, by Condition \ref{comp-03}, there is $1\leq j\leq l$ such that the vertex $Y_i$ of $L_{n-1}$ is the range of $v_j$. Then insert all edges between $V_i^{n-1}$ and $w$ to between $e(v_j)$ and $e(v_{j+1})$ in an arbitrary order, and then one obtains a total order on $r^{-1}(w)$.

Note that it follows from the construction above that $$m_-(w)=Y_{\min(w)}\quad\textrm{and}\quad m_+(w)=Y_{\max(w)}.$$

Let us verify that the resulting ordered Bratteli diagram $(V, E, >)$ is $k$-simple.

Since $(V, E)$ is a $k$-simple unordered Bratteli diagram, Condition \ref{cond-unorder} of Definition \ref{defn-BD} is automatic.

By the choices of the ordering, it is easy to see that $(z^1_{i, \min}, z^2_{i, \min}, ...)$ and $(z^1_{i, \max}, z^2_{i, \max}, ...)$, $1\le i\leq k$, are the only maximal infinite mathes and minimal infinite paths. Thus Condition \ref{minimalsets} of Definition \ref{defn-BD} is also satisfied.

Let us verify Condition \ref{cont-cond} of Definition \ref{defn-BD}. 

Fix any $v\in V_o^n$. 

Let $e$ be any edge with $s(e)=v$. Denote by $w=r(e)$. Write the path in the transition graph $L_n$ corresponding to $w$ (as an edge in the transition graph $L_{n+1}$) to be $(v_1, v_2, ..., v_l)$. Then there is $1\leq i\leq l$ such that  $v=v_i$. By the construction of the transition graph, one has that $m_+(v)$ is the range of $v_i$ in $L_n$, that is
\begin{equation}\label{coc-01}
m_+(v)=Y_{\max(v_i)}.
\end{equation}

Consider the edge $e+1$. By the construction of the order on $r^{-1}(w)$, the vertex $s(e+1)$ is either $v_{i+1}$ or in $V^n_{\max(v_i)}$. If $s(e+1)=v_{i+1}$, then $$m_-(s(e+1))=m_-(v_{i+1})=Y_{\min(v_{i+1})}=Y_{\max(v_{i})};$$
and if $s(e+1)\in V^n_{\max(v_i)}$, then $m_-(s(e+1))=Y_{\max(v_{i})}$. Therefore,  by \eqref{coc-01}, one always has $$m_-(s(e+1))=Y_{\max(v_{i})}=m_+(v),$$ which verifies Condition \ref{prop-4-1} of Definition \ref{defn-BD}.

Now, let $e$ be an edge with $e\notin E_{\max}$, $r(e)=v$ and $s(e)\in V_i^{n-1}$ with $n\geq 3$. Write the path in $L_{n-1}$ corresponding to $v$ (as an edge in $L_n$) as $(u_1, u_2, ..., u_s)$. By Condition \ref{comp-03}, there is $1\leq j\leq s$ such that  $Y_{\max(u_j)}=Y_i$. 

If $j<s$, then one has that either $s(e+1)=u_{j+1}$, in which case, $$m_-(s(e+1))=Y_{\min(u_{j+1})}=Y_{\max(u_{j})}=Y_i$$ or $s(e+1)\in V_i^{n-1}$. So, in both cases, one has that $m_-(s(e+1))=Y_i$.

If $j=s$, since $e\notin E_{\max}$, one has that $s(e+1)\in V_i^{n-1}$, and therefore $m_-(s(e+1))=Y_i$.

Thus, the order satisfies Condition \ref{prop-4-2} of Definition \ref{defn-BD}, and hence $(V, E, >)$ is a $k$-simple ordered Bratteli diagram.
\end{proof}

Let $B=(V, E, >)$ be an ordered $k$-simple Bratteli diagram, and denote by $L_2, L_3, ...$ the corresponding transition graphs. Consider the Cantor system $(X_B, \sigma)$, and denote by $Y_1, ..., Y_k$ the minimal subsets. For each $1\leq i\leq k$, recall that $d_i$ is the image of $\chi_{Y_i}u$ under the index map. By Theorem \ref{index-diag}, 
each entry of $d_i\in\bigoplus_{V_o^n}\Int$ has to be $\pm 1$ or $0$. Moreover, one also has that for each $v\in V_o^n$, $$\abs{\{1\leq i\leq k;\ d_i(v)\neq 0\}}=0\ \mathrm{or}\ 2,$$ and if $d_{i_1}(v)\neq 0$ and $d_{i_2}(v)\neq 0$ for some $i_1\neq i_2$, then $d_{i_1}(v)+d_{i_2}(v)=0$; that is, if there is a nonzero pair, it must be either $(+1, -1)$ or $(-1, +1)$. 

%
%
%

Thus, the unordered Bratteli diagram $(V, E)$ has the following property:
there are elements $d_1, ..., d_k$ in ${\mathrm{K}}_{I_B}$ such that 
\begin{enumerate}
\item[(a)]\label{RC-01} $c_1d_1+\cdots+c_nd_n=0$ if and only if $c_1=c_2=\cdots=c_n$;
\item[(b)]\label{RC-02} for each level $n$ and each $v\in V_o^n$, one has that $d_i(v)\in\{0, \pm 1\}$, $1\leq i\leq k$;
\item[(c)]\label{RC-03} for each $v\in V_o^n$, one has that $\abs{\{1\leq i\leq k;\ d_i(v)\neq 0\}}=0\ \mathrm{or}\ 2,$ and if $$\{1\leq i\leq k;\ d_i(v)\neq 0\}=\{i_1, i_2\},$$ then $(d_{i_1}(v), d_{i_2}(v))$ is either $(+1, -1)$ or $(-1, +1)$;
%
%
%
\end{enumerate}
It turns out the these conditions are also sufficient for the existence of a $k$-simple order on $(V, E)$ if $(V, E)$ is strongly $k$-simple.

\begin{thm}\label{realization-IF}
Let $B=(V, E)$ be an unordered strongly $k$-simple Bratteli diagram satisfying Condition \ref{tel-cond-3} of Lemma \ref{OBD-red} (that is, each vertex in $V_o^{n+1}$ is connected to all vertices in $V^n$). Suppose that there are element $d_1, ..., d_k\in {\mathrm{K}}_{I_B}\subseteq  {\mathrm{K}}_B$ satisfying Conditions (a), (b), and (c) 
above. Then there is an order $>$ such that $(V, E, >)$ is an ordered (strongly) $k$-simple Bratteli diagram.
\end{thm}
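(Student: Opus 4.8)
The plan is to reduce the statement to the converse half of Theorem \ref{realization-TD}: to order $(V,E)$ it is enough to exhibit a sequence of directed graphs $\{L_n;\ n\ge 2\}$, each on the vertex set $\{Y_1,\dots,Y_k\}$ with edges labelled by $V_o^n$, that is compatible with $(V,E)$ in the sense of Conditions \ref{comp-01}--\ref{comp-04}. I would build these graphs directly from the data $d_1,\dots,d_k$. Writing $d_i=(d_i^{(n)})$ with $d_i^{(n)}\in\Int^{\abs{V_o^n}}$ under the inductive limit decomposition $\mathrm{K}_{I_B}=\varinjlim\Int^{\abs{V_o^n}}$ furnished by $B$, I declare, for each $v\in V_o^n$: if Condition (c) provides the (necessarily unique) pair $i_1\ne i_2$ with $(d_{i_1}^{(n)}(v),d_{i_2}^{(n)}(v))=(+1,-1)$, then $v$ is the edge of $L_n$ running from $Y_{i_1}$ to $Y_{i_2}$; and if instead all $d_i^{(n)}(v)=0$, then $v$ is a loop, placed at an arbitrary vertex (the choice will turn out to be immaterial).

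The core of the argument is Conditions \ref{comp-01} and \ref{comp-02}: for each $w\in V_o^{n+1}$ I must produce a walk in $L_n$ having the same endpoints as the edge $w$ of $L_{n+1}$ and traversing each $v\in V_o^n$ exactly $\mu(v,w)$ times, where $\mu(v,w)$ is the edge multiplicity between $v$ and $w$ in $(V,E)$. Such a walk is exactly a directed Eulerian trail in the multigraph $L_n^{(w)}$ obtained from $L_n$ by assigning each edge $v$ the multiplicity $\mu(v,w)$. The point is that the connecting map of the inductive limit is the incidence matrix, so $d_i^{(n+1)}(w)=\sum_{v\in V_o^n}\mu(v,w)\,d_i^{(n)}(v)$; and since $d_i^{(n)}(v)$ is $+1$, $-1$ or $0$ according as $Y_i$ is the source, the range, or neither of the edge $v$, this sum is precisely the net out-degree (out minus in) of $Y_i$ in $L_n^{(w)}$. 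Hence Condition (c) applied at level $n+1$ says exactly that $L_n^{(w)}$ is balanced at every vertex when $w$ is a loop, and otherwise has net out-degree $+1$ at the source of $w$ and $-1$ at its range---which is precisely the degree hypothesis for a directed Eulerian trail. The only missing ingredient, connectivity, I would extract from Condition (a): if the proper (non-loop) edges of $L_n$ separated $\{Y_1,\dots,Y_k\}$ into a nonempty proper set $S$ and its complement with no proper edge between them, then $\sum_{i\in S}d_i^{(n)}=0$ coordinatewise (every proper edge lies inside $S$ or inside its complement, and there each contributes $+1-1$), whence $\sum_{i\in S}d_i=0$ in $\mathrm{K}_{I_B}$, contradicting (a). Thus $L_n$ is connected and each $Y_i$ meets a proper edge.

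The standing hypothesis---Condition \ref{tel-cond-3} of Lemma \ref{OBD-red}, that every vertex of $V_o^{n+1}$ is joined to all of $V^n$---then makes everything else routine. Because $\mu(v,w)\ge 1$ for all $v\in V_o^n$, the multigraph $L_n^{(w)}$ retains every edge of $L_n$ and so is connected; together with the degree computation above this yields the required Eulerian trail, which, being Eulerian, traverses all edges and hence visits every $Y_i$, giving Condition \ref{comp-03} (whose hypothesis is always in force here, since $w$ is joined to each $V_i^n$). Since loops are degree-balanced they affect neither the net out-degrees nor connectivity, so the arbitrary placement of loops is indeed immaterial. Finally each $v\in V_o^n$ is joined to all of $V^{n-1}$, in particular to $V_{\min(v)}^{n-1}$ and $V_{\max(v)}^{n-1}$, which is Condition \ref{comp-04}. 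Compatibility of $(L_n)$ with $(V,E)$ being verified, the converse part of Theorem \ref{realization-TD} supplies an order making $(V,E,>)$ a $k$-simple ordered Bratteli diagram; as $(V,E)$ is strongly $k$-simple and only an order is added, it is in fact strongly $k$-simple, as claimed.

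I expect the main obstacle to be the middle step: recognizing Conditions \ref{comp-01}--\ref{comp-02} as an Eulerian-trail problem on $L_n^{(w)}$ and verifying that the purely algebraic hypotheses (a)--(c) on $d_1,\dots,d_k$ translate, via the incidence-matrix description of the connecting maps, exactly into the degree-balance and weak-connectivity conditions that guarantee Eulerian trails. Once that dictionary is in place, loop placement and Conditions \ref{comp-03}--\ref{comp-04} are made automatic by the full connectivity of Condition \ref{tel-cond-3}.
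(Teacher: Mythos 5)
Your proposal is correct and follows essentially the same route as the paper's proof: reduce to the converse half of Theorem \ref{realization-TD}, build $L_n$ from the signs of $d_i(v)$ (loops where all coordinates vanish), deduce connectivity from Condition (a), and realize Conditions \ref{comp-01}--\ref{comp-02} as a directed Euler walk in the multigraph $L_n^{(w)}$ whose degree balance is exactly $\pm d_i(w)$ via the incidence-matrix connecting maps. The only cosmetic differences are a global sign convention on the $d_i$ and your explicit remark that Condition \ref{tel-cond-3} makes the loop placement and Conditions \ref{comp-03}--\ref{comp-04} automatic, which the paper handles the same way.
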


Before we prove the theorem, let us recall several facts from graph theory.

\begin{defn}
Let $G=(V, E)$ be a directed graph (there might be multiple edges between two vertices, and loops are also allowed). Let $v$ be a vertex of $G$. The indegree and outdegree of $v$, denote by $\deg^-(v)$ and $\deg^+(v)$ respectively, are the numbers of directed edges leading into and leading away from $v$ respectively. The degree of $v$ is defined by $$\deg(v)=\deg^+(v)-\deg^-(v).$$

A (directed) Euler walk in $G$ is a walk (in the directed sense) in $G$ that covers each directed edge exactly once.
\end{defn}

The following is a criterion for the existence of an Euler walk in a directed graph. 

\begin{thm}\label{euler-wk}
A directed (multi)graph has an Euler walk if and only if it is connected, and $\deg(v)=0$ for every vertices with the possible exception of two vertices $v_0$ and $v_1$ such that $\deg(v_0)=1$ and $\deg(v_1)=-1$. In this case, $v_0$ and $v_1$ are the starting point and the end point of the Euler walk respectively.
\end{thm}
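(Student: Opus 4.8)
The plan is to prove both implications, interpreting ``connected'' to mean that the subgraph spanned by the non-isolated vertices (those incident to at least one edge) is connected as an undirected graph; isolated vertices play no role in the existence of an Euler walk. The necessity direction is a direct bookkeeping argument, while the sufficiency direction is the substantive part and will be handled by first reducing the two-exceptional-vertex case to the all-balanced (closed-walk) case and then proving the latter by Hierholzer's splicing argument.

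For necessity, suppose $G$ carries an Euler walk $W = (v_0, e_1, v_1, e_2, \ldots, e_m, v_m)$ traversing each directed edge exactly once. Every edge of $G$ lies on $W$, so any two non-isolated vertices are joined by a subwalk of $W$; hence $G$ is connected in the stated sense. For the degree condition I would count the contributions to $\deg^-$ and $\deg^+$ vertex by vertex along $W$. Each interior visit to a vertex uses exactly one incoming and one outgoing edge, contributing equally to indegree and outdegree, and a loop likewise contributes one to each; thus the only imbalance comes from the two ends of $W$. The initial edge $e_1$ is an extra outgoing edge at $v_0$ and the final edge $e_m$ is an extra incoming edge at $v_m$. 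If $W$ is closed ($v_0=v_m$) these contributions cancel and $\deg(v)=0$ for all $v$; if $W$ is open they give $\deg(v_0)=\deg^+(v_0)-\deg^-(v_0)=1$ and $\deg(v_m)=-1$, all other vertices being balanced, and we set $v_1:=v_m$.

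For sufficiency I would first dispose of the open case. Given the two exceptional vertices with $\deg(v_0)=1$ and $\deg(v_1)=-1$, adjoin a single auxiliary directed edge $\varepsilon$ from $v_1$ to $v_0$. This raises $\deg^+(v_1)$ and $\deg^-(v_0)$ each by one, so in the augmented graph $G^+$ every degree is zero, and $G^+$ remains connected. A closed Euler walk of $G^+$ traverses $\varepsilon$ exactly once; cyclically rotating it to begin immediately after $\varepsilon$ and then deleting $\varepsilon$ yields an open Euler walk in $G$ starting at $v_0$ and ending at $v_1$. It therefore suffices to produce a closed Euler walk whenever $G$ is connected with all degrees zero. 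Here I run Hierholzer's argument: starting at any non-isolated vertex, repeatedly follow unused edges; since every vertex has indegree equal to outdegree, upon entering any vertex other than the start there is always an unused outgoing edge, so the only way to get stuck is back at the start, producing a closed walk $C$ that repeats no edge. If $C$ omits some edge, connectivity forces a vertex $w$ on $C$ with an unused incident edge; a fresh closed walk started at $w$ (again forced to close at $w$ by balance) can be spliced into $C$ at $w$, strictly increasing the number of covered edges. As $G$ has finitely many edges, iterating terminates in a closed Euler walk.

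The main obstacle is making the splicing step precise for a multigraph with loops: one must track the set of \emph{unused} edges rather than merely visited vertices, verify that the invariant ``indegree equals outdegree at every interior vertex'' persists on the residual graph after a closed walk is removed, and confirm that connectivity of $G$ really supplies an attachment vertex $w$ at each stage. Once these points are pinned down the termination and correctness are routine, and the reduction via the auxiliary edge $\varepsilon$ transfers the conclusion to the open case verbatim.
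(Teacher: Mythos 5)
The paper never proves this statement: Theorem \ref{euler-wk} is simply \emph{recalled} as a classical fact from graph theory (the directed Euler walk criterion) and then used as a black box in the proof of Theorem \ref{realization-IF}. Your proposal therefore supplies a proof the paper omits, and it is correct and standard. The necessity direction by counting in/out contributions along the walk is right, and your sign bookkeeping matches the paper's convention $\deg(v)=\deg^+(v)-\deg^-(v)$, so the starting point has $\deg=+1$ and the endpoint $\deg=-1$, exactly as the statement requires. The sufficiency direction — adjoin an auxiliary edge $\varepsilon$ from $v_1$ to $v_0$ to reduce the open case to the all-balanced case, then run Hierholzer's splicing argument — is the textbook route, and the rotation-then-deletion step correctly produces a walk from $v_0$ to $v_1$. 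The three items you flag as needing care are indeed the only remaining checks, and each is routine: removing a closed trail deletes equally many incoming and outgoing edges at every vertex, so balance persists in the residual graph; if some edge were unused but no unused edge were incident to a vertex of $C$, then following an undirected path (connectivity) from $C$ toward an unused edge, the first unused edge encountered would be incident to a vertex of $C$, a contradiction; and the number of uncovered edges strictly decreases at each splice, so the process terminates. One interpretive point worth making explicit, which you do: ``connected'' must mean connectivity of the underlying undirected graph (restricted to non-isolated vertices), which is consistent with how the theorem is applied in the paper, since the graphs $L_n^w$ there are connected in precisely that sense.
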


\begin{proof}[Proof of Theorem \ref{realization-IF}]
By Theorem \ref{realization-TD}, one only has to construct a sequence of directed graphs $L_2, L_3, ...$ which are compatible to $B=(V, E)$.

Note that the vertices of the proposed directed graphs are always $Y_1, Y_2, ..., Y_k$. To get $L_n$, one only has to assign each $v\in V_o^n$ to be a suitable directed edge of $L_n$. 

Fix $n\geq 2$. 

For each $v\in V_o^n$, if $d_i(v)=0$ for all $i$, then choose any $Y_i$ such that there is an edge between $V_i^{n-1}$ and $v$, and then assign $v$ to be a loop with the base point $Y_i$.

Otherwise, by Condition (c), 
there are $1\leq \min(v) \neq \max(v)\leq k$ such that $$d_{\min(v)}(v)=-1\quad\textrm{and}\quad d_{\max(v)}(v)=+1.$$ Then assign $v$ to be an edge from $Y_{\min(v)}$ to $Y_{\max(v)}$.

Denote by the resulting directed graph to be $L_n$, and one asserts that $\{L_2, L_3, ...\}$ are compatible to $(V, E)$ in the sense of Theorem \ref{realization-TD}.

First, note that for each $n\geq 2$ and $v\in V_o^n$, by the construction of $L_n$, one has 
\begin{equation}\label{index-recover}
d_i(v)=\left\{ 
\begin{array}{ll}
-1, & \textrm{if $Y_i$ is the source point but not range point of $v$ in $L_n$;}\\
+1, & \textrm{if $Y_i$ is the range point but not source point of $v$ in $L_n$;}\\
0, & \textrm{otherwise.}
\end{array}
\right.
\end{equation}
That is, the element $d_1, ..., d_k$ are induced by the diagram in the way of Theorem \ref{index-diag}.

Then it follows from Condition (a) 
that the underlining undirected graphs of $L_2, L_3, ...$ are connected. Indeed, if there were a proper connected component of $L_n$, say with vertices $Y_{n_1}, ..., Y_{n_t}$, it would then follow from \eqref{index-recover} that $d_{n_1}+\cdots+d_{n_t}=0$, which is in contradiction to Condition (a). 

Now, let $w$ be any edge of $L_{n+1}$. Consider $w$ as a vertex in $V_o^{n+1}$ and write $$w=\sum_{j=1}^l c_j v_j + \sum_{j=1}^{l'} c'_j v_j'$$ in the Bratteli diagram $(V, E)$, where $c_j, c'_j\in \mathbb N$, $v_j\in V_o^n$, and $v_j'\in V^n\setminus V_o^n$.

Since the vertex $w$ is assumed to be connected to all vertices at level $n$, one has that $\{v_1, ..., v_l\}=V_o^n$. Let us construct an auxiliary directed graph $L_n^w$ to be the directed (multi)graph obtained by multiple each edge $v_j$ of $L_n$ into $c_j$ edges. Since $L_n$ is connected, it is clear that $L_n^w$ is still connected.






Then, in order to find a path in $L_n$ satisfying Conditions \ref{comp-01} and \ref{comp-02} of Theorem \ref{realization-TD},  it is enough to find an Euler walk (i.e., a walk which cover each edge exactly once) in $L_n^w$ which has the starting point $Y_{\min(w)}$ and the ending point with $Y_{\max(w)}$. Moreover, since the edges of the graph $L_n^w$ exhaust all vertices in $V_o^n$, then Conditions \ref{comp-03} and \ref{comp-04} of Theorem \ref{realization-TD} are satisfied automatically.

Since $L_n^w$ is connected, by Theorem \ref{euler-wk}, it is enough to show that 
$$\deg_{L_n^w}(Y_{\min(w)})=+1,\quad\deg_{L_n^w}(Y_{\max(w)})=-1$$ and $$\deg_{L_n^w}(Y_i)=0\quad\textrm{for all other vertex $Y_i$}.$$ 

Consider any vertex $Y_i$ of $L_n^{w}$. Then 
$$\deg_{L_{n}^w}^+(Y_i)= \sum_{d_i(v_i)=-1} c_i\quad\textrm{and}\quad \deg_{L_{n}^w}^-(Y_i)= \sum_{d_i(v_i)=1} c_i,$$ and therefore
\begin{eqnarray*}
\deg_{L_{n}^w}(Y_i)&=& \deg_{L_{n}^w}^+(Y_i)-\deg_{L_{n}^w}^-(Y_i)\\
&=&\sum_{d_i(v_j)=-1} c_j -\sum_{d_i(v_j)=1} c_j \\
&=& -d_i(w).
\end{eqnarray*}
Hence,
\begin{displaymath}
\left\{
\begin{array}{l}
\deg_{L_{n}^w}(Y_i)=-d_i(w)=0,\quad  \textrm{if $i\notin\{\min(w), \max(w)\}$},\\
\deg_{L_{n}^w}(Y_{\min(w)})=-d_{\min(w)}(w)=1, \\
\deg_{L_{n}^w}(Y_{\max(w)})=-d_{\max(w)}(w)=-1. 
\end{array}
\right.
\end{displaymath}
Therefore, there is an Euler walk in $L_n^w$ with starting point $Y_{\min(w)}$ and ending point $Y_{\max(w)}$. That is, there is a path in $L_n$ satisfies Conditions \ref{comp-01}--\ref{comp-04} of Theorem \ref{realization-TD}, as desired.
\end{proof}

\begin{rem}
It would be interesting to know that for which (strongly) $k$-simple dimension group $G$, there is a (strongly) $k$-simple ordered Bratteli diagram $B=(V, E, >)$ such that $G$ is isomorphic to the dimension group of $(V, E)$. 
\end{rem}

\section{Cantor system with one minimal subset}\label{stable-alg}
Let us study the ordered $\Kzero$-group of $\mathrm C(X)\rtimes_\sigma\Int$ for a Cantor system with only one minimal component $Y$, and explore its connection to the boundedness of invariant measures on the open set $X\setminus Y$. By Theorem \ref{index}, the index map is zero. Also note that the C*-algebra $\mathrm{C}(Y)\rtimes_\sigma\Int$ is always an A$
\mathbb{T}$-algebra, and has real rank zero if $(X, \sigma)$ is aperiodic. Hence, by  \cite[Theorem 5]{LM}, we have the following structure theorem.
\begin{thm}
If $k=1$, the C*-algebra $\mathrm{C}(X)\rtimes_\sigma\Int$ is an 
A$\mathbb T$-algebra. It has real rank zero if $(X, \sigma)$ is aperiodic.
\end{thm}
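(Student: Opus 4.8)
The plan is to exhibit $\mathrm{C}(X)\rtimes_\sigma\Int$ as an extension of two A$\mathbb{T}$ algebras with vanishing index map, and then to apply the permanence theorem for such extensions. Writing $Y := Y_1$ for the unique minimal component, the defining sequence \eqref{exact} specializes to the extension
$$
0 \to \mathrm{C}_0(X\setminus Y)\rtimes_\sigma\Int \to \mathrm{C}(X)\rtimes_\sigma\Int \to \mathrm{C}(Y)\rtimes_\sigma\Int \to 0.
$$
First I would identify the ideal and the quotient. The ideal $\mathrm{C}_0(X\setminus Y)\rtimes_\sigma\Int$ has already been shown to be AF (it is the canonical AF ideal sitting inside $A_{y_1}$), and every AF algebra is in particular A$\mathbb{T}$. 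The quotient is A$\mathbb{T}$ irrespective of aperiodicity: since $\sigma|_Y$ is minimal, either $Y$ is a Cantor set and $\mathrm{C}(Y)\rtimes_\sigma\Int$ is A$\mathbb{T}$ by Putnam's theorem \cite{Put-PJM}, or $Y$ is a single periodic orbit of some period $p$ and $\mathrm{C}(Y)\rtimes_\sigma\Int\cong\mathrm{M}_p(\mathrm{C}(\mathbb{T}))$, which is again A$\mathbb{T}$.

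Next I would verify that the index map of this extension vanishes. By Theorem \ref{index0} its image is isomorphic to $\Int^{k-1}$, which is the trivial group when $k=1$; equivalently, the single class $d_1$ of Theorem \ref{index} is zero. Having an AF (hence A$\mathbb{T}$) ideal, an A$\mathbb{T}$ quotient, and a zero index map, the extension satisfies exactly the hypotheses of \cite[Theorem 5]{LM}, which then yields that $\mathrm{C}(X)\rtimes_\sigma\Int$ is A$\mathbb{T}$. This establishes the first assertion.

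For the real rank zero claim I would bring in the aperiodicity hypothesis, which forces $Y$ to be a Cantor set (a periodic orbit is now excluded), so the quotient $\mathrm{C}(Y)\rtimes_\sigma\Int$ is A$\mathbb{T}$ of real rank zero, while the ideal is AF and hence also of real rank zero. Since $\Kone(\mathrm{C}_0(X\setminus Y)\rtimes_\sigma\Int)=\{0\}$, the exponential map of the extension is automatically zero, and an extension of two real-rank-zero C*-algebras with vanishing exponential map has real rank zero (Proposition 4(i) of \cite{LM}); this is precisely the real-rank-zero half of Corollary \ref{2-0-fin} specialized to $k=1$. I expect no serious obstacle here, since every ingredient has already been established; the only points meriting a moment's care are confirming that the quotient remains A$\mathbb{T}$ in the degenerate periodic case and checking that the data fed into \cite[Theorem 5]{LM} (A$\mathbb{T}$ ideal and quotient, vanishing index map) match its hypotheses exactly.
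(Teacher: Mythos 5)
Your proposal is correct and follows essentially the same route as the paper: the paper also observes that the index map of the extension $0 \to \mathrm{C}_0(X\setminus Y)\rtimes_\sigma\Int \to \mathrm{C}(X)\rtimes_\sigma\Int \to \mathrm{C}(Y)\rtimes_\sigma\Int \to 0$ vanishes when $k=1$, that the ideal is AF and the quotient is always A$\mathbb{T}$, and then invokes \cite[Theorem 5]{LM}, with the real-rank-zero claim handled exactly as in Corollary \ref{2-0-fin} via the vanishing exponential map. Your extra care with the periodic-orbit case of the quotient (where it is $\mathrm{M}_p(\mathrm{C}(\mathbb{T}))$) is a point the paper leaves implicit.
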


Denote by $\mathcal K$ the algebra of compact operators acting on a separable infinite dimensional Hilbert space. Then a C*-algebra $A$ is said to be stable if $A \cong A\otimes\mathcal K$, and the positive cone of the $\Kzero$-group, denoted by $\Kzero^+(A)$, is defined by $$\Kzero^+(A) = \{[p]_0: \textrm{$p$ is a projection of $A\otimes\mathcal K$}\}\subseteq\Kzero(A).$$ Note that if $A$ is stably finite, $(\Kzero(A), \Kzero^+(A))$ is always an ordered group; furthermore, if $A$ is an A$\mathbb T$-algebras, $(\Kzero(A), \Kzero^+(A))$ is always a dimension group.

As in Section \ref{sect C.s. finitely many}, we have the following exact
 sequence
\begin{displaymath}
\xymatrix{
0\ar[r]&\Kzero(\mathrm{C}_0(X\setminus Y)\rtimes_\sigma\Int)\ar[r]^-
\iota&\Kzero(\mathrm{C}(X)\rtimes_\sigma\Int)\ar[r]^-\pi&
\Kzero(\mathrm{C}(Y)\rtimes_\sigma\Int)\ar[r]&0,
}
\end{displaymath}
and therefore $\Kzero(\mathrm{C}(X)\rtimes_\sigma\Int)$ is an extension 
of the dimension group. Denote by $u\in 
\Kzero^+(\mathrm{C}(X)\rtimes_\sigma\Int) $ and 
$v\in\Kzero^+(\mathrm{C}(Y)\rtimes_\sigma\Int)$  the 
standard order units induced by the constant function $1$. Then it is clear 
that $\pi(u)=v$. If, moreover, 
\begin{equation}\label{stb-cond}
\iota(\Kzero^+(\mathrm{C}_0(X\setminus Y)\rtimes_\sigma\Int))\subseteq [0, u],
\end{equation} 
then the extension above is an extension of the dimension group with 
order-units in the sense of \cite[page 295]{Goodearl}.  Since 
$\Kzero(\mathrm{C}(Y)\rtimes_\sigma\Int)$ is simple, it follows from 
 \cite[Theorem 17.9]{Goodearl} that the extension is lexicographic, namely, 
$$\Kzero^+(\mathrm{C}(X)\rtimes_\sigma\Int)=\iota(\Kzero^+
(\mathrm{C}_0(X\setminus Y)\rtimes_\sigma\Int))\cup \pi^{-1}(\Kzero^+
(\mathrm{C}(Y)\rtimes_\sigma\Int)\setminus\{0\}).$$

In general, relation \eqref{stb-cond} does not always hold. As we shall 
see in this section,  \eqref{stb-cond} holds if and only if there is no finite 
$\sigma$-invariant measure on the open set $X\setminus Y$, and hence 
the extension is lexicographic in this case.

\begin{example}\label{nonsimple}
The order-unit group $\Kzero(\mathrm{C}(Y)\rtimes_\sigma\Int)$ is always 
simple, but $\Kzero(\mathrm{C}_0(X\setminus Y)\rtimes_\sigma\Int)$ is 
not necessary a simple ordered group. For example, let $(X_1, \sigma_1)$ 
and $(X_2, \sigma_2)$ be two almost simple Cantor system with fixed point 
$x_1$ and $x_2$. Attaching $X_1$ to $X_2$ by identifying $x_1$ and 
$x_2$, we have a new Cantor set $X$ and an action $\sigma$ on it with a 
fixed point $\{x\}$. It is clear that the only nontrivial closed invariant subset 
is $\{x\}$ (although the system is not almost simple, which requires that 
every orbit other than $\{x\}$ is dense). However, $\Kzero(\mathrm{C}_0(X
\setminus \{x\})\rtimes_\sigma\Int)$ is not simple. One can expand the 
fixed point to an odometer system to get an aperiodic example. For instance, 
consider the stationary Bratteli diagram with incidence matrix 
\begin{displaymath}
F= \left(
\begin{array}{ccc}
2 & 2 & 0\\
0 & 2 & 0\\
0 & 2 & 3
\end{array}
\right).
\end{displaymath}
The dimension group associated with the ideal is $\Int[1/2]\oplus\Int[1/3]$ 
with the usual order.
\end{example}

%

Let $A$ be a stably finite C*-algebra. Denote by $$D(A)=\{[p]_0;\ p\in A\}
\subseteq\Kzero^+(A).$$
\begin{lem}\label{lemma-stable}
Let $A$ be a C*-algebra with an approximate unit consisting of projections. 
Assume that $A$ has stable rank one. Then $A$ is stable if and only if 
$D(A)=\Kzero^+(A)$.
\end{lem}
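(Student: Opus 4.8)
The plan is to treat the two implications separately. Throughout I fix an increasing approximate unit of projections $e_0=0\le e_1\le e_2\le\cdots$ for $A$, and I record the two consequences of stable rank one that I will use (Rieffel): projections in $A\otimes\mathcal K$ satisfy cancellation, and comparison is governed by $\Kzero$, i.e.\ $[p]_0\le[q]_0$ if and only if $p$ is Murray--von Neumann subequivalent to $q$, while $[p]_0=[q]_0$ if and only if $p\sim q$.

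For the direction ``$A$ stable $\Rightarrow D(A)=\Kzero^+(A)$'' only the isomorphism $A\cong A\otimes\mathcal K$ is needed. Since $A$ is stable, $M(A)$ contains a sequence of isometries $v_1,v_2,\dots$ with pairwise orthogonal ranges; given $x\in\Kzero^+(A)$, represent it by a projection $P=(P_{ij})\in \mathrm{M}_N(A)$ and set $p=\sum_{i,j=1}^N v_iP_{ij}v_j^*$. A direct check using $v_j^*v_k=\delta_{jk}$ shows that $p$ is a projection lying in $A$, and the assignment $P\mapsto p$ is the standard stabilization identification, so $[p]_0=[P]_0=x$ and hence $x\in D(A)$; as $D(A)\subseteq\Kzero^+(A)$ always holds, equality follows.

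The substantive direction is the converse. I would reduce stability to the geometric criterion of Hjelmborg and R{\o}rdam: a $\sigma$-unital C*-algebra with an increasing approximate unit of projections $(e_n)$ is stable if and only if for every $n$ there exist $m>n$ and a projection $q\le e_m-e_n$ with $q\sim e_n$. The whole problem then becomes the construction, for each $n$, of such an orthogonal copy of $e_n$ inside a single gap $e_m-e_n$. Here is where the hypothesis is used: the doubled class $2[e_n]_0=[e_n\oplus e_n]_0$ belongs to $\Kzero^+(A)$, so by $D(A)=\Kzero^+(A)$ it is represented by an honest projection $p\in A$. Because $(e_m)$ is an approximate unit and $p$ is a projection, $e_mpe_m$ becomes invertible in $pAp$ for large $m$, and a standard polar-decomposition argument gives $p\precsim e_m$; hence $2[e_n]_0=[p]_0\le[e_m]_0$, that is, $[e_n]_0\le[e_m-e_n]_0$. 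The stable-rank-one comparison then realizes this inequality geometrically: there is $q\le e_m-e_n$ with $[q]_0=[e_n]_0$, and cancellation upgrades it to $q\sim e_n$. Since $q\le e_m-e_n$ is orthogonal to $e_n$, this is exactly the configuration demanded by the criterion, and stability follows.

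The main obstacle is the bridge in the converse from the purely $\Kzero$-theoretic hypothesis to the geometric criterion. The hypothesis $D(A)=\Kzero^+(A)$ only says that the doubled class $2[e_n]_0$ is carried by \emph{some} projection of $A$; the decisive point is that this projection must then sit below a member of the approximate unit, which is what allows an orthogonal copy of $e_n$ to be placed in a single gap. This is precisely the mechanism by which $D(A)=\Kzero^+(A)$ removes the order-unit obstruction that prevents stability for unital algebras or for algebras such as $\bigoplus_n \mathrm{M}_n(\mathbb C)$, where doubled classes cannot be realized by genuine projections. Locating the exact form of the stability criterion and invoking the stable-rank-one comparison theory are the technical points to be pinned down.
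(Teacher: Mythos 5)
Your proposal is correct and follows essentially the same route as the paper: both directions hinge on realizing the doubled class $2[p]_0$ by an honest projection of $A$ via the hypothesis $D(A)=\Kzero^+(A)$, using stable-rank-one cancellation/comparison to produce an orthogonal equivalent copy, and then invoking the Hjelmborg--R{\o}rdam stability criterion. The only (cosmetic) difference is that the paper verifies the criterion in the form ``every projection $p\in A$ admits $q\sim p$ with $q\perp p$'' by unitary conjugation, whereas you verify the equivalent approximate-unit form $e_n\precsim e_m-e_n$.
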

\begin{proof}
If $A$ is stable, then it is clear that $D(A)=\Kzero^+(A)$.

Assume that $D(A)=\Kzero^+(A)$, and let us show that $A$ is stable. Since $A$ has an approximate unit consisting of projections, by Theorem 3.1 of \cite{HjelmborgRordam1998}, it is enough to show that for any projection $p\in A$, there is a projection $q\in A$ such that $q$ is Murray-von Neumann equivalent to $p$ and $q\perp p$. Indeed, it follows from the stable rank one that $A$ has cancelation of projections. Together with $D(A)=\Kzero^+(A)$, one has that, for the given projection $p$, there is a projection $s\in A$ such that $s$ is  Murray-von Neumann equivalent to $p\oplus p$, which implies that there is a subprojection $q'\leq s$ which is Murray-von Neumann equivalent to $p$. Using the cancelation of projections again, one has that the compliment projection $s-q'$ is also Murray-von Neumann equivalent to $p$. Since $A$ has stable rank one, there is a unitary $u\in \tilde{A}$ such that $u^*q'u = p$ (i.e., $q'$ and $p$ are unitarily equivalent). Then $q=u^*(s-q')u$ is the desired projection.
\end{proof}

The following result deals with the case of AF-algebras.

\begin{lem}\label{lemma-stable-2}
Let $A$ be an AF-algebra. Then $A$ is stable if and only if any nonzero 
trace on $A$ is unbounded.
\end{lem}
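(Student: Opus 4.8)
The plan is to reduce everything to Lemma \ref{lemma-stable}. Every AF-algebra has an approximate unit consisting of projections and has stable rank one, so that lemma applies and gives that $A$ is stable if and only if $D(A) = \Kzero^+(A)$. Hence it suffices to prove that $D(A) = \Kzero^+(A)$ if and only if $A$ admits no nonzero bounded trace (equivalently, every nonzero trace is unbounded). Throughout I would use the standard identification, valid for AF-algebras, of densely defined lower semicontinuous traces $\tau$ on $A$ with states $f = \tau_*$ on the dimension group $(\Kzero(A), \Kzero^+(A))$ via $f([p]_0) = \tau(p)$; under this identification $\tau$ is bounded exactly when $f$ is bounded on the scale $D(A)$, and for an increasing approximate unit of projections $p_n = 1_{F_n}$ one has $\norm{\tau} = \sup_n f([p_n]_0)$.

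First I would prove the easy implication: if $D(A) = \Kzero^+(A)$ then there is no nonzero bounded trace. Suppose $\tau \neq 0$ is bounded. By density of $\bigcup_n F_n$ and positivity there is a projection $p \in A$ with $\tau(p) > 0$. For each $n$ the class $n[p]_0$ lies in $\Kzero^+(A) = D(A)$, so it is represented by an honest projection $q_n \in A$, whence $\tau(q_n) = n\,\tau(p)$. Since $q_n$ is a projection in $A$ we have $\tau(q_n) \leq \norm{\tau}$, so $n\,\tau(p) \leq \norm{\tau}$ for all $n$, forcing $\tau(p)=0$, a contradiction. This is just the statement that a stable algebra absorbs arbitrarily many orthogonal copies of any projection.

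For the converse I would argue by contraposition: assuming $D(A) \subsetneq \Kzero^+(A)$, I would produce a nonzero bounded trace. With $u_n := [p_n]_0$ one has $D(A) = \bigcup_n [0,u_n]$, and every element of $\Kzero^+(A)$ is dominated by some multiple $m\,u_n$; by hypothesis there is $g \in \Kzero^+(A)$ with $g \not\le u_n$ for all $n$. Passing to $V = \Kzero(A)\otimes\Real$ (the group is torsion-free, so $\Kzero(A) \hookrightarrow V$) with the cone $P$ generated by $\Kzero^+(A)$, I would separate $g$ from the down-set $K := \{v \in V : v \le u_n \text{ for some } n\}$ by Hahn--Banach. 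The point is that $-P \subseteq K$ and each $u_n \in K$, so any functional $f$ with $f(g) > \sup_K f$ is automatically nonnegative on $\Kzero^+(A)$ and bounded on the scale, while $f(g) > 0$ makes it nonzero; integrating it back yields the required nonzero bounded trace. The hard part will be guaranteeing a \emph{strict} separation, i.e.\ that $g \notin \overline{K}$: the hypothesis only gives $g \not\le u_n$ in $\Kzero(A)$, whereas the separation argument needs $g$ outside the real-closed down-set. I expect to close this gap by combining unperforation of the dimension group with finite-level polyhedrality --- representing $g$ and the relevant $u_n$ inside a single finite-dimensional $\Kzero(F_M) = \Int^{b_M}$, where cones are closed, so that membership of $g$ in $\overline{K}$ would force $g \le u_N$ in $\Kzero(A)\otimes\Ratn$ and hence, by unperforation, $g \le u_N$ in $\Kzero(A)$, contradicting the choice of $g$. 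Reconciling this approximate (topological) domination with exact order in the countable dimension group --- equivalently, producing a \emph{coherent} bounded nonzero state across all levels at once --- is the technical heart of the argument. (As a consistency check, the criterion also rules out nonzero unital quotients, since a unital AF quotient would carry a tracial state pulling back to a nonzero bounded trace on $A$.)
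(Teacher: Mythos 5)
Your reduction to Lemma \ref{lemma-stable} and the direction ``$D(A)=\Kzero^+(A)$ implies no nonzero bounded trace'' are fine. The gap is in the converse, and it sits exactly where you flag it: the separation step does not close as proposed. The set $K=\bigcup_n(u_n-P)$ is a union over \emph{all} levels, so membership of $g$ in $\overline{K}$ only gives $g$ approximated by elements $v_j\le u_{n_j}$ with $n_j$ possibly tending to infinity; there is no single finite level $M$, and no single $u_N$, inside which the obstruction lives, so ``finite-level polyhedrality plus unperforation'' cannot convert approximate domination into $g\le u_N$ for one $N$. Moreover, in the (generally infinite-dimensional) space $V=\Kzero(A)\otimes\Real$ you have not specified a topology in which $K$ is closed or has interior, which a Hahn--Banach separation needs. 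What is actually required is one positive functional that is simultaneously bounded on all the $u_n$ at once --- the coherence-across-levels statement --- and your sketch defers precisely that.

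The paper resolves this by working on the algebra rather than on the dimension group. By Theorem 3.1 of \cite{HjelmborgRordam1998}, if $A$ is not stable there is a projection $p$ (which one may take in $A_1$) admitting no orthogonal Murray--von Neumann equivalent copy; this forces, at every level $n$, the existence of a simple summand $\mathrm{M}_{m_{n,i}}(\Comp)$ of $A_n$ in which the image of $p$ has rank greater than $m_{n,i}/2$. The normalized traces $\tau_n=\frac{1}{m_{n,i}}\mathrm{Tr}\circ\pi_{n,i}$ then satisfy $\tau_n(p)>1/2$ uniformly, and a weak-$*$ accumulation point of these (extended to norm-one functionals on $A$) is a nonzero bounded trace with $\tau(p)\ge 1/2$. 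The coherence problem you identify is thus solved by weak-$*$ compactness of the unit ball of $A^*$, not by separation in $\Kzero$. If you want to keep your purely order-theoretic route, you would need the Goodearl--Handelman machinery for states on dimension groups with a generating interval, which is substantially more than the one-step separation you propose; as written, the argument is incomplete.
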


\begin{proof}
If $A$ is stable, then any nonzero trace is unbounded.

If $A$ is not stable, we construct a nonzero bounded trace on $A$. Write 
$$A=\varinjlim(A_n,\phi_{n}),$$ where each $A_n=\bigoplus_{i=1}^{l_n}
\mathrm{M}_{m_{n, i}}(\Comp)$. Then there is a projection $p\in 
A$ (one may assume that $p\in A_1$) such that for any $n$, there exists 
$1\leq i\leq l_n$, such that $$2\cdot\mathrm{rank}(\pi_{n, i}\circ\phi_{1, n}
(p)) > m_{n, i};$$ as, otherwise, for 
any projection $p$, 
one can find a projection $q$ such that $p\perp q$ and $p\sim q$. Then, it 
follows from Theorem 3.1 of 
\cite{HjelmborgRordam1998} that $A$ is stable, which 
contradicts to the assumption.

Set $$\tau_n=\frac{1}{m_{n, i}}\mathrm{Tr}\circ\pi_{n, i}: A_n\to\Comp.
$$ It is clear that $\tau_n$ is a tracial state on $A_n$, and $
\tau_n(p)>1/2$. Extend $\tau_n$ to a linear functional on $A$ with norm 
one, and still denote it by $\tau_n$. Pick an accumulation point of $\{\tau_n;
\ n=1,...,\infty\}$, and denote it by $\tau$. It is clear that $\tau$ is a trace 
on $A$ with norm at most one. Moreover, $\tau_n(p)>1/2$, we have that $
\tau(p)\geq 1/2$, and thus $\tau$ is nonzero. Therefore $A$ has
 a nonzero bounded trace, as desired.
\end{proof}

\begin{thm}\label{inv-stable}
The restriction of $\sigma$ on $X\setminus Y$ has no finite invariant nonzero 
measure if and only if $\mathrm{C}_0(X\setminus Y)\rtimes_\sigma\Int$ is 
stable.
\end{thm}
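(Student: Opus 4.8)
The plan is to combine the AF structure of the ideal with the trace criterion for stability, and to identify bounded traces with finite invariant measures. Recall from Section \ref{sect C.s. finitely many} (in the case $k=1$) that $A:=\mathrm{C}_0(X\setminus Y)\rtimes_\sigma\Int$ is an AF-algebra; in particular it has an approximate unit of projections. By Lemma \ref{lemma-stable-2}, $A$ is stable if and only if every nonzero trace on $A$ is unbounded, equivalently if and only if $A$ admits no nonzero bounded trace. Thus it suffices to prove that bounded traces on $A$ correspond, bijectively and preserving nonvanishing, to finite $\sigma$-invariant measures on $X\setminus Y$. Let $E:A\to\mathrm{C}_0(X\setminus Y)$ denote the canonical faithful conditional expectation (the zeroth-Fourier-coefficient map obtained by integrating the dual action), which is contractive and satisfies $E(fu^n)=0$ for $n\neq 0$ and $E|_{\mathrm{C}_0(X\setminus Y)}=\mathrm{id}$.

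First I would pass from measures to traces. Given a finite $\sigma$-invariant measure $\mu$ on $X\setminus Y$, set $\tau_\mu:=\mu\circ E$. Invariance of $\mu$ makes $\tau_\mu$ a trace (one checks $\tau_\mu(ab)=\tau_\mu(ba)$ on the spanning elements $fu^m$), while finiteness of $\mu$ gives $\abs{\tau_\mu(a)}\leq\norm{E(a)}\mu(X\setminus Y)\leq\norm{a}\mu(X\setminus Y)$, so $\tau_\mu$ is bounded, and it is nonzero whenever $\mu\neq 0$. Hence a nonzero finite invariant measure produces a nonzero bounded trace, so by Lemma \ref{lemma-stable-2} the algebra $A$ is not stable.

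Conversely, let $\tau$ be a nonzero bounded trace. Its restriction $\mu:=\tau|_{\mathrm{C}_0(X\setminus Y)}$ is a positive functional of norm at most $\norm{\tau}$, hence a finite Radon measure on $X\setminus Y$. For invariance, observe that for every compact-open $U\subseteq X\setminus Y$ the element $\chi_U u\in A$ is a partial isometry with $(\chi_Uu)(\chi_Uu)^*=\chi_U$ and $(\chi_Uu)^*(\chi_Uu)=\chi_{\sigma(U)}$; the trace identity then yields $\mu(U)=\mu(\sigma(U))$, so $\mu$ is $\sigma$-invariant. Finally $\mu\neq 0$: writing $A=\overline{\bigcup_n F_n}$ with $F_n\cong\bigoplus_i\mathrm{M}_{l_i}(\Comp)$ as in Lemma \ref{approx-l-subalg}, the restriction of $E$ to each $F_n$ is the expectation onto the diagonal subalgebra $F_n\cap\mathrm{C}_0(X\setminus Y)$, and any trace annihilates off-diagonal matrix units; hence $\tau=\tau\circ E$ on $\bigcup_n F_n$, and by continuity on all of $A$. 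Since $E(A)\subseteq\mathrm{C}_0(X\setminus Y)$, the equality $\mu=\tau|_{\mathrm{C}_0(X\setminus Y)}=0$ would force $\tau=\tau\circ E=0$, contradicting $\tau\neq 0$; thus $\mu\neq 0$. This produces a nonzero finite invariant measure whenever $A$ is not stable, completing the equivalence.

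The main obstacle is this last nonvanishing step, namely the identity $\tau=\tau\circ E$, which is what forces a nonzero trace to be detected already on the coefficient algebra $\mathrm{C}_0(X\setminus Y)$. Everything else is a routine unwinding of the crossed-product relations and the contractivity of $E$, but this identity must be argued carefully from the AF/tower description of Lemma \ref{approx-l-subalg}, since it is the only place where the specific matrix-unit structure of $A$, rather than soft functional-analytic facts, is genuinely used.
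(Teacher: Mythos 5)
Your proof is correct and follows essentially the same route as the paper: it reduces stability to the nonexistence of a nonzero bounded trace via Lemma \ref{lemma-stable-2} and then matches bounded traces on $\mathrm{C}_0(X\setminus Y)\rtimes_\sigma\Int$ with finite invariant measures through the canonical expectation. The only difference is that you spell out the step the paper leaves implicit, namely that a nonzero bounded trace restricts to a nonzero measure (via $\tau=\tau\circ E$ on the finite-dimensional towers), which is a worthwhile clarification but not a different argument.
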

\begin{proof}
If $(X\setminus Y, \sigma)$ has a finite invariant measure, then it induces a 
finite trace $\tau$ on $\mathrm{C}_0(X\setminus Y)\rtimes_\sigma\Int$, 
and hence it can not be stable.

On the other hand, if $\mathrm{C}_0(X\setminus Y)\rtimes_\sigma\Int$ is 
not stable, then by Lemma \ref{lemma-stable-2}, there is a bounded
 nonzero trace 
on $\mathrm{C}_0(X\setminus Y)\rtimes_\sigma\Int$. The restriction of this 
trace to $\mathrm{C}_0(X\setminus Y)$ induces a finite nonzero invariant 
measure.
\end{proof}

Denote by $u$ the standard order-unit of 
$\Kzero(\mathrm{C}(X)\rtimes_\sigma\Int)$, and consider the generating set 
$$\iota^{-1}[0, u] =\{\kappa \in \Kzero^+(\mathrm{C}_0(X\setminus Y)\rtimes_
\sigma\Int) : \iota(\kappa) < u\} \subseteq\Kzero^+(\mathrm{C}_0(X\setminus Y)\rtimes_
\sigma\Int).$$ 
The set $\iota^{-1}[0, u]$ is then a {\em generating interval} in sense that it 
is a convex upward-directed subset which generates the whole ordered 
group, see \cite[Lemma 17.8]{Goodearl}.

We have the following:
\begin{thm}\label{stable-gen}
The ideal $\mathrm{C}_0(X\setminus Y)\rtimes_\sigma\Int$ is stable if and 
only if $\iota^{-1}[0, u]=\Kzero^+(\mathrm{C}_0(X\setminus Y)\rtimes_
\sigma\Int).$
\end{thm}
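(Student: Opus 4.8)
The plan is to reduce the whole statement to a single order-theoretic identity and then invoke Lemma \ref{lemma-stable}. Write $A:=\mathrm{C}(X)\rtimes_\sigma\Int$, $B:=\mathrm{C}(Y)\rtimes_\sigma\Int$, and $I:=\mathrm{C}_0(X\setminus Y)\rtimes_\sigma\Int$. Since $I$ is an AF-algebra it has an approximate unit of projections and stable rank one, so Lemma \ref{lemma-stable} applies and gives that $I$ is stable if and only if $D(I)=\Kzero^+(I)$. Hence it suffices to establish the equality
$$D(I)=\iota^{-1}[0,u]=\{\kappa\in\Kzero^+(I):\iota(\kappa)\le u\},$$
after which $I$ stable $\iff D(I)=\Kzero^+(I)\iff \iota^{-1}[0,u]=\Kzero^+(I)$, which is precisely the theorem. (The distinction between $<u$ and $\le u$ in the definition of $\iota^{-1}[0,u]$ is harmless: no element of $\iota(\Kzero^+(I))$ can equal $u$, for otherwise $u\in\ker\pi_*$ and so $[1_B]_0=\pi_*(u)=0$, impossible since $B$ is a nonzero unital stably finite algebra.)

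The inclusion $D(I)\subseteq\iota^{-1}[0,u]$ is the easy half. If $p$ is a projection in the ideal $I\subseteq A$, then $1_A-p$ is a projection in the unital algebra $A$ orthogonal to $p$, so $u=[1_A]_0=[p]_0+[1_A-p]_0\ge [p]_0=\iota([p]_0)$, and therefore $[p]_0\in\iota^{-1}[0,u]$.

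The reverse inclusion $\iota^{-1}[0,u]\subseteq D(I)$ is the main point, and the comparison argument it requires is the step I expect to be the real obstacle. Given $\kappa\in\Kzero^+(I)$ with $\iota(\kappa)\le u=[1_A]_0$, I would first realize $\iota(\kappa)$ by a genuine subprojection of the unit of $A$: in the present case $k=1$ the algebra $A$ has stable rank one (Corollary \ref{2-0-fin}), so Murray--von Neumann comparison of projections is governed by $\Kzero$, and the inequality $\iota(\kappa)\le[1_A]_0$ produces a projection $p\le 1_A$ in $A$ with $[p]_0=\iota(\kappa)$. It then remains to check that this $p$ actually lies in $I$. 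Applying the quotient map $\pi\colon A\to B$ and using exactness of the $\Kzero$-sequence gives $[\pi(p)]_0=\pi_*(\iota(\kappa))=0$, and since $B$ is stably finite (being an A$\mathbb{T}$-algebra) a projection with trivial $\Kzero$-class must vanish, so $\pi(p)=0$, i.e.\ $p\in\ker\pi=I$. Thus $\kappa=[p]_0\in D(I)$, which finishes the equality $D(I)=\iota^{-1}[0,u]$ and hence, combined with Lemma \ref{lemma-stable}, proves the theorem.
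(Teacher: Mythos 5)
Your proof is correct and follows essentially the same route as the paper: both directions reduce to Lemma \ref{lemma-stable} together with the observation that $\Kzero$-comparison in $\mathrm{C}(X)\rtimes_\sigma\Int$ (which has cancellation, being an A$\mathbb{T}$-algebra when $k=1$) realizes any positive class dominated by $u$ by a projection lying in the ideal, while the trivial inclusion comes from $p\le 1$. The only cosmetic difference is in how the realizing projection is placed inside the ideal --- the paper conjugates a matrix projection over $I$ by a partial isometry and uses the ideal property, whereas you pass to the quotient and use stable finiteness of $\mathrm{C}(Y)\rtimes_\sigma\Int$ to conclude $\pi(p)=0$; both are valid.
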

\begin{proof}
Assume that $\iota^{-1}[0, u]=\Kzero^+(\mathrm{C}_0(X\setminus Y)
\rtimes_\sigma\Int)$; that is, for any positive element $a\in \Kzero^+
(\mathrm{C}_0(X\setminus Y)\rtimes_\sigma\Int)$, one has that $a<u$ in $
\Kzero^+(\mathrm{C}(X)\rtimes_\sigma\Int)$. Then, for any projection $p$ 
in a matrix algebra of $\mathrm{C}_0(X\setminus Y)\rtimes_\sigma\Int$, 
there is a partial isometry $v$ in a matrix algebra of $\mathrm{C}(X)
\rtimes_\sigma\Int$ such that $vv^*=p$ and $v^*v\leq 1$. In particular, 
$v^*pv\in\mathrm{C}(X)\rtimes_\sigma\Int$. Since $\mathrm{C}_0(X
\setminus Y)\rtimes_\sigma\Int$ is an ideal, one has that $v^*pv\in
\mathrm{C}_0(X\setminus Y)\rtimes_\sigma\Int$. Hence $[p]\in 
D(\mathrm{C}_0(X\setminus Y)\rtimes_\sigma\Int)$. By Lemma \ref{lemma-stable}, the ideal $\mathrm{C}_0(X\setminus Y)\rtimes_\sigma\Int$ is 
stable.

If the ideal $\mathrm{C}_0(X\setminus Y)\rtimes_\sigma\Int$ is stable, 
then, for any $a\in\Kzero^+(\mathrm{C}_0(X\setminus Y)\rtimes_\sigma
\Int)$, there is a projection $p\in\mathrm{C}_0(X\setminus Y)\rtimes_
\sigma\Int$ such that $[p]_0=a$. It is clear that $p< 1$ in $\mathrm{C}(X)
\rtimes_\sigma\Int$, and therefore $a=[p]_0<[1]_0=u$. Hence, 
$\iota^{-1}[0, 
u]=\Kzero^+(\mathrm{C}_0(X\setminus Y)\rtimes_\sigma\Int)$.
\end{proof}

\begin{cor} 
The restriction of $\sigma$ on $X\setminus Y$ has no nonzero finite invariant measure if and only if $\mathrm{C}_0(X\setminus Y)\rtimes_\sigma\Int$ is stable, if and only if $\iota^{-1}[0, u]=\Kzero^+(\mathrm{C}_0(X\setminus Y)\rtimes_\sigma\Int)$, and if and only if the extension is lexicographic.
\end{cor}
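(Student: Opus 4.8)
The plan is to assemble the stated quadruple equivalence from the two theorems just proved together with the lexicographic discussion preceding the corollary. The first two equivalences are immediate: that the absence of a nonzero finite $\sigma$-invariant measure on $X\setminus Y$ is equivalent to the stability of $\mathrm{C}_0(X\setminus Y)\rtimes_\sigma\Int$ is exactly Theorem \ref{inv-stable}, and that this stability is in turn equivalent to the identity $\iota^{-1}[0,u]=\Kzero^+(\mathrm{C}_0(X\setminus Y)\rtimes_\sigma\Int)$ is exactly Theorem \ref{stable-gen}. So the only remaining work is to show that this identity is equivalent to the extension being lexicographic.

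For the forward implication, I would note that $\iota^{-1}[0,u]=\Kzero^+(\mathrm{C}_0(X\setminus Y)\rtimes_\sigma\Int)$ says precisely that every $\kappa\in\Kzero^+(\mathrm{C}_0(X\setminus Y)\rtimes_\sigma\Int)$ satisfies $\iota(\kappa)<u$, and in particular $\iota(\kappa)\in[0,u]$, which is condition \eqref{stb-cond}. As recalled in the discussion before the corollary, \eqref{stb-cond} makes the short exact sequence an extension of dimension groups with order units, and since the quotient $\Kzero(\mathrm{C}(Y)\rtimes_\sigma\Int)$ is simple, \cite[Theorem 17.9]{Goodearl} then yields that the extension is lexicographic.

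For the converse, I would argue directly from the lexicographic form of the positive cone. Assume
$$\Kzero^+(\mathrm{C}(X)\rtimes_\sigma\Int)=\iota(\Kzero^+(\mathrm{C}_0(X\setminus Y)\rtimes_\sigma\Int))\cup\pi^{-1}(\Kzero^+(\mathrm{C}(Y)\rtimes_\sigma\Int)\setminus\{0\}),$$
and fix $\kappa\in\Kzero^+(\mathrm{C}_0(X\setminus Y)\rtimes_\sigma\Int)$. Since $\pi\circ\iota=0$ one has $\pi(u-\iota(\kappa))=\pi(u)=v$, where $v=[1]_0$ is a nonzero element of the simple group $\Kzero^+(\mathrm{C}(Y)\rtimes_\sigma\Int)$. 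Thus $u-\iota(\kappa)$ belongs to $\pi^{-1}(\Kzero^+(\mathrm{C}(Y)\rtimes_\sigma\Int)\setminus\{0\})$ and is therefore positive, so $\iota(\kappa)\leq u$; moreover $u-\iota(\kappa)\neq 0$, since otherwise $0=\pi(u-\iota(\kappa))=v\neq 0$. Hence $\iota(\kappa)<u$ for every such $\kappa$, which is exactly the identity $\iota^{-1}[0,u]=\Kzero^+(\mathrm{C}_0(X\setminus Y)\rtimes_\sigma\Int)$.

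The step needing the most care is this converse, in particular the passage from the weak inequality $\iota(\kappa)\leq u$ to the strict inequality $\iota(\kappa)<u$ demanded by the definition of $\iota^{-1}[0,u]$; this is where the nonvanishing of $v=\pi(u)$ is used, since it guarantees that $\iota(\kappa)$ can never reach $u$. Every other ingredient is a routine invocation of Theorems \ref{inv-stable} and \ref{stable-gen} and of Goodearl's lexicographic criterion.
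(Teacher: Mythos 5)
Your proof is correct and follows essentially the same route the paper intends: the first two equivalences are exactly Theorems \ref{inv-stable} and \ref{stable-gen}, and the implication from $\iota^{-1}[0,u]=\Kzero^+(\mathrm{C}_0(X\setminus Y)\rtimes_\sigma\Int)$ to the lexicographic form is precisely the Goodearl argument given in the discussion preceding the corollary. Your explicit converse (deducing $\iota(\kappa)<u$ from the lexicographic description of the positive cone via $\pi(u-\iota(\kappa))=v\neq 0$) correctly supplies the one direction the paper leaves implicit.
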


\section{Chain transitivity}\label{Chain}

\subsection{Topologies on the group of homeomorphisms} Let $X$ be a 
Cantor set, and let $H(X)$ denote the group of all homeomorphisms of a 
Cantor set $X$. Since all Cantor sets are homeomorphic, we do not need to 
specify a particular Cantor set while studying the group $H(X)$. 
In particular, the Cantor set can be represented as the path space of a non-
simple Bratteli diagram.

 We recall that by an \textit{aperiodic Cantor dynamical system} $(X, \sigma)$, 
 we mean a homeomorphism $\sigma$ of a Cantor set $X$ such that for any $x
 \in X$ the orbit $\mathrm{Orbit}_{\sigma}(x)= \{\sigma^i(x) : i \in \Z\}$ is 
 infinite.

The set  $\mathcal Ap$ of aperiodic homeomorphism  was studied in 
\cite{BezuglyiDooleyMedynets2005}, 
\cite{BezuglyiDooleyKwiatkowski2006}, 
\cite{Medynets2006}, \cite{Medynets2007} from various points 
of view. We recall here a few results that will be used below.

Fix a metric $d$ on $X$ compatible with the clopen topology on $X$. 
There are several natural topologies defined on $H(X)$, see 
\cite{BezuglyiDooleyKwiatkowski2006}, \cite{GlasnerWeiss2008}, 
\cite{Hochman2008}. The most popular one is the topology of uniform 
convergence, $\tau_w$, that turns $H(X)$ into a Polish group. This topology 
can be defined in several equivalent ways, for instance, by the metric
\begin{equation}\label{metric}
D(\psi_1, \psi_2) = \sup_{x\in X} d(\psi_1(x), \psi_2(x)) + \sup_{x\in X} 
d(\psi^{-1}_1(x), \psi^{-1}_2(x)), \ \ \ \psi_1, \psi_2 \in H(X).
\end{equation}
Equivalently, the topology $\tau_w$ is generated by the base of 
neighborhoods $\mathcal W = \{W(\psi ; E_1,..., E_n)\}$ where
$$
W(\psi ; E_1,..., E_n) = \{f \in H(X) : f(E_1) = \psi(E_1), ... , f(E_n) = 
\psi(E_n)\}.
$$
Here $ \psi \in H(X)$, and $E_1,..., E_n$ are any clopen sets. Without loss of 
generality, we can assume that $(E_1,..., E_n)$ forms a clopen partition of 
$X$.

If $D(\sigma_n, \sigma)  \to 0$, we say that $\sigma$ is approximated by a sequence 
of homeomorphisms $(\sigma_n)$. We first remark that any homeomorphism of 
$X$ is approximated by aperiodic homeomorphism.

\begin{lem}[\cite{BezuglyiDooleyKwiatkowski2006}]
\label{lem Ap dense in H(X)}
The set $\cal Ap$ is dense in $(H(X), \tau_w)$.
\end{lem}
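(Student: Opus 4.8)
The plan is to unwind the definition of the weak topology and then build, by hand, a single aperiodic homeomorphism inside an arbitrary basic neighborhood. Fix $\psi \in H(X)$ and a basic neighborhood $W(\psi; E_1, \ldots, E_n)$; as noted after \eqref{metric} we may assume $(E_1, \ldots, E_n)$ is a clopen partition of $X$, so that $f \in W(\psi; E_1, \ldots, E_n)$ means precisely $f(E_i) = \psi(E_i)$ for $i = 1, \ldots, n$. Writing $F_i := \psi(E_i)$, the families $\{E_i\}$ and $\{F_i\}$ are two clopen partitions of $X$ related by a bijective correspondence $E_i \leftrightarrow F_i$, with $E_i$ empty exactly when $F_i$ is empty and each nonempty cell a Cantor set. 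The neighborhood constrains only the images of these finitely many cells, so there is complete freedom in how a candidate $f$ acts inside each cell. The task therefore reduces to producing an aperiodic homeomorphism $f$ with $f(E_i) = F_i$ for all $i$.

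First I would record the trivial base case: if $n=1$ then $E_1 = F_1 = X$, and any odometer on $X$ (transported through a homeomorphism $X \cong \{0,1\}^{\N}$) is a minimal, hence aperiodic, member of the neighborhood. In general I would construct $f$ as a tower (Kakutani--Rokhlin) extension whose first-return map on the base is an odometer. Concretely, I would build a refining sequence of clopen Kakutani--Rokhlin partitions, arranging the first one to be subordinate to the common refinement with atoms $E_i \cap F_j$: each tower level is chosen inside a single atom, and a level lying in $E_a \cap F_b$ is required to have its $f$-image lying in $F_a$. Since the successor level is itself a cell of $\{F_\bullet\}$, consistency amounts to reading each tower column as a walk in the finite graph whose vertices are the partition cells and whose edges are the atoms $E_i \cap F_j$ --- exactly the kind of transition data studied in Section~\ref{tran-graph}. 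Choosing the columns to be such walks guarantees $f(E_i) = F_i$, while choosing the first-return map on the base to be an odometer forces every $f$-orbit to be infinite.

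I would then pass to the limit: since the Kakutani--Rokhlin partitions refine and their mesh tends to $0$, the partially defined maps converge to a well-defined continuous bijection $f$ of the compact metric space $X$, hence a homeomorphism, and by construction $f(E_i) = F_i$, i.e. $f \in W(\psi; E_1, \ldots, E_n)$. Aperiodicity is inherited from the base: every orbit meets the base, so a periodic $f$-orbit would yield a periodic orbit of the base odometer, which has none; thus $f \in \mathcal{Ap}$, and density follows. I expect the main obstacle to be the simultaneous bookkeeping in the second step --- arranging the tower columns as label-respecting walks so that the set constraint $f(E_i) = F_i$ holds exactly, while keeping the base map an honest odometer so that no orbit closes up, and while refining finely enough for the limit to be a genuine homeomorphism. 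Subdividing each atom $E_i \cap F_j$ into sufficiently many clopen pieces provides the room needed to realize the required walks, the odometer structure, and the shrinking mesh all at once.
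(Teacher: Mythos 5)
The paper does not actually prove this lemma; it is imported verbatim from \cite{BezuglyiDooleyKwiatkowski2006}, so there is no internal argument to compare yours against. Your reduction is exactly the right one: membership in $W(\psi;E_1,\dots,E_n)$ depends only on the set images of the cells, so density of $\mathcal{A}p$ amounts to producing, for any two clopen partitions $\{E_i\}$ and $\{F_i\}=\{\psi(E_i)\}$ with $E_i=\O$ iff $F_i=\O$ and every nonempty cell a Cantor set, a single aperiodic $f$ with $f(E_i)=F_i$. Your tower-over-an-odometer construction is the standard way to do this and is in the spirit of the cited reference; the combinatorial heart of it (columns must read as walks in the directed graph whose edges are the nonempty atoms $F_i\cap E_j$) is correctly identified.

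Two points deserve more care than your sketch gives them. First, the limit step is not automatic: a refining sequence of Kakutani--Rokhlin partitions with mesh tending to $0$ defines $f$ only off the intersection of the tops, and the extension over that residual set need not be continuous for an arbitrary choice of orders --- this is precisely the continuity issue for Vershik maps that Condition \ref{cont-cond} of Definition \ref{defn-BD} is designed to control. If you keep the tower formulation, arrange the tops and the bases to shrink to single points and define the extension there by hand. Second, one should actually verify that a system of label-respecting columns exhausting every atom exists. Both issues are discharged at once by the following packaging of your idea. Let $G$ be the finite directed graph on the nonempty cells with one edge $i\to j$ for each nonempty atom $F_i\cap E_j$; since $\{F_i\}$ and $\{E_j\}$ are both partitions of $X$, every vertex of $G$ has in-degree and out-degree at least one, so the two-sided vertex shift $\Sigma_G=\{(j_t)_{t\in\Z}:\ j_t\to j_{t+1}\ \text{an edge of}\ G\}$ is nonempty, compact, zero-dimensional, and every edge occurs in some bi-infinite walk. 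Put $\wt X=\Sigma_G\times\{0,1\}^{\N}$ and $\wt f=S\times\tau$, where $S$ is the shift and $\tau$ the dyadic odometer; then $\wt X$ is a Cantor set, $\wt f$ is aperiodic because $\tau$ is, and the cylinders $\wt A_{ij}=\{(\mathbf{j},c):\ j_{-1}=i,\ j_0=j\}$ form a clopen partition of $\wt X$ into nonempty Cantor sets indexed by the edges of $G$, exactly as the atoms $F_i\cap E_j$ partition $X$. Any homeomorphism $\Theta:\wt X\to X$ matching the two partitions cell by cell conjugates $\wt f$ to an aperiodic $f$ with $f(E_j)=\Theta(\wt f(\{j_0=j\}))=\Theta(\{j_{-1}=j\})=F_j$, i.e.\ $f\in W(\psi;E_1,\dots,E_n)\cap\mathcal{A}p$. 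This is your construction with the ``walk'' bookkeeping absorbed into the shift factor and the aperiodicity absorbed into the odometer factor, and it closes the two soft spots above.
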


We introduce the notations for some classes of homeomorphisms of a 
Cantor set $X$: $\mathcal Min$ denotes the set of all minimal 
homeomorphisms (a homeomorphism  $\sigma$ is minimal if every 
$\sigma$-orbit is dense in $X$); $\mathcal Tt$ denotes the set of all 
topologically transitive homeomorphisms ($\sigma$ is {\em topologically 
transitive} if there exists a dense orbit); $\cal Mov$ is the set of all moving 
homeomorphisms (a homeomorphism $\sigma \in H(X)$ is called 
\textit{moving} if, for any nontrivial clopen set $E\subset  X$
$$
\sigma(E) \setminus E \neq \O \ \ \ \mbox{and}\ \ \  E \setminus 
\sigma(E) \neq \O
$$

The notion of moving homeomorphisms was defined in  
\cite{BezuglyiDooleyKwiatkowski2006}.

\begin{lem}[\cite{BezuglyiDooleyKwiatkowski2006}]
\label{lem closure of moving} 
The set of moving homeomorphisms is $\tau_w$-closed. An aperiodic 
homeomorphism $\sigma$ is moving if and only if $\sigma $ can be approximated 
by a sequence of minimal homeomorphisms (or topologically transitive 
homeomorphisms), that is $\mathcal Mov = 
\ov{\mathcal Min}^{\tau_w}= \ov{\mathcal Tt}^{\tau_w}$.
\end{lem}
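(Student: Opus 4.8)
The plan is to establish the three assertions in order: that $\mathcal{Mov}$ is $\tau_w$-closed, that $\mathcal{Min}\subseteq\mathcal{Tt}\subseteq\mathcal{Mov}$, and finally the approximation statement that every aperiodic moving homeomorphism lies in $\overline{\mathcal{Min}}^{\tau_w}$; together these yield the displayed equalities for aperiodic $\sigma$.

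For closedness I would exploit that the moving condition only constrains images of clopen sets, and that $\tau_w$-convergence eventually freezes such images. Concretely, suppose $\sigma_n\to\sigma$ with each $\sigma_n\in\mathcal{Mov}$, and let $E$ be any nontrivial clopen set. Since $\{E,X\setminus E\}$ is a clopen partition, $W(\sigma;E,X\setminus E)$ is a $\tau_w$-neighbourhood of $\sigma$, so for all large $n$ one has $\sigma_n(E)=\sigma(E)$. The moving property of $\sigma_n$ then reads $\sigma(E)\setminus E=\sigma_n(E)\setminus E\neq\O$ and $E\setminus\sigma(E)=E\setminus\sigma_n(E)\neq\O$, so $\sigma$ is moving; hence $\mathcal{Mov}$ is closed.

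Next, the inclusions. I would first rephrase the moving property: since $\sigma(E)\subseteq E$ is equivalent to $\sigma(X\setminus E)\supseteq X\setminus E$, a homeomorphism \emph{fails} to be moving exactly when some nontrivial clopen $E$ satisfies $\sigma(E)\subseteq E$. If $\sigma$ is minimal and such an $E$ existed, then $\bigcap_{m\geq 0}\sigma^m(E)$ would be a nonempty (decreasing intersection of nonempty clopen sets) proper closed $\sigma$-invariant subset, contradicting minimality; hence $\mathcal{Min}\subseteq\mathcal{Mov}$. If $\sigma$ is topologically transitive with a dense orbit and $\sigma(E)\subseteq E$ for a nontrivial clopen $E$, then $\sigma(E)=E$ is impossible (an invariant proper clopen set traps every orbit in $E$ or in $X\setminus E$, so none is dense), and for $\sigma(E)\subsetneq E$ the set $F=E\setminus\sigma(E)$ is nonempty clopen with $\sigma^k(F)=\sigma^k(E)\setminus\sigma^{k+1}(E)$ pairwise disjoint for $k\geq 0$. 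Since $E$ is forward invariant, the visiting times $\{n:\sigma^n(x)\in E\}$ of a dense orbit form an interval $[m,\infty)$, and tracking the orbit shows it meets $F$ exactly once (at time $m$); this is impossible, as $F$ is a nonempty clopen, hence infinite perfect, subset of the Cantor set and a dense orbit must meet the open set $F$ infinitely often. Thus $\mathcal{Tt}\subseteq\mathcal{Mov}$, and as minimal homeomorphisms are transitive, $\mathcal{Min}\subseteq\mathcal{Tt}\subseteq\mathcal{Mov}$. Combined with closedness this gives $\overline{\mathcal{Min}}^{\tau_w}\subseteq\overline{\mathcal{Tt}}^{\tau_w}\subseteq\mathcal{Mov}$.

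The remaining and hardest step is the reverse inclusion: every aperiodic moving $\sigma$ lies in $\overline{\mathcal{Min}}^{\tau_w}$. Given a clopen partition $\{E_1,\dots,E_n\}$, I would use aperiodicity and the Kakutani--Rokhlin machinery of Lemma \ref{pre-partition} and Section \ref{Can-BD} to produce a Kakutani--Rokhlin partition $\{Z(l,j)\}$ refining $\{E_1,\dots,E_n\}$ on which $\sigma$ acts by $Z(l,j)\mapsto Z(l,j+1)$ below the tops. I would then define a homeomorphism $\tau$ equal to $\sigma$ on all non-top levels and reconnecting the tops $\bigcup_l Z(l,J(l))$ to the bases $\bigcup_l Z(l,1)$ by a single cyclic bijection chosen so that $\tau(E_i)=\sigma(E_i)$ for every $i$; because the partition is refined, matching the coarse images leaves enough freedom. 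The moving hypothesis is precisely what guarantees the reconnection can be made so that all $\tau$-orbits become dense, i.e.\ that $\tau$ is minimal, since it forbids any clopen trap obstructing the threading of all towers into one minimal castle. This yields $\tau\in\mathcal{Min}\cap W(\sigma;E_1,\dots,E_n)$, so $\sigma\in\overline{\mathcal{Min}}^{\tau_w}$. The main obstacle is executing this simultaneous minimal-reconnection-plus-partition-matching rigorously, which is the combinatorial core and follows the construction of \cite{BezuglyiDooleyKwiatkowski2006}. Since every minimal or transitive homeomorphism is aperiodic, this reverse inclusion combines with the chain $\overline{\mathcal{Min}}^{\tau_w}\subseteq\overline{\mathcal{Tt}}^{\tau_w}\subseteq\mathcal{Mov}$ to give, for aperiodic $\sigma$, the equivalences $\sigma\in\mathcal{Mov}\iff\sigma\in\overline{\mathcal{Min}}^{\tau_w}\iff\sigma\in\overline{\mathcal{Tt}}^{\tau_w}$, completing the proof.
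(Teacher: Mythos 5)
First, note that the paper does not prove this lemma at all: it is quoted verbatim from \cite{BezuglyiDooleyKwiatkowski2006} and used as a black box, so there is no in-paper argument to compare yours against. Judged on its own terms, your proposal is solid on the easy parts. The closedness argument (freeze $\sigma_n(E)=\sigma(E)$ via the neighbourhood $W(\sigma;E,X\setminus E)$ and transfer the moving property) is correct, as is the reduction of ``not moving'' to the existence of a nontrivial clopen $E$ with $\sigma(E)\subseteq E$, the deduction $\mathcal{M}in\subseteq\mathcal{M}ov$ via the invariant set $\bigcap_{m\ge 0}\sigma^m(E)$, and the deduction $\mathcal{T}t\subseteq\mathcal{M}ov$ via the pairwise disjoint sets $\sigma^k(F)$, $F=E\setminus\sigma(E)$, which a dense orbit could meet at most once but must meet infinitely often. (One small slip: your closing remark that every topologically transitive homeomorphism is aperiodic is false --- the full shift is transitive with many periodic points --- though this does not damage the chain of inclusions you actually use.)

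The genuine gap is the reverse inclusion, which is the entire content of the theorem: every aperiodic moving $\sigma$ lies in $\ov{\mathcal{M}in}^{\tau_w}$. What you offer here is a plan, not a proof. Reconnecting the tops of a single Kakutani--Rokhlin partition to the bases does not by itself produce a minimal homeomorphism --- minimality requires an infinite refining sequence of such partitions with a simplicity/primitivity property, and one must check that this infinite construction can be carried out while keeping $\tau(E_i)=\sigma(E_i)$ for the finitely many prescribed clopen sets. More importantly, the sentence ``the moving hypothesis is precisely what guarantees the reconnection can be made \dots since it forbids any clopen trap'' is exactly the assertion to be proved, not an argument for it; you never exhibit where the hypothesis enters the construction (as opposed to where its failure obstructs approximation, which is the easy necessity direction you already covered). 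You acknowledge this yourself by deferring ``the combinatorial core'' to \cite{BezuglyiDooleyKwiatkowski2006}. Since that core is the whole point of the lemma, the proposal as written does not constitute a proof of the approximation statement; it establishes only $\ov{\mathcal{M}in}^{\tau_w}\subseteq\ov{\mathcal{T}t}^{\tau_w}\subseteq\mathcal{M}ov$ and the closedness of $\mathcal{M}ov$.
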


\subsection{Chain transitive homeomorphisms}

Let $(X, \sigma)$ be an aperiodic Cantor dynamical system. We now recall  
several  notions related to \textit{chains} in $X$ defined by $\sigma$. A finite 
set $\{x_0, x_1,..., x_n\}$, where $\sigma(x_i) = x_{i+1}, i =0,...,n-1$, is called 
a {{\em $\sigma$-chain} (or simply a {\em chain}). Given $\e >0$ and $x,y \in X
$, an {\em $\e$-chain} from $x$ to $y$ is a finite sequence $\{x_0, x_1,..., 
x_n\}$ such that $x_0 = x, x_n = y$ and $d(\sigma(x_i), x_{i+1}) < \e, i 
=0,...,n-1$. In symbols, an $\e$-chain from $x$ to $y$ will be denoted by $x 
\stackrel{\e}\rightsquigarrow y$.

Given an aperiodic Cantor system $(X,\sigma)$, it is said that $\sigma$ is  {\em 
chain transitive} if for any two points $x,y\in X$ there exists an $\e$-chain 
from $x$ to $y$. 

The following result is a new characterization of moving homeomorphisms  as  
chain transitive ones. 

\begin{thm}\label{thm moving=chain transitive}
 An aperiodic homeomorphism $\sigma$ of a Cantor set $X$ is chain transitive if 
 and only if $\sigma$ is moving.
\end{thm}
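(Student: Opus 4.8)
The plan is to prove the two implications separately, in each case exploiting the metric $D$ of \eqref{metric} to pass between uniform proximity of homeomorphisms and the existence of $\e$-chains.

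\textbf{Moving implies chain transitive.} I would fix $x, y \in X$ and $\e > 0$. Since $\sigma$ is aperiodic and moving, Lemma \ref{lem closure of moving} supplies a \emph{minimal} homeomorphism $\rho$ with $D(\sigma, \rho) < \e/2$, so in particular $\sup_{z\in X} d(\sigma(z), \rho(z)) < \e/2$. By minimality the forward orbit $\{\rho^n(x) : n \ge 1\}$ is dense in $X$, so I can choose $N \ge 1$ with $d(\rho^N(x), y) < \e/2$. I then propose the finite sequence $x_0 = x$, $x_i = \rho^i(x)$ for $1 \le i \le N-1$, and $x_N = y$. For $0 \le i \le N-2$ one has $x_{i+1} = \rho(x_i)$, whence $d(\sigma(x_i), x_{i+1}) = d(\sigma(x_i), \rho(x_i)) < \e/2 < \e$; and for the final step,
$$
d(\sigma(x_{N-1}), y) \le d\big(\sigma(\rho^{N-1}x), \rho^N x\big) + d(\rho^N x, y) < \tfrac{\e}{2} + \tfrac{\e}{2} = \e .
$$
Thus $\{x_0, \dots, x_N\}$ is an $\e$-chain from $x$ to $y$. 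The one point needing care is that the forward (not merely the two-sided) orbit of $x$ under a minimal $\rho$ is dense; this is standard, following from the fact that $\bigcap_{n \ge 0}\rho^n(\ov{O^+(x)})$ is a nonempty closed $\rho$-invariant set, hence all of $X$ by minimality.

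\textbf{Chain transitive implies moving.} Here I would argue the contrapositive. Suppose $\sigma$ is not moving, so there is a nontrivial clopen $E$ with either $\sigma(E) \subseteq E$ or $E \subseteq \sigma(E)$. In the second case $\sigma(X \setminus E) = X \setminus \sigma(E) \subseteq X \setminus E$, so after replacing $E$ by $X \setminus E$ I may assume $\sigma(E) \subseteq E$ with both $E$ and $X \setminus E$ nonempty. Put $\delta = d(E, X \setminus E)$, which is positive since $E$ and $X \setminus E$ are disjoint compact sets. If $\e < \delta$ and $\{x_0, \dots, x_n\}$ is any $\e$-chain with $x_0 \in E$, then a one-line induction shows every $x_i \in E$: indeed $\sigma(x_i) \in \sigma(E) \subseteq E$, so $d(\sigma(x_i), x_{i+1}) < \e < \delta$ forces $x_{i+1} \in E$. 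Hence no $\e$-chain can join a point of $E$ to a point of $X \setminus E$, and $\sigma$ is not chain transitive.

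\textbf{Main obstacle.} The genuine content sits entirely in the forward implication: the mechanism is that a nearby minimal model lets an honest orbit of $\rho$ serve as an $\e$-chain for $\sigma$, and it is precisely Lemma \ref{lem closure of moving} that guarantees such a model exists for aperiodic moving homeomorphisms. Consequently the remaining labor is only the estimate above and the density of forward orbits in a minimal system. The reverse implication is elementary, the sole subtlety being the reduction of the two ``non-moving'' alternatives to a single forward-invariant clopen trap via passage to the complement.
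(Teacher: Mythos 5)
Your proof is correct, but it takes the route of the paper's \emph{alternative} argument rather than its main one. For the implication ``moving $\Rightarrow$ chain transitive'', the paper's primary proof is a self-contained exhaustion argument: partition $X$ into clopen sets of diameter $<\e$, and use the moving property to show that the sets reachable by $\e$-chains of length $1,2,\dots$ from $x$ successively swallow new partition elements until, by compactness, they cover $X$. Your argument instead invokes Lemma \ref{lem closure of moving} to replace $\sigma$ by a nearby minimal homeomorphism $\rho$ and uses a genuine $\rho$-orbit as the $\e$-chain; this is exactly the proof the paper records in Remark \ref{rem mov implies chain transitivity}. The trade-off is real: the partition proof uses only the zero-dimensionality of $X$ and nothing external, while your approach outsources the hard work to the nontrivial fact $\mathcal{Mov}=\ov{\mathcal{Min}}^{\tau_w}$ from \cite{BezuglyiDooleyKwiatkowski2006} but, in exchange, makes no use of total disconnectedness and so generalizes to arbitrary compact metric spaces (cf.\ Corollary \ref{cor chain transitive}). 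Your handling of the last link of the chain (ending the chain at $y$ itself via the triangle inequality) is actually tidier than the Remark's version, which ends at $f^k(x)$ rather than at $y$. For the converse, your contrapositive is the same idea as the paper's but executed more robustly: the paper picks two points of $E\setminus\sigma(E)$ and chooses $\e$ small relative to their distances to $\sigma(E)$, which leaves the confinement of the chain to $E$ somewhat implicit, whereas your choice $\e<d(E,X\setminus E)$ together with the one-line induction makes the forward-invariant clopen trap argument airtight; the reduction of the case $E\subseteq\sigma(E)$ to $\sigma(E)\subseteq E$ by passing to the complement is also handled correctly.
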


\begin{proof} ($\Longrightarrow$) If $\sigma$ is not moving then there is a 
nontrivial clopen set $E$ such that $\sigma(E) \subset E$, and $F= E \setminus 
\sigma(E)$ is a nonempty clopen subset (the case when $E \subset \sigma(E)$ is 
considered similarly). Let $x_0, y_0$ be any distinct points from $F$ with 
$d(x_0,y_0) = \delta >0$. We note that $\sigma^i(x) \in \sigma^i(E) \subset 
\sigma(E), i >0$ for any $x\in E$. Take $0 <\e < \min\{\delta, d(y_0, \sigma(E)), 
d(x_0, \sigma(E))\}$ (we denote the distance between closed subsets of $X$ 
by the same letter $d$; it will be clear from the context what metric space is 
meant).  Then there is no $\e$-chain from $x_0$ to $y_0$ and  from $y_0$ 
to $x_0$ as well. Hence, $\sigma$ is not chain transitive.

($\Longleftarrow$) Conversely, let $\sigma$ be a moving homeomorphism of a 
Cantor set $X$, and  let $\e$  be a positive number. Take a partition of $X$ 
into a finite collection of clopen sets $C(i)$ such that $\mbox{diam}(C(i)) < 
\e$ for any $i =1,..., N$. We are going to show that, for any  $x, y\in X$, 
there is a finite $\e$-chain for $\sigma$ from $x$ to $y$. Suppose $x\in C(i_0)
$ and $\sigma(x) \in C(i_1)$. Then any $z$ from $C(i_1)$ can be considered as 
the target of $\e$-chain $\{x,z\}$ of length 1. So, if $y\in C(i_1)$ we are 
done. If not, we consider $X \setminus B(1)$ where  $B(1) = C(i_0)$. 
Because $\sigma$ is moving, the set   $\sigma(B(1))$ intersects $X \setminus B(1)
$. Let
$$
B(2) = \bigcup_{j\in I_1} C(j)
$$
 where $j\in I_1$ if $C(j) \cap \sigma(B(1)) \neq \O$. If $z$ is a point 
 from $B(2)$, then there exists an $\e$-chain from $x$ to $z$ of length 2. 
 Indeed, if $z\in C(j), j \in I_1$, take the $\e$-chain $\{x, x_1, z\}$ where 
$x_1 \in B(1)$ such that $\sigma(x_1) \in C(j)$. If $y\in B(2)$, we are done. 
Otherwise, we apply the same argument to the set $X \setminus (B(1) \cup 
B(2))$. In view of compactness of $X$, this procedure terminates in a finite 
number of steps. This means there exists $B(k)$ such that $y \in B(k)$, that 
is the point $y$ is the final point of an $\e$-chain of length $k$.
\end{proof}

\begin{rem}\label{rem mov implies chain transitivity} We observe that the 
proof of Theorem \ref{thm moving=chain transitive} uses essentially the 
topological structure of Cantor sets, namely, the existence of  partitions of 
Cantor sets into clopen sets of arbitrary small diameter. Below we give 
another proof of the implication

\centerline{\textit{``moving'' $\Longrightarrow $ ``chain transitivity''} }

\noindent
that is based on Lemma  \ref{lem closure of moving}

\begin{proof} Suppose $\sigma\in H(X)$ is moving. By Lemma \ref{lem closure 
of moving}, for any fixed $\e >0$, there exists a minimal homeomorphism $f 
= f_\e$ such that $D(\sigma, f) < \e$. Take any two points $x, y \in X$. By 
minimality of $f$,  find the smallest positive $k$ such that $d(f^kx, y)  < \e
$. Consider the following set $I =\{x= x_0, x_1 = f(x), x_2 = f^2(x)),..., x_k 
= f^{k}(x)\}$. We claim that $I$ is an $\e$-chain for $\sigma$ from $x$ to $y$. 
Indeed,
$$
d(\sigma(x_i), x_{i+1}) = d(\sigma(f^i(x), f(f^i(x))   < \sup_{z\in X} d(\sigma(z), f(z)) 
< \e,\ i = 0,1,..., k-1,
$$
and $d(x_k, y) < \e$. Thus, $\sigma$ is chain transitive.
\end{proof}
\end{rem}

Let $\cal ChT$ be the set of all chain transitive homeomorphisms. The 
following Corollary follows from Theorem \ref{thm moving=chain transitive}.

\begin{cor}\label{cor chain trans - closed}
The set $\mathcal ChT$ of chain transitive homeomorphisms of a Cantor set 
is closed in the topology $\tau_w$ of uniform convergence.
\end{cor}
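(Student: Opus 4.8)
The plan is to identify the set $\mathcal{ChT}$ with the set $\mathcal{Mov}$ of moving homeomorphisms and then to appeal to Lemma \ref{lem closure of moving}, which states that $\mathcal{Mov}$ is $\tau_w$-closed; given the identity $\mathcal{ChT} = \mathcal{Mov}$, the closedness of $\mathcal{ChT}$ follows at once. Equivalently, one may phrase this as a sequence argument: a $\tau_w$-convergent sequence $\sigma_n \to \sigma$ with each $\sigma_n \in \mathcal{ChT}$ lands in $\mathcal{Mov}$, which is closed, so $\sigma$ is moving and hence again chain transitive.

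To establish $\mathcal{ChT} \subseteq \mathcal{Mov}$ I would simply reread the implication $(\Longrightarrow)$ in the proof of Theorem \ref{thm moving=chain transitive} as a contrapositive: if $\sigma$ is not moving there is a nontrivial clopen set $E$ with $\sigma(E) \subseteq E$ (the case $E \subseteq \sigma(E)$ being symmetric), the clopen set $F = E \setminus \sigma(E)$ is nonempty, and for two distinct points $x_0, y_0 \in F$ together with a sufficiently small $\e$ no $\e$-chain joins $x_0$ to $y_0$; thus $\sigma \notin \mathcal{ChT}$. For the reverse inclusion $\mathcal{Mov} \subseteq \mathcal{ChT}$ I would invoke the implication $(\Longleftarrow)$ of the same theorem: for a moving $\sigma$ and any $\e > 0$ one partitions $X$ into clopen sets of diameter less than $\e$ and, starting from the cell containing a given point $x$, uses the moving property repeatedly to enlarge the reachable clopen region $B(1) \subseteq B(1) \cup B(2) \subseteq \cdots$ until compactness forces every point, in particular any prescribed $y$, to be the endpoint of an $\e$-chain from $x$.

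The step I expect to require the most care is the role of aperiodicity. Theorem \ref{thm moving=chain transitive} is phrased for aperiodic homeomorphisms, whereas $\mathcal{Mov}$ is $\tau_w$-closed even though the aperiodic homeomorphisms are merely dense (Lemma \ref{lem Ap dense in H(X)}); hence a $\tau_w$-limit of chain transitive maps need not remain aperiodic, and one cannot blindly re-apply the theorem to the limit. The resolution I would emphasize is that neither implication in the proof of Theorem \ref{thm moving=chain transitive} actually uses aperiodicity---both arguments rest only on the clopen structure of $X$, on the moving condition, and on compactness---so the equivalence ``chain transitive $\iff$ moving'' and therefore the identity $\mathcal{ChT} = \mathcal{Mov}$ are valid on all of $H(X)$. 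With this in hand, Lemma \ref{lem closure of moving} delivers that $\mathcal{ChT}$ is $\tau_w$-closed.
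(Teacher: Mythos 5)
Your proposal is correct and follows exactly the paper's route: the paper derives this corollary directly from Theorem \ref{thm moving=chain transitive} (chain transitive $\iff$ moving) together with the $\tau_w$-closedness of $\mathcal{Mov}$ from Lemma \ref{lem closure of moving}. Your extra observation that neither implication in the proof of Theorem \ref{thm moving=chain transitive} actually uses aperiodicity --- so the identification $\mathcal{ChT}=\mathcal{Mov}$ survives passage to a limit that may fail to be aperiodic --- is a point the paper leaves implicit, and it is a genuine (and correct) tightening of the argument.
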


Since the proof of the fact that any moving homeomorphism is chain 
transitive, given in Remark \ref{rem mov implies chain transitivity} does not 
use explicitly the fact that $X$ is a Cantor set, we immediately deduce the 
following corollary.

 \begin{cor}\label{cor chain transitive} Suppose  $\sigma$ is a homeomorphism 
 of a compact metric space $(\Omega, d)$ which is approximated by minimal 
 homeomorphisms, i.e., $\lim_i D(\sigma, f_i) = 0$ where each $f_i$ is minimal. 
 Then $\sigma$ is chain transitive.
 \end{cor}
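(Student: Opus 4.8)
The plan is to recognize that Corollary~\ref{cor chain transitive} is exactly the implication ``moving $\Longrightarrow$ chain transitive'' isolated in Remark~\ref{rem mov implies chain transitivity}, the only change being that the approximating minimal homeomorphisms are now supplied directly by hypothesis instead of being produced via Lemma~\ref{lem closure of moving}. The second proof given in that remark never appealed to the Cantor structure of the space: it used solely the existence, for each $\e>0$, of a minimal homeomorphism $\e$-close to $\sigma$ in the metric $D$ of \eqref{metric}, together with elementary metric-space reasoning. Hence the entire argument transfers verbatim to an arbitrary compact metric space $(\Omega,d)$, and the task reduces to re-running it with the given $f_i$.

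Concretely, I would fix $x,y\in\Omega$ and $\e>0$, invoke $\lim_i D(\sigma,f_i)=0$ to select a minimal homeomorphism $f:=f_i$ with $D(\sigma,f)<\e$, and set $\delta:=\sup_{z\in\Omega} d(\sigma(z),f(z))\leq D(\sigma,f)<\e$. Minimality of $f$ on the compact space $\Omega$ forces every forward orbit to be dense: the $\omega$-limit set $\omega(x)$ is nonempty, closed and $f$-invariant, hence all of $\Omega$ by minimality, and $\omega(x)\subseteq\overline{\{f^n(x):n\geq 1\}}$. The candidate $\e$-chain for $\sigma$ from $x$ is then $x_0=x,\ x_1=f(x),\ \dots,\ x_{k-1}=f^{k-1}(x)$, terminated by the point $y$, where $k$ is chosen below. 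For each interior index $0\leq i\leq k-2$ one has
\[ d(\sigma(x_i),x_{i+1})=d\big(\sigma(f^i(x)),f(f^i(x))\big)\leq\delta<\e, \]
so all the intermediate comparisons are automatic, just as in the remark.

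The one place needing genuine care --- and the step I expect to be the main, if minor, obstacle --- is making the chain terminate \emph{exactly} at $y$ rather than merely near it. For this I would use density of the forward $f$-orbit to choose $k\geq 1$ with $d\big(f^{k}(x),y\big)<\e-\delta$ (possible since $\e-\delta>0$), so that the final comparison of the chain becomes
\[ d\big(\sigma(f^{k-1}(x)),y\big)\leq d\big(\sigma(f^{k-1}(x)),f^{k}(x)\big)+d\big(f^{k}(x),y\big)<\delta+(\e-\delta)=\e. \]
Thus $\{x,f(x),\dots,f^{k-1}(x),y\}$ is a genuine $\e$-chain from $x$ to $y$ for $\sigma$. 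Since $x,y\in\Omega$ and $\e>0$ were arbitrary, this produces an $\e$-chain for every $\e$, which is precisely the definition of chain transitivity, and completes the proof.
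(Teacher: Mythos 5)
Your proof is correct and follows essentially the same route as the paper, which simply observes that the argument of Remark \ref{rem mov implies chain transitivity} (build the $\e$-chain out of iterates $f^i(x)$ of a nearby minimal homeomorphism $f$) never uses the Cantor structure of the space. Your only addition is a careful patch at the last step --- choosing $k$ with $d(f^k(x),y)<\e-\delta$ so the chain terminates exactly at $y$ as the definition requires, where the paper's version ends at $f^k(x)$ merely $\e$-close to $y$ --- which is a welcome tightening but not a different argument.
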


\subsection{Homeomorphisms with finite number of minimal sets}

In this subsection, we consider the case when an aperiodic homeomorphism 
$\sigma \in H(X)$ has a finite number of minimal sets, say $Y_1,..., Y_k$, 
that is each $Y_i$ is a closed $\sigma$-invariant set such that the orbit 
$\mathrm{Orbit}_\sigma(z)$ is dense in $Y_i$ for any $z\in Y_i$. For 
simplicity, we will call such homeomorphisms {\em $k$-minimal}.

It worth reminding that we deal with indecomposable homeomorphisms 
$\sigma$, that is every minimal set $Y$ for a $k$-minimal homeomorphism 
$\sigma$ is not open and has empty interior. 

Given a closed subset $C$ of $X$, we  say that an open set $V$ is an  {\em 
$\e$-neighborhood of $C$} if $V \supset C$ and $d(C, x) < \e$ for any $x
\in V$.

\begin{lem}\label{lem clopen nbhd} Given an aperiodic Cantor system $(X,
\sigma)$, let  $\{Y_\alpha\}$ be the collection of all minimal subsets for 
$\sigma$. Let $V$ be any clopen subset of $X$ such that $V \supset 
\bigcup_{\alpha} Y_\alpha$. Then
there exist positive integers $K_+$ and $K_-$ such that
\begin{equation}\label{eq K+ K-}
\bigcup_{n=0}^{K_+}\sigma^n (V) = X\ \ \mbox{and}\ \ 
\bigcup_{n=0}^{K_-}\sigma^{-n} (V) = X.
\end{equation}
Moreover, the same result is true when the above condition $V \supset 
\bigcup_{\alpha} Y_\alpha$ is replaced by the condition $V\cap Y_\alpha 
\neq \O$  for any $\alpha$.
\end{lem}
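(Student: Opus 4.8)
The plan is to treat both assertions at once, observing that the only feature of $V$ actually used is that it is open and meets every minimal set; under either hypothesis (that $V\supseteq\bigcup_\alpha Y_\alpha$, or merely that $V\cap Y_\alpha\neq\O$ for every $\alpha$) this holds automatically, since each $Y_\alpha$ is nonempty and $V$, being clopen, is open. First I would reduce the problem to the \emph{unbounded} one-sided statement $\bigcup_{n=0}^{\infty}\sigma^n(V)=X$. Once this is known, $\{\sigma^n(V)\}_{n\ge 0}$ is an open cover of the compact space $X$, so a finite subcover exists, and taking $K_+$ to be the largest index occurring in it gives $\bigcup_{n=0}^{K_+}\sigma^n(V)=X$; we may take $K_+$ positive by enlarging it, as the union only grows with $K_+$. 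The backward statement then follows by applying the very same result to $\sigma^{-1}$, which has exactly the same minimal sets as $\sigma$, since invariance of a closed set is insensitive to replacing $\sigma$ by $\sigma^{-1}$.

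The heart of the argument is thus to prove $W:=\bigcup_{n=0}^{\infty}\sigma^n(V)=X$. The set $W$ is open and forward invariant, i.e.\ $\sigma(W)\subseteq W$, so its complement $C:=X\setminus W$ is closed and satisfies $\sigma^{-1}(C)\subseteq C$. Iterating, the sets $\sigma^{-n}(C)$ form a decreasing sequence of closed sets, and I would set $C_\infty:=\bigcap_{n\ge 0}\sigma^{-n}(C)$. A short computation shows $\sigma(C_\infty)=C_\infty$, so $C_\infty$ is closed and $\sigma$-invariant; moreover, if $C\neq\O$ then $C_\infty\neq\O$ by compactness (a nested intersection of nonempty compact sets). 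Every nonempty closed invariant set contains a minimal set (Zorn's Lemma together with compactness, as recalled in Section~\ref{sect prelim}), so $C_\infty$ would contain some minimal set $Y$. But $Y$ is one of the $Y_\alpha$, hence meets $V\subseteq W$, contradicting $Y\subseteq C_\infty\subseteq C=X\setminus W$. Therefore $C=\O$ and $W=X$.

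Assembling the pieces produces $K_+$, and applying the one-sided result to $\sigma^{-1}$ produces $K_-$. The main obstacle is precisely the one-sidedness: unlike in Lemma~\ref{AF-cond}, where the two-sided orbit $\bigcup_{j\in\Int}\sigma^j(U)$ is genuinely invariant and its complement is at once a closed invariant set, here $W$ is only \emph{forward} invariant, and the clean invariance must be recovered by passing to the nested intersection $C_\infty$. I note finally that aperiodicity is not needed for this argument; it is inherited from the standing hypotheses of the subsection but plays no role in the proof.
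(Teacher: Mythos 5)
Your proof is correct, but it takes a different route from the paper's. The paper works with the \emph{two-sided} orbit $\bigcup_{n\in\Int}\sigma^n(V)$, which is genuinely $\sigma$-invariant, so its complement is immediately a closed invariant set that would have to contain a minimal set --- exactly the argument of Lemma~\ref{AF-cond}. Compactness then gives $X=\bigcup_{n=-N}^{N}\sigma^n(V)$ for some $N$, and applying $\sigma^{N}$ (resp.\ $\sigma^{-N}$) to both sides of this identity shifts the symmetric union into the one-sided unions of \eqref{eq K+ K-} with $K_\pm=2N$. You instead attack the one-sided covering directly: since $W=\bigcup_{n\ge 0}\sigma^n(V)$ is only forward invariant, you recover an honest invariant set by passing to $C_\infty=\bigcap_{n\ge 0}\sigma^{-n}(C)$, which is nonempty whenever $C=X\setminus W$ is (nested nonempty compacta) and is $\sigma$-invariant, hence contains a minimal set meeting $V$ --- a contradiction. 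Both arguments are sound; the paper's shift trick is shorter and reuses the two-sided argument verbatim, while your $C_\infty$ construction is a standard and slightly more robust device (it isolates exactly where compactness and the existence of minimal sets enter, and makes transparent why the one-sided statement cannot be read off from forward invariance alone). Your handling of the ``moreover'' clause, of $K_-$ via $\sigma^{-1}$, and your remark that aperiodicity is never used all match the situation in the paper, which likewise disposes of the second hypothesis by noting the proof only uses $V\cap Y_\alpha\neq\O$.
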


\begin{proof}  Let $(X,\sigma)$ be a Cantor aperiodic dynamical system. 
Consider the $\sigma$-invariant open set  $Y =\bigcup_{n\in \Z} \sigma^n 
(V)$. If the closed set $X \setminus Y$ were nonempty, then it would  
contain a minimal set for $\sigma$; this is impossible because all minimal sets 
are subsets of $Y$. Thus, $X = \bigcup_{n\in \Z} \sigma^n (V)$; hence, by 
compactness, the latter is a finite union. Applying  appropriate powers of $
\sigma$ to the above relation, we find some $K_+\in \N$ and $K_-\in \N$ 
such that (\ref{eq K+ K-}) holds. The other statement of the lemma is proved 
similarly.
\end{proof}

In the case of finitely many minimal subsets, we can reformulate Lemma 
\ref{lem clopen nbhd} in a more appropriate form. For this,
we first observe the following useful fact.  Let $V \supset Z$ be any  
neighborhood of a minimal set $Z$ for a homeomorphism $\sigma$. It is 
easily seen that $\sigma^n(V) \supset Z\ (n \in \Z)$ because $Z$ is $\sigma
$-invariant. Therefore, the following result can be straightforwardly deduced 
from  Lemma \ref{lem clopen nbhd}.

\begin{cor}\label{cor finite union} Let $(X, \sigma)$ be an aperiodic Cantor 
system with minimal sets  $Y_1,..., Y_k$. For any $\e> 0$ and any clopen 
$\e$-neighborhood $V_i$ of $Y_i\ (i=1,..., k)$, the clopen set $V = 
\bigcup_{i=1}^k V_i$  satisfies the condition  $\bigcup_{n= 0}^{K_+}
\sigma^n (V) = X$, $\bigcup_{n= 0}^{K_-}\sigma^{-n} (V) = X$ for some 
positive integers $K_+$ and $K_-$.
\end{cor}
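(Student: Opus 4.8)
The plan is to deduce the statement directly from Lemma \ref{lem clopen nbhd}, whose hypothesis requires only a clopen set containing all the minimal components. First I would observe that $V = \bigcup_{i=1}^k V_i$ is clopen, being a finite union of clopen sets, and that each $\e$-neighborhood $V_i$ satisfies $V_i \supseteq Y_i$ by definition of a neighborhood. Since $Y_1, \ldots, Y_k$ are \emph{all} the minimal sets of $(X,\sigma)$, it follows that $V \supseteq \bigcup_{i=1}^k Y_i = \bigcup_\alpha Y_\alpha$, so the hypothesis of Lemma \ref{lem clopen nbhd} is met.

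Applying the lemma then yields positive integers $K_+$ and $K_-$ with $\bigcup_{n=0}^{K_+}\sigma^n(V) = X$ and $\bigcup_{n=0}^{K_-}\sigma^{-n}(V) = X$, which is exactly the assertion. The role of the observation recorded just before the corollary---that $\sigma^n(V) \supseteq Y_i$ for every $n \in \Z$, since $\sigma^n(Y_i) = Y_i$ by invariance---is to make transparent that no minimal set is ever lost under iteration, so that all the work of covering $X$ is carried out on the complement of $\bigcup_i Y_i$. This is what makes the reformulation into the ``finitely many minimal sets'' form both natural and immediate.

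There is essentially no obstacle here: the content of the corollary is entirely contained in Lemma \ref{lem clopen nbhd}, and the only point to verify is that a finite union of clopen $\e$-neighborhoods of the individual minimal components is itself a clopen set containing the union of all minimal components. The restriction to $\e$-neighborhoods is not needed for the conclusion---the integers $K_\pm$ produced depend on $V$ but not on $\e$---and is recorded in this form only because the subsequent chain-transitivity arguments require neighborhoods of arbitrarily small diameter.
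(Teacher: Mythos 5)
Your proposal is correct and follows exactly the route the paper takes: the corollary is deduced directly from Lemma \ref{lem clopen nbhd}, after noting that $V=\bigcup_{i=1}^k V_i$ is a clopen set containing all the minimal components (the paper likewise records the observation that $\sigma^n(V)\supseteq Y_i$ for all $n$ by invariance). Nothing further is needed.
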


\begin{defn}\label{def equivalent wrt Z}
 Let   $Z$  be a  fixed minimal set for an aperiodic Cantor system 
 $(X,\sigma)$.  We will say that two points $x,y \in X$ are \textit{chain 
 equivalent with respect to $Z$} if for any $\e >0$ there exist $\e$-chains 
 $x \stackrel{\e}\rightsquigarrow z_0$ and $y \stackrel{\e}\rightsquigarrow 
 z_0$ where  $z_0$ is a point from $Z$. 
\end{defn}

We observe that if $x \stackrel{\e}\rightsquigarrow z_0$ and $y 
\stackrel{\e}\rightsquigarrow z_0$ for some point $z_0\in Z$, then $x 
\stackrel{\e}\rightsquigarrow z$ and $y \stackrel{\e}\rightsquigarrow z$ for 
any point $z$ because $\sigma$ is minimal on $Z$.

\begin{lem}\label{lem TFAE chain wrt Z} Let $Z$ be a minimal set for an 
aperiodic Cantor system $(X,\sigma)$, and let $x,y$ be any two  points from 
$X$. The following statements are equivalent:

(i) the points $x$ and $y$ are chain equivalent with respect to $Z$;

(ii) $\forall \e >0\ \exists\ x \stackrel{\e}\rightsquigarrow z_1 \ \exists \ y 
\stackrel{\e}\rightsquigarrow z_2$ where $z_1, z_2 \in Z$;

(iii) $\forall \e >0\ \mbox{and}\ \forall \e\mbox{-neighborhood}\ V_\e \ 
\exists\ x \stackrel{\e}\rightsquigarrow v_1 \ \exists\ y \stackrel{\e}
\rightsquigarrow v_2$ where $v_1, v_2 \in V_\e$.
\end{lem}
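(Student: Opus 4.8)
The plan is to prove the three statements equivalent by the cycle (i)$\Rightarrow$(ii)$\Rightarrow$(iii)$\Rightarrow$(i), with only the last implication carrying real content. The implication (i)$\Rightarrow$(ii) is immediate: taking $z_1=z_2=z_0$ turns a pair of $\e$-chains to a common point of $Z$ into a pair of $\e$-chains to (possibly equal) points of $Z$. For (ii)$\Rightarrow$(iii), I would use that every $\e$-neighborhood $V_\e$ of $Z$ contains $Z$ by definition; hence the endpoints $z_1,z_2\in Z$ furnished by (ii) already lie in $V_\e$, and one may take $v_1=z_1$, $v_2=z_2$.

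The substance is (iii)$\Rightarrow$(i), and I would carry it out in two moves. First, a \emph{uniform continuity} step to convert ``chains landing near $Z$'' into ``chains landing on $Z$'': since $X$ is compact and $\sigma$ continuous, $\sigma$ is uniformly continuous, so for the given $\e$ I can pick $\delta\in(0,\e)$ with $d(a,b)<\delta\Rightarrow d(\sigma(a),\sigma(b))<\e$. Applying (iii) with parameter $\delta$ to the concrete $\delta$-neighborhood $V=\{p:d(Z,p)<\delta\}$ yields $\delta$-chains $x\stackrel{\delta}{\rightsquigarrow}v_1$ and $y\stackrel{\delta}{\rightsquigarrow}v_2$ with $d(Z,v_i)<\delta$; choosing $z_i\in Z$ with $d(v_i,z_i)<\delta$ and appending the single point $\sigma(z_i)\in\sigma(Z)=Z$ to each chain (legitimate because $d(\sigma(v_i),\sigma(z_i))<\e$) produces $\e$-chains from $x$ and from $y$ ending in $Z$.

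The second move is the \emph{minimality/density} step, which is exactly the observation recorded just before the lemma: if $x\stackrel{\e}{\rightsquigarrow}w$ for some $w\in Z$, then $x\stackrel{\e}{\rightsquigarrow}z$ for every $z\in Z$. I would justify this by noting that $\sigma|_Z$ is minimal, so the forward orbit $\{\sigma^n(w):n\ge 1\}$ is dense in $Z$; given a target $z\in Z$ one selects $n$ with $d(\sigma^{n+1}(w),z)<\e$ and appends the orbit segment $\sigma(w),\dots,\sigma^n(w),z$ to the chain ending at $w$, since each orbit step has cost $0$ and the final step has cost $<\e$. Applying this to the two chains from the previous paragraph, both $x$ and $y$ can be $\e$-chained to any prescribed $z_0\in Z$, which is precisely (i); as $\e>0$ was arbitrary, (i) holds.

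The main obstacle is the bookkeeping inside (iii)$\Rightarrow$(i): one must invoke (iii) at the smaller scale $\delta$ (not $\e$) and at an explicit neighborhood, and the two \emph{separate} endpoints $\sigma(z_1),\sigma(z_2)\in Z$ produced there must be reconciled to a \emph{common} target only afterwards, via the density extension applied to each of $x$ and $y$ independently. A minor point worth verifying is that in a minimal system the \emph{forward} orbit (not merely the two-sided orbit) is dense: the closure of a forward orbit is a nonempty closed forward-invariant set, and the intersection of its images under the iterates of $\sigma$ is a nonempty closed $\sigma$-invariant subset, hence all of $Z$ by minimality, forcing the forward orbit closure to equal $Z$.
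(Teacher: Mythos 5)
Your proof is correct and follows the same route as the paper's (which is only a sketch): the implications (i)$\Rightarrow$(ii)$\Rightarrow$(iii) are dismissed as immediate, and (iii)$\Rightarrow$(i) is reduced to the observation that, by minimality of $\sigma|_Z$, any point landing in a small neighborhood of $Z$ can be $\e$-chained onward to any prescribed point of $Z$. You merely supply the details the paper omits --- the uniform-continuity step at scale $\delta<\e$ to move from ``near $Z$'' onto $Z$, and the forward-orbit density argument --- and both are carried out correctly.
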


It follows from this lemma that the chain equivalence with respect to a 
minimal set $Z$ is an equivalence  relation on $X\times X$. We denote it by $
\mathcal E(Z)$.

\begin{proof} We sketch the proof of the lemma because the technique used 
in the proof  is rather standard. Clearly, the implications $(i) \Longrightarrow  
(ii)  \Longrightarrow (iii)$ are obvious, so that it remains  to show that  $(iii) 
\Longrightarrow  (i)$. To do this, it suffices to notice that, by  minimality of $
\sigma$ on $Z$,  there exists a point $z\in Z$ (in fact, $z$ can be any point 
from $Z$) such that  $v_1 \stackrel{\e}\rightsquigarrow z$ and  $v_2 
\stackrel{\e}\rightsquigarrow z$. The result then follows Definition
\ref{def equivalent wrt Z}.
\end{proof}

\begin{prop}\label{prop unique minimal set} Suppose $Z$ is a unique minimal 
set for an aperiodic homeomorphism $\sigma$ of a Cantor set $X$. Then 
$\mathcal E(Z) = X\times X$. In other words, any two points in $X$ are 
chain equivalent with respect to $Z$.
Moreover, $\sigma$ is  chain transitive.
\end{prop}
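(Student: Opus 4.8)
The plan is to exploit the uniqueness of $Z$ through Corollary~\ref{cor finite union} (the one--minimal--set instance of Lemma~\ref{lem clopen nbhd}), which provides, for every clopen $\e$-neighborhood $V\supseteq Z$, integers $K_+,K_-$ with $\bigcup_{n=0}^{K_+}\sigma^n(V)=X$ and $\bigcup_{n=0}^{K_-}\sigma^{-n}(V)=X$. Fix $\e>0$ and build a clopen $\e$-neighborhood $V$ of $Z$: since $X$ is Cantor and $Z$ compact, cover $Z$ by finitely many clopen sets of diameter $<\e$ and let $V$ be their union, so $V$ is clopen and $d(x,Z)<\e$ for all $x\in V$. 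The whole argument then reduces to two reachability facts: every point can be $\e$-chained into $Z$, and $Z$ can be $\e$-chained to every point.

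First I would prove $\mathcal E(Z)=X\times X$. By the equivalence $(ii)\Longrightarrow(i)$ of Lemma~\ref{lem TFAE chain wrt Z}, it suffices to produce, for each $x\in X$, an $\e$-chain from $x$ to a point of $Z$. Applying $\bigcup_{n=0}^{K_-}\sigma^{-n}(V)=X$ to $\sigma(x)$ yields $n\in\{1,\dots,K_-+1\}$ with $\sigma^n(x)\in V$ (using $\sigma(x)$ rather than $x$ forces $n\geq 1$ and avoids a degenerate empty chain). Then $d(\sigma^n(x),z_0)<\e$ for some $z_0\in Z$, and the finite sequence $x,\sigma(x),\dots,\sigma^{n-1}(x),z_0$ is an $\e$-chain, since every interior link is an exact orbit step of length $0$ while the last link has length $d(\sigma^n(x),z_0)<\e$. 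Carrying this out for both $x$ and $y$ gives condition $(ii)$, hence chain equivalence with respect to $Z$, proving $\mathcal E(Z)=X\times X$.

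For chain transitivity I would, given $x,y$ and $\e$, concatenate a forward chain into $Z$, a bridge inside $Z$, and a backward-orbit segment landing on $y$; to absorb the triangle-inequality losses I run the whole construction with $\e/2$. First chain $x$ into $Z$, ending at some $z_0\in Z$ as above. Next, apply $\bigcup_{n=0}^{K_+}\sigma^n(V)=X$ to $y$ to obtain $M$ with $\sigma^{-M}(y)\in V$, so $d(\sigma^{-M}(y),w)<\e/2$ for some $w\in Z$. Using minimality of $\sigma$ on $Z$ (density of the orbit of $z_0$) choose $j\geq 0$ with $d(\sigma^{j+1}(z_0),w)<\e/2$; extending the chain by the exact orbit $\sigma(z_0),\dots,\sigma^{j}(z_0)$ and then jumping to $\sigma^{-M}(y)$ costs $d(\sigma^{j+1}(z_0),\sigma^{-M}(y))<\e$ by the triangle inequality. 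Finally append the exact backward-orbit segment $\sigma^{-M}(y),\sigma^{-M+1}(y),\dots,y$, producing an $\e$-chain from $x$ to $y$.

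The main obstacle is precisely this bridging step: one must align the endpoint of the forward chain out of $x$ with the re-entry point $\sigma^{-M}(y)$ of the backward orbit of $y$ while keeping the accumulated error below $\e$, and it is exactly minimality of $Z$ together with the $\e/2$ bookkeeping that makes this possible. The remaining verifications---existence of the clopen $\e$-neighborhood $V$ and the handling of degenerate index cases---are routine.
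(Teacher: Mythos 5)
Your proposal is correct and follows essentially the same route as the paper: both use Corollary~\ref{cor finite union} to chain $x$ forward into $Z$ and to locate $y$ on a forward iterate of a clopen $\e$-neighborhood of $Z$, then bridge inside $Z$ via minimality and exit along the backward orbit of $y$. Your version is in fact slightly tidier on the bookkeeping (running the construction at $\e/2$ yields a genuine $\e$-chain, where the paper settles for a $2\e$-chain), but this is a cosmetic difference since $\e$ is arbitrary.
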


\begin{proof}  The fact that $\mathcal E(Z) = X\times X$ follows directly 
from Lemmas \ref{lem clopen nbhd} and \ref{lem TFAE chain wrt Z} because 
the $\sigma$-orbit of any clopen neighborhood of $Z$ covers $X$ after 
finitely many iterations.

Take any two points  $x,y$  in  $X$.
Let $\e > 0$, and let $V_{\e}$ be an $\e$-neighborhood of $Z$.  
We consider the case when $x,y \in X \setminus Z$; the other possible cases 
are considered similarly with obvious simplifications. By Corollary \ref{cor 
finite union}, we find positive integers $K_+$ and $K_-$ such that
$$
\bigcup_{n=0}^{K_+}\sigma^n (V_\e) =  \bigcup_{n=0}^{K_-}\sigma^{-n} 
(V_\e) = X.
$$
Let $i$ be the smallest number from the set $(0, 1,..., K_-)$ such that $x \in 
\sigma^{-i}(V_\e)$, and let $j$ be the smallest number from the set $(0, 
1,..., K_+)$ such that $y \in \sigma^{j}(V_\e)$. To construct a chain $C$ 
from $x$ to $y$, we find first two  points $w\in Z$ and $z\in Z$ such that 
$d(w, \sigma^{-j}(y)) < \e$ and  $d(\sigma^i(x), z) < \e$. Then we take the 
set
$$
C = (x, \sigma(x),..., \sigma^i(x), z, \sigma(z), ..., \sigma^m(z), \sigma^{-j}
(y), \sigma^{-j +1}(y),..., y),
$$
where  $m$ is the smallest positive integer such that $d(\sigma^m(z), w) < 
\e$. Then $C$ is a $2\e$-chain for $\sigma$. This proves the chain 
transitivity  of $\sigma$.
\end{proof}

\begin{rem} (1) It follows from Theorem \ref{thm moving=chain transitive} 
that if $(X,\sigma)$ is an aperiodic Cantor system with a unique  minimal set, 
then $\sigma$ is a moving homeomorphism.

(2) Another simple observation from the proved result consists of the 
following fact: any aperiodic homeomorphism with a unique minimal set is the 
limit of a sequence of minimal homeomorphisms in the topology of uniform 
convergence $\tau_w$.
\end{rem}

Dynamical properties of an aperiodic Cantor  system $(X, \sigma)$ become  
more complicated when the system has several minimal sets $Y_1,..., Y_k$.

 Let $E_i$  be the subset of $X$ defined as follows:
\begin{equation}\label{eq E_i}
E_i = \{x \in X : \forall \e >0 \ \exists\ x \stackrel{\e}\rightsquigarrow z \ 
\mbox{for\ some}\  z\in Y_i\},\ i =1,..., k.
\end{equation}
We observe that $E_i$ contains $Y_i$, and moreover, $E_i$ is the $\mathcal 
E(Y_i)$-saturation of the set $Y_i$. The latter means that $E_i$ is the 
minimal $\mathcal E(Y_i)$-invariant set such that every $\cal E(Y_i)$-orbit 
meets $Z$ at least once.

\begin{lem}\label{lem E_i is clopen} In the above notation, each $E_i$ is a 
clopen subset of $X,\ i=1,..., k$. 
\end{lem}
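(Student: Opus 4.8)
The plan is to show that $E_i$ is both open and closed. For $\e>0$ write $E_i^\e$ for the set of points $x$ admitting an $\e$-chain $x\stackrel{\e}{\rightsquigarrow}z$ with $z\in Y_i$, so that $E_i=\bigcap_{\e>0}E_i^\e$; since an $\e'$-chain is also an $\e$-chain when $\e'\le\e$, the family $(E_i^\e)$ is increasing in $\e$, and $x\in E_i$ iff $x\in E_i^\e$ for all $\e>0$. The first observation is that each $E_i^\e$ is \emph{open}: if $x,y_1,\dots,y_n$ witnesses $x\in E_i^\e$, then by continuity of $\sigma$ every $x'$ in a small enough clopen neighborhood of $x$ still has $d(\sigma(x'),y_1)<\e$, so $x',y_1,\dots,y_n$ is again an $\e$-chain into $Y_i$.

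Closedness is the easy half, obtained by the same perturbation applied to the initial vertex. Suppose $x_t\to x$ with $x_t\in E_i$ and fix $\e>0$. Choose $t$ with $d(\sigma(x),\sigma(x_t))<\e/2$; since $x_t\in E_i\subseteq E_i^{\e/2}$ there is an $\e/2$-chain $x_t=y_0,y_1,\dots,y_n=z$ with $z\in Y_i$. Then $x,y_1,\dots,y_n$ has first step $d(\sigma(x),y_1)\le d(\sigma(x),\sigma(x_t))+d(\sigma(x_t),y_1)<\e/2+\e/2=\e$ and all later steps unchanged, so it is an $\e$-chain from $x$ to $Y_i$. As $\e$ was arbitrary, $x\in E_i$, and $E_i$ is closed.

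The genuine difficulty is openness of $E_i$: each $E_i^\e$ is open but $E_i=\bigcap_{\e>0}E_i^\e$ is a priori only a $G_\delta$, and a decreasing intersection of open sets need not be open. To organize this I would pass to a combinatorial model of chain–reachability. For a clopen partition $\mathcal P$ of $X$, let $G(\mathcal P)$ be the directed graph on the cells with an arrow $A\to B$ whenever $\sigma(A)\cap B\ne\emptyset$, and let $S(\mathcal P)$ be the set of $x$ whose $\mathcal P$-cell of $\sigma(x)$ can reach in $G(\mathcal P)$ a cell meeting $Y_i$; this $S(\mathcal P)$ is a union of cells, hence \emph{clopen}. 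Writing $\e$ for the mesh of $\mathcal P$ and $\delta>0$ for the (positive) minimum distance between distinct cells, one checks the sandwich $E_i^{\delta}\subseteq S(\mathcal P)\subseteq E_i^{\e}$, and that $S(\mathcal P')\subseteq S(\mathcal P)$ whenever $\mathcal P'$ refines $\mathcal P$. Thus along a refining sequence $\mathcal P_m$ generating the topology, $S(\mathcal P_m)$ is a \emph{decreasing} sequence of clopen sets with $\bigcap_m S(\mathcal P_m)=E_i$ (re-proving closedness), and $E_i$ is open precisely when this sequence \emph{stabilizes}.

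Proving stabilization is where I expect the real work to lie, and it is the step that must use both the minimality of $(Y_i,\sigma)$ and the finiteness of the set of minimal components. Here I would bring in the uniform covering estimate of Corollary \ref{cor finite union} and Lemma \ref{lem clopen nbhd}, which bound the number of forward steps needed for an orbit to enter a prescribed clopen neighborhood of $\bigcup_j Y_j$, together with the uniform recurrence of $Y_i$ into its own shrinking clopen neighborhoods. The aim is a \emph{coarse-to-fine} (shadowing-type) statement: once a point can be $\e_0$-chained into a sufficiently small clopen neighborhood of $Y_i$ for a single fixed $\e_0$, the recurrence of $Y_i$ lets the chain be prolonged, with uniformly controlled error, into $Y_i$ at \emph{every} finer scale, so that membership in $E_i$ is decided at the fixed scale $\e_0$ and the family $S(\mathcal P_m)$ cannot shrink forever. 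This upgrade — ruling out points that reach $Y_i$ only at large scales but not at small ones — is the main obstacle; the surrounding open/closed bookkeeping, and the clopenness of the resulting fixed-scale bounded-length reachability set, are then routine.
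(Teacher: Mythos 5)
Your closedness argument is correct and coincides with the paper's (perturb the first point of an $\e/2$-chain), and your diagnosis of the openness half is sound: the naive perturbation only shows that each fixed-scale set $E_i^{\e}$ is open, because the neighborhood it produces depends on $\e$, whereas $E_i=\bigcap_{\e>0}E_i^{\e}$ is a priori only a $G_\delta$. (In fact the paper's own two-line openness argument has exactly this defect: the $V$ it constructs depends on $\e$, so it too proves only that $E_i^{\e}$ is open.) Your combinatorial reformulation is also correct: for a clopen partition $\mathcal P$ of mesh $\e$ with $\delta$ the least distance between distinct cells, the sandwich $E_i^{\delta}\subseteq S(\mathcal P)\subseteq E_i^{\e}$ and the monotonicity under refinement do hold, so $E_i$ is a decreasing intersection of clopen sets $S(\mathcal P_m)$, and openness of $E_i$ is equivalent to this sequence stabilizing.

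The proposal, however, stops at precisely the step that carries all the content. Stabilization is only asserted to follow from a ``coarse-to-fine shadowing'' argument that is never carried out, and the mechanism you describe --- prolonging a fixed-scale $\e_0$-chain into $Y_i$ using the recurrence of $Y_i$ --- cannot work as stated: an $\e$-chain with $\e<\e_0$ must have \emph{all} of its steps of size $<\e$, and appending points to a coarse chain never refines its earlier steps. Worse, since stabilization of $S(\mathcal P_m)$ is \emph{equivalent} to clopenness of $E_i$ (cover the compact set $X\setminus E_i$ by the increasing open sets $X\setminus S(\mathcal P_m)$), no purely local argument at a single point can supply it. The way to close the gap is global rather than pointwise: every $x\in X$ lies in some $E_j$, because $\omega(x)$ contains one of the finitely many minimal sets $Y_j$ and the forward orbit of $x$ then furnishes $\e$-chains into $Y_j$ for every $\e$ (this is the paper's assertion $X=\bigcup_j E_j$ via Proposition \ref{prop unique minimal set}); each $E_j$ is closed by your first paragraph; and any two of them are disjoint or equal (Lemma \ref{lem E_i  intersects and E_j}). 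The distinct $E_j$'s therefore form a finite partition of $X$ into closed sets, and each is open as the complement of a finite union of closed sets. Without this, or some genuine substitute for the unproved stabilization step, your proposal establishes only that $E_i$ is closed.
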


\begin{proof} First we show that $E_i$ is open. For $x \in E_i$ and $\e > 0$, 
take an $\e$-chain $(x= x_0, x_1,..., x_n=z) $ from $x$ to $z \in /Y_i$. We 
prove that there exists a neighborhood $V$ of $x$ such that $V \subset E_i
$. Choose $\delta >0$ such that $d(\sigma(x), \sigma(y)) < \e/2$ when $y 
\in V := \{u : d(x,u)< \delta\}$. Then for any $y\in V$, we see that $(y, x_1, 
... , x_n)$ is an $\e$-chain from $y$ to $z$.

Next, let $(u(n))$ be a sequence from $E_i$ that converges to some point 
$x$. Suppose that for each $u(n)$ there is an $\e$-chain $(u(n), x_1(n),..., 
z(n))$ from $u(n)$ to $Y_i$. Choose $n_0$ such that $d(\sigma(u(n_0), 
\sigma(x)) < \e/2$. Then $(x, x_1(n),..., z(n))$ is an $\e$-chain from $x$ to 
$Y_i$. Hence, $E_i$ is closed.

\end{proof}

Consider an aperiodic Cantor system $(X,\sigma)$ with finitely many minimal 
sets $Y_1,..., Y_k$, and let $E_1, ... , E_k$ be the clopen sets defined by 
minimal sets according to (\ref{eq E_i}). Then, by Proposition \ref{prop 
unique minimal set}, we have
$$
X = \bigcup_{i=1}^k E_i.
$$

\begin{lem}\label{lem E_i  intersects and E_j}
In the above notation, 
$$
E_i \cap E_j \neq \O \ \ \Longrightarrow \ \ E_i = E_j,\ \ i,j = 1,...,k.
$$ 
\end{lem}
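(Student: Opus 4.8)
The plan is to recast membership in $E_i$ through a chain-reachability preorder and then reduce the statement to the \emph{mutual} chain-reachability of $Y_i$ and $Y_j$. Write $x \Rrightarrow A$ to mean that for every $\e>0$ there is an $\e$-chain from $x$ to some point of $A$, and $x\Rrightarrow y$ when $A=\{y\}$. Concatenating two chains at their shared endpoint shows $\Rrightarrow$ is transitive, and it is trivially reflexive. Since $\sigma|_{Y_i}$ is minimal, any point of $Y_i$ is $\Rrightarrow$-reachable from any other (the forward orbit is dense), so $x\Rrightarrow Y_i$ iff $x\Rrightarrow y_i$ for a single fixed $y_i\in Y_i$; thus by \eqref{eq E_i} one has $E_i=\{x:\ x\Rrightarrow y_i\}$. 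I would first record two elementary consequences: $E_i\cap E_j\neq\O$ says exactly that some $x_0$ satisfies $x_0\Rrightarrow Y_i$ and $x_0\Rrightarrow Y_j$; and, by transitivity, $E_i=E_j$ holds as soon as $Y_i\Rrightarrow Y_j$ and $Y_j\Rrightarrow Y_i$, because then $x\Rrightarrow Y_i$ iff $x\Rrightarrow Y_j$ for every $x$. The converse also holds since $Y_i\subseteq E_i=E_j$. Hence the whole lemma is equivalent to the implication: a common point reaching both $Y_i$ and $Y_j$ forces $Y_i,Y_j$ to be mutually chain-reachable.

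Next I would extract \emph{one} direction for free from the covering in Corollary \ref{cor finite union}. Fixing clopen $\e$-neighbourhoods $V_1,\dots,V_k$ of $Y_1,\dots,Y_k$ and $V=\bigcup_m V_m$, the relation $\bigcup_{n=0}^{K_+}\sigma^n(V)=X$ writes $x_0=\sigma^n(v)$ with $v\in V_m$ for some $m$ and $0\le n\le K_+$. Since $v$ lies in the $\e$-neighbourhood of $Y_m$ and $Y_m$ is minimal, a short $\e$-chain joins $Y_m$ to $v$, after which the genuine orbit $v,\sigma(v),\dots,\sigma^n(v)=x_0$ continues it; letting $\e\to 0$ yields $Y_m\Rrightarrow x_0$ for some minimal $Y_m$, whence $Y_m\Rrightarrow Y_i$ and $Y_m\Rrightarrow Y_j$. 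The genuinely hard remaining point is to \emph{reverse} reachability, i.e.\ to upgrade a one-sided chain $Y_m\Rrightarrow Y_i$ to $Y_i\Rrightarrow Y_m$.

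The main obstacle is precisely this reversal: it amounts to ruling out an ``attracting'' minimal set that can be chain-reached but never chain-escaped, and the two-sided covering of Corollary \ref{cor finite union} by itself does not exclude such behaviour. I would overcome it by passing to the Bratteli--Vershik picture. By Theorem \ref{model} the system is conjugate to a non-elementary $k$-simple Bratteli--Vershik model, so its transition graphs $L_n$ are available; for $k\ge 2$, Corollary \ref{connected-diag} gives that each $L_n$ is connected, while Corollary \ref{loop-diag} gives that every edge issuing from a vertex lies on a directed closed walk. Consequently every directed edge of $L_n$ sits on a directed cycle, and a connected directed graph in which every edge lies on a directed cycle is strongly connected (an edge between two distinct strongly connected components would, lying on a cycle, merge them). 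The last step is the dictionary between combinatorics and dynamics: a directed edge of $L_n$ from $Y_a$ to $Y_b$ is a transit vertex $v\in V_o^n$ with $m_-(v)=a$ and $m_+(v)=b$, and the associated tower in $X_B$ carries a genuine orbit segment running from near $Y_a$ to near $Y_b$, realizing an $\e$-chain that witnesses $Y_a\Rrightarrow Y_b$; splicing such chains along a directed path (using minimality of each intermediate $Y$) yields $Y_a\Rrightarrow Y_b$ for all pairs. Strong connectivity then gives mutual reachability of $Y_i$ and $Y_j$ (indeed of all minimal sets, so every $E_i$ equals $X$), and the reduction of the first paragraph concludes. I expect this transition-graph-to-$\e$-chain translation, together with the deduction that the loop and connectivity properties force strong connectivity, to be the delicate heart of the proof; the case $k=1$ is immediate, since then $E_1=X$ by Proposition \ref{prop unique minimal set}.
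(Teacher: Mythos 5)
Your proposal takes a genuinely different route from the paper, and it correctly isolates the crux --- one must \emph{reverse} chain-reachability, upgrading ``$y$ reaches $Y_i$'' to ``$Y_i$ reaches $y$'' --- but the way you resolve it does not work in the generality in which the lemma is stated. The step that fails is the appeal to Corollary \ref{connected-diag}. That corollary is deduced from Theorem \ref{index-diag} together with Theorem \ref{index}, which rest on the computation $\Kone(\mathrm{C}(X)\rtimes_\sigma\Int)\cong\Int$, i.e.\ on the hypothesis that $X$ has \emph{no nontrivial clopen invariant subsets}. Section \ref{Chain} assumes only that the minimal sets are not open, which is strictly weaker: the disjoint union of two aperiodic systems, each with a single non-open minimal set, is a legitimate $2$-minimal system in this setting, its transition graphs are disconnected, and $E_1,E_2$ are the two clopen halves of $X$ --- so your conclusion ``every $E_i$ equals $X$'' is false for it. (The ``if and only if'' in Theorem \ref{thm chain transitive} shows that such decomposable systems are meant to be covered; if every $E_i$ were always $X$, that theorem would be vacuous.) Conversely, in the only regime where Corollary \ref{connected-diag} is available --- no nontrivial clopen invariant subsets --- the lemma needs none of this machinery: $E_i$ is clopen by Lemma \ref{lem E_i is clopen} and is easily seen to be $\sigma$-invariant (prepending $\sigma^{-1}(x)$ to an $\e$-chain gives backward invariance; deleting the first link, at the cost of an error controlled by the uniform continuity of $\sigma$, gives forward invariance), so $E_i=X$ outright. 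Your argument therefore proves the lemma exactly where it is already trivial and does not reach the cases where it has content. A repair in your spirit would retain Corollary \ref{loop-diag} (which needs no indecomposability and makes each \emph{weakly connected component} of $L_n$ strongly connected), but would then have to show that $E_i\cap E_j\neq\O$ forces $Y_i$ and $Y_j$ into the same component --- the converse of your edge-to-$\e$-chain dictionary --- and that step is missing from your write-up.

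For comparison, the paper's proof stays entirely inside elementary topological dynamics: it invokes Proposition \ref{prop unique minimal set} to assert that any two points of a single $E_i$ are joined by an $\e$-chain (an assertion which itself embeds exactly the reversal you identify), and then concatenates chains through a common point of $E_i\cap E_j$ to obtain $x_i\rightsquigarrow Y_j$ and $x_j\rightsquigarrow Y_i$; no Bratteli--Vershik model, transition graph, or index map appears anywhere in Section \ref{Chain}. Your reduction to mutual reachability of $Y_i$ and $Y_j$, and your translation of transition-graph edges into $\e$-chains (essentially the pseudoperiodicity argument already used in the proof of Corollary \ref{2-0-fin}), are sound and worth keeping; the part to replace is the global connectedness input.
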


\begin{proof}
It follows from Proposition \ref{prop unique minimal set} that, for any $\e 
>0$, each set $E_i$, and any two points $x,y \in E_i$, there exists an $\e$-
chain from $x$ to $y$. Thus, if $x_i \in E_i$,  $x_j \in E_j$  are arbitrary 
points and $y\in E_i \cap E_j$, then there are $\e$-chains $x_i \stackrel{\e}
\rightsquigarrow z_j$ and  $x_j \stackrel{\e}\rightsquigarrow z_i$ where 
$z_i \in Y_i$ and $z_j\in Y_j$.
\end{proof}

\begin{thm}\label{thm chain transitive}
Suppose $\sigma$ is an aperiodic homeomorphism, and $Y_1, ... , Y_k$ are 
minimal sets for $\sigma$ on a Cantor set $X$. Then  $(X,\sigma)$ is chain 
transitive if and only if $E_1 = ... = E_k = X$.

\end{thm}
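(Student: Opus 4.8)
The plan is to establish the two implications separately; the forward direction is immediate from the definitions, while the reverse direction carries all the content and mirrors the chain construction already used in the proof of Proposition \ref{prop unique minimal set}.

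For ``chain transitive $\Longrightarrow E_1 = \cdots = E_k = X$'', I would argue directly. Fix an arbitrary $x \in X$ and an index $i$, and choose any point $z_0 \in Y_i$. Chain transitivity supplies, for every $\e > 0$, an $\e$-chain from $x$ to $z_0$; since $z_0 \in Y_i$, this is exactly the defining condition \eqref{eq E_i} for $x \in E_i$. As $x$ and $i$ are arbitrary, every $E_i$ equals $X$.

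For the converse, assume all $E_i = X$ and fix $x, y \in X$ and $\e > 0$; it suffices to produce a $2\e$-chain from $x$ to $y$, since $\e$ is arbitrary. First I would take clopen $\e$-neighborhoods $V_i \supseteq Y_i$ and set $V = \bigcup_i V_i$. By Corollary \ref{cor finite union} there is an integer $K_-$ with $\bigcup_{n=0}^{K_-} \sigma^{-n}(V) = X$, so some $n$ with $0 \le n \le K_-$ satisfies $\sigma^{-n}(y) \in V_{i'}$ for an index $i'$; choose $w \in Y_{i'}$ with $d(w, \sigma^{-n}(y)) < \e$. Since $E_{i'} = X$, we have $x \in E_{i'}$, hence an $\e$-chain $x = x_0, \ldots, x_m = z$ ending at some $z \in Y_{i'}$. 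Minimality of $\sigma$ on $Y_{i'}$ then lets the genuine forward orbit of $z$ come within $\e$ of $w$, say $d(\sigma^p(z), w) < \e$. Splicing these pieces onto the backward orbit of $y$, the sequence
\[
x, \ldots, z, \sigma(z), \ldots, \sigma^{p-1}(z), \sigma^{-n}(y), \sigma^{-n+1}(y), \ldots, y
\]
is a $2\e$-chain from $x$ to $y$: each internal transition along an exact $\sigma$-orbit contributes error $0$, the initial segment contributes $< \e$, and the single hop from $\sigma^{p-1}(z)$ to $\sigma^{-n}(y)$ is controlled by $d(\sigma^p(z), w) + d(w, \sigma^{-n}(y)) < 2\e$ via the triangle inequality.

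The main obstacle, and the reason the hypothesis must be the full statement $E_1 = \cdots = E_k = X$ rather than anything weaker, is that one has no control over which minimal set the backward orbit of $y$ returns to: the index $i'$ in the second step is forced by $y$ and may be any of $1, \dots, k$. Only $E_{i'} = X$ guarantees that $x$ can be routed into $Y_{i'}$ so that the splicing goes through; given that routing, the rest is the standard three-segment construction (an $\e$-chain into $Y_{i'}$, an exact orbit arc inside $Y_{i'}$ supplied by minimality, and the exact backward-orbit arc out to $y$), and the sole care needed is the off-by-one indexing in the definition of an $\e$-chain, which costs only the harmless factor of $2$.
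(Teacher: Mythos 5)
Your proposal is correct. The forward direction coincides with the paper's (chain transitivity gives an $\e$-chain from any $x$ into any $Y_i$, which is precisely the membership condition \eqref{eq E_i}). For the converse the paper simply writes that the statement ``follows from Lemma \ref{lem E_i  intersects and E_j}''; the real content there is the claim, buried in that lemma's proof and attributed to Proposition \ref{prop unique minimal set}, that any two points of a single $E_i$ are $\e$-chain connected. You instead give a self-contained three-segment construction modeled on the proof of Proposition \ref{prop unique minimal set}: pull $y$ back via Corollary \ref{cor finite union} into a neighborhood of some $Y_{i'}$, use the hypothesis $E_{i'}=X$ to route $x$ into $Y_{i'}$, bridge inside $Y_{i'}$ by minimality, and splice, paying a factor of $2$ in $\e$. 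This buys something the paper's citation chain glosses over: with several minimal sets one has no control over \emph{which} $Y_{i'}$ the backward orbit of $y$ approaches, and it is exactly the full hypothesis $E_1=\cdots=E_k=X$ that lets $x$ be routed to that particular $Y_{i'}$. Your explicit handling of this point makes the argument more transparent (and arguably more complete) than the paper's appeal to Lemma \ref{lem E_i  intersects and E_j}, whose internal claim does not follow verbatim from the unique-minimal-set proposition. The only cosmetic caveat is that one should take the exponent $p\geq 1$ in the orbit arc inside $Y_{i'}$ (minimality supplies infinitely many such $p$), so that the spliced sequence is a well-formed chain; this does not affect the estimate.
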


\begin{proof} Let $\sigma$ be a chain transitive homeomorphism. Fix $E_i$ 
and take any point $x \in E_i$. Then, for any $\e >0$ and   $y_j \in E_j$, 
there exists an $\e$-chain from $x$ to $y_j$, $j = 1, ... , k$. Hence, $x\in 
E_j$ for all $j = 1,...,k$. This proves the first statement.

The converse statement follows from Lemma 
\ref{lem E_i  intersects and E_j}.
\end{proof}

The properties of homeomorphisms with finitely many minimal sets are clearly 
seen for Vershik maps defined on $k$-minimal Bratteli diagrams. We obtain 
 the following result  from Theorem \ref{thm chain transitive}.

\begin{cor}\label{cor k-minimal homeo}
Let $B = (V, E, >)$ be a $k$-simple ordered Bratteli diagram and let
$(X_B, \omega_B)$ be a Bratteli-Vershik $k$-minimal 
dynamical system defined on the path space of $B$. 
Then $\sigma_B$ is a chain transitive homeomorphism.
\end{cor}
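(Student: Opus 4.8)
The plan is to invoke the characterization in Theorem \ref{thm chain transitive}. Since a $k$-minimal homeomorphism is by definition aperiodic, and $Y_1,\dots,Y_k$ are the minimal sets of $(X_B,\sigma_B)$, it suffices to prove that the sets $E_1,\dots,E_k$ defined in \eqref{eq E_i} all coincide with $X_B$. I would fix an index $i$ and show directly that $E_i=X_B$, which makes a case split on $k$ unnecessary.

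The two ingredients I would establish are that $E_i$ is clopen and that it is $\sigma_B$-invariant. The first is exactly Lemma \ref{lem E_i is clopen}, and $E_i\supseteq Y_i\neq\O$, so $E_i$ is a nonempty clopen set. Invariance I would prove at the level of $\e$-chains. Backward invariance is immediate: given $x\in E_i$ and an $\e$-chain $x=x_0,x_1,\dots,x_m=z\in Y_i$, the sequence $\sigma_B^{-1}(x),x_0,x_1,\dots,x_m$ is again an $\e$-chain ending at $z$, because its first transition has length $d(\sigma_B(\sigma_B^{-1}(x)),x_0)=0$; hence $\sigma_B^{-1}(E_i)\subseteq E_i$. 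For forward invariance I would use the uniform continuity of $\sigma_B$: given $\e>0$, choose $\delta<\e/2$ with $d(u,w)<\delta\Rightarrow d(\sigma_B(u),\sigma_B(w))<\e/2$, take a $\delta$-chain $x=x_0,\dots,x_m=z\in Y_i$, and check that $\sigma_B(x),x_2,x_3,\dots,x_m$ is an $\e$-chain from $\sigma_B(x)$ to $z$. Indeed the new first transition satisfies $d(\sigma_B^2(x),x_2)\le d(\sigma_B^2(x),\sigma_B(x_1))+d(\sigma_B(x_1),x_2)<\e/2+\delta<\e$, while the remaining transitions were already shorter than $\delta$; the degenerate case $m=1$ is handled by passing directly to $\sigma_B(z)\in Y_i$. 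Since $\e$ is arbitrary this gives $\sigma_B(E_i)\subseteq E_i$, and together with the previous inclusion $\sigma_B(E_i)=E_i$.

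Finally I would invoke indecomposability. The Bratteli-Vershik system attached to a $k$-simple ordered Bratteli diagram is indecomposable, that is, its only clopen $\sigma_B$-invariant subsets are $\O$ and $X_B$; this is built into the model of Section \ref{BV-model} and is reflected combinatorially in the connectedness of the transition graphs (Corollary \ref{connected-diag}). Consequently the nonempty clopen invariant set $E_i$ must equal $X_B$. As $i$ was arbitrary, $E_1=\dots=E_k=X_B$, and Theorem \ref{thm chain transitive} yields the chain transitivity of $\sigma_B$. The only delicate point is the forward-invariance estimate for $\sigma_B(E_i)\subseteq E_i$; everything else is a direct citation, so I expect that $\e$--$\delta$ bookkeeping to be the main (though routine) obstacle.
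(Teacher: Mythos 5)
Your proof is correct and follows the same route the paper intends: the paper derives the corollary in one line from Theorem \ref{thm chain transitive}, and you supply the verification that $E_1=\cdots=E_k=X_B$ which the paper leaves implicit. Your $\e$-chain computations for the invariance of $E_i$ (prepending $\sigma_B^{-1}(x)$ for backward invariance, and the $\delta$-then-$\e/2$ uniform-continuity estimate for forward invariance, with the degenerate case $m=1$ handled separately) are sound, and together with Lemma \ref{lem E_i is clopen} and $Y_i\subseteq E_i$ they reduce everything to indecomposability.

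Two cautions about that last step. First, indecomposability in the strong sense you need (no nontrivial clopen $\sigma_B$-invariant subset) is a standing assumption of the paper (see Section \ref{subsect index map}) rather than a formal consequence of Definition \ref{defn-BD}: a disjoint union of two simple ordered diagrams satisfies Definition \ref{defn-BD} with $V_o^n=\O$, and its Vershik map is decomposable and not chain transitive, so the corollary holds only under that standing assumption (cf.\ the remark following Definition \ref{defn-unordered-BD} that $V_o^n=\O$ forces decomposability). You are right to isolate indecomposability as the key input, but it is an assumption, not something automatically ``built into'' the combinatorics of the model. Second, you should not cite Corollary \ref{connected-diag} as evidence for indecomposability: its proof goes through Theorem \ref{index}, whose computation of $\Kone(\mathrm{C}(X)\rtimes_\sigma\Int)=\Int$ already presupposes indecomposability, so that citation would be circular. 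With strong indecomposability taken as given, your argument is complete and is a clean way to fill in the details the paper omits.
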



\begin{thebibliography}{10}

\bibitem{BezuglyiDooleyMedynets2005}
S.~Bezuglyi, A.~H. Dooley, and K.~Medynets.
\newblock The {R}okhlin lemma for homeomorphisms of a {C}antor set.
\newblock {\em Proc. Amer. Math. Soc.}, 133(10):2957--2964, 2005.

\bibitem{BezuglyiKarpel2016}
S.~Bezuglyi and O.~Karpel.
\newblock Bratteli diagrams: structure, measures, dynamics.
\newblock In {\em Dynamics and numbers}, volume 669 of {\em Contemp. Math.},
  pages 1--36. Amer. Math. Soc., Providence, RI, 2016.

\bibitem{BezuglyiKwiatkowskiMedynets2009}
S.~Bezuglyi, J.~Kwiatkowski, and K.~Medynets.
\newblock Aperiodic substitution systems and their {B}ratteli diagrams.
\newblock {\em Ergodic Theory Dynam. Systems}, 29(1):37--72, 2009.

\bibitem{BezuglyiKwiatkowskiMedynetsSolomyak2013}
S.~Bezuglyi, J.~Kwiatkowski, K.~Medynets, and B.~Solomyak.
\newblock Finite rank {B}ratteli diagrams: structure of invariant measures.
\newblock {\em Trans. Amer. Math. Soc.}, 365(5):2637--2679, 2013.

\bibitem{BezuglyiKwiatkowskiYassawi2014}
S.~Bezuglyi, J.~Kwiatkowski, and R.~Yassawi.
\newblock Perfect orderings on finite rank {B}ratteli diagrams.
\newblock {\em Canad. J. Math.}, 66(1):57--101, 2014.

\bibitem{BezuglyiYasaswi2017}
S.~Bezuglyi and R.~Yassawi.
\newblock Orders that yield homeomorphisms on {B}ratteli diagrams.
\newblock {\em Dyn. Syst.}, 32(2):249--282, 2017.

\bibitem{BezuglyiDooleyKwiatkowski2006}
S.~Bezuglyi, A.~H. Dooley, and J.~Kwiatkowski.
\newblock Topologies on the group of homeomorphisms of a {C}antor set.
\newblock {\em Topol. Methods Nonlinear Anal.}, 27(2):299--331, 2006.

\bibitem{Br-AF}
O.~Bratteli.
\newblock Inductive limits of finite dimensional $\textrm{C}$*-algebras.
\newblock {\em Trans. Amer. Math. Soc.}, 171:195--234, 1972.

\bibitem{CornfeldFominSinai1982}
I.~P. Cornfeld, S.~V. Fomin, and Ya.~G. Sina{\u\i}.
\newblock {\em Ergodic theory}, volume 245 of {\em Grundlehren der
  Mathematischen Wissenschaften [Fundamental Principles of Mathematical
  Sciences]}.
\newblock Springer-Verlag, New York, 1982.
\newblock Translated from the Russian by A. B. Sosinski{\u\i}.

\bibitem{Davidson-book}
K.~R. Davidson.
\newblock {\em $\textrm{C*}$-algebras by example}.
\newblock Number~6 in Fields Institute Monographs. American Mathematical
  Society, Providence, RI, 1996.

\bibitem{DownarowiczKarpel2018}
T.~Downarowicz and O.~Karpel.
\newblock Dynamics in dimension zero: a survey.
\newblock {\em Discrete Contin. Dyn. Syst.}, 38(3):1033--1062, 2018.

\bibitem{Durand2010}
F.~Durand.
\newblock Combinatorics on {B}ratteli diagrams and dynamical systems.
\newblock In {\em Combinatorics, automata and number theory}, volume 135 of
  {\em Encyclopedia Math. Appl.}, pages 324--372. Cambridge Univ. Press,
  Cambridge, 2010.

\bibitem{Ell-Her-AF}
G.~A. Elliott.
\newblock Automorphisms determined by multipliers on ideals of a {C*}-algebra.
\newblock {\em J. Funct. Anal.}, 23(1):1--10, 1976.

\bibitem{Elliott-AF}
G.~A. Elliott.
\newblock On the classification of inductive limits of sequences of semisimple
  finite-dimensional algebras.
\newblock {\em J. Algebra}, 38(1):29--44, 1976.

\bibitem{GPS-Cantor}
T.~Giordano, I.~F. Putnam, and C.~F. Skau.
\newblock Topological orbit equivalence and {C*}-crossed products.
\newblock {\em J. Reine Angew. Math.}, 469:51--111, 1995.

\bibitem{GlasnerWeiss2008}
E.~Glasner and B.~Weiss.
\newblock Topological groups with {R}okhlin properties.
\newblock {\em Colloq. Math.}, 110(1):51--80, 2008.

\bibitem{Goodearl}
K.~R. Goodearl.
\newblock {\em Partially ordered abelian groups with interpolation}.
\newblock Number~20 in Mathematical Surveys and Monographs. American
  Mathematical Society, Providence, RI, 1986.

\bibitem{HPS-Cantor}
R.~H. Herman, I.~F. Putnam, and C.~F. Skau.
\newblock Ordered $\textrm{Bratteli}$ diagrams, dimension groups and
  topological dynamics.
\newblock {\em Internat. J. Math.}, 3(6):827--864, 1992.

\bibitem{HjelmborgRordam1998}
J.~v.B. Hjelmborg and M.~R{\o}rdam.
\newblock On stability of {$C^*$}-algebras.
\newblock {\em J. Funct. Anal.}, 155(1):153--170, 1998.

\bibitem{Hochman2008}
M.~Hochman.
\newblock Genericity in topological dynamics.
\newblock {\em Ergodic Theory Dynam. Systems}, 28(1):125--165, 2008.

\bibitem{JanssenQuasYassawi2017}
J.~Janssen, A.~Quas, and R.~Yassawi.
\newblock Bratteli diagrams where random orders are imperfect.
\newblock {\em Proc. Amer. Math. Soc.}, 145(2):721--735, 2017.

\bibitem{LM}
H.~Lin and M.~R{\o}rdam.
\newblock Extensions of inductive limits of circle algebras.
\newblock {\em J. London Math. Soc.}, 51:603--613, 1995.

\bibitem{Medynets2006}
K.~Medynets.
\newblock Cantor aperiodic systems and {B}ratteli diagrams.
\newblock {\em C. R. Math. Acad. Sci. Paris}, 342(1):43--46, 2006.

\bibitem{Medynets2007}
K.~Medynets.
\newblock On approximation of homeomorphisms of a {C}antor set.
\newblock {\em Fund. Math.}, 194(1):1--13, 2007.

\bibitem{Pimsner-AF}
M.~Pimsner.
\newblock Embedding some transformation group {C*}-algebras into {AF}-algebras.
\newblock {\em Ergodic Theory Dynam. Systems}, 3(4):613--626, 1983.


\bibitem{Poon-PAMS-89}
Y.~T. Poon.
\newblock Stable rank of some crossed product {C*}-algebras.
\newblock {\em Proc. Amer. Math. Soc.}, 105(4):868--875, 1989.

\bibitem{Poon-Rocky}
Y.~T. Poon.
\newblock $\textrm{AF}$ subalgebras of certain crossed products.
\newblock {\em Rocky Mountain Journal of mathematics}, 20(2):527--537, 1990.

\bibitem{Put-PJM}
I.~F. Putnam.
\newblock The {C*}-algebras associated with minimal homeomorphisms of the
  {C}antor set.
\newblock {\em Pacific J. Math.}, 136(2):329--353, 1989.

\bibitem{Putnam-AT}
I.~F. Putnam.
\newblock On the topological stable rank of certain transformation group
  $\mbox{C}$*-algebras.
\newblock {\em Ergodic Theory Dynam. Systems}, 10(1):197--207, 1990.



\bibitem{Rieffel-DimStr}
M.~A. Rieffel.
\newblock Dimension and stable rank in the {K}-theory of {C*}-algebras.
\newblock {\em Proc. London Math. Soc. (3)}, 46(2):301--333, 1983.


\bibitem{RLL-ktheory}
M.~R{\o}rdam, F.~Larsen, and N.~J. Laustsen.
\newblock {\em An introduction to $\mbox{K}$-theory for $\mbox{C}$*-algebras}.
\newblock Number~49 in London Mathematical Society Student Texts. Cambridge
  University Press, 2000.

\end{thebibliography}
\end{document}